\documentclass[10pt,reqno]{amsart}
\usepackage{geometry, graphicx, url}
\usepackage{amsfonts,color,amsmath,amssymb,amsthm}

\newcommand{\F}{{\mathbb F}}
\newcommand{\tr}{\textup{Tr}}
\newcommand{\ra}{\rangle}
\newcommand{\la}{\langle}

\newcommand{\cB}{{\mathcal B}}
\newcommand{\cC}{{\mathcal C}}
\newcommand{\cD}{{\mathcal D}}

\newcommand{\cM}{{\mathcal M}}
\newcommand{\cN}{{\mathcal N}}
\newcommand{\cP}{{\mathcal P}}
\newcommand{\cQ}{{\mathcal Q}}
\newcommand{\cS}{{\mathcal S}}
\newcommand{\cU}{{\mathcal U}}

\newcommand{\Gal}{\textup{Gal}}
\newcommand{\Aut}{\textup{Aut}}

\newcommand{\lcm}{\textup{lcm}}
\newcommand{\End}{\textup{End}}

\newcommand{\GL}{\textup{GL}}
\newcommand{\GU}{\textup{GU}}
\newcommand{\SL}{\textup{SL}}
\newcommand{\Sp}{\textup{Sp}}
\newcommand{\SU}{\textup{SU}}
\newcommand{\GO}{\textup{GO}}
\newcommand{\PG}{\textup{PG}}

\newcommand{\PSL}{\textup{PSL}}
\newcommand{\PSU}{\textup{PSU}}
\newcommand{\PSp}{\textup{PSp}}
\newcommand{\PGO}{\textup{PGO}}

\newcommand{\PGU}{\textup{PGU}}
\newcommand{\GamL}{\Gamma\textup{L}}

\newcommand{\GamU}{\Gamma\textup{U}}

\newcommand{\PGamL}{\textup{P}\Gamma\textup{L}}

\newcommand{\POmeg}{\textup{P}\Omega}

\newtheorem{thm}{Theorem}

\newtheorem{lemma}[thm]{Lemma}

\newtheorem{proposition}[thm]{Proposition}

\numberwithin{equation}{section}
\numberwithin{thm}{section}
\numberwithin{table}{section}

\newtheorem{remark}[thm]{Remark}

\geometry{a4paper,left=2cm,right=2cm,top=2cm,bottom=2.2cm}

\begin{document}

\title[]{On subgroups of finite classical groups with exactly two orbits on singular or isotropic points}
%[Subgroups of Finite Classical Groups with Exactly Two Orbits]

\author[Feng, Xiang]{Tao Feng, Qing Xiang}
\address{Tao Feng, School of Mathematical Sciences, Zhejiang University, 38 Zheda Road, Hangzhou 310027, Zhejiang, China}
\email{tfeng@zju.edu.cn}
\address{Qing Xiang, Department of Mathematics and SUSTech International Center for Mathematics, Southern University of Science and Technology, Shenzhen 518055, China} \email{xiangq@sustech.edu.cn}

\begin{abstract}
In this paper, we classify the  groups of semisimilarities of finite classical polar spaces with exactly two orbits on the singular or isotropic points. As a byproduct, we obtain many highly symmetric regular sets in the point graphs of finite classical polar spaces.
%\medskip
%\noindent{{\it Keywords\/}: finite classical polar spaces, finite classical groups, primitive divisor}
%\smallskip
%\noindent {{\it MSC(2020)\/}: 51E20, 05B25, 51A50}
\end{abstract}

\maketitle

\tableofcontents

\section{Introduction}

The classification of finite primitive permutation groups of low rank has attracted much attention of group theorists due to applications in design theory, graph theory and finite geometry. The rank $2$ case corresponds to multiply transitive permutation groups, cf. \cite[Chapter~7]{DixonMortimer}. The classification in the rank $3$ case is accomplished in a series of papers by several researchers, and we refer the reader to \cite{kantor82rank3}, \cite{LiebeckRank3} and \cite{liebeckSaxlRank3} for the complete classification. The classification in the rank $4$ case seems incomplete, and we refer the reader to the PhD theses \cite{CuypRank4,vauRank4}. For partial results in the other low rank cases, we refer the reader to \cite{SolLowRank}, \cite{Foulser} and \cite{MuzySpiga}. The paper \cite{MuzySpiga} contains a nice exposition about the historical developments as well as the state of the art.

In this paper, we consider an analogous problem for finite classical polar spaces; that is, we would like to determine groups of semisimilarities that have few orbits on the singular or isotropic points.  Suppose that $\cP$ is a finite classical polar space of rank at least $2$, and let $H$ be a group of semisimilarities that has $t$ orbits on the points of $\cP$. We refer the reader to \cite{giudici2020subgroups} for the classification results in the $t=1$ case;  and we will assume that $t>1$ in the sequel. The {\it point graph} (or {\it collinearity graph}) of $\cP$ has the points of $\cP$ as vertices, and two distinct points are adjacent if and only if they are perpendicular. It is known that the point graph of $\cP$ is strongly regular. The $H$-orbits on the points of $\cP$ form an equitable partition of the vertices of the point graph. If further $t=2$, then each orbit is a regular set of the graph, cf. \cite{SRG}. A regular set of the point graph is also called an intriguing set in the literature. There are two kinds of intriguing sets, namely tight sets and $m$-ovoids, cf. \cite{BambergTightsets2007}. The concept of tight sets and $m$-ovoids were first introduced by Payne \cite{Payne} and Thas \cite{thasMovo} respectively for finite generalized quadrangles, and were later generalized to finite polar spaces in \cite{Drudge} and \cite{ShultThas}. They are important geometric objects which have close connection with strongly regular graphs and projective two-weight codes, cf. \cite{SRG,TwoWeight}. The notion of intriguing sets was introduced in \cite{BambergTightsets2007} and \cite{BambergCombIntriComb}  to unify the two concepts. In the present paper, we aim to classify the groups of semisimilarities that have exactly two orbits on the points of $\cP$. This is achieved in Theorems \ref{thm_red}, \ref{thm_TS} and \ref{thm_mOvoid} which we state below.

\begin{thm}\label{thm_red}
Let $V$ be a vector space of dimension $d$ over $\F_q$ equipped with a nondegenerate quadratic, unitary or symplectic form $\kappa$, so that the associated polar space $\cP$ has rank $r\ge 2$ and is not $Q^+(3,q)$. Suppose that $H$ is a reducible subgroup of semisimilarities that has two orbits $O_1$, $O_2$ on the points of $\cP$. Let $W$ be an $H$-invariant subspace of the lowest dimension. Then $O_1$ is the set of singular or isotropic points in $W^\perp$ and $O_2$ is its complement in $\cP$, and we have one of the following cases:
 \begin{enumerate}
\item[(R1)] $W$ is a maximal totally singular or isotropic subspace,
\item[(R2)] $V=W\oplus W^\perp$ with $\dim(W)=1$, and $H|_{W^\perp}$ is transitive on both $O_1$ and the isometry class of nonsingular points in $W^\perp$ of size $\frac{1}{2}|O_2|$,
\item[(R3)] $\cP=Q^{\pm}(d-1,q)$ with $d$ even, $W$ is elliptic of dimension $2$, and $H|_{W^\perp}$ is transitive on both $O_1$ and the set of all nonsingular points in $W^\perp$,
\item[(R4)] $\cP=Q^{\pm}(2m+1,q)$ with $q$ even, and either $\Sp_{2a}(b)'\unlhd H$ ($m=ab$) or $G_2(q^b)'\unlhd H$ ($m=3b$).
\end{enumerate}
\end{thm}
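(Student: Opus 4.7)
The plan is to pick $W$ to be an $H$-invariant subspace of lowest positive dimension, so that $H$ acts irreducibly on $W$. Because $H$ consists of semisimilarities, $W^{\perp}$ is also $H$-invariant, and the radical $W \cap W^{\perp}$ is an $H$-invariant subspace of $W$; by irreducibility it equals $0$ or $W$. Hence $W$ is either nondegenerate (giving $V = W \oplus W^{\perp}$) or totally isotropic. The intersection $\cP \cap W^{\perp}$ is a union of $H$-orbits and cannot be all of $\cP$ since singular points span $V$; after relabeling we assume $O_1 = \cP \cap W^{\perp}$, ruling out the degenerate alternative $\cP \cap W^{\perp} = \emptyset$ via the minimality constraint $\dim W \le \dim W^{\perp}$ together with the rank $\ge 2$ and $\cP \ne Q^+(3,q)$ hypotheses.

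Next I would distinguish two structural cases. In the totally isotropic case $W \subseteq W^{\perp}$, if $Q|_W \equiv 0$ (i.e., $W$ is totally singular) then $\cP \cap W$ is a nonempty $H$-invariant subset of $O_1$, so transitivity of $H$ on $O_1$ forces $\cP \cap W = O_1 = \cP \cap W^{\perp}$. Hence $W^{\perp} \setminus W$ contains no singular point and $W$ is a maximal totally singular subspace, yielding (R1). If instead $Q|_W \not\equiv 0$ (possible only for quadratic forms in characteristic $2$, where $B|_W \equiv 0$ is compatible with $Q|_W \ne 0$), then $\ker(Q|_W)$ is a proper $H$-invariant subspace of $W$, forced to be $0$ by minimality. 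Injectivity of the additive map $Q|_W \colon W \to \F_q$ gives $\dim W = 1$, so $W = \langle v \rangle$ with $v$ nonsingular. The quotient $W^{\perp}/W$ then carries a nondegenerate alternating form of dimension $2m$, and since $Q(u + \lambda v) = Q(u) + \lambda^2 Q(v)$ is surjective in $\lambda \in \F_q$ (as squaring is bijective on $\F_q$ in characteristic $2$), each nonzero coset of $W$ in $W^{\perp}$ contains exactly one singular representative. This produces an $H$-equivariant bijection $O_1 \leftrightarrow \PG(W^{\perp}/W)$, so the induced subgroup of $\GamSp_{2m}(q)$ is transitive on the points of the symplectic projective space. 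The classification of transitive linear groups on $\PG(n-1,q)$ (Hering, as refined by Liebeck) then identifies $\Sp_{2a}(q^b)'$ or $G_2(q^b)'$ as a normal subgroup of $H$, giving (R4).

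In the nondegenerate case $V = W \oplus W^{\perp}$, I would show $W$ is anisotropic: if $W$ had a singular point, then $\cP \cap W$ is $H$-invariant and disjoint from $W^{\perp}$, hence contained in $O_2$, and transitivity on $O_2$ forces $\cP \cap W = O_2$; but then any singular $w \in W$ and any singular $w' \in W^{\perp}$ (which exists as $O_1 \ne \emptyset$) combine to give a singular vector $w + w'$ lying outside $W \cup W^{\perp}$, a contradiction. Anisotropy of $W$ rules out the symplectic form entirely, forces $\dim W = 1$ for unitary forms, and leaves $\dim W \in \{1,2\}$ for quadratic forms (with $\dim W = 1$ requiring characteristic $\ne 2$ for orthogonal, since in characteristic $2$ every $1$-space is contained in its perp). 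This yields (R2) for $\dim W = 1$ and (R3) for $\dim W = 2$ elliptic. The stated transitivity of $H|_{W^{\perp}}$ on the relevant isometry class of nonsingular points then follows from transitivity of $H$ on $O_2$ by pairing each point of $O_2$ with its partners on the line through $W$ (respectively, the $3$-space through the elliptic plane $W$); the projection of these configurations to $W^{\perp}$ picks out a single isometry class of nonsingular points, characterised by a discriminant/square-class condition for $\dim W = 1$ and comprising all nonsingular points when $\dim W = 2$ is elliptic.

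The main obstacle is case (R4), where the deep classification of transitive subgroups of $\GamSp_{2m}(q)$ on $\PG(2m-1,q)$ must be invoked to identify precisely $\Sp_{2a}(q^b)$ and $G_2(q^b)$ as the only possibilities in characteristic $2$. A secondary technical step, in (R2) and (R3), is the form-theoretic bookkeeping needed to confirm that exactly one of the two isometry classes of nonsingular points in $W^{\perp}$ accounts for $\tfrac{1}{2}|O_2|$ (respectively, that the whole set of nonsingular points is used in the elliptic $2$-dimensional case) and that $H$ acts transitively on it.
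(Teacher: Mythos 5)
Your overall route is essentially the paper's: take $W$ of minimal dimension, split into the nondegenerate, totally singular, and characteristic-two one-dimensional nonsingular cases, identify the two orbits with the singular points of $W^\perp$ and their complement, prove $W$ anisotropic in the nondegenerate case, obtain the transitivity statements in (R2)/(R3) by the pairing argument, and in the last case transfer the action to the symplectic quotient $W^\perp/W$ and quote a classification of point-transitive linear groups. Your maximality argument for (R1) (every singular point of $W^\perp$ lies in $W$, and any maximal totally singular subspace containing $W$ is contained in $W^\perp$) is actually cleaner than the paper's, which invokes the intersection property of $m$-ovoids with $1$-tight sets.

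There is, however, a concrete missing step: in the elliptic-plane case you never derive that $d$ is even, which is part of the assertion (R3); the parabolic possibility ($d=2r+1$ odd, $q$ odd, $W$ elliptic of dimension $2$) must be excluded. The paper does this by noting that $|P^\perp\cap O_1|$ takes both values $|Q^+(2r-3,q)|$ and $|Q^-(2r-3,q)|$ as $P$ ranges over $O_2$, so $O_1$ would fail to be an intriguing set. In your own framework the exclusion also follows from the transitivity you establish: when $\dim(W^\perp)=d-2$ is odd, every similarity factor is a square, so the two isometry classes of nonsingular points of $W^\perp$ are preserved by all semisimilarities and have different sizes, contradicting transitivity of $H|_{W^\perp}$ on all nonsingular points; but as written this point is simply absent, so your argument proves a weaker statement than (R3). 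Two further glosses in (R4): Hering's classification is of transitive subgroups of $\GamL_{2m}(q)$, so you still have to discard the members of that list that do not preserve the symplectic form on $W^\perp/W$ (the paper instead cites a ready-made symplectic transitivity theorem of Giudici--Glasby--Praeger); and your conclusion lives in the image of $H$ in $\GamSp_{2m}(q)$, i.e.\ modulo the order-two kernel generated by the reflection in $W$, whereas (R4) asserts $\Sp_{2a}(q^b)'\unlhd H$ or $G_2(q^b)'\unlhd H$ inside $\Gamma(V,\kappa)$ itself; the lift through this kernel (the paper uses that $q$ is even so the relevant perfect central extension collapses) needs to be addressed.
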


\begin{remark}
Take the same notation as in Theorem \ref{thm_red}. The orbits of the isometry group of $(V,\kappa)$ on the nonsingular points are referred to as isometry classes of nonsingular points when $\kappa$ is orthogonal or unitary. For case (R1) $O_1$ is a $1$-tight set, and we are not able to explicitly determine all such subgroups in this case. For (R2) the parameter of $O_1$ is given in \cite[Lemma~7]{BambergTightsets2007}, and the possible group structures of $H|_{W^\perp}$ will be determined in Section \ref{subsec_grp_lems}. For (R3) $O_1$ is a $(q^{r-1}+1)$-tight set of $\cP=Q^-(2r+1,q)$ if $d=2r+2$, and is a $\frac{q^{r-2}-1}{q-1}$-ovoid of $\cP=Q^+(2r-1,q)$ if $d=2r$. The possible group structures of $H|_{W^\perp}$ will be determined in Section \ref{subsec_grp_lems}. For (R4) the parameter of $O_1$ is also given in \cite[Lemma~7]{BambergTightsets2007}.
\end{remark}

\begin{thm}\label{thm_TS}
Let $V$ be a vector space over $\F_q$ of dimension $d$ equipped with a nondegenerate quadratic, unitary or symplectic form $\kappa$, so that the associated polar space $\cP$ has rank $r\ge 2$ and is not $Q^+(3,q)$. Suppose that $H$ is an irreducible subgroup of semisimilarities that has two orbits $O_1$, $O_2$ on the points of $\cP$ which are $i_1$- and $i_2$-tight sets respectively. Then either it occurs in Tables \ref{tab_TS_C3}-\ref{tab_TS_cSother}, or it is one of the following:
\begin{enumerate}
\item[(T1)]$H$ preserves a decomposition $\cD:\,V=V_1\oplus\cdots\oplus V_t$ with $\dim(V_i)=m$ for each $i$, either $t=d$ and we have the cases in Table \ref{tab_TS_C2}, or $t=2$ and we have one of: (a) $\cP=W(2m-1,q)$ and $V_1$, $V_2$ are nondegenerate, (b) $\cP=Q^+(2m-1,q)$ and $V_1$, $V_2$ are totally singular. For (b), either $\SL_m(q)\unlhd H_{V_1}^{V_1}$ , or $(q,m)=(2,4)$ and $A_7\unlhd H_{V_1}^{V_1}$, or $(q,m)\in\{(2,3),(8,3)\}$ and $H_{V_1}^{V_1}\unlhd\GamL_1(q^3)$.
\item[(T2)]$\cP=Q^+(2m-1,q)$, $\{i_1,i_2\}=\{q+1,q^{m-1}-q\}$, and  $H$ preserves a decomposition $(V,\kappa)=(V_1,f_1)\otimes (V_2,f_2)$, where $\dim(V_1)=2$, $\dim(V_2)=m\ge 4$ and $f_1,f_2$ are symplectic.
\item[(T3)]$\cP=H(2m-1,q)$, $O_1=W(2m-1,q^{1/2})$ which is a $(q^{1/2}+1)$-tight set, and $\Sp_{2m}(q^{1/2})'\unlhd H$.
\item[(T4)]$\cP=W(d-1,q)$ with $q$ even, $H$ stabilizes $Q^+(d-1,q)$ which is a $(q^{d/2-1}+1)$-tight set.
\end{enumerate}
\end{thm}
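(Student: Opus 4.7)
The plan is to invoke Aschbacher's classification of maximal subgroups of finite classical groups: since $H$ is irreducible, it is contained in a maximal subgroup lying in one of the geometric classes $\cC_2,\ldots,\cC_8$ or the class $\cS$ of almost simple irreducible subgroups. The two standing hypotheses to exploit throughout are (i) $H$ has exactly two orbits $O_1,O_2$ on singular or isotropic points, and (ii) these orbits are \emph{both} tight sets; since the complement of a tight set is in general not a tight set, condition (ii) is genuinely restrictive and will rule out many Aschbacher candidates.

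I would first treat the classes admitting a direct geometric analysis, which account for the cases (T1)--(T4). For $\cC_2$, $H$ preserves a decomposition $V=V_1\oplus\cdots\oplus V_t$ into pairwise isomorphic summands. The two-orbit constraint forces either $t=d$ (the fully decomposed imprimitive wreath case compiled in Table \ref{tab_TS_C2}) or $t=2$, and a short form-theoretic argument shows the two summands are either both nondegenerate (forcing $\cP=W(2m-1,q)$) or both totally singular (forcing $\cP=Q^+(2m-1,q)$). In the totally singular subcase, the induced group on $V_1$ must act transitively on the singular points of a generator paired with $V_1$, i.e.\ on the $1$-spaces of $V_1^*\cong V_1$; this is a $2$-transitive action on $\PG(m-1,q)$, and Hering's classification pins $H_{V_1}^{V_1}$ down to the three options listed in (T1). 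For $\cC_4$ the tensor decomposition $V_1\otimes V_2$ with the indicated form types produces (T2), the parameters $q+1$ and $q^{m-1}-q$ following by counting pure and non-pure tensors in each orbit. Class $\cC_5$ yields only the subfield embedding $W(2m-1,q^{1/2})\subset H(2m-1,q)$ as in (T3); other subfield configurations fail either the two-orbit test or the tight-set parameter test. Finally, (T4) is the classical characteristic-$2$ embedding of the orthogonal quadric inside the symplectic space, which is known to furnish a tight set.

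For the residual classes $\cC_3,\cC_6,\cC_7,\cC_8$ and the almost simple class $\cS$, I would run a uniform enumeration: list candidate subgroups $H$ using the detailed structural description of each class together with dimension and degree bounds (for $\cS$, via the Hiss--Malle low-dimensional representation data), then for each candidate compute the orbits on singular points and test the tight-set parameters. The tight-set sum identity (together with the eigenvalue condition that distinguishes tight sets from $m$-ovoids in the strongly regular point graph) cuts the search space down to a finite, tractable list, which fills Tables \ref{tab_TS_C3}--\ref{tab_TS_cSother}.

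The hardest step will be the class $\cS$ analysis: there is no uniform structural mechanism forcing two orbits to be tight, so one has to inspect each almost simple irreducible subgroup individually, often via explicit matrix representations or character-theoretic orbit counts. A secondary technical nuisance is the passage from isometries to semisimilarities: outer field automorphisms can fuse $H\cap \text{Isom}(V,\kappa)$-orbits, so a configuration with three or more isometry orbits can still yield two semisimilarity orbits, and such fusions must be systematically tracked to avoid missing genuine examples or admitting phantom ones.
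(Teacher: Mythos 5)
Your global skeleton (Aschbacher reduction; a direct geometric analysis of $\cC_2$, $\cC_4$, $\cC_5$, $\cC_8$ giving (T1)--(T4); numerical tight-set conditions plus explicit checks for the rest) is the same as the paper's, but two steps in your sketch would not actually deliver the stated conclusion. First, in (T1)(b) you assert that transitivity of $H_{V_1}^{V_1}$ on the $1$-spaces of $V_1$ is ``a $2$-transitive action on $\PG(m-1,q)$'' and that Hering's classification then pins the group down to the three listed options. Transitivity on points of $\PG(m-1,q)$ is not $2$-transitivity; the relevant input is Hering's classification of transitive \emph{linear} groups, and that list is much larger than your three cases: it contains $\GamL_1(q^m)$ for every $q$, $\Sp_{2a}(q^{m/(2a)})$, $G_2(q_1)'$ for $q$ even, and several exceptional groups. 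What removes these, in the paper's Proposition \ref{prop_C2_H}, is the transitivity of the stabilizer of $V_1$ on the \emph{second} orbit $\cN_2=\{\la x_1+x_2\ra\mid x_i\in V_i\setminus\{0\},\,B(x_1,x_2)=0\}$: the divisibility of $|\cN_2|=(q^{m-1}-1)\frac{q^m-1}{q-1}$ by $\gcd(2,q-1)\cdot|G_1|$, primitive-prime-divisor arguments, and the maximality of the field of definition. This is precisely what restricts the $\GamL_1(q^3)$ case to $(q,m)\in\{(2,3),(8,3)\}$ and eliminates $\Sp$ and $G_2$; your sketch never uses the second orbit at this point, so it cannot reach the stated list.

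Second, treating $\cC_3$, $\cC_6$, $\cC_7$ and $\cS$ as ``a finite, tractable list'' obtained by enumeration and orbit computation is not viable: Tables \ref{tab_TS_C3}, \ref{tab_C3Remain} and \ref{tab_TS_cSLiedef} contain infinite families. For $\cC_3$ one needs the field-reduction framework: the $\Gamma^\#$-invariant set $\cM_1$ of \eqref{eqn_M1def}, the trace conditions deciding when $\Gamma^\#$ is transitive on $\cM_1$ and its complement, and, when $\cM_1=\cP$, descent to $\cP'$ and a recursion (this is where, e.g., the $\Sp_{2m}(q)'$-invariant tight sets of $Q^+(4m-1,q)$ and the $\SU/\Omega$ families arise). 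For $\cS$, the Hiss--Malle data only covers cross characteristic; the defining-characteristic families $\PSL_3(q)$ on $Q(6,q)$, $\PSp_6(q)$ on $Q(12,q)$, $\textup{Spin}_9(q)$ on $Q^+(15,q)$, $\textup{Spin}_8^-(q^{1/2})$ and $F_4(q)$ require highest-weight and form arguments (L\"ubeck's bounds, twisted tensor structure, Frobenius--Schur type criteria) together with bespoke orbit analyses on adjoint, spin and twisted-tensor modules, which occupy Sections \ref{subsec_adj}--\ref{subsec_spin} of the paper. A smaller point: your justification that hypothesis (ii) is restrictive because ``the complement of a tight set is in general not a tight set'' is false --- with exactly two orbits each orbit is automatically an intriguing set, and the complement of an $i$-tight set is a $(\theta_r-i)$-tight set; the genuine content of (ii) is the dichotomy tight sets versus $m$-ovoids, the latter being the subject of Theorem \ref{thm_mOvoid}.
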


\begin{remark} 
In the third block of Table \ref{tab_C3Remain}, we list the Aschbacher class of $H$ as a subgroup of the semisimilarity group of $\cP'$. For Table \ref{tab_TS_cSLiedef}, Cohen and Cooperstein \cite{Cohen1988} showed that $F_4(q)$ has two orbits on the singular points of $Q(24,q)$ in its action on the minimal module, and Cardinali and De Bruyn \cite{card} constructed the tight sets that arise from spin modules.  The remaining two infinite families of tight sets in Table \ref{tab_TS_cSLiedef} seem to be new. 
\end{remark}
\begin{remark}
Below are some comments on the group structure of $H$ in Theorem \ref{thm_TS}.
\begin{enumerate}
\item[(1)] The group $H_{V_1}^{V_1}$ in (T1) is the induced action of the stabilizer of $V_1$ in $H$ on the subspace $V_1$. We refer the reader to \cite[Theorem~3.3]{pls_linear} for more details on the possible structures of $H$ for (T1-a).
\item[(2)]  We refer the reader to Remark \ref{rem_C4C7} for  the possible structures of $H$ for (T2).
\item[(3)] The group $H$ is transitive on both the set of singular points and the set of nonsingular points of $Q^+(d-1,q)$ for (T4), and we refer the reader to Section \ref{subsec_grp_lems} for the possible structures of $H$ for $d\ge 6$.
\end{enumerate}
\end{remark}

\begin{table}[h]
\centering\caption{Class $\cC_2$ for Theorems \ref{thm_TS}, \ref{thm_mOvoid}: $H$ stabilizes a decomposition $\cD:\,V=\la e_1\ra\oplus\cdots\oplus \la e_d\ra$ with each $e_i$ nonsingular. Here, t.s.=tight set.}\label{tab_TS_C2}
\begin{tabular}{|c|ccccccc|}\hline
$\cP$& $H(3,4)$  & $Q(4,3)$ &$Q(6,3)$ &$Q^+(7,3)$ & $H(4,4)$ & $Q(4,3)$  & $Q^-(5,3)$ \\ \hline
$O_1,O_2$ &$2,3$-ovoids&$2,2$-ovoids &$5,8$-ovoids &$8,32$-ovoids &$6,27$-t.s. & $4,6$-t.s. &$8,20$-t.s. \\   \hline
\end{tabular}
\end{table}

\begin{table}[h]
\centering
\caption{Class $\cC_3$ for Theorem \ref{thm_TS}: one $H$-orbit is $\cM_1$, cf. \eqref{eqn_M1def} } \label{tab_TS_C3}
\begin{tabular}{|cccc|}
\hline
$\cP$     & $\cP'$   & Condition   &$H$\\ \hline
$Q^+(2mb-1,q)$  & $Q^+(2m-1,q^b)$ & $b=2$, or & If $m\ge 3$,  then $\Omega^+_{2m}(q^b)\unlhd H$,\\
  &   & $(q,b)\in\{(2,3),(8,3)\}$ & or $\SU_{m}(q^b)\unlhd H$ with $m$ even,\\
  &   &                           & or $\textup{Spin}_7(q^b)\unlhd H$ with $m=4$.\\
$W(2m-1,q)$ & $H(m-1,q^2)$ & $m\ge 2$ even &  $\SU_m(q)\unlhd H$ or $(q,m)=(2,2)$\\
$H(3m-1,q)$ & $H(m-1,q^3)$ & $q\in\{2^2,2^6\}$, &  $\SU_m(q^{3/2})\unlhd H$,\\
  &   &   and $m\ge3$ odd & or $(m,q)=(3,4)$\\
$Q^+(2mb-1,q)$ & $H(2m-1,q^b)$ & --  &  $\SU_{2m}(q^b)\unlhd H$\\
$Q^-(2m-1,q)$ & $Q(m-1,q^2)$ & $m\ge 3$, $qm$ odd  & $\Omega_{m}(q^2)\unlhd H$, or\\
  &   &                           &$G_2(q^2)\unlhd H$ with $m=7$ \\ \hline
\end{tabular}
\end{table}

\begin{table}[h]
\centering \caption{Class $\cC_3$  for Theorem \ref{thm_TS}: the remaining cases, cf. Propositions \ref{prop_C3_O1peqPp}, \ref{prop_C3_OipProper_r'eq1} and  \ref{prop_C3_remain}  }\label{tab_C3Remain}
\begin{tabular}{|ccccc|}\hline
$\cP$      & $\cP'$   & $i_1,i_2$ & $H$ &Remark\\ \hline
$W(3,3)$ & $W(1,9)$  &   $5,5$ & $\SL_2(5)\unlhd H$ & \\  
$Q^-(5,3)$&$H(3,3^2)$& $14,14$ & $\PSL_2(7)\unlhd H$&  \\ \hline
$W(3,q)$ & $W(1,q^2)$ & $q+1,q^2-q$& $\SL_2(q)\unlhd H$& \\
         & & $q_1^{2}+1,q^2-q_1^{2}$& $\SL_2(q_1^{2})\unlhd H$&  $q=q_1^{3}$ even\\
$W(3,7)$ & $W(1,7^2)$ & $20,30$ & $2.A_5\unlhd H$ &  \\
$Q^-(5,5)$ & $H(2,5^2)$ & $54,72$ & $H\le N_\Gamma(3_+^{1+2})$ &  \\
 & &  $42,84$ & $\PSL_2(7)\unlhd H$&  \\
 & &  $36,90$ & $3.A_6\unlhd H$&  \\
$Q^-(5,q)$  & $H(2,q^2)$ & $3q+3,q^3-3q-2$ & $H\le N_{\Gamma}(C_{q+1}^3)$ & $q\in\{3,4,5,8,32\}$\\ \hline
$W(11,2)$  & $W(5,4)$ & $25,40$& $J_2\unlhd H$ &  $\cS$\\
$Q^+(7,3)$ & $H(3,9)$ & $7,21$ & $2.A_7$ & $\cS$\\
           &          &$12,16$ &$H\le N_\Gamma(4\circ 2^{1+4})$& $\cC_6$\\
$Q^-(9,2)$ & $H(4,4)$  & $6,27$ & $H\le \GO_2^-(2)\wr S_3$& $\cC_2$ \\
           &   & $11,22$ & $\PSL_2(11)\unlhd H$& $\cS$ \\
$Q^+(11,2)$ & $H(5,4)$ & $6,27$ & $3.\PSU_4(3)\unlhd H$& $\cS$ \\
       & & $11,22$ & $3.M_{22}\unlhd H$& $\cS$ \\
$Q^+(4m-1,q)$&$H(2m-1,q^2)$ & $q+1,q^{2m-1}-q$ & $\Sp_{2m}(q)'\unlhd H$& $\cC_5$\\
\hline
\end{tabular}
\end{table}

\begin{table}[h]
\centering\caption{Class $\cC_6$  for Theorem \ref{thm_TS}: $H$ normalizes $R$}\label{tab_TS_C6}
\begin{tabular}{|ccc|ccc|}
\hline
$\cP$      & $R$ & $i_1,i_2$  &  $\cP$  & $R$  & $i_1,i_2$    \\ \hline
$H(3,9)$   & $4\circ 2^{1+4}$ & $12,16$  &  $W(3,7)$   & $2_-^{1+4}$      & $10,40$  \\
$W(3,3)$   & $2_-^{1+4}$      & $\{2,8\}$ or $\{4,6\}$ &$W(7,3)$   & $2_-^{1+6}$      & $18,64$ \\
$W(3,5)$   & $2_-^{1+4}$      & $10,16$  &  $Q^+(7,3)$ & $2_+^{1+4}$      & $\{7,21\}$, $\{12,16\}$ or $\{4,24\}$\\
\hline
\end{tabular}
\end{table}

\begin{table}[h]
\centering\caption{Class $\cS$  for Theorem \ref{thm_TS}: simple groups of Lie type in the defining characteristic}\label{tab_TS_cSLiedef}
\begin{tabular}{|cccc|}
\hline
$\cP$       & $H^\infty$    & $i_1,i_2$   & Remark         \\ \hline
$Q(6,q)$ ($q=3^f$) & $\PSL_3(q)$  & $q+1, q^3-q$   & adjoint module\\
$Q^+(7,q)$ ($q$ square) & $\textup{Spin}_8^-(q^{1/2})$  & $q^{3/2}-1,q^3-q^{3/2}$ & spin module\\
$Q(12,q)$ ($q=3^f$)& $\PSp_6(q)$ & $q^2+1,q^6-q^2$ & section of exterior square \\
$Q^+(15,q)$ & $\textup{Spin}_9(q)$   & $q^3+1,q^7-q^3$   & spin module  \\
$Q(24,q)$ ($q=3^f$) & $F_4(q)$  & $q^4+1,q^{12}-q^{4}$    & minimal module\\
 \hline
\end{tabular}
\end{table}

\begin{table}[h]
\centering\caption{Class $\cS$  for Theorem \ref{thm_TS}: the remaining cases}\label{tab_TS_cSother}
\begin{tabular}{|ccc|ccc|}
\hline
$\cP$    & $H^\infty$  & $i_1,i_2$  &  $\cP$  & $H^\infty$   & $i_1,i_2$  \\ \hline
$W(3,5)$ & $2.A_6$ & $6,20$    & $Q^-(5,5)$ & $\PSp_4(3)$ & $36,90$\\
$W(3,7)$ & $2.A_7$  & $15,35$  & $Q^-(9,2)$ & $M_{11}$    & $11,22$\\
$W(5,4)$ & $J_2$ &  $25,40$  &            & $A_{11}$    & $11,22$\\
$W(7,2)$ & $A_{10}$ & $3,14$ & $Q^-(13,2)$& $G_2(3)$ & $12,117$\\
$W(11,3)$& $2.$Suz & $90,640$    & $Q^-(17,2)$& $A_{20}$    & $19,494$\\
         & $2.G_2(4)$ &$90,640$& $Q^+(23,2)$ & $Co_1$ & $24,2025$\\
$H(3,9)$ & $2.A_7$  & $7,21$   & $H(5,4)$ & $3.\PSU_4(3)$ & $6,27$\\
$H(4,4)$ & $\PSL_2(11)$&$11,22$&           & $3.M_{22}$    & $11,22$\\
\hline
\end{tabular}
\end{table}

\begin{thm}\label{thm_mOvoid}
Let $V$ be a vector space over $\F_q$ of dimension $d$ equipped with a nondegenerate quadratic, unitary or symplectic form $\kappa$, so that the associated polar space $\cP$ has rank $r\ge 2$ and is not $Q^+(3,q)$. Suppose that $H$ is an irreducible subgroup of semisimilarities that has two orbits $O_1$, $O_2$ on the points of $\cP$ which are $m_1$- and $m_2$-ovoids respectively. Then either it occurs in Tables \ref{tab_TS_C2}, \ref{tab_TrOvoid_C3}, \ref{tab_TrOvoid_C3Rem} and \ref{tab_TrOvoid_cS}, or it is one of the following:
\begin{enumerate}
\item[(M1)] $\cP=W(3,3)$, $H=2^.\POmeg^-_4(2)\le N_\Gamma(2_-^{1+4})$ and $m_1=m_2=2$,
\item[(M2)] $\cP=W(d-1,q)$ with $q$ even, and one $H$-orbit is an elliptic quadric $Q^-(d-1,q)$.
\end{enumerate}
\end{thm}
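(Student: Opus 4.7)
The plan is to run the argument in parallel with the classification for tight sets (Theorem~\ref{thm_TS}) by invoking Aschbacher's theorem for the irreducible subgroup $H$ of the semisimilarity group $\Gamma$ of $\cP$. Each $H$-orbit on points is an intriguing set; by \cite{BambergTightsets2007}, an intriguing set is either a tight set or an $m$-ovoid, and the dichotomy is detected by which of the two nontrivial eigenvalues of the collinearity graph is carried by the characteristic vector. Hence the workflow is: reuse the Aschbacher-class candidate list assembled for Theorem~\ref{thm_TS}, and for each candidate retain exactly the configurations whose orbits land on the $m$-ovoid side.

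For the geometric classes I would proceed as follows. In class $\cC_2$, $H$ preserves a decomposition $V=\langle e_1\rangle\oplus\cdots\oplus\langle e_d\rangle$ into nonsingular lines; the orbit enumeration coincides with the one done for Theorem~\ref{thm_TS}, and the $m$-ovoid outcomes are precisely the first four columns of Table~\ref{tab_TS_C2}. In class $\cC_3$, $H$ preserves a field extension $\F_{q^b}/\F_q$ on $V$ so that one orbit is the point set of a subpolar space; computing the generator-intersection parameter in each case splits the $\cC_3$ list of Theorem~\ref{thm_TS} into its tight-set and $m$-ovoid parts, the latter yielding Tables~\ref{tab_TrOvoid_C3} and \ref{tab_TrOvoid_C3Rem}. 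Class $\cC_6$ ($H$ normalises an extraspecial group) contributes only case (M1) on $W(3,3)$, verifiable by a direct orbit computation inside the normaliser $N_\Gamma(2_-^{1+4})$ on the $40$ isotropic points. Classes $\cC_4,\cC_5,\cC_7,\cC_8$ and the characteristic-$2$ stabiliser of a quadric inside $W(d-1,q)$ are analysed as for tight sets, and on the $m$-ovoid side the only surviving output is case (M2): in even characteristic, $Q^-(d-1,q)$ embeds inside $W(d-1,q)$ and a standard point count shows that it forms an $m$-ovoid of the symplectic polar space (the $Q^+$ analogue having been absorbed into the tight-set case~(T4)).

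The remaining and most delicate case is class $\cS$, where $H$ is almost simple and acts absolutely irreducibly on $V$. The plan here is to enumerate the faithful absolutely irreducible representations of almost simple groups of the relevant dimensions using the low-dimensional representation literature (Hiss--Malle, L\"ubeck, the modular \textsc{Atlas}), for each candidate use character-theoretic methods (or direct computation in \textsc{Magma}/\textsc{Gap} for small groups) to count orbits on isotropic or singular points, and for two-orbit candidates to test the $m$-ovoid condition by intersecting an orbit with a single generator. The survivors populate Table~\ref{tab_TrOvoid_cS}.

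The main obstacle is unquestionably the class $\cS$ analysis: the representation-theoretic catalogue is long, the $m$-ovoid test frequently requires Brauer characters or explicit matrix computations, and several of the answers are sporadic low-dimensional cases with no common structural explanation. A secondary difficulty is the bookkeeping that decides, within each Aschbacher class, whether a two-orbit configuration is an $m$-ovoid or a tight set---since many of the same groups appear in both Theorems~\ref{thm_TS} and \ref{thm_mOvoid}, but only contribute to one of them for arithmetic reasons governing the relevant eigenvalue of the collinearity graph.
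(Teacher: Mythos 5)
Your overall plan --- rerun the Aschbacher machinery of Theorem~\ref{thm_TS} and keep the configurations that land on the $m$-ovoid side --- has a genuine gap at its first step. The candidate lists built in Sections 4 and 5 were pruned with tight-set arithmetic: by Lemma~\ref{lem_ParaCond} each orbit size is a multiple of $\frac{q^r-1}{q-1}$, and candidates are repeatedly discarded because $\frac{q^r-1}{q-1}\nmid|\Aut(S)|$ or because no integer $i$ exists with $\lcm(i,\theta_r-i)\cdot\frac{q^r-1}{q-1}$ dividing $|\Aut(S)|$. In the $m$-ovoid situation the orbit sizes are $m_i\theta_r$ with $\theta_r=q^{d-r-\epsilon}+1$, a different and typically much larger divisor, so the surviving candidates form a different set: many entries of Tables~\ref{tab_TrOvoid_C3}, \ref{tab_TrOvoid_C3Rem} and \ref{tab_TrOvoid_cS} never appear in the Theorem~\ref{thm_TS} sieve at all. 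For instance, the fully deleted module of $A_{16}$ on $Q^+(13,2)$ is dismissed in Proposition~\ref{prop_S_Anfdm} precisely because its orbit lengths are not multiples of $2^7-1$; the field-reduction rows 3, 5, 7 of Table~\ref{tab_extfieldP'gt0} are set aside in Proposition~\ref{prop_C3_M1prop} exactly because $\cM_1$ is there an $m$-ovoid rather than a tight set, and these rows are where most of Table~\ref{tab_TrOvoid_C3} lives; the Suzuki-group families and the $2.J_2\le\Sp_6(5)$ example likewise have no tight-set counterpart. So ``retaining the $m$-ovoid side'' of the tight-set candidate lists cannot yield a complete classification. To make your route work you would have to redo the whole sieve with $\theta_r$-divisibility, and your proposal does not supply the numerical tool that makes this tractable (a primitive prime divisor of $q^{2(d-r-\epsilon)}-1$ must divide $|H|$, which is where the large-prime-divisor subgroup classifications of \cite{bamberg2008overgroups,Guralnick1999Linear} enter), especially in class $\cS$ and in the unbounded-rank $\cC_3$ families.

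For comparison, the paper's proof of Theorem~\ref{thm_mOvoid} is short and takes a different route: each $H$-orbit is an $m$-ovoid admitting a transitive group of semisimilarities acting irreducibly on $V$, so both orbits are covered by the classification of transitive irreducible $m$-ovoids in \cite{FengLiTao}. Section 6 then only has to decide which of those transitive $m$-ovoids occur in complementary pairs with $m_1+m_2=\frac{q^r-1}{q-1}$ (checking transitivity on the complement, sometimes by Magma), and to pin down the possible group structures using the transitivity lemmas of Section~\ref{subsec_grp_lems}. If you wish to avoid citing that classification, you must in effect reprove it with the $m$-ovoid arithmetic rather than borrow the tight-set bounds.
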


\begin{table}
\centering
\caption{Class $\cC_3$ for Theorem \ref{thm_mOvoid}: one $H$-orbit is $\cM_1$, cf. \eqref{eqn_M1def} }\label{tab_TrOvoid_C3}
\begin{tabular}{|cccc|}
\hline
$\cP$     & $\cP'$   & Condition   &$H$\\ \hline	
$H(3m-1,q)$ & $H(m-1,q^3)$  & $m\ge 3$ odd, $q\in\{2^2,2^6\}$ & $\SU_{m}(q^{3/2})\unlhd H$\\
$Q^-(4m-1,q)$ & $Q^-(2m-1,q^2)$ & $m\ge 2$ & $\Omega_{2m}^-(q^2)\unlhd H$, or\\
    &    &    & $\SU_{m}(q^4)\unlhd H$ with $m$ odd\\
$Q^-(6m-1,q)$ & $Q^-(2m-1,q^3)$ & $m\ge 2$, $q\in\{2,2^3\}$ &$\Omega_{2m}^-(q^3)\unlhd H$, or\\
    &    &    & $\SU_{m}(q^3)\unlhd H$ with $m$ odd \\
$Q^+(2m-1,q)$ &$Q(m-1,q^2)$ & $qm$ odd, $m\ge 3$& $\Omega_{m}(q^2)\unlhd H$, or\\
    &    &    & $G_2(q^2)\unlhd H$ with $m=7$\\
$W(2m-1,q)$ & $H(m-1,q^2)$ & $m\ge 3$ odd & $\SU_{m}(q)\unlhd H$, or\\
    &    &    & $(q,m)=(2,3)$\\ \hline
\end{tabular}
\end{table}

\begin{table}
\centering
\caption{Class $\cC_3$  for Theorem \ref{thm_mOvoid}: the remaining cases}\label{tab_TrOvoid_C3Rem}
\begin{tabular}{|cccc|}
\hline
$\cP$     & $\cP'$   &  $m_1,m_2$  &$H$\\ \hline	
$Q^-(5,5)$  & $H(2,5^2)$ & $3,3$ & $3.A_7\unlhd H$\\
$Q^+(7,2)$  & $H(3,2^2)$ & $6,9$ & $H\le(\GO_2^-(2)\wr S_4).2$\\
$W(3,q)$, $q\in\{2,4\}$ & $H(0,q^4)$ &  $1,q$ & $H\le \GamU_1(q^2)$\\
$W(4b-1,q)$, $q=2^f$ & $W(3,q^b)$ &  $\frac{q^b-1}{q-1}$, $\frac{q^b-1}{q-1}q^b$  &$\textup{Sz}(q^b)\unlhd H$\\ \hline
\end{tabular}
\end{table}

\begin{table}
\centering\caption{Class $\cS$  for Theorem \ref{thm_mOvoid}}\label{tab_TrOvoid_cS}
\begin{tabular}{|ccc|ccc|}
\hline
$\cP$ & $H^{\infty}$ &  $m_1,m_2$ &$\cP$ & $H^{\infty}$ &  $m_1,m_2$  \\ \hline	
$Q^-(5,3)$& $2.\PSL_3(4)$    & $2,2$ & $Q^+(13,2)$&$A_{16}$ & $28,99$ \\
$W(5,5)$  & $2.J_2$ &$15,16$ &  $Q^+(7,5)$& $2.\PSp_6(2)$ & $60,96$ \\
$Q(6,3)$  & $A_9$  & $3,10$ &  &$2.\POmeg_{8}^+(2)$&$60,96$  \\
   & $\PSp_6(2)$   & $1,12$ &  &$2.A_{10}$     & $36,120$ \\
$Q(6,5)$  & $\PSp_6(2)$   & $15,16$ & $W(3,q)$, $q=2^{f}$  & $\textup{Sz}(q)$& $1,q$  \\
$W(7,2)$  & $\PSL_2(17)$  & $6,9$ &$Q(6,q)$, $q=3^f$  & $\PSU_3(q)$  & $1,q+q^2$  \\
$Q^+(7,2)$ & $A_9$ & $1,14$ & $Q^+(7,q)$, $q=2^f$ &$\PSL_2(q^3)$& $1,q+q^{2}+q^{3}$ \\ \hline
\end{tabular}
\end{table}

 In \cite{KellyCons}, Kelly described a method to obtain intriguing sets of one polar space from those of another with a smaller rank via field reduction. The intriguing sets in high rank polar spaces that do not arise from field reduction are rare. The abundance of examples that show up in our classification indicates that there are highly symmetric intriguing sets in high rank polar spaces yet to be discovered.

Below are some comments on the proofs of Theorems \ref{thm_TS} and \ref{thm_mOvoid}. The symplectic case of  Theorem \ref{thm_TS} can also be deduced from \cite{LiebeckRank3}, and for this purpose we need to determine which subgroups in the main theorem of \cite{LiebeckRank3}  preserve a nondegenerate symplectic form. We shall do so only for subgroups of Aschbacher classes $\cS$ and $\cC_6$ in Propositions \ref{prop_classS_caseS} and \ref{prop_C6} respectively, and for the remaining geometric classes we will handle the symplectic case together with the orthogonal and unitary cases in a unified manner. Theorem \ref{thm_mOvoid} follows from \cite{FengLiTao} in which the $m$-ovoids of finite classical polar spaces that admit a transitive automorphism group that acts irreducibly on the ambient space are classified. The proof of \cite{FengLiTao} relies on the classification of subgroups of classical groups with a large prime divisor in  \cite{bamberg2008overgroups}, and the latter is based on \cite{Guralnick1999Linear}.

The paper is organized as follows. In Section 2 we first present some basic definitions and group theoretical lemmas, and then handle some specific subgroups of Aschbacher class $\cS$. Those subgroups arise in the proof of Theorem \ref{thm_TS}, and we handle them altogether in this section to streamline the proof of Theorem \ref{thm_TS}. In Section 3, we prove Theorem \ref{thm_red} which handles the case where the subgroup $H$ is reducible. We present the proof of Theorem \ref{thm_TS} in the next two sections which deal with the non-geometric and geometric subgroups respectively. In Section 6, we give a short proof of Theorem \ref{thm_mOvoid} based on the classification results in \cite{FengLiTao}.

\section{Preliminaries}\label{sec:Preli}

\subsection{Notation and basic definitions}\label{subsec_basic}

In this subsection, we introduce some basic definitions and the notation that we will use throughout this paper. We use the standard group theory notation as in \cite[Section~1.2]{bray2013maximal}. For a property $P$, we use the Iverson bracket notation $[\![P]\!]$ which takes value $1$ or $0$ according as $P$ holds or not.

\medskip

\noindent\textbf{The classical groups}\medskip

Let $q=p^f$ be a prime power with $p$ prime, and use $\F_q$ to denote the finite field of size $q$. Let $V$ be a vector space of dimension $d$ over $\F_q$ equipped with a nondegenerate reflexive sesquilinear form or quadratic form $\kappa$. We write $\GamL(V)$ for the group of invertible semilinear transformations of $V$, and write $\PG(V)$ for the projective geometry associated with $V$. Let $\cP$ be the associated polar space of $(V,\kappa)$ and write $r$ for its rank. We write $\perp$ for the polarity of $\cP$, and assume that $r\ge 2$ throughout this paper. We have $2r\le d\le 2r+2$, and $d=2r+2$ if and only if $\cP=Q^-(2r+1,q)$. An {\it ovoid}  of $\cP$ is a set of points which intersects each maximal totally singular or isotropic subspace in exactly one point. We denote the size of a putative ovoid by $\theta_r$, which we call the {\it ovoid number} of $\cP$. We have $\theta_r=q^{d-r-\epsilon}+1$, where $\epsilon=0$, $\frac{1}{2}$ or $1$ according as $\kappa$ is symplectic, unitary or orthogonal, cf. Table \ref{tab_thetar}. The number of points in $\cP$ equals $\frac{q^r-1}{q-1}\cdot\theta_r$.   We take the same notation for classical groups as in \cite{kleidman1990subgroup} except for that we regard the ambient space $V$ as a vector space over $\F_q$ also in the unitary case. If $\kappa$ is symplectic or unitary, its semisimilarity group $\Gamma(V,\kappa)$ is defined as follows:
\begin{equation}\label{eqn_Gam} \Gamma(V,\kappa):=\{g\in\GamL(V)\mid \kappa(xg,yg)=\lambda(g)\kappa(x,y)^{\sigma(g)}\textup{ for all }x\in V\},
\end{equation}
where $\sigma(g)\in\Gal(\F_{q}/\F_p)$, $\lambda(g)\in\F_{q}^{*}$. If $\kappa$ is a quadratic form, the group $\Gamma(V,\kappa)$ is defined similarly. We refer the reader to \cite[p.~13]{kleidman1990subgroup} for the definitions of its subgroups $\Delta(V,\kappa)$, $I(V,\kappa)$, $S(V,\kappa)$ and $\Omega(V,\kappa)$. We shall write $\Gamma(V)$ or $\Gamma$ instead of $\Gamma(V,\kappa)$ if $(V,\kappa)$ is clear from the context. We put the letter P in front of those classical groups for the respective quotient groups modulo their intersections with the center of $\GL(V)$.
\begin{table}[h]
\centering\caption{The finite classical polar spaces of rank $r$ and their ovoid numbers $\theta_r$}	 \label{tab_thetar}
\begin{tabular}{|c|cccccc|}\hline
$\cP$  &$W(2r-1,q)$ & $Q^+(2r-1,q)$ & $Q(2r,q)$& $Q^-(2r+1,q)$ & $H(2r-1,q)$ & $H(2r,q)$\\
$\theta_{r}$ & $q^r+1$ & $q^{r-1}+1$ & $q^{r}+1$ & $q^{r+1}+1$ & $q^{r-1/2}+1$ & $q^{r+1/2}+1$\\ \hline
	\end{tabular}
\end{table}	
\medskip

\noindent\textbf{Intriguing sets}\medskip

Take $(V,\kappa)$ as above, and write $d=\dim(V)$. Let $r$ be the rank of the associated polar space $\cP$, and write $\theta_r$ for the ovoid number of $\cP$ as in Table \ref{tab_thetar}.  Let $\cM$ be  a nonempty subset of points of $\cP$. We say that $\cM$ is an \textit{intriguing set} if there exist some constants $h_1,h_2$ such that for each point $P$ of $\cP$ we have $|P^{\perp}\cap\cM|=h_{1}$ or $h_{2}$ according as $P\in\cM$ or not. By \cite[Section~4]{BambergTightsets2007}, there are exactly two types of intriguing sets:
\begin{enumerate}
\item[(1)]$i$-tight sets: $\cM$ has size $i\cdot\frac{q^r-1}{q-1}$ for some integer $i$, and $h_1=q^{r-1}+i\cdot\frac{q^{r-1}-1}{q-1}$, $h_2=i\cdot\frac{q^{r-1}-1}{q-1}$;
\item[(2)]$m$-ovoids: $\cM$ has size $m\theta_r$ for some integer $m$, and $h_1=(m-1)\theta_{r-1}+1$, $h_2=m\theta_{r-1}$.
\end{enumerate}
We call $i$ and $m$ the parameters of $\cM$ respectively. The set of all the singular points of $\cP$ is both a $\theta_r$-tight set and a $\frac{q^r-1}{q-1}$-ovoid, which is the trivial intriguing set. We refer the reader to \cite[Chapter~2]{SRG} and the references therein for the known constructions of intriguing sets in various classical polar spaces.

If a subgroup $H_0$ of $\textup{P}\Gamma(V,\kappa)$ has two orbits $O_1$, $O_2$ on the points of $\cP$, then each $H_0$-orbit forms an intriguing set. Suppose further that the two $H_0$-orbits are $i_1$- and $i_2$-tight sets respectively. Then $i_1+i_2=\theta_r$, and $\frac{q^r-1}{q-1}$ divides the sizes of $O_1$ and $O_2$. Since each $|O_i|$ divides the order of $H_0$, we deduce that $\textup{lcm}(|O_1|,|O_2|)$ divides $|H_0|$. In particular, we have the trivial bound $|H_0|\ge\frac{1}{2}|\cP|$. Recall that $|\cP|=(q^{d-\epsilon-r}+1)\frac{q^r-1}{q-1}$, where $\epsilon=0$, $\frac{1}{2}$ or $1$ according as $\kappa$ is symplectic, unitary or orthogonal. It follows that $|\cP|>q^{d-1-\epsilon}$, and so $d<1+\epsilon+\log_q(2\cdot|H_0|)$. We summarize these observations as a lemma for later reference.
\begin{lemma}\label{lem_ParaCond}
Let $\cP$ be a classical polar space of rank $r\ge 2$ associated with $(V,\kappa)$, and
set $\epsilon:=0$, $\frac{1}{2}$ or $1$ according as $\cP$ is symplectic, unitary or orthogonal. Suppose that $H_0$ is a group of semisimilarities that has two orbits $O_1$, $O_2$ on the points of $\cP$ which are respectively $i_1$- and $i_2$-tight sets. Then  
$$\dim(V)<1+\epsilon+\log_q(2\cdot|H_0|),$$ 
and $\frac{q^r-1}{q-1}$ divides $\gcd(|O_1|,|O_2|)$, $\lcm(|O_1|,|O_2|)$ divides $|H_0|$.
\end{lemma}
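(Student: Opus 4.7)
The plan is to extract the three claims from the two defining properties of intriguing sets and the orbit--stabilizer theorem, and then combine them with a simple size estimate for $|\cP|$. None of the steps should require any deep input beyond what has already been set up in the paragraph preceding the lemma.

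First I would record that, by definition of an $i_j$-tight set in a polar space of rank $r$, the orbit $O_j$ has size $i_j\cdot\frac{q^r-1}{q-1}$ for $j=1,2$. This immediately gives that $\frac{q^r-1}{q-1}$ divides both $|O_1|$ and $|O_2|$, hence divides $\gcd(|O_1|,|O_2|)$. Next, since $O_1$ and $O_2$ are $H_0$-orbits, the orbit--stabilizer theorem yields that $|O_j|$ divides $|H_0|$ for each $j$, so $\lcm(|O_1|,|O_2|)$ divides $|H_0|$ as well. These two divisibility claims are the second half of the lemma.

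For the dimension bound, I would start from $|\cP|=|O_1|+|O_2|$ together with $|O_j|\le |H_0|$, giving the crude estimate $|\cP|\le 2|H_0|$. Plugging in the formula
\[
|\cP|=(q^{d-\epsilon-r}+1)\cdot\frac{q^{r}-1}{q-1},
\]
where $d=\dim(V)$ and $\epsilon\in\{0,\tfrac12,1\}$ is as stated, I would bound each factor from below by $q^{d-\epsilon-r}$ and $q^{r-1}$ respectively to obtain $|\cP|>q^{d-1-\epsilon}$. Combining with $|\cP|\le 2|H_0|$ and taking $\log_q$ then produces the required inequality $d<1+\epsilon+\log_q(2|H_0|)$.

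There is really no obstacle here: the whole argument is a direct bookkeeping of the tight-set parameters, the orbit--stabilizer theorem, and a one-line lower bound on $|\cP|$. The only thing worth a moment of care is checking that the estimate $(q^{d-\epsilon-r}+1)\frac{q^r-1}{q-1}>q^{d-1-\epsilon}$ is valid uniformly in the symplectic, unitary and orthogonal cases, i.e.\ for every admissible value of $\epsilon$ and every $r\ge 2$; this follows at once since $q^{d-\epsilon-r}+1>q^{d-\epsilon-r}$ and $\frac{q^r-1}{q-1}\ge q^{r-1}$.
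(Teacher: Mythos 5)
Your proposal is correct and is essentially identical to the paper's own argument (given in the paragraph preceding the lemma): the divisibility claims follow from the definition of tight sets and the orbit--stabilizer theorem, and the dimension bound from $|\cP|\le 2|H_0|$ combined with the lower bound $|\cP|=(q^{d-\epsilon-r}+1)\frac{q^r-1}{q-1}>q^{d-1-\epsilon}$. Nothing further is needed.
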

\medskip

\noindent\textbf{The field reduction}\medskip

Let $b$ be a  divisor of an integer $d$ with $b>1$. Suppose that $V'$ is a $d/b$-dimensional vector space over the finite field $\F_{q^b}$, equipped with a non-degenerate reflexive sesquilinear or quadratic form $\kappa'$. Let $\cP'$ be the polar space associated with $(V',\kappa')$. Let $\tr_{\F_{q^b}/\F_q}$ be the relative trace function from the field $\F_{q^b}$ to $\F_{q}$.  We regard $V'$ as a $d$-dimensional vector space $V$ over $\F_q$. By composing $\kappa'$ and the relative trace function, we obtain a new form $\kappa$ on $V$. Let $\cP$ be the polar space defined by $(V,\kappa)$. Such a process is called {\it field reduction}. In Table \ref{tab_extfieldP'gt0}, we list all the cases that can be obtained by iteratively composing the field reductions in \cite[Table~4.3A]{kleidman1990subgroup}. We refer the reader to \cite{gill2006polar} for restrictions on $\alpha$ in rows 11, 12 of Table \ref{tab_extfieldP'gt0}. The table also lists the sizes of $\cP$ and $\cP'$ for reference. The following group
\begin{equation}\label{eqn_GamJ}
 \Gamma^\#(V',\kappa')=\{g\in\Gamma(V,\kappa)\mid \alpha^{1-\sigma(g)}\lambda(g)\in\F_q^*\}
\end{equation}
is a subgroup of $\Gamma(V,\kappa)$ of Aschbacher class $\cC_5$, where we set $\alpha:=1$ by default if there is no parameter $\alpha$ in the corresponding row in Table \ref{tab_extfieldP'gt0}. For later use, we shall need the following subset of $\cP$:
\begin{equation}\label{eqn_M1def}
   \cM_1=\{\la \eta v\ra_{\F_q}\mid \eta\in\F_{q^b}^*,\,\la v\ra_{\F_{q^b}}\in \cP'\}.
\end{equation}

\begin{table}[h]
\centering\caption{The field reductions, where $\alpha+\alpha^{q^{b/2}}=0$ for rows 7, 8.}\label{tab_extfieldP'gt0}
\scalebox{0.9}{
		\begin{tabular}{|ccccccc|}
			\hline
			Case & $\cP'$ & $|\cP'|$ & $\cP$ & $|\cP|$ &$\kappa$& Condition\\ \hline
			1 &$W(d/b-1,q^b)$ & $\frac{q^d-1}{q^b-1}$ & $W(d-1,q)$ & $\frac{q^d-1}{q-1}$ & $\tr_{q^b/q}\circ\kappa'$&$d/b$ even\\
			2 & $Q^{+}(d/b-1,q^b)$ & $(q^{d/2-b}+1)\frac{q^{d/2}-1}{q^b-1}$ & $Q^{+}(d-1,q)$ & $(q^{d/2-1}+1)\frac{q^{d/2}-1}{q-1}$ & $\tr_{q^b/q}\circ\kappa'$& $d/b$ even \\
			3 & $Q^{-}(d/b-1,q^b)$ & $(q^{d/2}+1)\frac{q^{d/2-b}-1}{q^b-1}$ & $Q^{-}(d-1,q)$ & $(q^{d/2}+1)\frac{q^{d/2-1}-1}{q-1}$ & $\tr_{q^b/q}\circ\kappa'$& $d/b$ even \\
			4 & $Q(d/b-1,q^b)$ & $\frac{q^{d-b}-1}{q^b-1}$ & $Q(d-1,q)$ & $\frac{q^{d-1}-1}{q-1}$ & $\tr_{q^b/q}\circ\kappa'$& $dq$ odd \\
			5 & $H(d/b-1,q^b)$ & $(q^{d/2}+1)\frac{q^{(d-b)/2}-1}{q^b-1}$ & $H(d-1,q)$ & $(q^{d/2}+1)\frac{q^{(d-1)/2}-1}{q-1}$ & $\tr_{q^b/q}\circ\kappa'$& $d$ odd \\
			6 & $H(d/b-1,q^b)$ & $(q^{(d-b)/2}+1)\frac{q^{d/2}-1}{q^b-1}$ & $H(d-1,q)$ & $(q^{(d-1)/2}+1)\frac{q^{d/2}-1}{q-1}$ & $\tr_{q^b/q}\circ\kappa'$& $d/b$ even, $b$ odd \\
			7 & $H(d/b-1,q^b)$ & $(q^{d/2}+1)\frac{q^{(d-b)/2}-1}{q^b-1}$ & $W(d-1,q)$ & $\frac{q^{d}-1}{q-1}$ & $\tr_{q^b/q}\circ\alpha\kappa'$& $d/b$ odd, $b$ even\\
			8 & $H(d/b-1,q^b)$ & $(q^{(d-b)/2}+1)\frac{q^{d/2}-1}{q^b-1}$ & $W(d-1,q)$ & $\frac{q^{d}-1}{q-1}$ & $\tr_{q^b/q}\circ\alpha\kappa'$& $d/b$ even, $b$ even\\
			9 & $H(d/b-1,q^b)$ & $(q^{d/2}+1)\frac{q^{(d-b)/2}-1}{q^b-1}$ & $Q^{-}(d-1,q)$ & $(q^{d/2}+1)\frac{q^{d/2-1}-1}{q-1}$ & $\tr_{q^{b/2}/q}\circ\kappa'(-,-)$&$d/b$ odd, $b$ even \\
			10& $H(d/b-1,q^b)$ & $(q^{(d-b)/2}+1)\frac{q^{d/2}-1}{q^b-1}$ & $Q^{+}(d-1,q)$ & $(q^{d/2-1}+1)\frac{q^{d/2}-1}{q-1}$ & $\tr_{q^{b/2}/q}\circ\kappa'(-,-)$&$d/b$ even, $b$ even \\
			11 & $Q(d/b-1,q^b)$ & $\frac{q^{d-b}-1}{q^b-1}$ & $Q^{+}(d-1,q)$ & $(q^{d/2-1}+1)\frac{q^{d/2}-1}{q-1}$ & $\tr_{q^b/q}\circ\alpha\kappa'$&$qd/b$ odd \\
			12 & $Q(d/b-1,q^b)$ & $\frac{q^{d-b}-1}{q^b-1}$ & $Q^{-}(d-1,q)$ & $(q^{d/2}+1)\frac{q^{d/2-1}-1}{q-1}$ & $\tr_{q^b/q}\circ\alpha\kappa'$&$qd/b$ odd \\
			\hline
	\end{tabular}}
\end{table}\medskip

\noindent\textbf{The weak equivalence of representations}\medskip

Let $\rho_1:\,G\rightarrow\GL_n(q)$ be the representation of a finite group $G$ that is afforded by a $G$-module $V_1$ with respect to a chosen basis. Take a field automorphism $\theta$ of $\F_q$ and a group automorphism $\alpha\in\Aut(G)$. For a vector $v$ and a matrix $m$, we write $v^\theta$ and $m^\theta$ for the vector and the matrix obtained by applying $\theta$ to their entries respectively. We define $V_1^\theta$ as the $G$-module with the same ambient space as $V_1$ and $G$-action $v. g=v(g\rho_1)^\theta$, and let $\rho_1^\theta$ for the corresponding representation. Similarly, let ${}^\alpha V_1$ be the $G$-module with the same ambient space as $V_1$ and $G$-action $v. g=v(g^\alpha)\rho_1$, and write ${}^\alpha\rho_1$ for the corresponding representation. Let $V_2$ be another $G$-module.  If $V_2$ is equivalent to ${}^\alpha V_1$ for some $\alpha\in\Aut(G)$, then we say that $V_1$ and $V_2$ are {\it quasi-equivalent}. If $V_2$ is equivalent to $V_1^\theta$ for some field automorphism $\theta$, then we say that $V_1$ and $V_2$ are {\it algebraically conjugate}. If $V_2$ is equivalent to one of the  $G$-modules obtained from $V_1$ by recursive applications of group automorphisms, field automorphisms and duality, then we say that $V_1$ is {\it weakly equivalent} to $V_2$.

Suppose that two $G$-modules $V_1$, $V_2$ are weakly equivalent and $V_1$ has a nondegenerate $G$-invariant reflexive sesquilinear or quadratic form $\kappa_1$. It is routine to show that there is a corresponding nondegenerate $G$-invariant form $\kappa_2$ on $V_2$ such that $(V_1,\kappa_1)$ and $(V_2,\kappa_2)$ define the same classical polar space $\cP$, and $G$ has the same orbit structure on the points of $\cP$. Let $\rho_i:\,G\rightarrow\GL(V_i)$ be the respective representations for $i=1,~2$. Suppose further that $G\rho_1$ is a subgroup of $\Gamma(V_1,\kappa_1)$ of Aschbacher class $\cS$ that has two orbits  on the singular points of $\cP$. Then $G\rho_2$ is also a subgroup of $\Gamma(V_2,\kappa_2)$ of Aschbacher class $\cS$ with the same properties. \medskip

\noindent\textbf{Primitive prime divisors}\medskip

Let $n$, $k$ be positive integers. A prime divisor $p$ of $n^k-1$ is called \textit{primitive} if $p$ divides none of $n^i-1$, $1\le i\le k-1$. For a primitive prime divisor $p$ of $n^k-1$, we have $\textup{ord}_p(n)=k$, where we write $\textup{ord}_p(n)$ for the multiplicative order of $n$ modulo $p$. If $k=2$, then each odd prime divisor of $n+1$ is a primitive divisor of $n^2-1$ by the fact that $\gcd(n+1,n-1)$ is at most $2$. It follows that $n^2-1$ has no primitive prime divisor only if $n+1$ is a power of $2$. For $k\ge 3$, we have the following classical result.
\begin{lemma}\cite{zsigmondy1892theorie}\label{lem_zsigmondy}
	Let $n$ and $k$ be positive integers such that $n>1$, $k\geq 3$ and $(n,k)\neq(2,6)$. Then $n^k-1$ has at least one primitive prime divisor.
\end{lemma}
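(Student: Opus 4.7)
The plan is to exploit the factorization
$$n^k-1=\prod_{d\mid k}\Phi_d(n),$$
where $\Phi_d(X)\in\mathbb Z[X]$ is the $d$-th cyclotomic polynomial. A prime $p$ is a primitive prime divisor of $n^k-1$ iff $\textup{ord}_p(n)=k$, in which case Fermat's little theorem gives $k\mid p-1$ so automatically $p\nmid k$. Conversely, a short argument (using that $\Phi_d$ reduced modulo $p$ has distinct roots when $p\nmid d$) shows that $p\mid \Phi_k(n)$ together with $p\nmid k$ forces $\textup{ord}_p(n)=k$. So it suffices to exhibit a prime divisor of $\Phi_k(n)$ that does not divide $k$.

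Suppose, for contradiction, that every prime divisor of $\Phi_k(n)$ divides $k$. The key refinement, proved by a short $p$-adic valuation computation based on the identities $\Phi_{pm}(X)=\Phi_m(X^p)/\Phi_m(X)$ for $p\mid m$ and $\Phi_{pm}(X)=\Phi_m(X^p)$ for $p\nmid m$, is that any such prime $p$ must be the largest prime divisor of $k$, and $v_p(\Phi_k(n))\le 1$ with the sole exception of $k=2$ and $n$ odd. Consequently $\Phi_k(n)\le P$, where $P$ is the largest prime divisor of $k$; in particular $\Phi_k(n)\le k$.

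For the lower bound, write $\mu_k^*$ for the set of primitive $k$-th roots of unity and pair complex conjugates in $\Phi_k(n)=\prod_{\zeta\in\mu_k^*}(n-\zeta)$. Each conjugate pair contributes $n^2-2n\cos\theta+1\ge n^2-2n\cos(2\pi/k)+1$. For $k\ge 7$ and $n\ge 2$ the resulting lower bound on $\Phi_k(n)$ rapidly overtakes $k$, and for $k\in\{3,4,5,6\}$ one inspects the remaining pairs directly. The only coincidence is $(n,k)=(2,6)$, where $\Phi_6(2)=3$ divides $k=6$, matching the fact that $2^6-1=63=7\cdot 9$ has all prime factors already dividing $2^j-1$ for some $j<6$. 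The hard part of this program will be the $p$-adic valuation bookkeeping in the second paragraph, especially the handling of $p=2$; the remainder reduces to routine estimation and case-checking.
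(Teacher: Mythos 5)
The paper does not prove this lemma at all --- it is quoted from Zsigmondy's 1892 paper --- so there is no in-paper argument to compare with; what follows is a review of your proposal on its own terms. Your skeleton is the standard cyclotomic-polynomial proof and its first two paragraphs are sound: a prime $p$ dividing $\Phi_k(n)$ with $p\nmid k$ is primitive, and if every prime divisor of $\Phi_k(n)$ divides $k$, then there is only one such prime, it is the largest prime divisor $P$ of $k$ (since $k=\mathrm{ord}_p(n)\cdot p^j$ with $\mathrm{ord}_p(n)\mid p-1$), and $v_P(\Phi_k(n))\le 1$ for $k\ge 3$; hence $\Phi_k(n)\le P\le k$. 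All of that is correct.

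The gap is in the closing estimate. Your lower bound is $\Phi_k(n)\ge\bigl(n^2-2n\cos(2\pi/k)+1\bigr)^{\varphi(k)/2}$, obtained by bounding every conjugate pair by the worst pair, and you assert that for $k\ge 7$, $n\ge 2$ this "rapidly overtakes $k$". For $n=2$ this is false: the per-pair bound is $5-4\cos(2\pi/k)\to 1$ as $k\to\infty$, and since $\varphi(k)=o(k^2)$ the whole bound tends to $1$. Concretely, for $k=9$ it gives about $7.3<9$, for $k=12$ about $2.4<12$, and for $k=100$ about $1.2$, nowhere near $k$ (nor even near $P$). So precisely the case $n=2$ --- which is the delicate heart of Zsigmondy's theorem and the source of the exception $(2,6)$ --- is not handled; you cannot conclude that $(2,6)$ is the "only coincidence" from this estimate. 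To repair it you need a genuinely stronger lower bound for $\Phi_k(2)$, e.g.\ from $\Phi_k(2)=\prod_{d\mid k}(2^d-1)^{\mu(k/d)}$ one gets $\Phi_k(2)\ge 2^{\varphi(k)}\prod_{d\ge 1}(1-2^{-d})>2^{\varphi(k)}/4$, and then compare against $P$ (not $k$) using the structural fact $k=e\cdot P^j$ with $e\mid P-1$, so $\varphi(k)\ge P-1$; this forces a contradiction for all $k\ge 3$ except the short list of small cases containing $(n,k)=(2,6)$. For $n\ge 3$ your estimate (or simply $\Phi_k(n)>(n-1)^{\varphi(k)}\ge 2^{\varphi(k)}$, again played off against $P$) does suffice, so the defect is confined to, but fatal for, $n=2$.
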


\subsection{Some group theoretic lemmas}\label{subsec_grp_lems}

Suppose that $V$ is a vector space of dimension $d$ over $\F_q$ equipped with a nondegenerate form $\kappa$, and write $\cP$ for the associated polar space. We call an $I(V,\kappa)$-orbit on the nonsingular points an isometry class of nonsingular points. There are at most two such classes of nonsingular points, and there is exactly one class only if either $\kappa$ is unitary, or $\kappa$ is orthogonal and $q$ is even. When $q$ is odd, let $\eta$ be the quadratic character of $\F_q^*$ such that $\eta(a)=1$ or $-1$ according as $a$ is a square or nonsquare.  If $\kappa$ is orthogonal and $q$ is odd, then each class of nonsingular points has the same value $\eta(\kappa(v))$ for its element $\la v\ra$. If $d$ is odd, then the two classes have different sizes. If $d$ is even, the two classes have the same sizes and are swapped by a similarity. In \cite{giudici2020subgroups} the authors classified all groups of semisimilarities that act transitively on the set of all subspaces of a given isometry type, where the subspaces are either nondegenerate or totally singular. We shall need the classification of subgroups of $\Gamma(V,\kappa)$ that are transitive on both the set of singular points and one isometry class of nonsingular points for the proofs of our main theorems. We determine such subgroups of the unitary and orthogonal groups in the next few lemmas.

\begin{lemma}\label{lem_H_TrSinNonsin}
Suppose that $V$ is a vector space over $\F_{q}$ with a nondegenerate Hermitian form $\kappa$, where $q$ is a square and $d=\dim(V)\ge 2$. Let $H$ be a group of semisimilarities that is transitive on both the singular points and the nonsingular points. Then either $\SU_d(q^{1/2})\le H$, or $(d,q)=(3,2^2)$ and $H=3_+^{1+2}\rtimes C_8$, or $(d,q)=(2,2^2)$.
\end{lemma}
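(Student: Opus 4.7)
The strategy is to invoke the classification in \cite{giudici2020subgroups} of subgroups of $\GamU_d(q^{1/2})$ that are transitive on the singular points of $H(d-1,q)$, and then sieve that list by the additional requirement of transitivity on the nonsingular points. Transitivity on singular points alone forces $H$ either to contain $\SU_d(q^{1/2})$ or to lie in a short explicit list of small exceptions; the second transitivity is then used to collapse this list.

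First I would dispose of $d=2$. The space $H(1,q)$ has $q^{1/2}+1$ singular and $q^{1/2}(q^{1/2}-1)$ nonsingular points, and a short analysis inside $\PGamU_2(q^{1/2})\cong\PGamL_2(q^{1/2})$ shows that for $q^{1/2}\ge 3$ the two transitivities together force $\SU_2(q^{1/2})\le H$. For $q^{1/2}=2$ the orbits have sizes $3$ and $2$ only, so several small subgroups of $\GamU_2(4)$ satisfy both conditions without containing $\SU_2(2)$; this produces the exception $(d,q)=(2,4)$.

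For $d\ge 3$ I would walk through the list of \cite{giudici2020subgroups}. The candidates not already containing $\SU_d(q^{1/2})$ are certain Aschbacher-class $\cC_i$ subgroups (preserving a decomposition, subfield, tensor product, or extension field) together with a finite list of class-$\cS$ exceptions in small dimension. For each candidate I would test the divisibility of $|H|$ by the number of nonsingular points (equal to $\frac{q^d-1}{q-1}$ minus the singular count) and examine the induced orbit structure on nonsingular points. For instance, a proper subfield candidate cannot fuse the $\F_{q_0^{1/2}}^*$-classes of Hermitian values into a single $\F_q^*$-orbit, and an imprimitive class-$\cC_2$ subgroup preserves a decomposition whose components remain visible in the nonsingular orbit structure. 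These observations, together with the divisibility constraints, eliminate all geometric candidates for $d\ge 3$.

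The delicate case is $(d,q)=(3,4)$. Here $\PGamU_3(2)$ has order $432$, and among its subgroups the group $3_+^{1+2}\rtimes C_8$ of order $216$ is transitive on both the $9$ singular and the $12$ nonsingular points of $H(2,4)$ while not containing $\SU_3(2)\cong 3_+^{1+2}\rtimes Q_8$; I would verify this by direct computation in $\PGamU_3(2)$, noting that the two subgroups of order $216$ have isomorphic Sylow-$3$ subgroups but non-isomorphic $2$-complements, and that the $C_8$ fuses the $Q_8$-orbits on the nonsingular points. For $d=3$ with $q\ge 9$ and for all $d\ge 4$, the combined divisibility and order bounds eliminate every remaining non-$\SU$ candidate. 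The main anticipated obstacle is the finite bookkeeping in small dimensions, for which I would cross-reference the class-$\cS$ and class-$\cC_6$ entries in \cite{bray2013maximal} with the singular-transitivity list from \cite{giudici2020subgroups} to ensure no further sporadic case escapes.
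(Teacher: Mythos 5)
Your proposal follows essentially the same route as the paper: both quote the Giudici--Glasby--Praeger classification of subgroups transitive on the singular points (which already collapses the non-$\SU_d(q^{1/2})$ cases to a short list of specific $(d,q)$, rather than generic geometric candidates), then sieve by transitivity on the nonsingular points, isolating $(3,2^2)$ with $3_+^{1+2}\rtimes C_8$ and the $(2,2^2)$ exception. The paper performs the second sieve by again citing the same reference (transitivity on nondegenerate points forces $d$ even or $(3,2^2)$) together with Magma checks for $(4,3^2)$ and for $d=2$, $q=16^2$, which is exactly the finite bookkeeping your divisibility and computational checks are meant to cover.
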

\begin{proof}
The group $\SU_d(q^{1/2})$ is transitive on both the singular points and the nonsingular points by \cite[Lemma~2.10.5]{kleidman1990subgroup}. Assume that $H$ does not contain $\SU_d(q^{1/2})$. Let $n$ be the least common multiple of the numbers of singular and nonsingular points. Then $n$ divides $|H|$.  First suppose that $d\ge 3$. Since $H$ is transitive on the singular points, by \cite[Theorem~4.1 (b)]{giudici2020subgroups} $(d,q)$ is one of $(3,2^2)$, $(3,3^2)$, $(3,5^2)$, $(3,8^2)$, $(4,3^2)$ and $(9,2^2)$. Since $H$ is transitive on the nonsingular points, either $(d,q)=(3,2^2)$ or $d$ is even by \cite[Theorem~4.1 (a)]{giudici2020subgroups}. Therefore, $(d,q)$ is either $(3,2^2)$ or $(4,3^2)$. For $(d,q)=(3,2^2)$, we have $H=3_+^{1+2}\rtimes C_8$; for $(d,q)=(4,3^2)$, there is no such subgroup $H$ by examining all the subgroups whose orders are multiples of $n$ by Magma \cite{Magma}. Next suppose that $d=2$. By \cite[Theorem~3.7 (b)]{giudici2020subgroups}, we deduce that $q=2^2$ or $16^2$. For $q=16^2$, the groups of semisimilarities whose orders are multiples of $n$ all contain $\SU_{2}(16)$. This completes the proof.
\end{proof}
\begin{lemma}\label{lem_Q(dodd,q)sub}
Suppose that $(V,\kappa)$ is a quadratic space of dimension $d\ge 3$ over $\F_q$, where $qd$ is odd. Let $H$ be a group of semisimilarities that is transitive on both the set of singular points and one isometry class of nonsingular points. Then either $\Omega_{d}(q)\unlhd H$, or $d=7$ and $G_2(q)\unlhd H$. Moreover, $H^\infty$ is transitive on both isometry classes of nonsingular points in both cases.
\end{lemma}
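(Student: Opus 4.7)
The plan is to follow the template of the proof of Lemma~\ref{lem_H_TrSinNonsin}. First, I would invoke \cite[Theorem~4.1(b)]{giudici2020subgroups} to enumerate the subgroups of semisimilarities that are transitive on the singular points of $Q(d-1,q)$ under the hypothesis that $qd$ is odd. The two infinite-family outcomes are $\Omega_d(q) \unlhd H$ and, for $d=7$, $G_2(q) \unlhd H$, together with a short list of small-dimensional or small-field exceptional cases.

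Next, to eliminate the exceptional cases I would use the additional hypothesis that $H$ is transitive on one isometry class of nonsingular points. The size of such a class has the shape $(q^{d-1} \pm q^{(d-1)/2})/2$, with sign depending on the square class of the discriminant, so $|H|$ must be divisible by this number as well as by the number of singular points. For each exceptional candidate listed in the previous step, the order constraints, supplemented where necessary by a Magma check, rule out the simultaneous existence of orbits of these two sizes, leaving only the two main cases.

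For the moreover part, I would verify transitivity of $H^\infty$ on each of the two isometry classes of nonsingular $1$-spaces. When $\Omega_d(q) \unlhd H$, this follows from Witt's theorem: the full isometry group $\textup{O}_d(q)$ is transitive on nonsingular vectors of any fixed square norm, the stabilizer of such a vector is $\textup{O}_{d-1}^\pm(q)$ with type determined by the discriminant of $v^\perp$, and this stabilizer surjects onto the quotient $\textup{O}_d(q)/\Omega_d(q) \cong C_2 \times C_2$ via the spinor norm and determinant; hence each $\textup{O}_d(q)$-orbit on nonsingular $1$-spaces remains a single $\Omega_d(q)$-orbit. For $d=7$ and $G_2(q) \unlhd H$, the stabilizers in $G_2(q)$ of nonsingular $1$-spaces of the two isometry types are the maximal subgroups $\SL_3(q).2$ and $\SU_3(q).2$ of the $7$-dimensional natural module, and orbit-stabilizer gives transitivity on each of the two classes.

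The main obstacle is the book-keeping for the sporadic transitive-on-singular subgroups appearing in \cite[Theorem~4.1(b)]{giudici2020subgroups}; these require a case-by-case verification, but the list is finite and short, so the necessary computations are routine.
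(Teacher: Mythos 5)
Your overall route is the same as the paper's: quote the classification from \cite{giudici2020subgroups} of semisimilarity groups transitive on the singular points, eliminate the leftovers using transitivity on a class of nonsingular points, and then check that $\Omega_d(q)$ and $G_2(q)$ really are transitive on both isometry classes. Your treatment of the ``moreover'' part is correct and in fact more self-contained than the paper's (which just cites \cite[Lemma~2.10.5]{kleidman1990subgroup} and \cite[Theorem~7.1]{giudici2020subgroups}): the Witt-plus-spinor-norm argument showing the stabilizer of a nonsingular point surjects onto $\textup{O}_d(q)/\Omega_d(q)\cong C_2\times C_2$ is fine, and for $G_2(q)$ the point stabilizers $\SL_3(q).2$ and $\SU_3(q).2$ with orbit--stabilizer give exactly the two class sizes $q^3(q^3\mp1)/2$, consistent with the paper's use of \cite[Propositions~2.1,~6.1]{CK_G2} elsewhere.

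There are, however, two concrete problems with the first half. First, \cite[Theorem~4.1(b)]{giudici2020subgroups} is the \emph{Hermitian} statement (it is what Lemma~\ref{lem_H_TrSinNonsin} uses); the classification of subgroups transitive on the singular points of $Q(d-1,q)$, $qd$ odd, is Theorem~7.1 there, and it only covers $d\ge 7$. Second, and more seriously, for $d=3$ and $d=5$ your premise that the non-$\Omega$ transitive subgroups form ``a short list of small-dimensional or small-field exceptional cases'' is false: for $d=3$ the singular points of the conic form $\PG(1,q)$, and for every odd $q$ the normalizer of a nonsplit torus (a dihedral group of order $2(q+1)$, extended by field automorphisms) acts transitively on them, so the exceptions form infinite families in $q$ and cannot be disposed of by a finite Magma check. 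The paper avoids this by switching hypotheses in low dimension: by \cite[Theorems~3.3,~5.3]{giudici2020subgroups}, a group not containing $\Omega_d(q)$ that is transitive on one isometry class of nonsingular points must fix a singular point ($d=3$) or a totally singular line ($d=5$), contradicting transitivity on singular points. Your divisibility idea (the class size $(q^{d-1}\pm q^{(d-1)/2})/2$ must divide $|H|$) can indeed kill these families, since for them $|H|$ is only $O(fq)$, but that has to be carried out as an argument over all odd $q$, not as the routine finite verification your proposal describes; as written, the cases $d=3,5$ are left unproved.
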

\begin{proof}
The group $\Omega_d(q)$ is transitive on both the set of singular points and each isometry class of nonsingular points by \cite[Lemma~2.10.5]{kleidman1990subgroup}. Suppose that $H$ does not contain $\Omega_d(q)$. By \cite[Theorem~3.3]{giudici2020subgroups} $H$ is transitive on one class of nonsingular points only if it fixes a singular point for $d=3$, so $d\ne 3$. By \cite[Theorem~5.3]{giudici2020subgroups} $H$ is transitive on one class of nonsingular points only if it fixes a totally singular line for $d=5$, so $d\ne 5$. By \cite[Theorem~7.1]{giudici2020subgroups}, $H$ is transitive on the set of singular points only if $(d,H^\infty)=(7,G_2(q))$ for $d\ge 7$. There is a unique conjugacy class of $G_2(q)$ in the isometry group by \cite[Table~8.40]{bray2013maximal}, and it is transitive on both classes of nonsingular points by \cite[Theorem~7.1]{giudici2020subgroups}. This completes the proof.
\end{proof}

\begin{lemma}\label{lem_Q-(2m,q)sub}
Suppose that $(V,\kappa)$ is an elliptic quadratic space  and $\dim(V)=2m\ge 4$. Let $H$ be a subgroup of semisimilarities that is transitive on both the set of singular points and one isometry class of nonsingular points. Then either $\Omega^-_{2m}(q)\unlhd H$, or one of the following occurs:
\begin{enumerate}
\item[(1)] $m$ is odd, and $\SU_m(q)\unlhd H$,
\item[(2)] $m=2$, $q=2$ and $H=\GO_2^-(4).2$, or $q=4$ and $\GO_2^-(4^2).4\unlhd H$,
\item[(3)] $m=3$, $q=2$ and $3_+^{1+2}\unlhd H$, or $q=3$ and $2.\PSL_3(4)\unlhd H$.
\end{enumerate}
For $(m,q)=(3,3)$, the normalizer of $2.\PSL_3(4)$ in $\Delta(V)$ is transitive on all nonsingular points.
\end{lemma}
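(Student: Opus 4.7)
The plan is to mirror the approach of Lemmas \ref{lem_H_TrSinNonsin} and \ref{lem_Q(dodd,q)sub}. First, I would observe via \cite[Lemma~2.10.5]{kleidman1990subgroup} that $\Omega^-_{2m}(q)$ is itself transitive on the singular points and on each isometry class of nonsingular points, so we may suppose $\Omega^-_{2m}(q)\not\unlhd H$.

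Next, I would invoke the classification in \cite{giudici2020subgroups} of subgroups of $\Gamma(V,\kappa)$ which are transitive on the singular points of $Q^-(2m-1,q)$. Generically this list consists of the subgroup $\SU_m(q)$ (when $m$ is odd, arising from the $\cC_3$-type embedding associated with the $\F_{q^2}$-structure on $V$), together with a handful of small-case exceptions concentrated at $(m,q)\in\{(2,2),(2,4),(3,2),(3,3)\}$. I would then intersect this list with the parallel classification of subgroups of $\Gamma(V,\kappa)$ transitive on one isometry class of nonsingular points, keeping only those candidates satisfying both conditions; this should yield precisely the families listed in (1)--(3).

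For the infinite family $\SU_m(q)\unlhd H$ with $m$ odd, transitivity on the singular points is standard, and transitivity on each isometry class of nonsingular points follows from the orbit description of $\SU_m(q)$ on $\PG(2m-1,q)$ via the Hermitian form attached to the $\F_{q^2}$-structure: orbits correspond to isotropy types for the Hermitian form, so the two Hermitian non-isotropic orbits map into the two quadratic isometry classes of nonsingular points in $Q^-(2m-1,q)$. The small exceptional groups in (2)--(3) I would verify by direct Magma \cite{Magma} computation, enumerating the overgroups of each candidate whose orders are divisible by $\lcm(|O_s|,|O_n|)$, where $O_s$ is the set of singular points and $O_n$ is one isometry class of nonsingular points.

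The main obstacle will be the supplementary claim that for $(m,q)=(3,3)$ the normalizer $N_{\Delta(V)}(2.\PSL_3(4))$ is transitive on the \emph{full} set of nonsingular points, not merely on one isometry class. I would settle this by a direct Magma computation: construct the chosen embedding of $2.\PSL_3(4)$ into $\Omega^-_6(3)$, compute its normalizer inside $\Delta(V)$, and exhibit an explicit element of this normalizer lying outside the isometry group that fuses the two isometry classes of nonsingular points. A subtlety to keep in mind throughout is that for even-dimensional orthogonal spaces the two isometry classes of nonsingular points have equal size and are swapped by an $\F_q^*$-scaling similarity but not by any isometry; consequently, when applying \cite{giudici2020subgroups} I must track carefully whether each candidate subgroup lies in $I(V,\kappa)$, $\Delta(V,\kappa)$, or the full semisimilarity group $\Gamma(V,\kappa)$, since overgroups in $\Gamma\setminus\Delta$ can fuse the two classes and create spurious transitivity.
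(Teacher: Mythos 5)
Your proposal is correct and follows essentially the same route as the paper: transitivity of $\Omega^-_{2m}(q)$ and of $\SU_m(q)$ ($m$ odd) via \cite[Lemma~2.10.5]{kleidman1990subgroup}, reduction to finitely many pairs $(m,q)$ via the transitivity classifications in \cite{giudici2020subgroups}, and Magma computations both for the exceptional cases and for the supplementary claim about $N_{\Delta(V)}(2.\PSL_3(4))$ when $(m,q)=(3,3)$. The only cosmetic difference is that the paper's list of candidate pairs coming from \cite{giudici2020subgroups} also includes $(2,3)$, which is then eliminated by the same Magma check you describe.
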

\begin{proof}
If either $\Omega_{2m}^-(q)\le H$, or $m$ is odd and $\SU_m(q)\unlhd H$, then $H$ transitive on both the set of singular points and one class of nonsingular points by \cite[Lemma~2.10.5]{kleidman1990subgroup}. Suppose that $H$ is in neither of the above two cases. By examining the $(m,q)$ pairs that admit transitive $H$-action on the singular points and one class of nonsingular points in \cite[Theorems~3.4,~4.2,~6.2]{giudici2020subgroups}, we deduce that $(m,q)$ is one of $(2,3)$, $(2,2)$, $(2,4)$, $(3,2)$ and $(3,3)$. For each $(m,q)$ pair, we examine the orbits of all groups of semisimilarities whose orders are multiples of the least common multiple of the numbers of singular points and nonsingular points in one class by Magma \cite{Magma}.
It turns out that $H=\GO_2^-(4).2$ for $(m,q)=(2,2)$, $\GO_2^-(4^2).4\unlhd H$ for $(m,q)=(2,4)$, $3_+^{1+2}\unlhd H$ for $(m,q)=(3,2)$, $2.\PSL_3(4)\unlhd H$ for $(m,q)=(3,2)$, and there is no such subgroup $H$ for $(m,q)=(2,3)$. The last claim is verified by Magma. This completes the proof.
\end{proof}

\begin{lemma}\label{lem_Q+(2m,q)sub}
Suppose that $(V,\kappa)$ is a hyperbolic quadratic space and $\dim(V)=2m\ge 6$. Let $H$ be a subgroup of semisimilarities that is transitive on both the set of singular points and one isometry class of nonsingular points. Then either $\Omega^+_{2m}(q)\unlhd H$, or one of the following occurs:
\begin{enumerate}
\item[(1)] $m$ is even, and $H^{(\infty)}=\textup{SU}_{m}(q)$,
\item[(2)] $m=4$, $H^{(\infty)}=\textup{Spin}_{7}(q)$ which is irreducible on $V$,
\item[(3)] $(m,q)=(4,2)$, and $H^{\infty}=A_9$.
\end{enumerate}
Moreover, the normalizer of $H^\infty$ in $\Gamma(V)$ is transitive on the set of all nonsingular points in each case.
\end{lemma}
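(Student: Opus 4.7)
The plan is to follow the template of Lemmas~\ref{lem_Q(dodd,q)sub} and \ref{lem_Q-(2m,q)sub}: verify sufficiency of the listed candidates directly, then use the singular-point transitivity classification from \cite{giudici2020subgroups} to bound the possibilities, and treat any small residue by \textsc{Magma}.

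For the sufficiency direction, I would first note that $\Omega^+_{2m}(q)$ is transitive on singular points and on each isometry class of nonsingular points by \cite[Lemma~2.10.5]{kleidman1990subgroup}. In case (1), the inclusion $\SU_m(q)\le \Omega^+_{2m}(q)$ arises from the $\cC_3$ field-reduction in row~10 of Table~\ref{tab_extfieldP'gt0}; the required transitivities follow by combining the unitary analogues in \cite[Theorems~3.4 and~4.1]{giudici2020subgroups} with the observation that multiplication by $\F_{q^2}^{*}$ sweeps $\cM_1$ through exactly one of the two nonsingular classes of $Q^+(2m-1,q)$. In case (2), $\textup{Spin}_7(q)$ acts irreducibly on the $8$-dimensional spin module preserving a hyperbolic quadratic form, and its transitivity on singular points is recorded in the relevant entry of \cite[Theorem~6.2]{giudici2020subgroups}, from which orbit counting forces transitivity on one nonsingular class. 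Case (3) is a direct \textsc{Magma} \cite{Magma} check inside $\GO^+_8(2)$.

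For the necessity direction, assume $H$ does not contain $\Omega^+_{2m}(q)$. Since $H$ is transitive on the singular points of $Q^+(2m-1,q)$, \cite[Theorem~6.2]{giudici2020subgroups} provides a short list of candidates for $H^{(\infty)}$. Intersecting this list with the transitivity constraints on nonsingular points (cf.~\cite[Theorem~4.2]{giudici2020subgroups}) eliminates every candidate outside conclusions~(1)--(3); any remaining ambiguity for small $(m,q)$ is resolved by enumerating in $\Gamma(V)$ all subgroups whose order is divisible by the least common multiple of the two orbit lengths, exactly as in the proof of Lemma~\ref{lem_Q-(2m,q)sub}. The closing normalizer assertion is then obtained by exhibiting, case by case, a similarity in $\Delta(V)\setminus I(V,\kappa)$ which normalizes $H^{(\infty)}$ and swaps the two nonsingular classes (a scalar of non-square norm for $\Omega^+_{2m}(q)$ and $\SU_m(q)$, an element of the conformal spin group for $\textup{Spin}_7(q)$, and a \textsc{Magma} witness for $A_9$).

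The main obstacle will be the $m=4$ case, where the triality-linked classes $\textup{Spin}_7(q)$ and $\SU_4(q)$ both appear and must be separated, and where at $q=2$ the extra sporadic example $A_9$ complicates the enumeration. Verifying that no further $\cS$-class subgroup slips through the singular-transitivity list of \cite[Theorem~6.2]{giudici2020subgroups} in dimension $8$, and that the unitary and spin candidates carry precisely the orbit structure claimed, is the step that will require the most care.
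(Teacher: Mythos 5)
Your proposal is correct and follows essentially the same route as the paper's proof: sufficiency of the generic candidates (the paper simply cites \cite[Lemma~2.10.5]{kleidman1990subgroup} where you unwind the unitary structure by hand), necessity via the classification of subgroups transitive on the singular points of a hyperbolic quadric in \cite{giudici2020subgroups} (the relevant statement is Theorem~8.4 there, not Theorems~4.2/6.2), \textsc{Magma} \cite{Magma} checks for the small $(m,q)$, and a class-swapping similarity for the closing normalizer claim. Two small repairs you would need: the transitivity of $\textup{Spin}_7(q)$ on an isometry class of nonsingular points is not forced by orbit counting alone but comes from the same Theorem~8.4 together with the spin-norm analysis that the paper packages as Proposition~\ref{prop_spin7}, and for $\Omega^+_{2m}(q)$ a scalar has square similitude factor and so does not swap the two nonsingular classes, so your witness must instead be a non-scalar similarity in $\Delta(V)\setminus I(V,\kappa)$.
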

\begin{proof}
If $H$ contains either $\Omega^+_{2m}(q)$ or $\SU_m(q)$ ($m$ even), then it is transitive on both the set of singular points and each class of nonsingular points by \cite[Lemma~2.10.5]{kleidman1990subgroup}. Assume that $H$ is in neither of the two cases. The group $H$ is transitive on the singular points, so by \cite[Theorem~8.4]{giudici2020subgroups} we have the following possibilities: (a) $m=4$ and $H^{\infty}=\textup{Spin}_{7}(q)$ which is irreducible on $V$, (b) $(m,q)$ is one of $(3,2)$, $(4,2)$, $(4,3)$. For case (a), there is a unique conjugacy class of irreducible $\textup{Spin}_7(q)$ in $\Gamma(V,\kappa)$ by \cite[Table~8.50]{bray2013maximal} and \cite{KleidmanThemaximal8}. The claims on transitivity for this case will be established in Proposition \ref{prop_spin7} below. Suppose that $H$ is in case (b) and is not covered by (a). If $(m,q)=(3,2)$, then we check by Magma \cite{Magma} that $H$ is transitive on the singular points only if $H^\infty=A_7$ but the normalizer of $A_7$ is not transitive on the nonsingular points.  If $(m,q)=(4,2)$, then $H$ is transitive on both singular and nonsingular points only if $H^\infty=A_9$. If $(m,q)=(4,3)$, then there is no such subgroup $H$. The last claim is verified by Magma. This completes the proof.
\end{proof}

We shall also need the following result in the symplectic case.
\begin{proposition}\label{prop_symp_PtLnTrans}
Suppose that $(V,\kappa)$ is a symplectic space, and $\dim(V)=2m\ge 4$. Let $H$ be a group of semisimilarities that is transitive on both the isotropic points and the totally isotropic lines. Then either $\Sp_{2m}(q)'\unlhd H$, or $(m,q)=(2,3)$ and $2_-^{1+4}.A_5\unlhd H$.
\end{proposition}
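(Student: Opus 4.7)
Since the symplectic form makes every nonzero vector isotropic, the hypothesis means $H \le \PGamSp_{2m}(q)$ is transitive on the points of $\PG(V)$ and on the totally isotropic lines of $W(2m-1,q)$. Both the point count $N_p = (q^{2m}-1)/(q-1)$ and the line count $N_\ell = (q^{2m}-1)(q^{2m-2}-1)/((q^2-1)(q-1))$ divide $|H|$, so primitive prime divisors of $q^{2m}-1$ and, when $m \ge 3$, of $q^{2m-2}-1$ (whose existence outside a handful of exceptions is guaranteed by Lemma \ref{lem_zsigmondy}) divide $|H|$.

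Assume henceforth that $\Sp_{2m}(q)' \not\unlhd H$. The plan is to rule out each Aschbacher class of maximal subgroups of $\PGamSp_{2m}(q)$ except one exceptional case. Class $\cC_1$ (reducible) is excluded by point-transitivity, since any proper invariant subspace would yield a nontrivial orbit of points. Classes $\cC_2$--$\cC_5$ and $\cC_7$ have maximal subgroup orders too small to be divisible by $\lcm(N_p, N_\ell)$ outside finitely many parameter values, each of which can be eliminated by direct order comparison. Class $\cC_6$ forces $2m$ to be a power of $2$ and $q$ to be an odd prime; the divisibility conditions cut the surviving parameters down to $(m,q) = (2,3)$, giving $H \le N_\Gamma(2_-^{1+4}) = 2_-^{1+4}.A_5$ of order $1920$, and one checks (directly or in Magma \cite{Magma}) that this group is indeed transitive on both the $40$ points and the $40$ totally isotropic lines of $W(3,3)$.

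The main work, and the main obstacle, lies in class $\cS$. Here I would invoke the Bamberg--Penttila classification \cite{bamberg2008overgroups} of subgroups of classical groups containing an element whose order is a primitive prime divisor of $q^{2m}-1$; this reduces the possibilities for the socle of $H$ to a short, explicit list. For each candidate I would verify whether it is transitive on totally isotropic lines. The subtlest case is $G_2(q) \le \Sp_6(q)$ for $q$ even: this subgroup is transitive on isotropic points of $W(5,q)$ by Hering's theorem, but the split Cayley algebra structure defining $G_2(q)$ distinguishes the hexagon lines (those on which $G_2(q)$ induces the associated generalized hexagon) from the remaining totally isotropic lines, giving two $G_2(q)$-orbits on totally isotropic lines; hence $G_2(q)$ is excluded. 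The remaining $\cS$ candidates are eliminated by order divisibility, or, for the finitely many small sporadic configurations, by direct Magma computation.
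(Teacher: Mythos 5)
Your route is genuinely different from the paper's: the paper disposes of this proposition in a few lines by citing \cite[Theorem~5.2]{giudici2020subgroups}, which classifies subgroups transitive on totally isotropic $2$-subspaces (this forces $m=2$, and transitivity on points then forces $q=3$), followed by a Magma check of the subgroups of order divisible by $40$; you instead propose a from-scratch Aschbacher analysis driven by order divisibility and primitive prime divisors. That could in principle work, but as written it has a concrete gap in the geometric classes. You claim that $\cC_2$--$\cC_5$ and $\cC_7$ are eliminated by comparing $\lcm(N_p,N_\ell)$ with the subgroup orders "outside finitely many parameter values". This fails for $\cC_3$: the field-extension subgroups $\Sp_{2a}(q^b)\le\Sp_{2m}(q)$ (with $ab=m$) are transitive on the nonzero vectors of $V$, hence point-transitive, so $N_p$ automatically divides their order; and in the case $m=2$, $b=2$ one has $N_p=N_\ell=(q^2+1)(q+1)$, which divides $|\SL_2(q^2)|=q^2(q^2-1)(q^2+1)$ for \emph{every} $q$. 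So the normalizer of $\SL_2(q^2)$ in $\GamSp_4(q)$ is an infinite family that no order comparison can remove; it must be excluded geometrically, e.g.\ by observing that the $q^2+1$ lines $\la v\ra_{\F_{q^2}}$ of the field-reduction spread are totally isotropic and form a proper invariant set, so this group has at least two orbits on totally isotropic lines. (For $m\ge 3$ your ppd of $q^{2m-2}-1$ does kill $\cC_3$, since the available exponents $2bi\le 2m$ with $b\mid m$ never admit $2m-2$ as a divisor, but you still need to treat the Zsigmondy exception $(q,m)=(2,4)$ separately by \cite{zsigmondy1892theorie}.)

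Two smaller points. First, your $\cC_6$ and $\cS$ discussions are plausible (the $G_2(q)$, $q$ even, exclusion via the two orbits of hexagon versus non-hexagon totally isotropic lines is correct, and \cite{bamberg2008overgroups} is a legitimate tool for the ppd-element reduction in class $\cS$), but the amount of case analysis you would be re-deriving is essentially the content of \cite{giudici2020subgroups}; if you are going to cite classification machinery anyway, citing their Theorem~5.2 directly, as the paper does, is both shorter and safer. Second, for the surviving case you should state the conclusion as the paper does, namely $2_-^{1+4}.A_5\unlhd H$ (the candidate $H$ need not equal the normalizer), which is what the Magma search over subgroups of order divisible by $40$ verifies.
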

\begin{proof}
The group $\Sp_{2m}(q)$ has the desired property by Witt's Lemma \cite[Proposition~2.1.6]{kleidman1990subgroup}, and $\Sp_4(2)'$ also has the property (this was checked by using a computer). Assume that $H$ does not contain $\Sp_{2m}(q)'$. Since $H$ is transitive on the set of totally singular lines, we have $m=2$ by \cite[Theorem~5.2]{giudici2020subgroups}. Since it is also transitive on the isotropic points, we have $q=3$ by the same theorem. There are $40$ isotropic points and $40$ totally isotropic lines in this case. We check by Magma that $2_-^{1+4}.A_5\unlhd H$ by examining all the subgroups whose orders are divisible by $40$. This completes the proof.
\end{proof}

\subsection{The adjoint module}\label{subsec_adj}

Suppose that $G$ is one of $\SL_3(q)$, $\SU_3(q)$ and $\SU_4(q)$. We set $\F=\F_q$ or $\F_{q^2}$ respectively according as $G$ is linear or unitary, and let $\sigma$ be the field automorphism $x\mapsto x^q$ of $\F_{q^2}$. Let $M_n(\F)$ be the set of all $n\times n$ matrices over $\F$, and for $1\le i,j\le n$ write $E_{ij}$ for the $0$-$1$ matrix of order $n$ whose only nonzero entry is the $(i,j)$-th. We regard $G$ as a matrix group; in particular, $\SU_n(q)=\{g\in \SL_n(q^2)\mid gg^{\sigma\top}=I_n\}$. The group $G$ acts on $M_n(\F)$ via $X.g=g^{-1}Xg$ for $g\in G$, $X\in M_n(\F)$. For $\SL_n(q)$ set $M=M_n(\F)$, and for $\SU_n(q)$ set $M=\{X\in M_n(\F)\mid X^\top=X^\sigma\}$.  Let $U$ be the subspace of $M$ consisting of the matrices of trace $0$, and let $U'$ be the subspace consisting of scalar matrices in $M$. For $A\in U$, define $Q(A)=\sum_{1\le i<j\le n}(A_{ij}A_{ji}-A_{ii}A_{jj})$. Note that $Q(A)$ is the negative of the coefficient of $x^2$ in the characteristic polynomial $\det(xI_n-A)$ of $A$, so $Q$ is $G$-invariant. The adjoint module of $G$ is $V:=U/(U\cap U')$, which has dimension $n^2-1-[\![ p\mid n]\!]$. For $X\in U$, we write $\overline{X}$ for its image in $V$. The form $Q$ induces a nondegenerate $G$-invariant form $\kappa$ on $V$, and we have $\kappa=Q$ if $p$ does not divide $n$. We refer the reader to \cite[Section~5.4.1]{bray2013maximal} and \cite[Section~8.4]{maxsub_1314} for more details on the adjoint modules.

Since the nonzero scalar matrices acts on $V$ trivially, we have an embedding of $L:=G/Z(G)$ in $\Gamma(V,\kappa)$, where $Z(G)$ is the center of $G$. Let $\phi$ be the field automorphism $x\mapsto x^p$ of $\F$ applied to matrices entrywise, and define $X.(-\top)=X^\top$ for $X\in M$. The normalizer $K$ of $L$ in $\Gamma(V,\kappa)$ is the induced action of $\la\GL^\pm_n(q),\phi,-\top\ra$ on $V$, where $\GL^+=\GL$ and $\GL^-=\GU$.
\begin{proposition}\label{prop_adj_SL3}
If $G=\SL_3(q)$, then $K$ has at least three orbits on the singular points for $p\ne 3$, and $G$ has two orbits which are $(q+1)$- and $(q^3-q)$-tight sets of $Q(6,q)$ respectively for $p=3$.
\end{proposition}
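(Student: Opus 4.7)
The plan is to identify each singular point $\overline{A}$ of $V$ via the rank of a matrix representative, using the observation that for $A\in U$ with $\tr(A)=0$,
\[
\det(xI_3 - A) = x^3 - Q(A)\,x - \det(A),
\]
since $Q(A)$ is the negative of the sum of the principal $2\times 2$ minors of $A$. Thus $Q(A)=0$ forces $A^3=(\det A)\,I$ by Cayley--Hamilton.

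When $p\ne 3$ we have $V=U$ of dimension $8$, and the polynomial $x^3-\det A$ is separable over $\overline{\F}_q$: hence $\textup{rank}(A)=3$ whenever $\det A\ne 0$, and otherwise $A$ is nilpotent of Jordan type $(2,1)$ or $(3)$, giving ranks $1$ and $2$. The matrix rank is invariant under the action of $\la\GL_3^\pm(q),\phi,-\top\ra$ that induces $K$ on $V$, so the three possible ranks partition the singular locus into three $K$-invariant subsets. Their nonemptiness is witnessed by the representatives $E_{12}$, $E_{12}+E_{23}$, and the companion matrix of $x^3-1$, yielding the desired lower bound of three $K$-orbits.

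For $p=3$, $V=U/U'$ has dimension $7$. The relation $A^3=(\det A)I$ combined with the bijectivity of Frobenius on $\F_q$ gives $\det A=\alpha^3$ for a unique $\alpha\in\F_q$, whence $(A-\alpha I)^3=0$ and $\overline{A}=\overline{A-\alpha I}$ admits a nilpotent representative; conversely, every nonzero trace-zero nilpotent is singular. Within the fibre over $\overline{A}$ the nilpotent representatives form a single projective line in $U$: if $N$ is nilpotent then $N+\lambda I$ has determinant $\lambda^3$, so is nilpotent only when $\lambda=0$. Consequently the $G$-orbits on singular points of $V$ correspond bijectively to the projective $\SL_3(q)$-conjugacy classes of nonzero trace-zero nilpotent matrices in $U$, and there are exactly two such classes, indexed by Jordan type $(2,1)$ and $(3)$.

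It remains to identify the two orbits as the claimed tight sets. I would parameterise the rank-$1$ class as $uv^\top$ with $u,v\in\F_q^3\setminus\{0\}$ and $v^\top u=0$; after modding out the rescaling $(u,v)\mapsto(\lambda u,\lambda^{-1}v)$ and the projective scaling on $uv^\top$, this yields $(q+1)(q^2+q+1)$ singular points, and subtracting from $|Q(6,q)|=(q^3+1)(q^2+q+1)$ leaves $(q^3-q)(q^2+q+1)$ for the rank-$2$ orbit. Since $G$ has exactly two orbits on the collinearity graph of $\cP$, each is an intriguing set; the identity $(q+1)+(q^3-q)=q^3+1=\theta_3$, together with the fact that $(q+1)(q^2+q+1)$ is not divisible by $\theta_3=(q+1)(q^2-q+1)$, forces them to be $(q+1)$- and $(q^3-q)$-tight sets rather than $m$-ovoids. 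The main hurdle is the characteristic-$3$ reduction to nilpotent representatives together with verifying that the two Jordan classes remain distinct modulo $U'$; the remaining counting and transitivity statements are short verifications.
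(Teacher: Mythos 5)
Your proposal is correct, and for $p\ne 3$ it is essentially the paper's argument: both exhibit singular vectors of ranks $1,2,3$ (your companion matrix of $x^3-1$ is the paper's $E_{12}+E_{23}+E_{31}$) and use the fact that rank is invariant under the action of $\la\GL_3(q),\phi,-\top\ra$ inducing $K$; your characteristic-polynomial analysis just adds the (unneeded for the lower bound) fact that these are the only possibilities. For $p=3$ your route is genuinely different. The paper computes the stabilizers of $\la\overline{E_{12}}\ra$ and $\la\overline{E_{12}+E_{23}}\ra$ in $\SL_3(q)$ explicitly, reads off the orbit lengths $(q+1)\frac{q^3-1}{q-1}$ and $(q^3-q)\frac{q^3-1}{q-1}$, and gets exhaustion by comparing with the number of points of $Q(6,q)$; you prove exhaustion structurally -- every singular point of $V=U/(U\cap U')$ has, via $A^3=\det(A)I$ and bijectivity of the cube map, a nilpotent representative in $U$ that is unique up to scalar -- and then reduce the orbit count to conjugacy of nilpotent matrices, recovering the sizes by counting rank-one nilpotents and subtracting. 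Your version is more conceptual and explains why there are exactly two orbits; the paper's computation delivers the orbit sizes directly without invoking conjugacy-class theory.

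One step you should not present as automatic is the claim that there are exactly two projective $\SL_3(q)$-classes of nonzero trace-zero nilpotents ``indexed by Jordan type'': Jordan form only classifies $\GL_3(q)$-orbits, and for a regular nilpotent $N$ every $g\in\GL_3(q)$ with $g^{-1}Ng\in\F_q^*N$ has cube determinant (write $g=ch$ with $c\in\{aI+bN+cN^2:a\ne0\}$, $\det c=a^3$, and $h=\diag(1,\mu,\mu^2)$, $\det h=\mu^3$), so the $\GL_3(q)$-orbit of the point $\la N\ra$ splits into $\gcd(3,q-1)$ orbits under $\SL_3(q)$. In characteristic $3$ one has $\gcd(3,q-1)=1$, so your claim is true, but this cube argument (or the paper's explicit stabilizer computation) is precisely the transitivity content that must be supplied, and it would fail if $3\mid q-1$. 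Two harmless slips: the nilpotent representatives over a fixed singular point are the nonzero scalar multiples of one matrix, i.e.\ a single point of $\PG(U)$ rather than a ``projective line''; and the non-divisibility by the ovoid number $q^3+1$ should be checked (or deduced from $i_1+i_2=\theta_r$) for both orbit sizes, as the paper does. With these filled in, your counting and tight-set identification agree with the paper's.
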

\begin{proof}
The vectors $v_1=E_{12}$, $v_2=E_{12}+E_{23}$ and $v_3=E_{12}+E_{23}+E_{31}$ in $U$ are singular vectors for $Q$. They have different ranks, so $\la v_1\ra$, $\la v_2\ra$ and $\la v_3\ra$ are not in the same $K$-orbits. This establishes the claim for $p\ne 3$. Suppose that $p=3$, so that $(V,\kappa)$ is parabolic.  It is straightforward to show that the stabilizer of $\la\overline{v_1}\ra$ in $\SL_3(q)$ consists of $\begin{pmatrix}\lambda&a&b\\0&\mu&0\\0&c&\lambda^{-1}\mu^{-1}\end{pmatrix}$ with $\lambda,\mu\in\F_q^*$ and $a,b,c\in\F_q$, so its $\SL_3(q)$-orbits has size $(q+1)\frac{q^3-1}{q-1}$. Also, the stabilizer of  $\la\overline{v_2}\ra$ in $\SL_3(q)$ consists of $\begin{pmatrix}\lambda&b&c\\0&1&\lambda^{-1}b\\0&0&\lambda^{-1}\end{pmatrix}$ with $\lambda\in\F_q^*$ and $b,c\in\F_q$, so its $\SL_3(q)$-orbit has size $(q^3-q)\frac{q^3-1}{q-1}$. The two orbits comprises all the singular points by comparing sizes, and so they are intriguing sets by Section \ref{subsec_basic}.
Since the ovoid number $q^3+1$ does not divide their sizes, they are
both tight sets. This establishes the claim for $p=3$.
\end{proof}

\begin{proposition}\label{prop_adjMod_SU3}
If $G=\SU_3(q)$, then $K$ has at least three orbits on the singular points if $p\ne 3$, and $G$ has exactly two orbits one of which is Kantor's unitary ovoid if $p=3$.
\end{proposition}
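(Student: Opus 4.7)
The plan is to mirror Proposition \ref{prop_adj_SL3}, splitting by the characteristic of $\F_q$.

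In the case $p\ne 3$ we have $U\cap U' = 0$ so that $V = U$, and the $K$-action (by $\GU_3(q)$-conjugation, field Frobenius, and transpose) preserves matrix rank. It therefore suffices to exhibit three singular trace-zero Hermitian matrices of ranks $1$, $2$, $3$, which will lie in three distinct $K$-orbits. Take $v_1 = uu^{\sigma\top}$ for an isotropic $u\in\F_{q^2}^3$ (for the standard Hermitian form); this is rank $1$, trace zero, and automatically $Q$-singular since all its $2\times 2$ minors vanish. Take $v_2$ to be the Hermitian matrix supported on the positions $(1,2),(2,1),(2,3),(3,2)$ with entries $a,a^\sigma,b,b^\sigma$, where $a,b\in\F_{q^2}^*$ satisfy $N_{\F_{q^2}/\F_q}(a)+N_{\F_{q^2}/\F_q}(b)=0$ (solvable by the surjectivity of the norm map); direct computation gives $Q(v_2)=N(a)+N(b)=0$, and rows~$1$ and $3$ are $\F_{q^2}$-proportional so the rank is $2$. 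For a rank-$3$ example $v_3$ we need the characteristic polynomial to be $x^3-\det(v_3)$ with $\det(v_3)\ne 0$; the diagonal matrix $\diag(1,\omega,\omega^2)$ works when $q\equiv 1\pmod 3$, and a suitable non-diagonal Hermitian matrix (allowing nonzero diagonal entries, e.g.\ $a_1=a_2=1$) works in the remaining case, whose existence reduces to a short parameter count.

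In the case $p = 3$ the space $V = U/\F_q I$ has dimension $7$, so $\cP = Q(6,q)$ with ovoid number $\theta_3 = q^3+1$. The Cayley--Hamilton identity $X^3 = Q(X)X+\det(X)I$ for trace-zero $X$, combined with $Q(X)=0$, yields $X^3 = \det(X)\cdot I$. Because the cubing map is an automorphism of $\F_q=\F_{3^f}$, we can write $\det(X)=\mu^3$ with $\mu\in\F_q$; then $X-\mu I$ is nilpotent and represents the same coset $\bar X\in V$. This nilpotent representative is unique in the coset, since a nonzero-scalar shift of a nilpotent matrix has determinant $\mu^3\ne 0$. Hence singular points of $\cP$ biject with $\F_q^*$-orbits of nonzero nilpotent trace-zero Hermitian matrices.

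Such a nilpotent has rank $1$ or $2$. The rank-$1$ nilpotent Hermitian matrices are precisely $uu^{\sigma\top}$ for isotropic $u$, and their $\F_q^*$-classes correspond bijectively to the points of $H(2,q)$. This gives a single $G$-orbit of size $q^3+1 = \theta_3$, and since $q^2+q+1$ does not divide $q^3+1$, it must be a $1$-ovoid, matching Kantor's unitary ovoid. To complete the proof I would show that the rank-$2$ nilpotent matrices form a single $G$-orbit by invoking the Lang--Steinberg theorem: the centralizer of a regular nilpotent $e$ in the algebraic group $\SL_3(\overline{\F_q})$ is the connected unipotent subgroup $\{I+ae+be^2\mid a,b\in\overline{\F_q}\}$, so the $F$-rational orbit is a single $\SU_3(q)$-orbit, of size $q(q+1)(q^3+1)$, matching the ovoid complement. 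The principal obstacle is this final rank-$2$ transitivity claim; an elementary alternative is an explicit computation in the Borel subgroup, checking that the stabilizer of a rank-$2$ nilpotent has order $q^2(q-1)$.
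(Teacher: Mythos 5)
Your proposal is correct, but it takes a genuinely different route from the paper. The paper's own proof of this proposition is largely by citation: for $p=3$ it simply refers to Kantor's paper, and for $p\ne 3$ it splits on $q\bmod 3$ — when $q\equiv 2\pmod 3$ it quotes Kantor and \cite{FengLiTao} for the three $K$-orbits (which are $1$-, $(q^2+q)$- and $q^3$-ovoids of $Q^+(7,q)$), and when $q\equiv 1\pmod 3$ it notes that $Q$ has minus sign and that the rank-one orbit $\cN$ of size $q^3+1$ is not an intriguing set of $Q^-(7,q)$, since $|\cN|$ is divisible by neither $q^4+1$ nor $\frac{q^3-1}{q-1}$, so there must be more than two orbits. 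You instead argue uniformly for $p\ne3$ by exhibiting singular elements of ranks $1$, $2$, $3$ — exactly the device the paper uses for $\SL_3$, $\SU_4$, $\Sp_4$ and $\Sp_6$ — and you give a self-contained proof of the two-orbit statement in characteristic $3$ rather than citing Kantor. Your approach buys independence from the external references at the cost of two deferred verifications, and both of them do go through. For the rank-$3$ witness when $q\equiv 2\pmod 3$: take $\mu\in\F_q^*$, a $1\times1$ block $(\mu)$ and a $2\times2$ Hermitian block with entries $a,b\in\F_q$, $c\in\F_{q^2}$ satisfying $a+b=-\mu$ and $N_{\F_{q^2}/\F_q}(c)=ab-\mu^2$; the block sum is Hermitian, has trace $0$, $Q$-value $0$ and determinant $\mu^3\ne0$, so your ``short parameter count'' is indeed short.

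For $p=3$, your reduction of singular points to projective classes of nonzero nilpotent Hermitian matrices is sound (the induced form is well defined on $U/(U\cap U')$ because $Q(X+\lambda I)=Q(X)$ in characteristic $3$, and the nilpotent representative in each coset is unique), and the final transitivity on rank-$2$ nilpotents is true. One technical caveat on your Lang--Steinberg step: since $3\mid n$, the scheme-theoretic centralizer of a regular nilpotent in $\SL_3(\overline{\F}_q)$ is not smooth in characteristic $3$, so the orbit map is inseparable; but the standard abstract-group argument only needs Lang's theorem for the reduced, $F$-stable centralizer $\{I+ae+be^2\}$, which is connected unipotent of dimension $2$, so your conclusion stands. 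Note also that the centralizer of the matrix in $\SU_3(q)$ then has order $q^2$, while the figure $q^2(q-1)$ you quote is the stabilizer of the projective point (conjugation composed with $\F_q^*$-scaling); either way the point-orbit has size $q(q+1)(q^3+1)$, and together with the $q^3+1$ rank-one points this exhausts $|Q(6,q)|=(q^3+1)(q^2+q+1)$, so there are exactly two $G$-orbits. Finally, identifying the rank-one orbit with Kantor's unitary ovoid still requires the reference to Kantor's construction (as the paper's statement presupposes), but your divisibility argument correctly shows it is a $1$-ovoid once the two-orbit property is in hand.
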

\begin{proof}
For the case $p=3$, we refer the reader to \cite[Section~4]{KantorOvoids}. If $q\equiv 2\pmod{3}$, then there are three $K$-orbits which are $1$-, $(q^2+q)$- and $q^3$-ovoids of $Q^+(7,q)$ respectively, cf. \cite[Section~4]{KantorOvoids} and \cite[Example~3.9]{FengLiTao}. If $q\equiv 1\pmod{3}$, then $Q$ has minus sign, cf. \cite[Table~5.6]{bray2013maximal}. The ovoid number is $q^4+1$ and the rank is $r=3$. As in \cite[Section~4]{KantorOvoids}, we deduce that there is a $K$-orbit $\cN$ of size $q^3+1$ on the singular points that correspond to the rank-$1$ matrices in $U$. Since the size of $\cN$ is not divisible by $q^4+1$ or $\frac{q^3-1}{q-1}$, it is not an intriguing set. We conclude that there are more than two orbits for $q\equiv 1\pmod{3}$, cf. Section \ref{subsec_basic}. This completes the proof.
\end{proof}

\begin{proposition}
If $G=\SU_4(q)$, then $K$ has at least three orbits on the singular points.
\end{proposition}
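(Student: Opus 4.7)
The plan is to imitate the strategy of Proposition~\ref{prop_adjMod_SU3}: I will exhibit a single $K$-orbit $\cN$ on the singular points of $\cP$ whose cardinality fails the divisibility conditions from Lemma~\ref{lem_ParaCond} required for $\cN$ to be one half of a two-orbit partition, and then conclude that $K$ must have more than two orbits.

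First I would take $\cN$ to consist of the points of $\cP$ represented by the rank-one Hermitian matrices of the form $uu^{\sigma\top}$ with $u\in V':=\F_{q^2}^4$ a nonzero isotropic vector for the standard Hermitian form $\la x,y\ra = x^{\sigma\top}y$. Such a matrix has trace $\la u,u\ra=0$ and is nilpotent of rank one, hence lies in $U$ and is singular for $Q$. The assignment $\la u\ra_{\F_{q^2}}\mapsto \la \overline{uu^{\sigma\top}}\ra$ should be a bijection from the isotropic points of $H(3,q^2)$ onto $\cN$, yielding $|\cN|=(q^3+1)(q^2+1)$. Rank of a matrix in $U$ is preserved by conjugation by $\GU_4(q)$, by transpose, by the field automorphism $\phi$, by $\F_q^*$-scaling, and by addition of scalar matrices (which shifts eigenvalues but not Jordan-block sizes), so $\cN$ will be $K$-invariant; and since $\SU_4(q)$ is transitive on the isotropic points of $H(3,q^2)$ by Witt's theorem, $\cN$ will be a single $K$-orbit.

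Next I would exploit that $\dim V$ equals $15$ if $p$ is odd and $14$ if $p=2$, so the rank $r$ of $\cP$ is at least $6$. By the parameter formulas recalled in Section~\ref{subsec_basic}, an $i$-tight set has size divisible by $\frac{q^r-1}{q-1}$ and an $m$-ovoid has size divisible by $\theta_r$. Using $(q^3+1)(q^2+1)=q^5+q^3+q^2+1$, a direct comparison gives
\[
\frac{q^r-1}{q-1}\ge 1+q+q^2+q^3+q^4+q^5>|\cN|\quad\text{and}\quad \theta_r\ge q^6+1>|\cN|
\]
for every $q\ge 2$. Hence $|\cN|$ is positive but strictly smaller than either required divisor, so $\cN$ is neither a tight set nor an $m$-ovoid. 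Since $\cN$ is a $K$-orbit properly contained in the singular points, $K$ has at least two orbits; if it had exactly two, then $\cN$ would be intriguing, contradicting the preceding paragraph. Therefore $K$ has at least three orbits.

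The only subtle point will be the characteristic-two bookkeeping when passing from $U$ to $V=U/(U\cap U')$: I will need to verify that distinct isotropic $\F_{q^2}$-lines still yield distinct $V$-points, which reduces to observing that $uu^{\sigma\top}+\lambda I$ has full rank $4$ for every $\lambda\in\F_q^*$, so the rank-one representative in each $V$-point is determined up to $\F_q^*$ by its isotropic image. Beyond this minor check and the elementary size comparisons above, I do not anticipate any serious obstacle.
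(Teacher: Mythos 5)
Your argument is essentially the paper's own proof: the same orbit $\cN$ of (images of) rank-one matrices $uu^{\sigma\top}$ with $u$ isotropic, the same count $|\cN|=(q^2+1)(q^3+1)$, the same handling of the quotient by $U\cap U'$, and the same contradiction via the intriguing-set divisibility conditions — the paper just spells out the sign, rank and ovoid number in each characteristic, where you shortcut with $r\ge 6$ and $\theta_r\ge q^6+1$, which is fine. One small wording caveat: rank is not literally preserved by adding scalar matrices (a rank-one nilpotent plus $\lambda I_4$ has rank $4$); what you actually need, and effectively verify with that very full-rank observation, is that the property of a coset modulo $U\cap U'$ containing a rank-one matrix is preserved by the generators of $K$.
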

\begin{proof}
Let $Y$ be the set of rank-$1$ matrices in $U$. Then $Y=\{\lambda vv^{\sigma\top}\mid \lambda\in\F_q^*,\,v^{\sigma\top}v=0\}$, which has size $(q-1)(q^2+1)(q^3+1)$ and forms a $K$-orbit of singular vectors of $Q$. Since the difference of two rank-$1$ matrices has rank at most $2$, we deduce that $\{y+\lambda I_4\mid \lambda\in\F\}$'s with $y\in Y$ are pairwise distinct. It follows that elements of $Y$ yield a $K$-orbit $\cN$ of singular points of size $(q^2+1)(q^3+1)$. Suppose to the contrary that $K$ has two orbits on the singular points. Then $\cN$ is an intriguing set by Section \ref{subsec_basic}. If $q$ is odd, then  the associated polar space has rank $r=7$ and its ovoid number is $\theta_r=q^7+1$. If $q=2^f$, then $Q$ has plus sign if and only if $f$ is even by \cite[Corollary~8.4.6]{maxsub_1314}. If $f$ is even, then the rank $r=7$ and the ovoid number is $\theta_r=q^6+1$; if $f$ is odd, then the rank $r=6$ and the ovoid number is $\theta_r=q^7+1$. The size of $\cN$ is divisible by neither $\frac{q^r-1}{q-1}$ nor $\theta_r$ in each case: a contradiction. This completes the proof.
\end{proof}

\subsection{The symmetric and antisymmetric squares}\label{subsec_S2W2W}

Suppose that $G=\Sp_{2m}(q)$ with $m\ge 2$. Let $W$ be its natural module of dimension $2m$ over $\F_q$, and let $\kappa_1$ be a nondegenerate $G$-invariant symplectic form on $W$. We choose a symplectic basis $e_1,\ldots,e_{2m}$ of $W$, i.e., $\kappa_1(e_i,e_j)=1$ if $j=i+m$ and $\kappa_1(e_i,e_j)=0$ if $j-i\ne\pm m$. We identify $W$ with $\F_q^{2m}$  and regard $G$ as a matrix group with respect to this basis. Then $G=\{g\in\GL_n(q)\mid gJg^\top=J\}$, where $J$ is the matrix of order $2m$ whose $(i,j)$-th entry is  $\kappa_1(e_i,e_j)$.  We define the similarity $\delta=\textup{diag}(w,\ldots,w,1,\ldots,1)$ for a primitive element $w$ of $\F_q$, and define $\phi$ as the field automorphism $a\mapsto a^p$ of $\F_q$ applied to the vectors of $W$ entrywise. We refer the reader to \cite[Section~5.2.1]{bray2013maximal} for the definitions of the symmetric square $S^2(W)$ and antisymmetric square $\wedge^2(W)$ of $W$.

\begin{remark}\label{rem_rank}
Let $M$ be the set of skew-symmetric matrices of order $2m$ with zero diagonal entries over $\F_q$. There is a vector space endomorphism
$\psi:\,\wedge^2(W)\rightarrow M$, $u\wedge v\mapsto u^\top v-v^\top u$
by the universality of exterior squares \cite{Multi}. It maps a nonzero element $u\wedge v$ to a rank-$2$ skew-symmetric matrix whose row space is $\la u,v\ra$. By linear algebra each element of $M$ is the sum of rank-$2$ skew-symmetric matrices, so $\psi$ is surjective. It is an isomorphism by comparing dimensions. For $x\in \wedge^2(W)$ we define its rank as that of $\psi(x)$.  We equip $M$ with the $G$-action $X.g=g^\top Xg$, so that $\psi$ is a $G$-module isomorphism. We can similarly build a $G$-module isomorphism between $S^2(W)$ and the set of symmetric matrices of order $2m$ via $uv\mapsto \frac{1}{2}(u^\top v+v^\top u)$ for $q$ odd, so that it makes sense to talk about the ranks of its elements.
\end{remark}

For $q$ odd, there is a nondegenerate $G$-invariant symmetric form $B$ on $S^2(W)$ such that
\[
  B(u_1u_2,v_1v_2)=\kappa_1(u_1,v_1)\kappa_1(u_2,v_2)+\kappa_1(u_1,v_2)\kappa_1(u_2,v_1)
\]
by the universality of symmetric squares. Let $\kappa_S$ be the quadratic form on $S^2(W)$ such that $\kappa_S(x)=\frac{1}{2}B(x,x)$. The action of $G$ on $S^2(W)$ induces an embedding of $L=\PSp_{2m}(q)$ in the isometry group of $\kappa_S$, and let $K$ be the normalizer of $L$ in the semisimilarity group. The group $K$ is generated by the induced action of $\la G,\delta,\phi\ra$ on $S^2(W)$.
\begin{proposition}
If $G=\Sp_4(q)$ with $q$ odd, then $K$ has at least three orbits on the singular points of  $S^2(W)$.
\end{proposition}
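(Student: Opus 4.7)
The plan is to separate three $K$-orbits by exhibiting three singular vectors in $S^{2}(W)$ whose corresponding symmetric matrices (under the isomorphism in Remark~\ref{rem_rank}) have three pairwise distinct ranks. This will suffice because the $K$-action on $S^{2}(W)$ is induced from $\Gamma(W,\kappa_{1})$: invertible semilinear transformations act on symmetric matrices by $X\mapsto g^{\top}Xg$ and the field automorphism $\phi$ acts entry-wise, both of which preserve rank. Hence rank is a $K$-invariant on the set of singular points.

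First I would reformulate $\kappa_{S}$ in matrix-trace language. Using the universality identity
\[
B(u_{1}u_{2},v_{1}v_{2})=\kappa_{1}(u_{1},v_{1})\kappa_{1}(u_{2},v_{2})+\kappa_{1}(u_{1},v_{2})\kappa_{1}(u_{2},v_{1}),
\]
the formula $\kappa_{1}(u,v)=uJv^{\top}$, and the correspondence $uv\mapsto\tfrac{1}{2}(u^{\top}v+v^{\top}u)$, a short calculation gives the clean expression
\[
\kappa_{S}(X)=-\tr\bigl((XJ)^{2}\bigr)
\]
for every symmetric matrix $X$, where $J$ is the Gram matrix of $\kappa_{1}$ in the chosen symplectic basis.

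I would then test three specific matrices,
\[
X_{1}=E_{11},\qquad X_{2}=E_{11}+E_{22},\qquad X_{3}=E_{11}+E_{34}+E_{43},
\]
of ranks $1$, $2$, and $3$ respectively. Using the standard symplectic form of $J$ and the rule $E_{ab}E_{cd}=\delta_{bc}E_{ad}$, one checks $X_{1}J=E_{13}$ and $X_{2}J=E_{13}+E_{24}$ are nilpotent, while $X_{3}J=E_{13}-E_{32}-E_{41}$ satisfies $(X_{3}J)^{2}=-E_{12}-E_{43}$ which still has trace zero. Hence each $X_{k}$ is singular, and since their ranks differ, $\la X_{1}\ra$, $\la X_{2}\ra$, $\la X_{3}\ra$ represent three distinct $K$-orbits.

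The only real step is the trace-form reformulation of $\kappa_{S}$; once that is in place, exhibiting singular matrices of three different ranks is a straightforward verification using the matrix-unit multiplication rule, with no need to invoke deeper representation theory of $\Sp_{4}$ or its $10$-dimensional adjoint module beyond identifying $S^{2}(W)$ with the space of symmetric $4\times 4$ matrices.
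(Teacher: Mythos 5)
Your proposal is correct and follows essentially the same route as the paper: the paper's proof also exhibits three singular elements of $S^2(W)$ of ranks $1$, $2$, $3$ (namely $e_1^2$, $e_1e_2$, $e_1e_2+e_3e_3$) and concludes via the $K$-invariance of rank from Remark \ref{rem_rank}. Your trace reformulation $\kappa_S(X)=-\tr\bigl((XJ)^2\bigr)$ is a correct but optional extra device; the paper simply evaluates $B$ directly on its chosen vectors.
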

\begin{proof}
The vectors $v_1=e_1^2$, $v_2=e_1e_2$, $v_3=e_1e_2+e_3e_3$ are singular and have different ranks. We deduce that three corresponding projective points are in different $K$-orbits. This completes the proof.
\end{proof}

Suppose that $G=\Sp_6(q)$ from now on. There is a nondegenerate $G$-invariant symmetric bilinear form $\beta_1$ on $\wedge^2(W)$ such that
\[
  \beta_1(u_1\wedge v_1,u_2\wedge v_2)=\kappa_1(u_1,u_2)\kappa_1(v_1,v_2)-\kappa_1(u_1,v_2)\kappa_1(v_1,u_2).
\]
By the universality of exterior squares \cite[p.~5]{Multi}, there is an $\F_q$-linear map $\ell:\,\wedge^2(W)\rightarrow\F_q$ such that $\ell(u\wedge v)=\kappa_1(u,v)$. Since $\beta_1$ is nondegenerate, there is a unique element $h\in\wedge^2(W)$ such that $\beta_1(h,u\wedge v)=\kappa_1(u,v)$ for $u,v\in V$. The uniqueness implies that $h$ is $G$-invariant. It is routine to check that $h=e_1\wedge e_4+e_2\wedge e_5+e_3\wedge e_6$. Let $U'$ be the subspace spanned by $h$, and set $U:=\{x\in \wedge^2(W)\mid\beta_1(h,x)=0\}$, $V:=U/(U\cap U')$. Since $\beta_1(h,h)=3$, $U'$ is contained in $U$ if and only if $p=3$. The module $V$ is absolutely irreducible, cf. the proof of \cite[Theorem~5.1]{LubeckSmalldegree}. By \cite[Section~9.3]{maxsub_1314}, $V$ is not realized over a proper subfield and has a nondegenerate quadratic form $\kappa$. For $x\in U$, we write $\bar{x}$ for its image in $V$. Then $\beta(\bar{x},\bar{y}):=\beta_1(x,y)$ is the associated bilinear form of $\kappa$ by rescaling $\kappa$ properly. The action of $G$ on $V$ induces an embedding of $L=\PSp_{6}(q)$ in the isometry group of $(V,\kappa)$, and let $K$ be the normalizer of $L$ in the semisimilarity group. The group $K$ is generated by the induced action of $\la G,\delta,\phi\ra$ on $V$.

\begin{proposition}
If $G=\Sp_6(q)$, then $K$ has at least three orbits on the singular points of $V$ for $p\ne 3$, and $G$ has exactly two orbits which are $(q^2+1)$- and $(q^6-q^2)$-tight sets of $Q(12,q)$ for $p=3$.
\end{proposition}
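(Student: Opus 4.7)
The plan is to follow the template of Proposition \ref{prop_adj_SL3}: exhibit explicit singular vectors and exploit the rank function on $\wedge^2(W)$ (via the isomorphism $\psi$ of Remark \ref{rem_rank}) as a $K$-invariant. The key ingredient in both cases will be the Pfaffian of $\alpha v + \beta h$ along a pencil $\la v,h\ra \subset \wedge^2(W)$, which is a homogeneous cubic in $(\alpha,\beta)$; because the vectors we use involve only a few wedges, enumerating perfect matchings in the underlying graph gives this Pfaffian in closed form.

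For $p \neq 3$, $h \notin U$ so $V = U$ canonically and rank descends to a $K$-invariant on $V$. A direct $\beta_1$-computation shows that $w_1 = e_1 \wedge e_2$, $w_2 = e_1 \wedge e_2 + e_3 \wedge e_4$, and $w_3 = e_1 \wedge e_2 + e_3 \wedge e_4 + e_5 \wedge e_6$ all lie in $U$ and are singular, of ranks $2$, $4$, $6$ respectively; hence $K$ has at least three orbits on singular points. For $p = 3$, let $v_1 = e_1 \wedge e_2$ and $v_2 = e_1 \wedge e_2 + e_3 \wedge e_4 + e_5 \wedge e_6$; both lie in $U$ and are singular. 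Enumerating perfect matchings gives
\[
\textup{Pf}(\alpha v_1 + \beta h) = \pm \beta^3, \qquad \textup{Pf}(\alpha v_2 + \beta h) = \pm(\alpha^3 - \beta^3) = \pm(\alpha - \beta)^3
\]
in characteristic $3$. The first forces any $g \in \Stab_G(\la \overline{v_1} \ra)$ to satisfy $g v_1 = \alpha v_1$, since $g v_1$ has rank $2$ while $\alpha v_1 + \beta h$ has rank $6$ whenever $\beta \ne 0$; hence $\Stab_G(\la \overline{v_1} \ra)$ is the Siegel parabolic of $\Sp_6(q)$ stabilizing the totally isotropic $2$-space $\la e_1, e_2 \ra \subset W$, and the orbit size equals the number of totally isotropic $2$-spaces in $W$, namely $(q^2+q+1)(q^2+1)(q^3+1) = (q^2+1)\frac{q^6-1}{q-1}$. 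The second Pfaffian shows that $\la v_2, h \ra$ contains no rank-$2$ element, so $\la \overline{v_2} \ra \notin G \cdot \la \overline{v_1} \ra$.

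For the orbit of $\la \overline{v_2} \ra$, the unique degenerate element of $\la v_2, h \ra$ is $v_2 + h$, of rank $4$, whose radical $N := \la e_1 - e_3 + e_5,\, e_2 - e_4 + e_6 \ra$ is totally isotropic for $\kappa_1$. Every $g \in \Stab_G(\la \overline{v_2} \ra)$ preserves $\la v_2 + h \ra$ up to scalar, yielding a homomorphism $\lambda \colon \Stab_G(\la \overline{v_2} \ra) \to \F_q^*$ defined by $g(v_2 + h) = \lambda(v_2 + h)$; its kernel is the common isometry group $\Sp(W, \kappa_1) \cap \Sp(W, \kappa_2)$ with $\kappa_2 := v_2$. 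The operator $A = \kappa_1^{-1} \kappa_2$ has Jordan type $J_3(-1)^{\oplus 2}$ in characteristic $3$, so $W$ becomes free of rank $2$ over $R := \F_q[A] \cong \F_q[\epsilon]/(\epsilon^3)$, and one identifies the common isometry group with $\Sp_2(R)$ of order $q^{6} \cdot |\Sp_2(\F_q)| = q^7(q^2-1)$. Surjectivity of $\lambda$ is witnessed by a suitable element of $G$, giving $|\Stab_G(\la \overline{v_2} \ra)| = q^7(q-1)^2(q+1)$ and orbit size $(q^6 - q^2)\frac{q^6-1}{q-1}$. Summing, the two orbit sizes give $(q^{12}-1)/(q-1) = |Q(12,q)|$, so $G$ has exactly two orbits on singular points; they are intriguing sets by Section \ref{subsec_basic}, and since the ovoid number $q^6+1$ divides neither, they are $(q^2+1)$- and $(q^6-q^2)$-tight sets respectively. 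The principal obstacle is the identification of the common isometry group with $\Sp_2(R)$, which requires carefully matching the $R$-module structure on $W$ with the pair of symplectic forms.
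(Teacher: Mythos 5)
Your proposal is correct in substance, and for $p=3$ it takes a genuinely different route from the paper. The paper also identifies the first orbit with the decomposable classes (in bijection with the totally isotropic lines of $W$, size $(q^2+1)\frac{q^6-1}{q-1}$), but for the second orbit it works with the rank-$4$ representative $z=e_1\wedge e_2+e_3\wedge e_4$: a Witt's-lemma computation shows all singular vectors of $U$ with row space $\la e_1,e_2,e_3,e_4\ra$ lie in one $\Sp_6(q)$-orbit, and the orbit is then counted through the stabilizer of that $4$-space. You instead compute the point stabilizers directly: the parabolic stabilizing $\la e_1,e_2\ra$ for $\la \overline{v_1}\ra$, and for $\la\overline{v_2}\ra$ an extension of $\F_q^*$ by the common isometry group of the pair $(\kappa_1,\kappa_2)$, identified via the pencil Pfaffian and the Jordan analysis with $\Sp_2(R)$, $R=\F_q[\epsilon]/(\epsilon^3)$. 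I checked the key claims: $\textup{Pf}(\alpha v_1+\beta h)=\pm\beta^3$, $\textup{Pf}(\alpha v_2+\beta h)=\pm(\alpha^3-\beta^3)=\pm(\alpha-\beta)^3$ in characteristic $3$, the operator $A$ satisfies $A^3=-I$ so $(A+I)^3=0$, $\textup{rank}(A+I)=4$, hence type $J_3(-1)^{\oplus 2}$; the resulting stabilizer order $q^7(q-1)^2(q+1)$ and orbit size $(q^6-q^2)\frac{q^6-1}{q-1}$ agree with the paper. Your route yields the stabilizer structure explicitly; the paper's is more elementary, using only Witt's lemma and subspace counting.

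Three points need patching. First, for $p\ne 3$ the singularity of $w_1,w_2,w_3$ cannot be settled by ``a direct $\beta_1$-computation'' when $p=2$: there $\kappa$ is a quadratic form not determined by its bilinear form, and one must argue separately that $\kappa$ vanishes on the vectors $u\wedge v$ with $\kappa_1(u,v)=0$ (the paper does this with a short Witt's-lemma argument); singularity of $w_2,w_3$ then follows because the cross $\beta_1$-terms vanish. Second, surjectivity of $\lambda$ should not be left to an unspecified witness; in fact no witness is needed: since the orbit of $\la\overline{v_2}\ra$ avoids the first orbit (no rank-$2$ element in the pencil), its size is at most $(q^6-q^2)\frac{q^6-1}{q-1}$, while $|\ker\lambda|=q^7(q^2-1)$ gives $|G|/|\Stab_G(\la\overline{v_2}\ra)|\ge |G|/\bigl(q^7(q^2-1)(q-1)\bigr)=(q^6-q^2)\frac{q^6-1}{q-1}$; equality forces both the orbit size and the surjectivity of $\lambda$. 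Third, the identification of $\ker\lambda$ with $\Sp_2(R)$, which you rightly flag as the main obstacle, does require the standard transfer argument: $A$ is self-adjoint for $\kappa_1$, so choosing an $\F_q$-linear functional $\ell:R\to\F_q$ whose kernel contains no nonzero ideal, one gets a unique $R$-bilinear, alternating, unimodular form $B$ on the free rank-$2$ module $W$ with $\ell\circ B=\kappa_1$, and an element of $\GL(W)$ commutes with $A$ and preserves $\kappa_1$ if and only if it is $R$-linear and preserves $B$. With these repairs the argument is complete and independent of the paper's counting of vectors with prescribed row space.
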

\begin{proof}
Take a nonzero vector $u\wedge v$ in $U$, so that $\beta_1(h,u\wedge v)=\kappa_1(u,v)=0$.  We claim that $\kappa(\overline{u\wedge v})=0$. This is clear for $q$ odd since $\kappa(\overline{u\wedge v})=\frac{1}{2}\beta_1(u\wedge v,u\wedge v)=0$, so assume that $q$ is even. Take a  vector $w$ of $W$ that is perpendicular to $\la u,v\ra$ and not in $\la u,v\ra$. By Witt's Lemma, there are elements of $\Sp_6(q)$ that map the pair $(u,v)$ to $(w,v)$, $(u+w,v)$ respectively. By the $G$-invariance of $\kappa$,   the vectors $\overline{(u+w)\wedge v}$, $\overline{u\wedge v}$ and $\overline{w\wedge v}$ have the same $\kappa$-value $c$. We expand $\kappa(\overline{u\wedge v}+\overline{w\wedge v})=c$ to obtain $c=0$. This establishes the claim.

For $p\ne 3$, $e_1\wedge e_2$, $e_1\wedge e_2+e_3\wedge e_4$, $e_1\wedge e_2+e_3\wedge e_4+e_5\wedge e_6$ are singular vectors in $U$ and have different ranks. Therefore, there are at least three $K$-orbits on the singular points for $p\ne 3$. Suppose that $p=3$ in the sequel. We define $\cN_1:=\{\la \overline{u\wedge v}\ra\mid \kappa_1(u,v)=0\}$ which is $\Sp_6(q)$-transitive and consists of singular points. There is a bijection between elements of $\cN_1$ and the totally singular lines of $W$, so $|\cN_1|=(q^2+1)\frac{q^6-1}{q-1}$.  We set $z:=e_1\wedge e_2+e_3\wedge e_4$, whose row space is $\la e_1,e_2,e_3,e_4\ra$. Its rank is $4$ and $z+\lambda h$ has rank $6$ for $\lambda\in\F_q^*$, so the $\Sp_6(q)$-orbit of $\la \bar{z}\ra$ has the same size as that of $\la z\ra$. The singular vectors in $U$ with row space $\la e_1,e_2,e_3,e_4\ra$ are of the form $x=e_2\wedge(ae_1+ce_4)+e_3\wedge(be_1+de_4+ke_2)$, where $a,b,c,d,k$ are elements of $\F_q$ such that $D:=ad-bc\ne0$. In particular, $x=\lambda z$ if we specify $-a=d=\lambda\in\F_q^*$ and $b=c=k=0$. By Witt's Lemma there is an element $g\in\Sp_6(q)$ such that
\[
   g:\,(e_1,e_2,e_3,e_4)\mapsto(ae_1+ce_4,e_2,De_3,D^{-1}(be_1+de_4+ke_2)),
\]
and it maps $z$ to $x$.  Therefore, those $(q^4-q^2)(q-1)$ singular vectors with row space $\la e_1,e_2,e_3,e_4\ra$ are in the $\Sp_6(q)$-orbit of $z$.  The stabilizer of the subspace $\la e_1,e_2,e_3,e_4\ra$ in $\Sp_6(q)$ has size $q^9(q^2-1)^2(q-1)$, so that its $\Sp_6(q)$-orbit has size $(q^2+1)\frac{q^6-1}{q-1}$. We thus deduce that the $\Sp_6(q)$-orbit $\cN_2$ of $\la \bar{z}\ra$  has size $(q^6-q^2)\frac{q^6-1}{q-1}$. Since $|\cN_1|+|\cN_2|=\frac{q^{12}-1}{q-1}$, we deduce that $\cN_1$, $\cN_2$ are the only two $\Sp_6(q)$-orbits on the singular points. By Section \ref{subsec_basic} they are both intriguing sets of $Q(12,q)$. The ovoid number $q^6+1$ does not divide their sizes, so $\cN_1$ and $\cN_2$ are tight sets rather than $m$-ovoids.  This completes the proof.
\end{proof}

\subsection{The antisymmetric cube}\label{subsec_wedge3}

Suppose that $G=\SU_6(q)$, where $q=p^f$ with $p$ prime. Let $W$ be the natural $G$-module over $\F=\F_{q^2}$, and write $\kappa_1$ for the associated unitary form on $W$. We choose an orthonormal basis $e_1,\ldots,e_6$ of $W$ such that $\kappa_1(e_i,e_j)=[\![ i=j]\!]$, and identify $W$ with $\F_{q^2}^6$ with respect to this basis. Let $\phi$ be the field automorphism $a\mapsto a^p$ of $\F_{q^2}$, and set $\sigma=\phi^f$. Then $G=\{g\in\GL_6(q^2)\mid gg^{\sigma\top}=I_6\}$, where $I_6$ is the identity matrix of order $6$. Write $X:=\{1,\ldots,6\}$, and let $\binom{X}{3}$ be the set of all $3$-subsets of $X$. Let $U$ be the antisymmetric cube $\wedge^3(W)$.  For a  $3$-subset $I=\{i,j,k\}$ of $X$ with $i<j<k$, write both $e_I$ and $e_{ijk}$ for $e_i\wedge e_j\wedge e_k$. Then $\cB_0:=\{e_I\mid I\in\binom{X}{3}\}$ is a basis of $U$.

\begin{remark}\label{rem_Ext3_detU}
We shall work inside the exterior algebra $\wedge(W)$ of $W$, cf. \cite{Multi}.  For each $i$-dimensional subspace $W_1=\la u_1,\ldots,u_i\ra$ of $W$, it corresponds to a $1$-dimensional subspace $\la u_1\wedge\cdots\wedge u_i\ra$ of $\wedge^i(W)$ by \cite[p.~91]{Multi}. The latter is independent of the choice of the basis of $W_1$, so we denote it by $\la\det(W_1)\ra$.
\end{remark}

Set $z:=e_1\wedge\cdots\wedge e_6$. For $x,y\in U$, we define $\beta(x,y)$ to be the coefficient of $z$ in $x\wedge y$. It is a nondegenerate alternating form, and $\beta(e_I,e_J)$ is nonzero if and only if $I\cap J=\emptyset$ for $e_I,e_J\in\cB_0$. For $g\in G$, we have $z.g=e_1g\wedge\cdots\wedge e_6g=\det(g)z$, cf. \cite[p.~91]{Multi}. It follows that $\beta$ is $G$-invariant.

\begin{lemma}\label{lem_Ext3_Hf}
There is a $G$-invariant Hermitian form $\beta'$ on $U$ such that
\[
 \beta'(u_1\wedge u_2\wedge u_3,v_1\wedge v_2\wedge v_3)=\det\left(\begin{pmatrix}\kappa_1(u_i,v_j)\end{pmatrix}_{1\le i,j\le 3}\right).
\]
\end{lemma}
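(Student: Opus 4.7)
The plan is to construct $\beta'$ by a double application of the universal property of the exterior cube. First I would define the set-theoretic map
\[
 f:W^3\times W^3\rightarrow\F,\quad f(u_1,u_2,u_3;v_1,v_2,v_3)=\det\bigl((\kappa_1(u_i,v_j))_{1\le i,j\le 3}\bigr).
\]
Since $\kappa_1$ is $\F$-linear in the first argument and $\sigma$-semilinear in the second, and the determinant is multilinear and alternating in the rows and in the columns, $f$ is $\F$-multilinear alternating in $(u_1,u_2,u_3)$ and $\sigma$-semilinear, multilinear alternating in $(v_1,v_2,v_3)$. Viewing the second factor through the $\sigma$-twisted module $W^{(\sigma)}$ converts the semilinearity to linearity, and then the universal property of $\wedge^3$ applied separately to each factor produces a unique $\F$-sesquilinear form $\beta'$ on $U=\wedge^3(W)$ that agrees with $f$ on decomposable tensors. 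This is the only possible candidate.

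Next I would verify the Hermitian symmetry. On decomposables, the identity $\kappa_1(v,u)=\kappa_1(u,v)^\sigma$ shows that $(\kappa_1(v_j,u_i))_{j,i}$ is the $\sigma$-conjugate transpose of $(\kappa_1(u_i,v_j))_{i,j}$; since $\det(A^{\sigma\top})=\det(A)^\sigma$, we obtain $\beta'(y,x)=\beta'(x,y)^\sigma$ first on decomposable pairs and then everywhere by sesquilinearity. The $G$-invariance is essentially automatic: for $g\in\SU_6(q)$ the matrix $(\kappa_1(u_ig,v_jg))_{i,j}$ equals $(\kappa_1(u_i,v_j))_{i,j}$, so $\beta'((u_1\wedge u_2\wedge u_3)g,(v_1\wedge v_2\wedge v_3)g)=\beta'(u_1\wedge u_2\wedge u_3,v_1\wedge v_2\wedge v_3)$, and one extends this identity to all of $U$ by sesquilinearity.

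Finally, for nondegeneracy I would evaluate $\beta'$ on the basis $\cB_0$: for $I=\{i_1<i_2<i_3\}$ and $J=\{j_1<j_2<j_3\}$,
\[
 \beta'(e_I,e_J)=\det\bigl((\delta_{i_a,j_b})_{1\le a,b\le 3}\bigr),
\]
which equals $1$ if $I=J$ and $0$ otherwise, so $\cB_0$ is orthonormal for $\beta'$; in particular $\beta'$ is nondegenerate. The main (and really the only) obstacle is packaging the sesquilinear side of $f$ cleanly inside the exterior-power universality, since the textbook form of the universal property is stated for multilinear rather than semilinear maps; passing through the conjugate module $W^{(\sigma)}$ resolves this without any genuine computation.
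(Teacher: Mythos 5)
Your proposal is correct and is essentially the paper's own argument: both define the six-variable determinant function and invoke the universal property of $\wedge^3$ in each factor, the only difference being that you absorb the $\sigma$-semilinearity into the conjugate module $W^{(\sigma)}$ whereas the paper composes the resulting $U^*$-valued map with $\sigma$ before applying universality again. Your explicit checks of Hermitian symmetry, $G$-invariance and nondegeneracy (orthonormality of $\cB_0$) are correct and merely spell out what the paper asserts in its final sentence.
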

\begin{proof}
We write $U^*$ for the dual of $U$. We define $f:\,W^6\rightarrow\F_{q^2}$ such that
\[
  f(u_1,u_2,u_3,v_1,v_2,v_3)=\det\left(\begin{pmatrix}\kappa_1(v_i,u_j)\end{pmatrix}_{1\le i,j\le 3}\right).
\]
Since $f$ is alternating multilinear in the last three coordinates, for each tuple $(u_1,u_2,u_3)$ there is a well-defined element $\ell(u_1,u_2,u_3)\in U^*$ that maps $v_1\wedge v_2\wedge v_3$ to $f(u_1,u_2,u_3,v_1,v_2,v_3)$ by the universality of exterior powers \cite{Multi}. The mapping $(u_1,u_2,u_3)\mapsto \sigma\circ\ell(u_1,u_2,u_3)$ is alternating multilinear, so there is a well-defined mapping $\psi:\,U\rightarrow U^*$ such that $\psi(u_1\wedge u_2\wedge u_3)=\sigma\circ\ell(u_1,u_2,u_3)$. We thus have a mapping $\beta':\,U\times U\rightarrow\F_{q^2}$ such that
\begin{align*}
  \beta'&(u_1\wedge u_2\wedge u_3,v_1\wedge v_2\wedge v_3)
  =\psi(u_1\wedge u_2\wedge u_3)(v_1\wedge v_2\wedge v_3)\\
  &=f(u_1,u_2,u_3,v_1,v_2,v_3)^\sigma
  =\det\left(\begin{pmatrix}\kappa_1(u_i,v_j)\end{pmatrix}_{1\le i,j\le 3}\right).
\end{align*}
The form $\beta'$ is a nondegenerate $G$-invariant Hermitian form on $U$. This completes the proof.
\end{proof}

In the sequel, we assume that $q$ is even. Let $k$ be the algebraic closure of $\F_p$. Then $U\otimes k$ is the restriction of the irreducible $\SL_6(k)$-module $L(p^i\omega_3)$ for some $i$ by \cite{LubeckSmalldegree}. We deduce that $U$ can be realized over $\F_q$ but no smaller fields by \cite[Theorem~5.1.13]{bray2013maximal}. Take $\eta\in\F_{q^2}\setminus\F_q$ such that $\eta^{q+1}=1$. Let $V$ be the $\F_q$-span of the following basis of $U$:
\begin{equation}\label{eqn_wedg3basis}
  \cB=\left\{e_I+e_{X\setminus I}, \frac{1}{1+\eta} e_I+\frac{\eta}{1+\eta} e_{X\setminus I}\mid I \in\binom{X}{3}, 1\in I\right\}.
\end{equation}
It follows that $V\otimes\F_{q^2}=U$, and for $\sum_{I}a_Ie_I\in V$ we have $a_{X\setminus I}=a_I^q$ for each $I\in\binom{X}{3}$.
\begin{lemma}
The $\F_q$-space $V$ is $G$-invariant.
\end{lemma}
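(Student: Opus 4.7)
The plan is to reduce the $G$-invariance of $V$ to a short coordinate-free verification involving the two $G$-invariant forms $\beta$ and $\beta'$ on $U$, so as to avoid any direct computation with the (somewhat awkward) basis $\cB$. I would start from the description given just after \eqref{eqn_wedg3basis}:
\[
 V=\Bigl\{x=\sum_{I\in\binom{X}{3}} a_I e_I\in U\;\Big|\; a_{X\setminus I}=a_I^q\text{ for all }I\Bigr\}.
\]
Equivalently, introduce the $\sigma$-semilinear map $\tau\colon U\to U$, $\sum_I a_I e_I\mapsto\sum_I a_I^q\,e_{X\setminus I}$. Then $\tau^2=\mathrm{id}$ and $V$ is exactly the $\F_q$-space of $\tau$-fixed vectors, so it suffices to show that $\tau$ commutes with the $G$-action on $U$.

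The key step is to characterize $\tau$ intrinsically: I claim that $\tau(x)$ is the unique element of $U$ satisfying $\beta(y,\tau(x))=\beta'(y,x)$ for every $y\in U$. Here $\beta$ is the $G$-invariant alternating form defined just before \eqref{eqn_wedg3basis} and $\beta'$ is the $G$-invariant Hermitian form of Lemma \ref{lem_Ext3_Hf}. To verify the claim I would compare both sides on the basis vectors $y=e_I$. Because $q$ is even the sign ambiguities in the exterior algebra vanish, so $\beta(e_I,e_J)=[\![ J=X\setminus I]\!]$, whence $\beta(e_I,\tau(x))$ is precisely the coefficient of $e_{X\setminus I}$ in $\tau(x)$, namely $a_I^q$. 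For the other side, the determinantal formula in Lemma \ref{lem_Ext3_Hf} together with $\kappa_1(e_i,e_j)=[\![ i=j]\!]$ yields $\beta'(e_I,e_J)=[\![ I=J]\!]$, so the semilinearity of $\beta'$ in its second argument gives $\beta'(e_I,x)=a_I^q$ as well. Nondegeneracy of $\beta$ furnishes uniqueness.

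The conclusion then follows formally from the $G$-invariance of $\beta$ and $\beta'$: for any $g\in G$ and $y\in U$,
\[
 \beta(y,\tau(x.g))=\beta'(y,x.g)=\beta'(y.g^{-1},x)=\beta(y.g^{-1},\tau(x))=\beta(y,\tau(x).g),
\]
so $\tau(x.g)=\tau(x).g$ by nondegeneracy. In particular, $\tau(x)=x$ implies $\tau(x.g)=x.g$, establishing $V.g\subseteq V$. The main obstacle is merely bookkeeping with the conventions: one must check carefully that $\beta'$ is $\F_{q^2}$-linear in its first argument and $\sigma$-semilinear in its second, so that $\beta^{-1}\circ\beta'$ is $\sigma$-semilinear and therefore matches the semilinearity of $\tau$; and one must verify that the potential signs in $\beta(e_I,e_{X\setminus I})=\pm 1$ are all $+1$ because $q$ is even, since this is what makes the intrinsic description of $\tau$ this clean.
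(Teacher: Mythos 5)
Your proof is correct, and at bottom it runs on the same engine as the paper's: playing the $G$-invariant alternating form $\beta$ against the $G$-invariant Hermitian form $\beta'$ to extract a rationality statement over $\F_q$. The paper does this in coordinates: it computes the Gram matrices of $\beta$ and $\beta'$ with respect to the basis $\cB$ of \eqref{eqn_wedg3basis}, observes they are the same matrix $B$, and from the two congruences $(g\rho)^\top B(g\rho)=B$ and $\left((g\rho)^{\sigma}\right)^\top B(g\rho)=B$ concludes $(g\rho)^\sigma=g\rho$, i.e.\ $g$ commutes with the $\sigma$-semilinear involution whose fixed space is the $\F_q$-span of $\cB$ --- which is exactly your $\tau$. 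Your route packages this coordinate-freely: the identity $\beta(y,\tau(x))=\beta'(y,x)$ is checked on the monomial basis, where $\beta(e_I,e_J)=[\![ J=X\setminus I]\!]$ (no signs in characteristic $2$) and $\beta'(e_I,e_J)=[\![ I=J]\!]$ are immediate, and then $G$-equivariance of $\tau$ is formal from nondegeneracy of $\beta$; this avoids any computation with the awkward basis $\cB$, at the price of one extra verification. That verification is the only point you should make explicit: the sentence after \eqref{eqn_wedg3basis} gives only the inclusion $V\subseteq\{x\mid\tau(x)=x\}$, and you need equality (otherwise $G$-invariance of the fixed space would not transfer to $V$). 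It follows in one line from a dimension count: $V$ has $\F_q$-dimension $20$ since $\cB$ spans $U$ over $\F_{q^2}$, while $\F_q$-independent $\tau$-fixed vectors are $\F_{q^2}$-independent, so the fixed space also has $\F_q$-dimension at most $20$. Your bookkeeping on semilinearity is right: with the paper's conventions $\beta'$ is linear in the first and $\sigma$-semilinear in the second argument, so both sides of your defining identity are $\sigma$-semilinear in $x$ and $\tau$ is well defined.
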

\begin{proof}
Let $\rho:\,G\rightarrow\GL_{20}(q^2)$ be the matrix representation of $g\in G$ afforded by $U$ with respect to the basis $\cB$. We claim that $g\rho$ has entries in $\F_q$ for $g\in G$, so that the $G$-invariance of $V$ follows. Let $\beta'$ be the unitary form on $U$ in Lemma \ref{lem_Ext3_Hf}. The Gram matrices of the forms $\beta$ and  $\beta'$ with respect to the basis $\cB$ in \eqref{eqn_wedg3basis} coincide and are block diagonal with diagonal entries $\begin{pmatrix}0&1\\1&0\end{pmatrix}$, which we denote by $B$.  Then $\left((g\rho)^{\sigma}\right)^\top B(g\rho)=B$, $(g\rho)^\top B(g\rho)=B$, from which we deduce that $(g\rho)^{\sigma}=g\rho$. This completes the proof.
\end{proof}

\begin{lemma}\label{lem_Ext3_FS}
For $p=2$,
there is a nondegenerate $G$-invariant quadratic form $\kappa$ on $V$ whose associated bilinear form is the restriction of $\beta$ to $V$.
\end{lemma}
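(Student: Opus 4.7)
The plan is to extend $\kappa$ to all of $U$ by defining a quadratic form $\kappa_U\colon U\to\F_{q^2}$, and to show that $\kappa_U$ is invariant under the larger group $\SL_6(\F_{q^2})$ in characteristic~$2$. Since $G$ acts on $U=\wedge^3 W$ through its image in $\SL_6(\F_{q^2})$, the $G$-invariance of $\kappa=\kappa_U|_V$ is then immediate.

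First I would set $\kappa_U(v):=\sum_{I\in\binom{X}{3},\,1\in I}a_I\,a_{X\setminus I}$ for $v=\sum_Ia_I e_I\in U$. A direct polarization computation, using that $\beta(e_I,e_J)=1$ precisely when $I\sqcup J=X$ and that we are in characteristic~$2$, gives $\kappa_U(v+w)+\kappa_U(v)+\kappa_U(w)=\beta(v,w)$; so $\kappa_U$ is a quadratic form on $U$ with polarization $\beta$. Restricting to $V$, the constraint $a_{X\setminus I}=a_I^q$ forces $\kappa(v)=\sum_{I:\,1\in I}a_I^{q+1}\in\F_q$, so $\kappa:=\kappa_U|_V$ is a well-defined $\F_q$-valued quadratic form on $V$ whose polarization is $\beta|_V$. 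Nondegeneracy follows from nondegeneracy of $\beta|_V$ (descended from $\beta$ on $U$), combined with the characteristic-$2$ fact that on an even-dimensional space a quadratic form with nondegenerate polarization is itself nondegenerate.

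The key step is to prove that $\kappa_U$ is $\SL_6(\F_{q^2})$-invariant. Since the root subgroups $L_{i,i+1}\cong\SL_2(\F_{q^2})$ acting on $\la e_i,e_{i+1}\ra$ (and fixing the remaining basis vectors) generate $\SL_6(\F_{q^2})$, it suffices by symmetry to verify $L_{12}$-invariance. Decompose $U=U_0\oplus U_1\oplus U_2$, where $U_k$ is the span of those $e_I$ with $|I\cap\{1,2\}|=k$; by inspection $\kappa_U$ vanishes on $U_0$ and $U_2$, the $\beta$-cross terms between distinct $U_i$'s are either zero or $L_{12}$-invariant (note $L_{12}$ fixes $U_0$ and $U_2$ pointwise, the latter because it acts on $e_1\wedge e_2$ by its determinant), and the entire content lies on $U_1$. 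For each complementary pair of $2$-subsets $\{a,b\},\{c,d\}\subset\{3,4,5,6\}$, writing $\alpha,\beta,\gamma,\delta$ for the coefficients of $e_{\{1,a,b\}},e_{\{2,a,b\}},e_{\{1,c,d\}},e_{\{2,c,d\}}$, the relevant contribution to $\kappa_U$ is $\alpha\delta+\beta\gamma$. A short expansion shows that under $g=\bigl(\begin{smallmatrix}p&q\\r&s\end{smallmatrix}\bigr)\in L_{12}$ this contribution becomes $2pr\,\alpha\gamma+2qs\,\beta\delta+(ps+qr)(\alpha\delta+\beta\gamma)$, which in characteristic~$2$ collapses to $(ps+qr)(\alpha\delta+\beta\gamma)=\det(g)(\alpha\delta+\beta\gamma)=\alpha\delta+\beta\gamma$.

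The hard point is precisely this last identity: the $\SL_6$-invariance of $\kappa_U$ is a genuine characteristic-$2$ phenomenon with no counterpart in odd characteristic (where the terms $2pr\,\alpha\gamma$ and $2qs\,\beta\delta$ do not vanish, and $ps+qr$ is not $\det(g)$). The hypothesis $p=2$ therefore enters decisively at exactly this step; once $\SL_6(\F_{q^2})$-invariance of $\kappa_U$ is in hand, $G$-invariance of $\kappa$ is immediate from the inclusion $G\le\SL_6(\F_{q^2})$.
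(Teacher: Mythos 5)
Your proof is correct, and although the form itself is exactly the one the paper uses --- $\kappa_U(v)=\sum_{I\ni 1}a_Ia_{X\setminus I}$, which polarizes to $\beta$ because $\beta(e_I,e_J)=1$ precisely when $J=X\setminus I$ in characteristic $2$, and which on $V$ takes values in $\F_q$ via $a_{X\setminus I}=a_I^q$, with nondegeneracy coming from the Gram matrix of $\beta$ on the basis $\cB$ computed in the preceding lemma's proof --- the invariance step, which is the heart of the lemma, is argued along a genuinely different route. The paper shows by a single polynomial identity (also a characteristic-$2$ computation) that $Q$ vanishes on all pure wedges $x\wedge y\wedge z$, and then invokes irreducibility of $U$ together with a uniqueness principle: in characteristic $2$ two quadratic forms with the same polarization differ by an additive Frobenius-semilinear functional, so the form polarizing to $\beta$ and vanishing on the $G$-invariant spanning set of decomposables is unique, hence $G$-invariant. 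You instead verify invariance directly on the subgroups $\SL_2(\F_{q^2})$ acting on $\la e_i,e_{i+1}\ra$, which generate $\SL_6(\F_{q^2})$; your decomposition $U=U_0\oplus U_1\oplus U_2$, the pointwise fixing of $U_0$ and $U_2$, and the identity $(a\alpha+c\beta)(b\gamma+d\delta)+(b\alpha+d\beta)(a\gamma+c\delta)=(ad+bc)(\alpha\delta+\beta\gamma)$ in characteristic $2$ all check out, and they prove the stronger statement that $\kappa_U$ is $\SL_6(\F_{q^2})$-invariant, from which $G$-invariance is immediate since $G=\SU_6(q)\le\SL_6(q^2)$. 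What each approach buys: yours is elementary and explicit, needs neither irreducibility of $U$ nor the uniqueness argument, and isolates the fact that this is really an $\SL_6$-phenomenon in characteristic $2$; the paper's is shorter and computation-light, and in fact it also yields $\SL_6(\F_{q^2})$-invariance implicitly, since the set of pure wedges and the form $\beta$ are invariant under the larger group. Two cosmetic points only: the cross terms you discuss are cross terms of $\kappa_U$ rather than of $\beta$, and your matrix entries $p,q,r,s$ clash with the prime $p$ and field size $q$ of the ambient setup; neither affects correctness.
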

\begin{proof}
We define the following quadratic form $Q$ on $U$: for $x=\sum_Ix_Ie_I\in W$ we set $Q(x)=\sum_I'x_Ix_{X\setminus I}$, where the summation $\sum'$ is taken over the $3$-subsets of $X$ that contains $1$. Its associated bilinear form is $\beta$. It remains to show that $Q$ is preserved by $G$, so that its restriction to $V$ is the desired form $\kappa$.

Let $\Lambda$ be the set of $\la\det(W_1)\ra$'s, where $W_1$ ranges over all $3$-dimensional subspaces of $W$. We claim that $Q$ vanishes on $\Lambda$. Let $W_1=\la x,y,z\ra$ be a $3$-dimensional subspace of $W$, where $x=\sum_{i=1}^6x_ie_i$, $y=\sum_{i=1}^6y_ie_i$, $z=\sum_{i=1}^6z_ie_i$. Set $l_{ij}:=y_iz_j+y_jz_i$ for $1\le i<j\le 6$. We regard  $Q(x\wedge y\wedge z)$ as a homogenous polynomial of degree $2$ in the indeterminants $x_1,\ldots,x_6$. For $i<j$ the coefficient of $x_ix_j$ is $\sum l_{ab}l_{cd}=0$, where the summation is take over the tuples $(a,b,c,d)$ such that $\{i,j,a,b,c,d\}=X$, $a<b$ and $c<d$. It follows that $Q(x\wedge y\wedge z)=0$ as desired, so $Q$ vanishes on $\Lambda$.

Since $U$ is an irreducible $G$-module, the $G$-invariant set $\Lambda$ spans $U$ over $\F_{q^2}$. Then $Q$ is the unique quadratic form on $U$ that vanishes on the $G$-invariant set $\Lambda$ and polarizes to the $G$-invariant bilinear form $\beta$, so $Q$ is $G$-invariant. This completes the proof.
\end{proof}

\begin{proposition}
Let $\kappa$ be as in Lemma \ref{lem_Ext3_FS}, and let  $K$ be the normalizer of $G$ in $\Gamma(V,\kappa)$. Then $K$ has at least three orbits on the singular points.
\end{proposition}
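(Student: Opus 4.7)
The plan is to mirror the strategy used in the preceding propositions of this subsection: I would exhibit three singular vectors of $V$ whose projective images lie in pairwise distinct $K$-orbits. The natural $K$-invariant I shall use is the dimension of
\[
  R_x := \{v\in W : v\wedge x = 0\}\subseteq W
\]
for $x\in V\subseteq \wedge^3(W)$, where the wedge product lands in $\wedge^4(W)$. Since $K$ is generated by the induced actions on $\wedge^3(W)$ of Hermitian semilinear transformations of $W$, diagonal similarities, and the field automorphism $\phi$, every element $g\in K$ is induced by some semilinear $\tilde g$ acting on $W$, and then $\tilde g v\wedge gx = \wedge^4(\tilde g)(v\wedge x)$, so $\tilde g R_x = R_{gx}$. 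Hence $\dim R_x$ is a $K$-invariant of $\la x\ra$.

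I would then construct three singular vectors as follows:
\[
x_1 = (e_1+e_2)\wedge(e_3+e_4)\wedge(e_5+e_6),\qquad x_2 = e_{123}+e_{124}+e_{356}+e_{456},
\]
\[
x_3 = e_{123}+e_{456}+e_{124}+e_{356}+e_{145}+e_{236}+e_{136}+e_{245}.
\]
The vector $x_1=\det(W_1)$ with $W_1=\la e_1+e_2,\,e_3+e_4,\,e_5+e_6\ra$ a maximal totally isotropic $3$-space of $W$ (each factor is isotropic in characteristic $2$ and they are mutually orthogonal) lies in $\Lambda\cap V$, so $\kappa(x_1)=0$ by Lemma~\ref{lem_Ext3_FS}, and $R_{x_1}=W_1$ has dimension $3$. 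For $x_2=(e_3+e_4)\wedge(e_1\wedge e_2+e_5\wedge e_6)$, the coefficient condition $a_{X\setminus I}=a_I^q$ is verified on the two conjugate pairs $(\{1,2,3\},\{4,5,6\})$ and $(\{1,2,4\},\{3,5,6\})$, so $x_2\in V$; singularity follows from $\kappa(x_2)=1^{q+1}+1^{q+1}=0$ in characteristic $2$; and solving $v\wedge x_2=0$ gives $R_{x_2}=\la e_3+e_4\ra$ of dimension $1$. For $x_3$, the eight nonzero monomial coefficients are arranged in four conjugate pairs, so $x_3\in V$; we have $\kappa(x_3)=\sum_{I\ni 1}a_I^{q+1}=4\cdot 1=0$ in characteristic $2$; and a short linear algebra computation on the $\wedge^4(W)$-components of $v\wedge x_3$ (the coefficients of $e_{1235}$, $e_{1246}$, $e_{1345}$, $e_{1346}$ immediately force $v_3=v_4=v_5=v_6=0$, after which the remaining equations give $v_1=v_2=0$) shows $R_{x_3}=0$.

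The three invariants $\dim R_{x_i}$ take the distinct values $3$, $1$, $0$, so the projective points $\la x_i\ra$ lie in pairwise distinct $K$-orbits, whence $K$ has at least three orbits on the singular points. The main step requiring care is the verification of $\dim R_{x_3}=0$: one has to solve the $15$ linear equations obtained from the components of $v\wedge x_3\in\wedge^4(W)$ in the six unknowns $v_1,\ldots,v_6$. The specific choice of $x_3$ is made so that enough terms give equations with isolated unknowns, reducing the system to a trivial one. The other potential subtlety is the $K$-invariance of the rank stratification of $\wedge^3(W)$, which is genuine once we observe that the outer automorphism group of $\SU_6(q)$ contributes no graph automorphism distinct from the semilinear action on $W$ already accounted for.
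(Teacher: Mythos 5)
Your argument is correct and is essentially the paper's own proof: the paper uses exactly the same $K$-invariant $\dim\{v\in W:\ v\wedge x=0\}$ to separate three singular points of $V$ with values $3,1,0$, and its first witness is literally your $x_1=(e_1+e_2)\wedge(e_3+e_4)\wedge(e_5+e_6)$. The only (mild) difference is that your $x_2,x_3$ have all coefficients in $\F_2$, so the verification is uniform in $q$, whereas the paper chooses witnesses involving $\eta$ and elements $\alpha,\beta$ with $1+\alpha^{q+1}+\beta^{q+1}=0$, hence assumes $q>2$ and settles $q=2$ separately by an Atlas computation.
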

\begin{proof}
For $q=2$ we use the online Atlas data \cite{Atlas} to check that $K$ has three orbits on the singular points of $(V,\kappa)$, so assume that $q>2$. Take $\alpha,\beta\in\F_{q^2}^*$ such that $1+\alpha^{q+1}+\beta^{q+1}=0$; such elements exist when $q>2$. Then $x=(e_1+e_2)\wedge(e_3+e_4)\wedge(e_5+4_6)$, $y=e_{123}+e_{456}+\eta e_{124}+\eta^q e_{356}$, $z=e_{123}+e_{456}+\alpha e_{124}+\alpha^q e_{356}+\beta e_{135}+\beta^q e_{246}$ are singular vectors in $V$, where we use the expression of $\kappa$ as in the proof of Lemma \ref{lem_Ext3_FS}. The dimension of $\{u\in W\mid u\wedge w=0\}$ is $3,1,0$ for $w=x,y,z$ respectively, so $\la x\ra$, $\la y\ra$, $\la z\ra$ are in distinct $K$-orbits. This completes the proof.
\end{proof}

\subsection{The twisted tensor modules}\label{subsec_twTens}

Suppose that $G=\Sp_{2m}(q^2)$ with $m\ge 2$. Let $\sigma$ be the involutionary field automorphism of $\F_{q^2}$. Let $W$ be the natural module of $G$ over $\F_{q^2}$, and write $\kappa_1$ for the associated symplectic form. We choose a symplectic basis $e_1,\ldots,e_{2m}$ of $W$ such that $\kappa_1(e_i,e_j)=1$ if and only if $j=i+m$ and $\kappa_1(e_i,e_j)=0$ if $j-i\ne\pm m$. Let $J$ be the matrix of order $2m$ whose $(i,j)$-th entry is $\kappa_1(e_i,e_j)$. We identify $W$ with $\F_{q^2}^{2m}$ and regard  $G$ as the matrix group $\{g\in\GL_{2m}(q^2)\mid gJg^\top=J\}$ with respect to the chosen basis. Let $W^\sigma$ be the $G$-module with the same ambient space as $W$ and $G$-action $v.g=vg^\sigma$. We set $U=W\otimes W^\sigma$, which has a nondegenerate bilinear form $\beta'$ such that $\beta'(u\otimes v,u'\otimes v')=uJu'^\top vJ^\sigma v'^\top$.
If $q$ is odd, $\kappa'(x):=\frac{1}{2}\beta'(x,x)$ is a quadratic form on $U$. If $q$ is even, then there is a unique quadratic form $\kappa'$ on $U$ that vanishes on the $u\otimes v$'s and has $\beta'$ as its associated bilinear form, cf. \cite[Proposition~1.9.4]{bray2013maximal}. We define $V:=\{x\in U\mid f(x)=x\}$, where
$f(\lambda u\otimes v)=\lambda^q v^\sigma\otimes u^\sigma$ is a semilinear mapping that commutes with the action of $G$. Then $V\otimes\F_{q^2}=U$, and $V$ is an absolutely irreducible $G$-module. The restriction of $\kappa'$ to $V$, denoted by $\kappa$, is nondegenerate. This gives an embedding of $\PSp_{2m}(q^2)$ in $\Gamma(V,\kappa)$, and we write $K$ for its normalizer in the latter group.  Please refer to \cite{schaffer,cossi,Kingsubgroup2005} for more details.

\begin{proposition}
If $G=\Sp_4(q^2)$, then $K$ has at least three orbits on the singular points of $(V,\kappa)$.
\end{proposition}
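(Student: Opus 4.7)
The plan is to realize $V$ as a space of Hermitian $4\times 4$ matrices over $\F_{q^2}$, observe that the matrix rank is a $K$-invariant on $V$, and then exhibit three singular elements of distinct ranks.

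First, I would identify $U = W\otimes W^\sigma$ with $M_4(\F_{q^2})$ via the dictionary $u\otimes v \leftrightarrow u^\top v$. Under this identification the $G$-action on $U$ becomes $X\mapsto g^\top X g^\sigma$; the involutive semilinear operator $f$ cutting out $V$ becomes $X\mapsto (X^\top)^\sigma$; and hence $V$ is identified with the $16$-dimensional $\F_q$-space of Hermitian matrices $\{X\in M_4(\F_{q^2}):X^\top = X^\sigma\}$. A short computation, using $J^\top=-J$ and $J^\sigma=J$, gives $\beta'(X,Y) = -\mathrm{tr}(XJY^\top J)$ for all $X,Y\in U$, so that the restriction of $\beta'$ to $V$ is the bilinear form associated with $\kappa$.

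Second, I argue that matrix rank is a $K$-invariant. The $G$-action $X\mapsto g^\top X g^\sigma$ preserves rank since $g$ is invertible. The outer part of $K$ over $L = \PSp_4(q^2)\cong G/Z(G)$ is generated by $\textup{GSp}_4(q^2)$-similarities (which act on $V$ by scalar multiplication by elements of $\F_q^*$, via the norm map on similarity factors) and by lifts of $\Gal(\F_q/\F_p)$ to $\Gal(\F_{q^2}/\F_p)$ (which act entrywise on Hermitian matrices); both operations manifestly preserve the $\F_{q^2}$-rank.

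Third, I would exhibit the three singular vectors
\[
X_1 = E_{11},\qquad X_2 = E_{11} + E_{22},\qquad X_3 = E_{11} + E_{34} + E_{43},
\]
each Hermitian with entries in $\F_p\subseteq\F_{q^2}$ and of ranks $1$, $2$ and $3$ respectively. Singularity is checked by a direct trace computation for $q$ odd; for $q$ even, it follows from the fact that $\kappa$ vanishes on pure tensors $e_i\otimes e_j^\sigma$ together with the vanishing of all relevant cross terms $\beta'(e_i\otimes e_j^\sigma, e_k\otimes e_l^\sigma) = J_{ik}J_{jl}$, which is immediate from the sparsity of $J$. Since rank is $K$-invariant, the points $\langle X_1\rangle$, $\langle X_2\rangle$ and $\langle X_3\rangle$ lie in three distinct $K$-orbits on the singular points, giving the claim. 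The main obstacle is the second step: pinning down $K/L$ precisely and verifying that every outer automorphism of $L$ realized in $\Gamma(V,\kappa)$ acts on $V$ by a rank-preserving transformation; for $q$ even in particular, one must rule out a graph-type automorphism of $\Sp_4$ mixing the rank strata on the twisted tensor module, although this turns out to be routine.
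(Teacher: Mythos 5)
Your proposal is correct and takes essentially the same route as the paper: identify $V$ with the Hermitian matrices in $M_4(\F_{q^2})$ via $u\otimes v\mapsto u^\top v$, use matrix rank as a $K$-invariant, and exhibit three singular points of ranks $1,2,3$ (the paper's representatives are $E_{11}$, $E_{12}+E_{21}$, $E_{12}+E_{21}+E_{33}$, but yours work just as well). The only blemish is the parenthetical claim that $\textup{GSp}_4(q^2)$-similarities act on $V$ by $\F_q^*$-scalars --- they act by $X\mapsto h^\top Xh^\sigma$ --- but this does not affect rank-invariance, and the paper itself leaves the $K$-invariance of rank (including the characteristic-$2$ graph-automorphism point you flag) implicit.
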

\begin{proof}
We write $M=M_{4}(\F_{q^2})$ with $G$-action $X.g=g^\top Xg^\sigma$. There is a $G$-module isomorphism $\psi:\,U\rightarrow M$ such that $u\otimes v\mapsto u^\top v$ by the universality of tensors, cf. \cite{Multi}. We write $f'$, $V'$ for the respective counterparts of $f$, $V$ for $M$. Then $f'(X)=X^{\sigma\top}$ for $X\in M$, and $V'$ is the set of  Hermitian matrices in $M$. The vectors $e_1\otimes e_1$, $e_1\otimes e_2+e_2\otimes e_1$, $e_1\otimes e_2+e_2\otimes e_1+e_3\otimes e_3$ in $V$ are singular, and their images under $\psi$ have different ranks. Therefore, they are in distinct $K$-orbits. This completes the proof.
\end{proof}

\subsection{The spin modules}\label{subsec_spin}

Let $(V,Q)$ be a quadratic space over $\F_q$, where $\dim(V)\ge 5$ and $q=p^f$ with $p$ prime. Here, we allow $q$ to be even when $\dim(V)$ is odd. We define $B(x,y)=Q(x+y)-Q(x)-Q(y)$. For a nonzero vector $v$, we define
$v^{\perp,B}:=\{x\in V\mid B(v,x)=0\}$. We write $C(V,Q)$ for the Clifford algebra associated with the quadratic space $(V,Q)$ and let $\bar{\cdot}$ be the anti-isomorphism of $C(V)$ that extends the identity mapping of $V$, cf. \cite{Grove2002class}. If clear from the context, we will write $C(V)$ instead of $C(V,Q)$ for brevity. The Clifford group is the normalizer of $V$ in the group of invertible elements in $C(V)$. The even Clifford group consists of elements of the form $u_1\cdots u_{2s}$, where $u_i$'s are nonsingular vectors of $V$. For such an element $g$, define its spin norm as $N(g)=g\bar{g}$. The spin group $\textup{Spin}(V)$ is the kernel of $N$. For a nonsingular vector $v$, we define the reflection $r_v$ of $V$ such that $xr_{v}=x-\frac{B(v,x)}{Q(v)}v$ for $x\in V$. In $C(V)$, we have $xr_v=-vxv^{-1}$ for $x\in V$.

We suppose that $\dim(V)=2m\ge 8$ and $Q$ has plus sign for the rest of this subsection. We specify a basis $\{e_1,\ldots,e_m,f_1,\ldots,f_m\}$ of $V$ such that $U=\la f_1,\ldots,f_m\ra$ and $W=\la e_1,\ldots,e_m\ra$ are totally singular and $B(e_i,f_j)=[\![ i=j]\!]$. We set $X:=\{1,\ldots,m\}$. For subsets $I,J$ of $X$, we write $e_I=\prod_{i\in I}e_i$, $f_J:=\prod_{i\in J}f_j$, where the subscripts are arranged in the increasing order. In particular, $e_\emptyset=f_\emptyset=1$. Let $C_+(V)$ be the subspaces spanned by the $e_If_J$'s with $|I|+|J|$ even. Set $S_U:=C(V)/C(V)\cdot U$, where $C(V)\cdot U$ is the left ideal generated by all the elements of $U$. We write $e_I$'s for their corresponding images in $S_U$ by abuse of notation. Let $S_U^{+}$ be the quotient image of $C_{+}(V)$ in $S_U$, which has dimension $2^{m-1}$ and has the $e_I$'s with $I$ ranging over even subsets of $X$ as a basis. The even Clifford group acts on $S_U^+$ via left multiplication. In particular, $S_U^+$ is the half spin module of $\textup{Spin}(V)$.

For a maximal totally singular subspace $U'$ of $V$, the subspace $\{x\in S_U\mid ux=0\textup{ for all } u\in U'\}$ has dimension $1$ which we denote by $\la p_{U'}\ra$. We call any nonzero vector in it a {\it pure spinor} associated with $U'$. For instance, we have $\la p_U\ra=\la 1\ra$. There are two equivalence classes of maximal totally singular subspaces of $V$ by \cite[III.1.10]{alg_spinor}, and $\Omega(V,Q)$ is transitive on each equivalence class.
\begin{lemma}
Suppose that the maximal totally singular subspace $U'$ is the image of $U$ under $g=r_{v_s}\cdots r_{v_{1}}$, where each $v_i$ is nonsingular. Then $\la p_{U'}\ra=\la v_1\cdots v_{s}\ra$.
\end{lemma}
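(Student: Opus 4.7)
The plan is to exhibit $w:=v_1v_2\cdots v_s\in C(V)$ as a nonzero element of $S_U$ that is annihilated on the left by every element of $U'$, and then invoke the one-dimensionality of the annihilator to conclude $\la p_{U'}\ra=\la w\ra$.

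First I would translate the composition of reflections into conjugation in $C(V)$. Starting from the excerpt's identity $xr_v=-vxv^{-1}$ for $x\in V$ and $v$ nonsingular, a short induction on $s$ yields
\[
 wxw^{-1}=(-1)^s x(r_{v_s}\cdots r_{v_1})=(-1)^s xg,\qquad x\in V.
\]
As subspaces of $V$, this gives $wUw^{-1}=Ug=U'$.

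Next I would check that $w$ annihilates $U'$ inside $S_U=C(V)/C(V)\cdot U$. Every $u'\in U'$ has the form $u'=wuw^{-1}$ with $u\in U$, so $u'\cdot w=wu\in C(V)\cdot U$, which reads as $u'\cdot w=0$ in $S_U$. Hence $w$ lies in the annihilator $\{x\in S_U\mid U'x=0\}=\la p_{U'}\ra$.

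Finally I need $w\ne 0$ in $S_U$. Since each $v_i$ is nonsingular, $w$ is a unit of $C(V)$; left multiplication $L_w\colon C(V)\to C(V)$, $x\mapsto wx$, sends the left ideal $C(V)\cdot U$ into itself and is inverted by $L_{w^{-1}}$, so it descends to a bijection of $S_U$. Since $1\in S_U$ is nonzero (it represents the basis vector $e_\emptyset$), $L_w(1)=w$ is nonzero in $S_U$. Together with the previous paragraph, this forces $\la p_{U'}\ra=\la w\ra=\la v_1\cdots v_s\ra$.

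The only delicate point is pinning down the conjugation formula $wUw^{-1}=U'$ with the correct convention for $g=r_{v_s}\cdots r_{v_1}$: the sign $(-1)^s$ accumulated from the identity $xr_v=-vxv^{-1}$ harmlessly disappears when one passes to subspaces, but the order of multiplication on the Clifford side (namely $v_1v_2\cdots v_s$) must be tracked carefully so that it matches the order of the reflections on the geometric side.
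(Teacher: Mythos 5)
Your proof is correct and follows essentially the same route as the paper: the identity $xg=(-1)^s v_1\cdots v_s\, x\, v_s^{-1}\cdots v_1^{-1}$ in $C(V)$, followed by the observation that each element of $U'=Ug$ left-multiplies $w=v_1\cdots v_s$ into $C(V)\cdot U$, hence annihilates it in $S_U$. Your explicit check that $w\ne 0$ in $S_U$ (via invertibility of $w$ and the induced bijection of the quotient) is a point the paper leaves implicit, but it is a welcome refinement rather than a different argument.
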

\begin{proof}
For an element $x\in U$, we have $xg=(-1)^sv_1\cdots v_{s}xv_{s}^{-1}\cdots v_{1}^{-1}$ in $C(V)$. It follows that $(xg)v_1\cdots v_{s}=(-1)^sv_1\cdots v_{s}x=0$ in $S_U^+$ for each $x\in U$. The claim then follows from the fact $Ug=U'$.
\end{proof}

First suppose that $m=4$. By \cite[p.~147]{alg_spinor}, there is a hyperbolic quadratic form $\kappa$ on $S_U^+$ such that $\kappa(zx)=N(z)\kappa(x)$ for $z$ in the even Clifford group and $x\in S_U^+$. The nonzero singular vectors of $(S_U^+,\kappa)$ are exactly the pure spinors by \cite[IV.1.1]{alg_spinor}. Set $w=e_4-f_4$, $w'=e_4+f_4$ and $V'=w^{\perp,B}$. We have natural embeddings of $C(V')$, $\textup{Spin}(V')$ in $C(V)$ and $\textup{Spin}(V)$ respectively. The group $\textup{Spin}(V')$ is $\textup{Spin}_7(q)$ or $\Sp_6(q)$ according as $q$ is odd or even, and we call $S_{U}^+$ its spin module.

\begin{proposition}\label{prop_spin7}
The group $\textup{Spin}(V')$ is transitive on both the set of singular points and each isometry class of nonsingular points of $S_U^+$. For odd $q$, a subgroup of the even Clifford group of $V'$ that contains $\textup{Spin}(V')$ is transitive on the set of nonsingular points if and only if it contains an element whose spin norm is nonsquare.
\end{proposition}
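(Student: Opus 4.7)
My plan is to handle the three claims in turn, using respectively (i) the bijection between pure spinors of $S_U^+$ and one class of maximal totally singular subspaces of $V$, (ii) the maximal subgroup $G_2(q)\le\textup{Spin}(V')$ with an order count, and (iii) the Clifford algebra identity relating $\kappa$ and the spin norm $N$.

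For transitivity on singular points of $S_U^+$, I observe that singular points are precisely the pure spinors, which by the preceding discussion correspond bijectively to one $\Omega(V)$-class $\cA$ of maximal totally singular subspaces of $V$. I claim that the restriction $U'\mapsto U'\cap V'$ is an $\Omega(V')$-equivariant bijection from $\cA$ onto the set $\Sigma$ of $3$-dimensional totally singular subspaces of $V'$: the intersection has dimension $3$ because $V'$ has codimension $1$ in $V$ while its Witt index is only $3$, and conversely for each $U_3\in\Sigma$ the $2$-dimensional quadratic space $U_3^{\perp}/U_3\subset V/U_3$ contains the nonsingular image of $w$ and must be hyperbolic by a global count (the total number of MTS of $V$ is twice $|\Sigma|$); the two singular lines in this plane then yield two MTS extensions of $U_3$ that lie in opposite classes because their intersection $U_3$ has odd dimension. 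Witt's Lemma (applied to $V'$ directly when $q$ is odd, and to the induced nondegenerate symplectic space $V'/\la w\ra$ when $q$ is even) gives transitivity of $\Omega(V')$ on $\Sigma$, so $\textup{Spin}(V')$ is transitive on pure spinors.

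For transitivity on each isometry class of nonsingular points, I use the maximal embedding $G_2(q)\le\textup{Spin}(V')$: its action on $S_U^+$ has a trivial composition factor, producing a $G_2(q)$-fixed vector $v_0\in S_U^+$. A standard computation with the spin module (or an order comparison ruling out $G_2(q)\le\textup{Stab}_{\textup{Spin}(V')}(v_0)$ when $v_0$ is singular, using transitivity on pure spinors from the previous step) shows that $v_0$ is nonsingular. Since $|\textup{Spin}_7(q)|/|G_2(q)|=q^3(q^4-1)$ equals the number of vectors of $S_U^+$ with $\kappa$-value $\kappa(v_0)$, the $\textup{Spin}(V')$-orbit of $v_0$ exhausts them. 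For any other $c\in\F_q^*$, I invoke the surjectivity of the spin norm $N\colon\Gamma^+(V')\to\F_q^*$ (from products $u_1u_2$ of nonsingular vectors with $N(u_1u_2)=Q(u_1)Q(u_2)$) to pick $g\in\Gamma^+(V')$ with $N(g)=c/\kappa(v_0)$; then $gv_0$ has $\kappa$-value $c$ and $\textup{Spin}(V')$-stabilizer $gG_2(q)g^{-1}$ (by normality of $\textup{Spin}(V')$ in $\Gamma^+(V')$), and the same count yields transitivity on $\{\kappa=c\}$. Projectively, vectors with $\kappa$-value in a fixed square class comprise exactly one isometry class of nonsingular points, which is therefore a single $\textup{Spin}(V')$-orbit (and the unique class when $q$ is even, since $\lambda\mapsto\lambda^2$ is then a bijection of $\F_q^*$).

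For the criterion when $q$ is odd, I use the Clifford algebra identity $g\bar{g}=\bar{g}g=N(g)\in\F_q^*$ for $g\in\Gamma^+(V')$, which translates directly to $\kappa(gx)=N(g)\kappa(x)$ on $S_U^+$; hence $g$ preserves each isometry class of nonsingular points precisely when $N(g)\in(\F_q^*)^2$. Combined with the second step, a subgroup $\textup{Spin}(V')\le H\le\Gamma^+(V')$ fuses the two isometry class orbits into a single $H$-orbit on nonsingular points iff $H$ contains an element of nonsquare spin norm. The main obstacle I foresee is the first step: verifying the hyperbolicity of $U_3^{\perp}/U_3$ for every $U_3\in\Sigma$ and the transitivity of $\Omega(V')$ on $\Sigma$ requires a careful separate treatment in even characteristic, where the induced bilinear form on $V'$ has a $1$-dimensional radical spanned by $w$ and one must work with the symplectic quotient $V'/\la w\ra$.
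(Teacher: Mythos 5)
Your proposal is correct in substance but follows a genuinely different route from the paper for the transitivity statements. The paper's proof is two lines: transitivity of $\textup{Spin}(V')$ on the singular points and on each isometry class of nonsingular points is quoted from \cite[Theorem~8.4]{giudici2020subgroups}, and the spin-norm criterion for odd $q$ is deduced, exactly as in your third step, from the identity $\kappa(zx)=N(z)\kappa(x)$ for $z$ in the even Clifford group. Your self-contained treatment of the first claim — identifying the singular points with the pure spinors in $S_U^+$, hence with the maximal totally singular subspaces of $V$ in the class of $U$, and showing that $U'\mapsto U'\cap V'$ is an $\Omega(V')$-equivariant bijection onto the totally singular $3$-spaces of $V'$ (with the symplectic quotient $V'/\la w\ra$ handling even $q$) — replaces the citation by elementary geometry, and your treatment of nonsingular vectors via the $G_2(q)$-stabilizer recovers the classical chain $\Omega_8^+> \textup{Spin}_7>G_2$; the paper's version buys brevity, yours buys self-containedness and makes the orbit sizes visible.

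Two steps need tightening. First, in the defining characteristic a trivial composition factor of $S_U^+|_{G_2(q)}$ does not by itself produce a fixed vector; the clean fix is to identify $S_U^+$ with the split octonion algebra, whose identity element is a nonsingular vector fixed by $G_2(q)=\Aut(\mathbb{O})$ in every characteristic (your alternative order comparison does correctly rule out a singular fixed vector, but it presupposes that a fixed vector exists). Second, the computation $|\textup{Spin}(V')|/|G_2(q)|=q^3(q^4-1)$ only bounds the orbit of $v_0$ from above, since a priori the stabilizer could be larger than $G_2(q)$; you need the stabilizer to be exactly $G_2(q)$, which follows from the maximality of $G_2(q)$ in $\Omega_7(q)$ (resp. $\Sp_6(q)$ for $q$ even) together with the perfectness of $\textup{Spin}(V')$ and the irreducibility of $S_U^+$, which exclude the stabilizer being the whole group. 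With these two standard facts inserted, your argument is complete and the remaining steps (the rescaling by elements of nonsquare spin norm and the identification of isometry classes of points with square classes of $\kappa$-values via Witt's theorem) are sound and agree with the paper's own use of $\kappa(zx)=N(z)\kappa(x)$.
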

\begin{proof}
The first claim follows from \cite[Theorem~8.4]{giudici2020subgroups}.  The second claim follows from the fact that $\kappa(zx)=N(z)\kappa(x)$ for $z$ in the even Clifford group and $x\in S_U^+$.
\end{proof}

We further assume that $q=q_0^2$, and take an element $\delta\in\F_q\setminus\F_{q_0}$. We define
\begin{equation}\label{eqn_Spin_V0}
  V_0:=\la e_1, f_1, e_2, f_2, e_3, f_3,e_4+f_4,\delta e_4+\delta^q f_4\ra_{\F_{q_0}},
\end{equation}
and let $Q_0:=Q|_{V_0}$ be the restriction of $Q$ to $V_0$. Then $(V_0,Q_0)$ is an elliptic quadratic space, and $V=V_0\otimes\F_q$. The Clifford algebra $C(V_0)$ embeds in $C(V)$ naturally, and we have $\textup{Spin}(V_0)=\textup{Spin}(V)\cap C(V_0)$. The group $\textup{Spin}(V_0)$ is a covering group of $\Omega_8^-(q_0)$, and we call  $S_U^+$ its spin module.

\begin{thm}\label{thm_spinSqrt}
Suppose that $m=4$ and $q=q_0^2$,  and let $V_0$ be defined as in \eqref{eqn_Spin_V0}. Then $\textup{Spin}(V_0)$ has exactly two orbits on the singular points of $S_U^+$ which are $(q_0^3+1)$- and $(q^3-q_0^3)$-tight sets of $Q^+(7,q)$ respectively.
\end{thm}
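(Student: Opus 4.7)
My plan is to identify the singular points of $S_U^+$ with one equivalence class $\cM^+$ of maximal totally singular $4$-subspaces of $V$ via the pure spinor correspondence established earlier in this subsection, and to track $\textup{Spin}(V_0)$-orbits on $\cM^+$ through the invariant
\[
 d(W') := \dim_{\F_{q_0}}(W'\cap V_0) = \dim_{\F_q}(W'\cap W'^{\sigma}),
\]
where $\sigma$ generates $\Gal(\F_q/\F_{q_0})$. Since $\textup{Spin}(V_0)$ acts $\F_{q_0}$-linearly on $V$ and commutes with $\sigma$, $d$ is a $\textup{Spin}(V_0)$-invariant, and $d(W')\le 3$ because $V_0$ is elliptic of rank $3$. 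I first show that $\sigma$ interchanges the two classes $\cM^{\pm}$: given any maximal totally singular $A\subset V_0$ (necessarily of dimension $3$), the quotient $(A\otimes\F_q)^{\perp}/(A\otimes\F_q)\cong(A^{\perp}/A)\otimes\F_q$ is $2$-dimensional of hyperbolic type, since an elliptic plane over $\F_{q_0}$ becomes hyperbolic over the quadratic extension $\F_q$. Its two isotropic lines yield the two extensions of $A\otimes\F_q$ to maximal totally singular $4$-subspaces of $V$, which lie in opposite classes of $\cM$. These two extensions are permuted by $\sigma$, and neither can be $\sigma$-fixed (else it would descend to a $4$-dimensional totally singular subspace of $V_0$, contradicting its rank). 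Hence $\sigma$ swaps $\cM^{\pm}$, so for $W'\in\cM^+$ we have $W'^{\sigma}\in\cM^-$, and by the standard parity criterion (two maximal totally singular subspaces of $Q^+(7,q)$ lie in opposite classes iff their intersection dimension is odd), $d(W')\in\{1,3\}$. This gives a partition $\cM^+=\cM^+_3\sqcup\cM^+_1$.

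I then compute the orbit sizes. For $\cM^+_3$, each maximal totally singular $A\subset V_0$ determines the unique $W'\in\cM^+$ containing $A\otimes\F_q$, with $W'\cap V_0=A$. By Witt's lemma, $\Omega^-_8(q_0)$ is transitive on such $A$, hence on $\cM^+_3$, which has size equal to the number of maximal totally singular subspaces of $Q^-(7,q_0)$, namely $(q_0^4+1)(q_0^3+1)(q_0^2+1)=(q_0^3+1)(q+1)(q^2+1)$. For $\cM^+_1$, I fix a singular point $\la v\ra_{\F_{q_0}}\subset V_0$; its parabolic stabilizer in $\Omega^-_8(q_0)$ has Levi factor $C_{q_0-1}\times\Omega^-_6(q_0)$ acting on $v^{\perp}/\la v\ra_{\F_q}\cong V_0^\#\otimes\F_q$, where $V_0^\#=v^{\perp}_{V_0}/\la v\ra_{\F_{q_0}}\cong Q^-(5,q_0)$. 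Elements $W'\in\cM^+_1$ through $\la v\ra_{\F_{q_0}}$ correspond to $3$-dimensional totally singular subspaces $T$ in one class of $Q^+(5,q)$ satisfying $T\cap V_0^\#=0$. Through the exceptional isomorphism $\POmeg^-_6(q_0)\cong\PSU_4(q_0)$ and the Klein correspondence $Q^+(5,q)\leftrightarrow\PG(3,q)$, such $T$ correspond to non-isotropic points of the natural $\PSU_4(q_0)$-invariant Hermitian form on $\F_q^4$, which form a single $\PSU_4(q_0)$-orbit of size $q_0^3(q_0-1)(q_0^2+1)$. Multiplying by $|Q^-(7,q_0)|=(q_0^2+q_0+1)(q_0^4+1)$ yields $|\cM^+_1|=(q^3-q_0^3)(q+1)(q^2+1)$, which exactly complements $\cM^+_3$ in $\cM^+$ and confirms transitivity on $\cM^+_1$.

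Having exactly two $\textup{Spin}(V_0)$-orbits on the singular points of $S_U^+=Q^+(7,q)$, both orbits are intriguing sets by Section~\ref{subsec_basic}. Each orbit size is divisible by $(q^4-1)/(q-1)=(q+1)(q^2+1)$ with respective quotients $q_0^3+1$ and $q^3-q_0^3$, which sum to $q^3+1=\theta_r$; since neither size is divisible by $\theta_r$, both orbits are tight sets rather than $m$-ovoids, giving the claimed $(q_0^3+1)$- and $(q^3-q_0^3)$-tight sets. The main obstacle is establishing transitivity on $\cM^+_1$: this hinges on the exceptional isomorphism $\POmeg^-_6(q_0)\cong\PSU_4(q_0)$ and a careful identification of the pertinent fiber through the Klein correspondence, but once this step is carried out the tight-set parameters follow immediately from the size count.
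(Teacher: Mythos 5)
Your proposal is correct and follows essentially the same route as the paper's proof: identify singular points with one family of generators via pure spinors, handle the stratum meeting $V_0$ in dimension $3$ by Witt's lemma, handle the dimension-$1$ stratum by fibering over the singular points of $V_0$ and using the Klein correspondence together with $\Omega_6^-(q_0)\cong\SU_4(q_0)$ acting on $H(3,q)$, and finish by the size count and the ovoid-number divisibility argument. The only small difference is that you establish a priori, via the Galois twist and the parity criterion, that the intersection dimension lies in $\{1,3\}$, whereas the paper deduces exhaustion of the singular points purely from the final counting; both are fine.
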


\begin{proof}
Let $X$ be the set of maximal totally singular subspace of $(V,Q)$ in the same equivalence class as $U$ that intersects $V_0$ in a $3$-dimensional $\F_{q_0}$-subspace, and set $\cN_1:=\{\la p_{U'}\ra_{\F_q}\mid U'\in X\}$. The quadratic space $(V_0,Q_0)$ has $(q_0^3+1)\frac{q^4-1}{q-1}$ totaly singular $3$-subspaces, and each is contained in exactly one element of $X$. It follows that $|\cN_1|=(q_0^3+1)\frac{q^4-1}{q-1}$. Since $\Omega_8^-(q_0)$ acts transitively on the totaly singular $3$-subspaces of $V_0$, we deduce that it is transitive on the set $X$. It follows that $\textup{Spin}(V_0)$ is transitive on $\cN_1$.

We claim that $\Omega_8^-(q_0)$ is transitive on the set $Y$ of maximal totally singular subspaces of $V$ in the same equivalence class as $U$ that intersect $V_0$ in a $1$-dimensional $\F_{q_0}$-subspace. We define $Y_1:=\{U'\in Y\mid U'\cap V_0=\la f_1\ra_{\F_{q_0}}\}$. Since $\Omega_8^-(q_0)$  is transitive on the singular points of $V_0$, it suffices to show that $\Omega_8^-(q_0)_{\la f_1\ra}$ is transitive on $Y_1$. Set $V_1'=f_1^{\perp,B}/\la f_1\ra_{\F_q}$, and let $V_0'$ be the quotient image of $V_0$ in $V_1'$. Let $\kappa_1'$ be the induced quadratic form on $V_1'$, and identify the quadric defined by $(V_1',\kappa_1')$ as the Klein quadric $Q^+(5,q)$. Then $\PG(V_0')$ is a Baer subgeometry that meets the quadric in a $Q^-(5,q_0)$. Since the elements in $X_1$ intersect $V_0$ in $\la f_1\ra_{\F_{q_0}}$, their quotients in $V_1'$ are totally singular $3$-subspaces that intersects $V_0'$ trivially. There is an Hermitian surface $H(3,q)$ in $\PG(3,q)$ such that the Klein correspondence maps totally singular lines of $H(3,q)$ to the singular points of $Q^-(5,q_0)$, cf. \cite[Section~15.4]{Hirsch3dim}. Under such a correspondence the $q_0^3(q_0-1)(q+1)$ nonsingular points of $\PG(3,q)$ are mapped to the totally singular $3$-subspaces of one equivalence class in $V_1'$ that intersect $V_0'$ trivially, and the same is true for the $q_0^3(q_0-1)(q+1)$ non-tangent planes of $\PG(3,q)$. By considering the full preimages of those totally singular $3$-subspaces in $V$, we deduce that $|Y_1|=q_0^3(q_0-1)(q+1)$. Since $\SU_4(q_0)$ is transitive on both the set of nonsingular points and the set of non-tangent planes of  $H(3,q)$, we deduce that $\Omega_6^-(q_0)\cong \SU_4(q_0)$ is transitive on $Y_1$. Since $\Omega_6^-(q_0)\le\Omega_8^-(q_0)_{\la f_1\ra}$, this establishes the claim.

We have $|Y|=|Q^-(7,q_0)|\cdot|Y_1|=(q^3-q_0^3)\frac{q^4-1}{q-1}$ by the arguments in the last paragraph. Since $\Omega_8^-(q_0)$ is transitive on $Y$, we deduce that $\textup{Spin}(V_0)$ is transitive on $\cN_2:=\{\la p_{U'}\ra_{\F_q}\mid U'\in Y\}$. Since $|\cN_1|+|\cN_2|=|Q^+(7,q)|$, it follows that they are the only two $\textup{Spin}(V_0)$-orbits on the singular points of $S_U^+$. By Section \ref{subsec_basic}, $\cN_1$ and $\cN_2$ are both intriguing sets. Since their sizes are not divisible by the ovoid number $q^3+1$, they are tight sets rather than $m$-ovoids. This completes the proof.
\end{proof}

Suppose that $m=5$ in the sequel, and define
\begin{equation}\label{eqn_spin_V1}
 V_1:=z^{\perp,B},\quad\textup{where }z:=e_5-f_5.
\end{equation}
The restriction of $Q$ to $V_1$ is parabolic, and $\textup{Spin}(V_1)$ embeds in $\textup{Spin}(V)$ as  $\textup{Spin}(V_1)=\{x\in \textup{Spin}(V)\mid xzx^{-1}=z\}$. The group $\textup{Spin}(V_1)$ is absolutely irreducible on $S_U^+$, and we call $S_U^+$ its spin module. The group $\Omega(V_1)$ is transitive on each equivalence class of maximal totally singular subspaces of $V$ by  \cite[Theorem~8.4]{giudici2020subgroups}, so $\textup{Spin}(V_1)$ is transitive on the set $\cN_1$ of projective points defined by pure spinors in $S_U^+$. We have $|\cN_1|=(q^3+1)\frac{q^8-1}{q-1}$, which is the number of maximal totally singular subspaces of $V$ in one equivalence class. We define
\begin{equation}\label{eqn_spin_beta}
  \beta':\,S_U^+\times S_U^+\rightarrow \F_q,\;(x,y)\mapsto f_1\cdots f_5\bar{x}zy.
\end{equation}
Here, $\bar{x}$ is the image of any preimage of $x$ in $C_+(V)$ under the anti-isomorphism $\bar{\cdot}$ of $C(V)$. It is routine to check that $\beta'$ is well-defined, indeed takes value in $\F_q$ and is $\textup{Spin}(V_1)$-invariant. Moreover, for even subsets $I,J$ of $\{1,2,\ldots,5\}$ we have $\beta'(e_I,e_J)\ne0$ if and only if their symmetric difference is $\{1,\ldots,4\}$. It follows that $\beta'$ is a nondegenerate bilinear form on $S_U^+$.

\begin{lemma}\label{lem_spin9_form}
Suppose that $m=5$, and let $V_1$ be as in \eqref{eqn_spin_V1}. There is a nondegenerate $\textup{Spin}(V_1)$-invariant quadratic form $\kappa'$ of plus sign on $S_U^+$ that vanishes on the pure spinors in $S_U^+$ and has $\beta'$ in \eqref{eqn_spin_beta} as its associated bilinear form. Moreover, the even Clifford group of $V_1$ is in its similarity group.
\end{lemma}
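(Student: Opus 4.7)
The plan is to construct $\kappa'$ in a characteristic-sensitive manner and then verify each listed property in turn. First I would split according to the parity of $q$: when $q$ is odd, the symmetry of $\beta'$ (which one checks from the identity $\overline{\bar{x}} = x$, the antiautomorphism property $\overline{ab} = \bar{b}\bar{a}$, and the fact that $\bar{z}=z$) lets one define $\kappa'(x) := \tfrac{1}{2}\beta'(x,x)$; this automatically polarizes to $\beta'$ and inherits $\textup{Spin}(V_1)$-invariance from $\beta'$. When $q$ is even, $\beta'$ is alternating, and I would instead adapt the strategy used in Lemma \ref{lem_Ext3_FS}: define a candidate quadratic form $\kappa'$ by specifying its values on the basis $\{e_I\}_{|I|\text{ even}}$ of $S_U^+$ so that it polarizes to $\beta'$ and vanishes on the pure spinor $1 = p_U$, then show this $\kappa'$ is the unique quadratic form that polarizes to $\beta'$ and vanishes on the $\textup{Spin}(V_1)$-invariant spanning set of pure spinors; the uniqueness forces $\textup{Spin}(V_1)$-invariance.

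Next I would verify the vanishing on pure spinors. By the transitivity of $\Omega(V_1)$ on each equivalence class of maximal totally singular subspaces of $V$ (in particular on the class containing $U$, which is the only class that contributes pure spinors to $S_U^+$), it suffices to check $\beta'(1,1) = 0$, equivalently $\kappa'(1) = 0$. This reduces to a direct computation: one expands $f_1\cdots f_5 \cdot 1 \cdot z \cdot 1 = f_1\cdots f_5(e_5 - f_5)$ in $C(V)$, uses $f_5^2 = 0$ to kill the $f_5$ term, and checks that the remainder has zero coefficient with respect to the top-form convention being used to identify $\beta'$ with a scalar. For the other pure spinors, invariance of $\beta'$ propagates the vanishing.

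For the plus sign, my approach would be to exhibit a totally singular subspace of dimension $8$ in $(S_U^+,\kappa')$. A natural candidate is the span of the pure spinors $p_{U'}$ as $U'$ ranges over the maximal totally singular subspaces of $V$ that contain a fixed totally singular $4$-subspace of $V_1$: by the Cartan–Chevalley relations, these spinors all lie in a common $8$-dimensional $\kappa'$-singular subspace, which matches the Witt index of a hyperbolic $16$-dimensional quadratic space. Alternatively, one can count: having shown that $\textup{Spin}(V_1)$ is transitive on the set of pure spinors of size $(q^3+1)\tfrac{q^8-1}{q-1}$, the orbit sizes of $\Omega^\pm_{16}(q)$ on the singular points of $Q^\pm(15,q)$ single out the plus type. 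I expect this sign calculation to be the main obstacle, as a naive cardinality comparison does not immediately exclude the minus type; producing the explicit singular $8$-subspace, while conceptually straightforward, requires careful bookkeeping in the Clifford algebra.

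Finally, for the similarity claim, let $g$ lie in the even Clifford group of $V_1$, so that $N(g) = g\bar{g} \in \F_q^*$. Since $g$ is an even element of $C(V_1)$, a short computation from $v_1 v_2 w - w v_1 v_2 = B(v_2,w)v_1 - B(v_1,w)v_2$ with $v_1,v_2 \in V_1$ and $w = z \in V_1^{\perp,B}$ shows that $g$ (and hence $\bar g$) commutes with $z$ in $C(V)$. Therefore
\[
\beta'(gx,gy) = f_1\cdots f_5\,\bar{x}\bar{g} z g y = f_1\cdots f_5\,\bar{x}\, z(\bar{g}g)\, y = N(g)\,\beta'(x,y),
\]
which forces $\kappa'(gx) = N(g)\kappa'(x)$ by polarization (checking on a spanning set if needed). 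This exhibits $g$ as a similarity with multiplier $N(g)$, completing the proof.
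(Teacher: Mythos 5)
Your route is more self-contained than the paper's (which obtains existence, invariance and the plus sign of $\kappa'$ in one stroke from \cite[Proposition~5.4.9]{kleidman1990subgroup}, identifies $\beta'$ as the polarization via uniqueness of invariant bilinear forms, and gets vanishing at the pure spinor $1$ by restricting to a $\textup{Spin}(V_2)$ half-spin submodule where singular vectors are exactly the pure spinors), but as written your even-characteristic branch is circular. You propose to obtain $\textup{Spin}(V_1)$-invariance of the candidate $\kappa'$ from its vanishing on the spanning set of pure spinors (the uniqueness device of Lemma \ref{lem_Ext3_FS}), yet you then establish that vanishing by a transitivity argument ("it suffices to check $\kappa'(1)=0$"), which presupposes exactly the invariance you are trying to prove; and the fallback sentence "invariance of $\beta'$ propagates the vanishing" fails when $q$ is even, since a quadratic form is then not determined by its polarization. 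To follow the Lemma \ref{lem_Ext3_FS} template you would have to verify by direct computation that the basis-defined form vanishes on \emph{every} pure spinor (as was done there for the spanning set $\Lambda$); no such computation is indicated. The same characteristic-$2$ issue infects your last step: from $\beta'(gx,gy)=N(g)\beta'(x,y)$ one cannot conclude $\kappa'(gx)=N(g)\kappa'(x)$ "by polarization" for even $q$; one needs that the even Clifford group preserves the set $\cN_1$ of points defined by pure spinors (the paper quotes that products of an even number of reflections preserve each equivalence class of maximal totally singular subspaces), that $\cN_1$ consists of $\kappa'$-singular points and spans $S_U^+$, and that the difference function $x\mapsto\kappa'(gx)-N(g)\kappa'(x)$ is additive; your parenthetical "(checking on a spanning set if needed)" hides precisely this argument.

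The determination of the plus sign is also incomplete, and your explicit candidate does not work: a fixed totally singular $4$-subspace of $V_1$ lies in exactly two maximal totally singular subspaces of $V$, one from each equivalence class, so the pure spinors you collect span a $1$-dimensional space, not an $8$-dimensional one. The correct analogue — which the paper in effect exploits later in Lemma \ref{lem_spin_Tv} — is to fix a single singular vector $v\in V_1$ and take $T_{\la v\ra}=\{x\in S_U^+\mid vx=0\}$, an $8$-dimensional subspace spanned by pure spinors and hence totally singular once the vanishing statement is available. Your alternative counting argument (comparing the orbit size $(q^3+1)\frac{q^8-1}{q-1}$ with point orbits of $\Omega^{\pm}_{16}(q)$) is not developed and, as you yourself note, does not obviously exclude the minus type. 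The odd-characteristic part of your plan (symmetry of $\beta'$, $\kappa'(x)=\tfrac12\beta'(x,x)$, reduction to $\kappa'(1)=0$ by transitivity, similarity via $N(g)$) is sound, but even there the sign must still come from somewhere — either an explicit singular $8$-space as above or the citation the paper relies on.
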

\begin{proof}
By \cite[Proposition~5.4.9]{kleidman1990subgroup}, $S_U^+$ has a $\textup{Spin}(V_1)$-invariant quadratic form $\kappa'$ of plus sign.
Since $\textup{Spin}(V_1)$ is absolutely irreducible on $S_U^+$ and $\beta'$ is a nondegenerate $\textup{Spin}(V_1)$-invariant bilinear form, we deduce that $\beta'$ is the associated bilinear form of $\kappa'$ after properly rescaling $\kappa'$ by \cite[Lemma~1.8.8]{bray2013maximal}. We set $V_2=\la e_1,f_1,\ldots,e_4,f_4\ra$ and $U_2:=U\cap V_2$. There is a natural embedding of the half spin module $S_{U_2}^+$ of $\textup{Spin}(V_2)$ in $S_U^+$, and $S_{U_2}^+$ has a basis consisting of the $e_I$'s with $I$ ranging over even subsets of $\{1,\ldots,4\}$. The restriction of $\beta'$ to $S_{U_2}^+$ is nondegenerate upon direct check. Since $S_{U_2}^+$ is an absolutely irreducible $\textup{Spin}(V_2)$-module, we deduce that the restriction of $\kappa'$ is the unique $\textup{Spin}(V_2)$-invariant form on $S_{U_2}^+$ up to a scalar. It follows that $\kappa'$ vanishes on the pure spinor $1$ in $S_{U_2}^+$ by \cite[IV.1.1]{alg_spinor}. Since $\textup{Spin}(V_1)$ is transitive on the set $\cN_1$ of projective points defined by pure spinors in $S_U^+$, $\kappa'$ vanishes on all pure spinors in $S_U^+$. This establishes the first claim.

By \cite[p.~30]{kleidman1990subgroup} the product of even number of reflections preserves each equivalence class of maximal totally singular subspaces of $V$, so we deduce that the even Clifford group of $V$ preserves the set $\cN_1$. Take an element $g$ in the even Clifford group of $V_1$. Then $g=v_1\cdots v_{2s}$ for some nonsingular vector $v_i$'s of $V_1$. We have $\beta'(xg,yg)=\lambda \beta'(x,y)$, where $\lambda=\prod_{i=1}^{2s}Q(v_i)=N(g)$. Since $\cN_1$ consists of singular points and is $g$-invariant, we deduce that $g$ is a similarity of $\kappa'$ as desired. This completes the proof of the second claim.
\end{proof}

\begin{thm}\label{thm_spin9_Horb}
Suppose that $m=5$, let $V_1$ be as in \eqref{eqn_spin_V1} and $\kappa'$ be as in Lemma \ref{lem_spin9_form}, and let $\cP=Q^+(15,q)$ be the associated polar space of $(S_U^+,\kappa')$. Let $\cN_1$ be the set of projective points defined by pure spinors in $S_U^+$ and $\cN_2$ be its complement in $\cP$. Then $\cN_1$, $\cN_2$ are the only $\textup{Spin}(V_1)$-orbits on the points of $\cP$, and they are $(q^3+1)$- and $(q^7-q^3)$-tight sets of $\cP$ respectively.
\end{thm}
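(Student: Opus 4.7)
The plan is to establish the two-orbit structure of $\textup{Spin}(V_1)$ on the singular points of $\cP$ and then distinguish tight sets from $m$-ovoids using the framework of Section \ref{subsec_basic}. Transitivity on $\cN_1$ is already essentially contained in the paragraph preceding Lemma \ref{lem_spin9_form}: by \cite[Theorem~8.4]{giudici2020subgroups} the group $\Omega(V_1)$ acts transitively on the equivalence class of $U$ among maximal totally singular subspaces of $V$, and pure spinors are in $\textup{Spin}(V_1)$-equivariant bijection with this class, yielding $|\cN_1|=(q^3+1)\frac{q^8-1}{q-1}$. Since $|\cP|=(q^7+1)\frac{q^8-1}{q-1}$, complementation gives $|\cN_2|=(q^7-q^3)\frac{q^8-1}{q-1}$.

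The main obstacle is to prove that $\textup{Spin}(V_1)$ acts transitively on $\cN_2$. Unlike the set-up in Theorem \ref{thm_spinSqrt}, the dichotomy $\cN_1\sqcup\cN_2$ here cannot be read off from dimensions of intersections with $V_1$ (every MTS of $V$ meets $V_1$ in a 4-dimensional MTS), so a more intrinsic argument is required. I would fix a singular vector $x_0\in S_U^+\setminus\cN_1$ — obtained by perturbing a pure spinor $\overline{e_\emptyset}=\overline{1}$ by a carefully chosen second term $\lambda\overline{e_I}$ so that $\kappa'(x_0)=0$ but the annihilator $\{v\in V\mid v\cdot x_0=0\}$ has dimension strictly less than $5$ (ensuring $x_0$ is not a pure spinor) — and compute the $\textup{Spin}(V_1)$-stabilizer of $\la x_0\ra$ via the Clifford algebra action together with the explicit formula \eqref{eqn_spin_beta}. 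The stabilizer should be identifiable with a parabolic-type subgroup of $\textup{Spin}(V_1)$, and its index in $|\textup{Spin}_9(q)|$ can be checked to equal $(q^7-q^3)\frac{q^8-1}{q-1}=|\cN_2|$. A conceptual alternative is to pass to the algebraic group: the pure spinor variety is an irreducible proper closed subvariety of the 14-dimensional quadric $\{\kappa'=0\}$, so its complement is an irreducible open subvariety on which the simply connected group $\textup{Spin}_9$ acts transitively over the algebraic closure, and since $\textup{Spin}_9$ is simply connected the absence of Galois cohomology obstruction yields a single $\F_q$-orbit.

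Once the two-orbit decomposition is established, Section \ref{subsec_basic} forces both $\cN_1$ and $\cN_2$ to be intriguing sets, each of which is either a tight set or an $m$-ovoid. Since the ovoid number of $\cP=Q^+(15,q)$ is $\theta_r=q^7+1$, a short divisibility check — combining the factorization $q^7+1=(q+1)\Phi_{14}(q)$ with Lemma \ref{lem_zsigmondy} applied to the primitive prime divisor of $q^{14}-1$, which lies in $\Phi_{14}(q)$ but not in $q(q+1)(q^2-q+1)(q^2+q+1)$ — rules out the possibility that $q^7+1$ divides $(q^3+1)\frac{q^8-1}{q-1}$ for any prime power $q$. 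Hence neither orbit can be an $m$-ovoid, so both are tight sets; reading off the parameters from the cardinalities gives $i_1=q^3+1$ and $i_2=\theta_r-i_1=q^7-q^3$, completing the proof.
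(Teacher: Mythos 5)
Your outline gets the easy parts right (transitivity on $\cN_1$, the orbit sizes, and the final divisibility argument showing neither orbit can be an $m$-ovoid), but the heart of the theorem --- transitivity of $\textup{Spin}(V_1)=\textup{Spin}_9(q)$ on $\cN_2$ --- is not actually proved in either of your two branches. In the stabilizer branch, the identification of the stabilizer of $\la x_0\ra$ with ``a parabolic-type subgroup'' cannot be correct: the orbit of a non-pure singular point is the complement of the closed pure-spinor variety inside the quadric, hence open and not complete, so its stabilizer is not parabolic (a counting check shows the point stabilizer has order $q^{7}(q-1)\cdot|G_2(q)|$, i.e.\ a $G_2(q)$-related group extended by a unipotent radical, not a parabolic of $\textup{Spin}_9$); and the decisive index computation is only asserted (``can be checked''), not carried out. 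In the algebraic-group branch the logic fails at two points. First, irreducibility of the open complement of the pure-spinor variety does not by itself give transitivity of $\textup{Spin}_9(\overline{\F}_q)$ on it; you need the actual orbit classification of $\textup{Spin}_9$ on its $16$-dimensional spin representation (that the null cone has exactly two nonzero orbits), which you do not establish. Second, even granting that, the descent criterion you invoke is the wrong one: the $\F_q$-orbits inside the $\F_q$-points of a single $\overline{\F}_q$-orbit are controlled by $H^1(\F_q,G_{x_0})$ for the \emph{point stabilizer} $G_{x_0}$ (which vanishes by Lang's theorem when $G_{x_0}$ is connected), not by simple connectedness of $\textup{Spin}_9$; indeed $H^1(\F_q,G)$ vanishes for every connected group over a finite field, so the hypothesis you cite does no work, and you never verify connectedness of the stabilizer. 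As written, transitivity on $\cN_2$ therefore remains unproven.

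For comparison, the paper's proof avoids stabilizer computations altogether: for $q>2$ it covers $Q^+(15,q)$ by the maximal totally singular subspaces $T_{\la v\ra}=\{x\in S_U^+\mid vx=0\}$ indexed by the singular points $\la v\ra$ of $V_1$ (Lemma \ref{lem_spin_Tv}), shows $\textup{Spin}(V_1)$ permutes these transitively, and then observes that on the fixed member $T_{\la f_1\ra}$, which is the spin module of $\textup{Spin}(V_3)$ for a $7$-dimensional nondegenerate subspace $V_3$, the even Clifford group of $V_3$ has exactly two orbits by Proposition \ref{prop_spin7}; since that action is realized by elements of $\textup{Spin}(V_1)$, every singular point is $\textup{Spin}(V_1)$-equivalent to one of two representatives, giving at most two orbits, and the case $q=2$ is settled by the Atlas data. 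If you want to salvage your approach, the workable version of your second branch is to quote (or prove) the orbit classification of $\textup{Spin}_9$ on the null cone of the spin representation and verify that the stabilizer of a non-pure null spinor is connected, then apply Lang's theorem to that stabilizer; alternatively carry out the index computation in your first branch explicitly, with the stabilizer identified correctly.
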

The remaining part of this subsection is devoted to the proof of Theorem \ref{thm_spin9_Horb}. Take the same notation as in Lemma \ref{lem_spin9_form}. We have $\cP=Q^+(15,q)$, and the set $\cN_1$ consists of singular points of $\kappa'$ by Lemma \ref{lem_spin9_form}. For each nonzero singular vector $v\in V$, we define the subspace $T_{\la v\ra}:=\{x\in S_U^+\mid  vx=0\}$.

\begin{lemma}\label{lem_spin_Tv}
Let $Y$ be the set of singular points of $Q$ in $V_1$, and suppose that $q>2$. Then $\textup{Spin}(V_1)$ is transitive on the set $\mathcal{T}:=\{T_{\la v\ra}\mid \la v\ra\in Y\}$, and the union of elements in $\mathcal{T}$ is $Q^+(15,q)$.
\end{lemma}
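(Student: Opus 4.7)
The plan is to handle the two claims of the lemma in sequence, with the main work going into the covering statement. As a central tool I would use the endomorphism $\tilde v\colon S_U^+ \to S_U^+$ given by $\tilde v(x) = -zvx$, which is well-defined since the vector $v$ shifts $S_U^+$ to $S_U^-$ and then $z$ shifts back. Using $B(v,z)=0$ for $v\in V_1$ (giving $vz = -zv$ in odd characteristic and $vz = zv$ in characteristic $2$) together with $z^2 = Q(z)$, one computes $\tilde v^2 = Q(v)$. For singular $v\in V_1$ this gives $\tilde v^2 = 0$, so the image of $\tilde v$ is contained in $\ker(\tilde v) = T_{\la v\ra}$. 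A direct computation at $v=e_1$ in the basis $\{e_I : |I|\text{ even}\}$ of $S_U^+$ shows that $T_{\la e_1\ra}$ has the basis $\{e_I : 1\in I,\,|I|\text{ even}\}$ of size $8$, hence $\dim T_{\la v\ra} = 8$ for every $\la v\ra\in Y$ by transitivity, and the image of $\tilde v$ equals $T_{\la v\ra}$ by rank-nullity. A short verification via the anti-automorphism $\bar{\cdot}$ shows $\beta'(\tilde v(x),y) = \beta'(x,\tilde v(y))$, so $\beta'(\tilde v(a),\tilde v(b)) = \beta'(a,\tilde v^2 b) = 0$ and $T_{\la v\ra}$ is $\beta'$-totally isotropic. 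To upgrade this to total singularity for $\kappa'$ (the nontrivial content in characteristic $2$), I would observe that each basis vector $e_I$ of $T_{\la e_1\ra}$ is the pure spinor associated with the maximal totally singular subspace $\la e_i : i\in I\ra \oplus \la f_j : j\notin I\ra$, so $\kappa'(e_I) = 0$ by Lemma \ref{lem_spin9_form}; combined with $\beta'$-isotropy this forces $\kappa'\equiv 0$ on $T_{\la e_1\ra}$, and transitivity propagates this to every $T_{\la v\ra}$. Thus each $T_{\la v\ra}$ is a maximal totally singular subspace of $(S_U^+,\kappa')$.

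Transitivity of $\textup{Spin}(V_1)$ on $\mathcal T$ follows immediately: the map $\la v\ra \mapsto T_{\la v\ra}$ is $\textup{Spin}(V_1)$-equivariant because the conjugation action of $\textup{Spin}(V_1)$ on $V_1$ intertwines with its left-multiplication action on $S_U^+$, and $\textup{Spin}(V_1)$ surjects onto $\Omega(V_1)$, which is transitive on the singular points of the rank-$4$ parabolic space $V_1$ by Witt's lemma.

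For the covering claim, fix $x\in S_U^+$ with $\kappa'(x) = 0$ and consider $\psi_x\colon V_1 \to S_U^+$, $v\mapsto \tilde v(x)$. I would show the image of $\psi_x$ is totally singular in $(S_U^+,\kappa')$; its dimension is then at most the Witt index $8$, so $\dim\ker\psi_x \ge 1$, and any nonzero $v\in\ker\psi_x$ must be singular (from $Q(v)x = v^2 x = 0$ with $x\ne 0$), yielding $x\in T_{\la v\ra}$. For singular $v\in V_1$, $\tilde v(x)\in T_{\la v\ra}$ is already singular; it remains to show $\beta'(\tilde v(x),\tilde{v'}(x)) = 0$ for all singular $v,v'\in V_1$. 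A direct check using $vz=\pm zv$ and $z^2 = Q(z)$ gives $\tilde v\tilde{v'}(x) = vv'x$, so by self-adjointness $\beta'(\tilde v(x),\tilde{v'}(x)) = \beta'(x,vv'x)$. If $B(v,v')\ne 0$, then for any $y$ with $vy = v'y = 0$ the relation $(vv'+v'v)y = B(v,v')y$ forces $y=0$, so $T_{\la v\ra}\cap T_{\la v'\ra} = 0$ and by dimension $S_U^+ = T_{\la v\ra}\oplus T_{\la v'\ra}$; writing $x = a+b$ with $a\in T_{\la v\ra}$, $b\in T_{\la v'\ra}$, the identities $vv'b = v(v'b) = 0$ and $vv'a = (B(v,v')-v'v)a = B(v,v')a$ give $vv'x = B(v,v')a$, whence $\beta'(x,vv'x) = B(v,v')\beta'(a,b)$; but $\kappa'(x) = \kappa'(a)+\kappa'(b)+\beta'(a,b) = \beta'(a,b)$ because $a,b$ lie in totally singular subspaces, so $\beta'(a,b)=0$. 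If instead $B(v,v') = 0$, then $v+v'$ is singular and $\tilde{v+v'}(x) = \tilde v(x)+\tilde{v'}(x)$ lies in the totally singular $T_{\la v+v'\ra}$, so expanding $0 = \kappa'(\tilde v(x)+\tilde{v'}(x))$ yields $\beta'(\tilde v(x),\tilde{v'}(x)) = 0$. Since the singular vectors span $V_1$ for $q>2$, the image of $\psi_x$ is the linear span of pairwise $\beta'$-orthogonal singular vectors and hence is totally singular, completing the proof.

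The main obstacle is the characteristic $2$ case of the covering claim, where the polarization shortcut $2\beta'(x,vv'x) = B(v,v')\beta'(x,x)$ degenerates because $\beta'(x,x) = 0$ automatically; the decomposition $S_U^+ = T_{\la v\ra}\oplus T_{\la v'\ra}$ when $B(v,v')\ne 0$ provides a characteristic-free substitute, and the complementary case $B(v,v')=0$ is handled via the total singularity of $T_{\la v+v'\ra}$ established in the first part.
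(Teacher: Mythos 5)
Your proof is correct, and while the first half (each $T_{\la v\ra}$ is an $8$-dimensional maximal totally singular subspace, identified via pure spinors, with transitivity coming from the equivariance of $\la v\ra\mapsto T_{\la v\ra}$ and the transitivity of $\Omega(V_1)$ on $Y$) is essentially the paper's argument, your treatment of the covering claim takes a genuinely different route. The paper works with $S_x=\{z'ux\mid u\in V_1\}$, $z'=e_5+f_5$, and proves each element singular by two separate devices: for anisotropic $u$ the element $z'u$ lies in the even Clifford group of $V_1$ and is therefore a similarity by Lemma \ref{lem_spin9_form}, while for singular $u$ it perturbs to $z'(v+\lambda u)x$ with $v$ anisotropic and lets $\lambda$ range over $\F_q$, killing the coefficients of $1,\lambda,\lambda^2$ --- this interpolation step is exactly where the hypothesis $q>2$ enters. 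You instead show the image of $v\mapsto zvx$ is spanned by the vectors $\tilde v(x)$ with $v$ singular, each of which is singular because it lies in the totally singular $T_{\la v\ra}$, and you obtain pairwise $\beta'$-orthogonality by a clean case split on $B(v,v')$: the decomposition $S_U^+=T_{\la v\ra}\oplus T_{\la v'\ra}$ together with $\beta'(a,b)=\kappa'(x)=0$ when $B(v,v')\neq0$, and the singularity of $T_{\la v+v'\ra}$ when $B(v,v')=0$. The self-adjointness of $\tilde v$ with respect to $\beta'$ (a consequence of the definition \eqref{eqn_spin_beta}) is the key structural input replacing the paper's similarity/interpolation arguments. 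What your version buys is that it is characteristic-free and, despite your parenthetical remark, never actually needs $q>2$: the singular vectors of the rank-$4$ parabolic space $V_1$ span it for every $q$, so your argument would extend the lemma to $q=2$ (consistent with the paper's Atlas verification of Theorem \ref{thm_spin9_Horb} in that case); the paper's version is slightly shorter given the machinery of Lemma \ref{lem_spin9_form} already in place. Two small points you should make explicit: the identity $\tilde v^2=Q(v)$ uses $Q(z)=-1$ (any nonzero value of $Q(z)$ would do, but it should be recorded), and in the case $B(v,v')=0$ one must separately dispose of the trivial subcase $v+v'=0$, where $\beta'(\tilde v(x),\tilde{v'}(x))=-\beta'(\tilde v(x),\tilde v(x))=0$ since $\tilde v(x)$ is singular.
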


\begin{proof}
The subspace $T_{\la f_1\ra}$ has dimension $8$, and it has a basis consisting of the $e_I$'s with $I$ ranging over the even subsets of $\{2,\ldots,5\}$ upon direct check. It is totally isotropic with respect to the form $\beta'$ by the paragraph following \eqref{eqn_spin_beta}. For any nonzero elements $a_2,\ldots,a_4\in\F_q^*$, $e_2\cdots e_5=(e_2+a_2f_2)\cdots(e_4+a_4f_4)(e_5+f_5)$ is a pure spinor of $S_U^+$ that lies in $T_{\la f_1\ra}$. Similarly, we deduce that the specified basis of $T_{\la f_1\ra}$ are all pure spinors. Therefore,  $T_{\la f_1\ra}$ is a maximal totally singular subspace of $S_U^+$ by Lemma \ref{lem_spin9_form}.

For nonsingular vectors $w_1,w_2$ of $V$, we have $T_{\la vr_{w_2}r_{w_1}\ra}=w_1w_2\cdot T_{\la v\ra}$. Since $\Omega(V_1)$ is transitive on $Y$, we deduce that $\textup{Spin}(V_1)$ is transitive on the set $\mathcal{T}$. The subspace $T_{\la f_1\ra}$ is totally singular, so is each subspace in $\mathcal{T}$. Fix a nonzero singular vector $x$ of $S_U^+$. It remains to show that there is a nonzero singular vector $v$ in $V_1$ such that $x\in T_{\la v\ra}$, so that the last claim in the lemma follows.

We claim that $S_x:=\{z'ux\mid u \in V_1\}$ is a totally singular subspace of $S_U^+$, where $z':=e_5+f_5$. Take a vector $u$ in $V_1$. If $u$ is nonsingular, then $z'u$ is in the even Clifford group of $V_1$. By Lemma \ref{lem_spin9_form}, $z'ux$ is also singular. If $u$ is singular, then take a nonsingular vector $v$ in $\la z,u\ra^{\perp,B}$. The vector $v+\lambda u$ is nonsingular, so that $z'(v+\lambda u)x$ is singular for any $\lambda\in\F_q$. Since $q>2$, we expand $\kappa'(z'(v+\lambda u)x)=0$ and compare the coefficients of $1,\lambda,\lambda^2$ to deduce that $z'ux$ is singular. This establishes the claim.

Since $S_x$ is totally singular, it has dimension at most $8$. Since $V_1$ has dimension $9$, there is a nonzero vector $v\in V_1$ such that $z'vx=0$ by the definition of $S_x$. Since $z'$ is invertible, we have $vx=0$, i.e., $x\in T_{\la v\ra}$. It follows that $Q(v)x=vvx=0$, so $Q(v)=0$. This completes the proof.
\end{proof}

\begin{proof}[Proof of Theorem \ref{thm_spin9_Horb}]  We verify the claim for $q=2$ by using the Atlas data \cite{Atlas}, so assume that $q>2$ in the sequel. We set  $V_3=\la e_1,f_1,z\ra^{\perp,B}$. Then $V_3$ is spanned by $\{e_2,f_2,\ldots,e_4,f_4,z'\}$, where $z'=e_5+f_5$.  Let $Y$ be the set of singular points of $Q$ in $V_1$, and set $\mathcal{T}:=\{T_{\la v\ra}\mid \la v\ra\in Y\}$ which consists of maximal totally singular subspaces of $S_U^+$. By Lemma \ref{lem_spin_Tv}, $\textup{Spin}(V_1)$ is transitive on $\mathcal{T}$ and the union of  subspaces in $\mathcal{T}$ is $Q^+(15,q)$. There is a natural embedding of the half spin module $S_{U_3}^+$ of $\textup{Spin}(V_3)$ inside $S_U^+$, which identifies $S_{U_3}^+$ with $T_{\la f_1\ra}$. The even Clifford group of $V_3$ has two orbits  on the points of $T_{\la f_1\ra}$ by Proposition \ref{prop_spin7}. For nonsingular vectors $v_1,\ldots, v_{2s}$ in $V_3$, set $\lambda=\prod_iQ(v_i)^{-1}$ and $g=v_1\cdots v_{2s}$. The spin norm of $g'=g(e_1+f_1)(e_1+\lambda f_1)$ is $1$ and it commutes with $z$, so $g'$ is in $\textup{Spin}(V_1)$. For $x\in T_{\la f_1\ra}$, we have $gx=g'x$. In other words, there is a subset of $\textup{Spin}(V_1)$ that induces the same action as the even Clifford group of $V_3$ on $T_{\la f_1\ra}$. We thus conclude that there are at most two $\textup{Spin}(V_1)$-orbits on the singular points of $S_U^+$. Since $\cN_1$ is a $\textup{Spin}(V_1)$-orbit, it follows that its complement $\cN_2$ is the other $\textup{Spin}(V_1)$-orbit. By Section \ref{subsec_basic} they are both intriguing sets, and they are tight sets since their sizes are not divisible by the ovoid number $q^7+1$. This completes the proof.\end{proof}

\section{Proof of Theorem \ref{thm_red}}\label{sec_red}

In this section we present the proof of Theorem \ref{thm_red}. Let $V$ be a vector space over $\F_q$ equipped with a nondegenerate quadratic, unitary or symplectic form $\kappa$, and write $d=\dim(V)$. Let $\cP$ be the associated polar space, and assume that it has rank $r\ge 2$ and is not $Q^+(3,q)$. Let $\theta_r$ be its ovoid number, cf. Table \ref{tab_thetar}. Suppose that $H$ is a reducible subgroup of $\Gamma(V,\kappa)$ that has two orbits $O_1$, $O_2$ on the points of $\cP$. Let $H_0$ be the quotient group of $H$ in $\textup{P}\Gamma(V,\kappa)$, and write $Z$ for the center of $\GL(V)$. By Section \ref{subsec_basic} the two $H$-orbits are either $i_1$- and $i_2$-tight sets of $\cP$ or $m_1$- and $m_2$-ovoids of $\cP$ respectively, where $i_1+i_2=\theta_r$ and $m_1+m_2=\frac{q^r-1}{q-1}$.

Let $W$ be an $H$-submodule of $V$ of the lowest dimension. Let $N_\Gamma(W)$ be the stabilizer of $W$ in $\Gamma(V,\kappa)$, so that $H\le N_\Gamma(W)$. Let $\cN_1$ be the set of  singular points in $W^\perp$, and let $\cN_2$ be its complement in $\cP$. Then $\cN_1$, $\cN_2$ are both $N_\Gamma(W)$-invariant, and they are the two $H$-orbits when they are both nonempty. Let $H|_{W^\perp}$ be the restriction of $H$ to $W^\perp$, and let $\kappa|_W$ be the restriction of $\kappa$ to $W$. Since the subspace $W\cap W^\perp$ is also $H$-invariant, we deduce that either $W\cap W^\perp=0$, or $W\cap W^\perp=W$, i.e., $W\le W^\perp$. If $W\cap W^\perp=0$, then $W$ is nondegenerate and $V=W\oplus W^\perp$. Suppose that $W\le W^\perp$ and $W$ is not totally singular or isotropic. This occurs only if $\kappa$ is orthogonal and $q$ is even. Since $\kappa$ is nondegenerate, $d$ is also even. The mapping $\kappa|_W:\,W\rightarrow\F_q$ is $\F_q$-semilinear, i.e., $\kappa(\lambda x+y)=\lambda^2\kappa(x)+\kappa(y)$ for $x,y\in W$ and $\lambda\in\F_q$. Since $W$ is not totally singular, $\kappa|_W$ is surjective. We deduce that $\ker(\kappa|_W)$ is an $H$-invariant $\F_q$-subspace of codimension $1$ in $W$, and it is trivial by the choice of $W$. It follows that $\dim(W)=1$.  To summarize, we have one of the following cases:
\begin{enumerate}
\item[(1A)] $W$ is nondegenerate, and $V=W\oplus W^\perp$,
\item[(1B)] $W$ is totally singular or isotropic,
\item[(1C)] $\kappa$ is orthogonal with $q$, $d$ even,  and $W=\la v\ra$ for some nonsingular vector $v$.
\end{enumerate}

\begin{proposition}\label{prop_C1_singular}
If either $\kappa$ is symplectic or we are in case (1B), then $W$ is a maximal totally singular or isotropic subspace of $V$.
\end{proposition}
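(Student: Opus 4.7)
The plan is to prove the two assertions separately: first that a symplectic $\kappa$ forces us into case (1B), and then that case (1B) alone forces $W$ to be a maximal totally singular or isotropic subspace. The key lever throughout is that once both $\cN_1$ and $\cN_2$ are nonempty, the $H$-invariant partition $\cP=\cN_1\sqcup\cN_2$ must coincide with the partition into the two $H$-orbits, so $H$ acts transitively on each of $\cN_1$ and $\cN_2$; combined with the $H$-invariance of $W$ (hence of the set of points of $W$), this gives a strong rigidity.

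For the symplectic reduction, suppose for contradiction that $\kappa$ is symplectic and we are in case (1A), so $V=W\oplus W^\perp$ with $W$ nondegenerate and $\dim W\ge 2$. Every projective point is isotropic, so all points of $W$ belong to $\cP$; since $W\cap W^\perp=0$, each of them lies in $\cN_2$. This yields a nonempty $H$-invariant subset of $\cN_2$, so transitivity of $H$ on $\cN_2$ forces the points of $W$ to exhaust $\cN_2$. I would then pick nonzero $w\in W$ and $w'\in W^\perp$: since $\kappa$ is alternating, $v=w+w'$ is isotropic, yet $\la v\ra$ lies in neither $W$ nor $W^\perp$, contradicting this equality. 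Case (1C) requires $\kappa$ orthogonal, so only case (1B) remains when $\kappa$ is symplectic.

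In case (1B), $W\subseteq W^\perp$ and every projective point of $W$ is automatically singular or isotropic, hence in $\cN_1$. One first checks that $\cN_2$ is nonempty: otherwise every singular/isotropic point of $\cP$ lies in $W^\perp$, but these span $V$ when $r\ge 2$, forcing $W^\perp=V$ and $W=0$. Transitivity of $H$ on $\cN_1$, together with the $H$-invariance of the nonempty set of points of $W$, then forces $\cN_1$ to consist precisely of the points of $W$. If $W$ were not maximal I would enlarge it strictly to a totally singular/isotropic subspace $W'$: any point of $W'\setminus W$ lies in $W'\subseteq W'^\perp\subseteq W^\perp$ and is singular/isotropic, hence belongs to $\cN_1\setminus W$, contradicting the above equality.

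The main subtlety is the bookkeeping needed to ensure that both $\cN_1$ and $\cN_2$ are nonempty so that they can be identified with the $H$-orbits $O_1,O_2$; once this is in place, the argument proceeds by a single use of transitivity against the rigidity coming from $H$-invariance of $W$. No deep input beyond this orbit dichotomy and the standard fact that every non-maximal totally singular/isotropic subspace embeds in a strictly larger one is needed, so the proof should be short.
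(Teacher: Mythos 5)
Your argument is correct, and its decisive step is genuinely different from the paper's. The common part is the orbit rigidity: since $H\le N_\Gamma(W)$, the sets $\cN_1$ (singular or isotropic points in $W^\perp$) and the points of $\PG(W)$ are $H$-invariant, and once an invariant set is nonempty and proper it must be one of the two orbits; your exclusion of (1A) for symplectic $\kappa$ via the point $\la w+w'\ra$ is in substance the paper's remark that $N_\Gamma(W)$ already has at least three orbits there. Where you diverge is the maximality of $W$ in case (1B): the paper observes that the points of $\PG(W)$ form one of the two orbits, hence a tight set or an $m$-ovoid, and then applies the intersection property of an $m$-ovoid with the $1$-tight set $\PG(U)$, where $U$ is a maximal totally singular subspace containing $W$ (\cite[Theorem~4]{BambergTightsets2007}), so that a size comparison forces $W=U$. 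You instead identify the whole orbit $\cN_1$ with the points of $W$ and derive a contradiction by extending a non-maximal $W$ to a strictly larger totally singular subspace, whose new points would lie in $\cN_1\setminus\PG(W)$. Your route avoids the intriguing-set parameter machinery of Section 2.1 entirely, at the cost of two standard facts that should be stated explicitly: that the singular vectors span $V$ when the rank is at least $1$ (needed for $\cN_2\ne\emptyset$; a short argument with a hyperbolic pair, with the usual caveat about the nucleus for quadratic forms in even characteristic, which is harmless here since $W$ is totally singular), and Witt's theorem to extend totally singular subspaces. It also yields the slightly stronger intermediate conclusion that $W^\perp$ carries no singular points outside $W$, which the paper's counting argument never needs; both proofs are equally short once their respective ingredients are in place.
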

\begin{proof}
We only gives details for the case where $\kappa$ is symplectic, since the other case is similar. Suppose that $\kappa$ is symplectic, so that each nonzero vector is isotropic. We can not be in case (1A), since otherwise $N_\Gamma(W)$ stabilizes both $W$ and $W^\perp$ and it has at least three orbits on the points of $\cP$. Suppose that we are in case (1B), i.e., $W$ is totally isotropic. Let $\cM'$ be the projective points of $\PG(W)$, which is nonempty and $N_\Gamma(W)$-invariant. Then  $\cM'$ is one of the two $H$-orbits, and so it is either a tight set or $m$-ovoid. Let $U$ be a maximal totally isotropic subspace that contains $W$, so that the points of $\PG(U)$ form a $1$-tight set. Since an $m$-ovoid and a $1$-tight set intersects in $m$ points by \cite[Theorem~4]{BambergTightsets2007}, we deduce that $\cM'$ is a $1$-tight set, i.e., $W=U$. This completes the proof.
\end{proof}

We assume that $\kappa$ is orthogonal or unitary in the sequel in view of Proposition \ref{prop_C1_singular}. The case (1B) has also been covered in the same proposition, and it remains to consider (1A), (1C).
\begin{proposition}\label{prop_C1_WNondeg}
Suppose that we are in case (1A) with $\kappa$ orthogonal or unitary. Then $W$ contains no nonzero singular vectors, and $\cN_1$, $\cN_2$ are the two $H$-orbits. Moreover,
\begin{enumerate}
\item[(1)] If $W=\la u\ra$, then $H|_{W^\perp}$ is transitive on both $\cM_1$ and the set $\mathcal{T}_2$ of nonsingular points $\la v\ra$ in $W^\perp$ such that $\kappa(v,v)=-\kappa(u,u)$  or $\kappa(v)=-\kappa(u)$. If $q$ is odd and $\kappa$ is orthogonal, then $|\mathcal{T}_2|=\frac{1}{2}|\cN_2|$.
\item[(2)] If $\dim(W)=2$, then $d$ is even, $\kappa$ is orthogonal, and $H|_{W^\perp}$ is transitive on both $\cN_1$ and the set of all nonsingular points in $W^\perp$.
\end{enumerate}
\end{proposition}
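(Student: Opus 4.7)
The plan is to establish the three assertions in sequence: the absence of singular vectors in $W$, the identification of $\cN_1$ and $\cN_2$ as the two $H$-orbits, and the transitivity statements in (1) and (2). The structural observation driving the argument is that under case (1A) we have the disjoint $H$-invariant partition $\cP=\cN_1'\sqcup\cN_1\sqcup\cN_3$, where $\cN_1':=\cP\cap W$ and $\cN_3:=\cP\setminus(\cN_1'\cup\cN_1)$; since $H$ has exactly two orbits on $\cP$, at least one of these three sets must be empty.

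To prove $\cN_1'=\emptyset$, I argue by contradiction. Suppose $\cN_1'\neq\emptyset$. If $\cN_1\neq\emptyset$ as well, then taking singular $w\in W$ and singular $w'\in W^\perp$ yields $\kappa(w+w',w+w')=\kappa(w,w)+\kappa(w',w')=0$ in both the orthogonal and unitary cases, so $\la w+w'\ra\in\cN_3$; this forces all three sets to be nonempty, contradicting the two-orbit hypothesis. Hence $\cN_1=\emptyset$, so $W^\perp$ is anisotropic. Anisotropic orthogonal (resp.\ unitary) spaces have dimension at most $2$ (resp.\ $1$), whereas $W$ nondegenerate containing a singular vector satisfies $\dim W\ge 2$. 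If $\dim W^\perp<\dim W$, the nonzero $H$-invariant subspace $W^\perp$ violates the minimality of $W$; the only remaining possibility $\dim W=\dim W^\perp=2$ forces $W$ hyperbolic of rank $1$ and $W^\perp$ anisotropic of rank $0$, giving total rank $1$, which contradicts $r\ge 2$.

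With $\cN_1'=\emptyset$ established, the partition $\cP=\cN_1\sqcup\cN_3$ gives two $H$-invariant subsets. The subspace $W^\perp$ inherits rank $r\ge 2$, so $\cN_1\neq\emptyset$; for $\cN_3\neq\emptyset$, pick any nonzero $w\in W$ (automatically nonsingular) and use that the form on the rank-$\ge 2$ space $W^\perp$ represents every value to find $w'\in W^\perp$ with $\kappa(w',w')=-\kappa(w,w)$, giving $\la w+w'\ra\in\cN_3$. Thus $\cN_1$ and $\cN_2=\cN_3$ are the two $H$-orbits. For parts (1) and (2) the common tool is the map $\psi:\cN_2\to\PG(W^\perp)$, $\la w+w'\ra\mapsto\la w'\ra$, defined after normalizing the $W$-component of a representative; this map is $H$-equivariant with the $H|_{W^\perp}$-action on the target, so transitivity of $H$ on $\cN_2$ transfers to transitivity of $H|_{W^\perp}$ on the image. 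In part (1) the image is exactly $\mathcal{T}_2$, and a short calculation with $\lambda^2=1$ for $q$ odd orthogonal yields the fiber size $2$ of $\psi$, giving $|\mathcal{T}_2|=\tfrac{1}{2}|\cN_2|$. In part (2), $\dim W=2$ with $W$ anisotropic forces $\kappa$ orthogonal (the nondegenerate symplectic and unitary $2$-spaces have isotropic vectors), so $W$ is elliptic; the rank computation then shows $W^\perp$ is hyperbolic of dimension $2r$ and $d=2r+2$ is even, and the elliptic form on $W$ represents every nonzero scalar, making the image of $\psi$ all nonsingular points of $W^\perp$.

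The main obstacle lies in tightening the minimality argument at the edge case $\dim W=\dim W^\perp=2$, which requires the rank-totaling computation to beat the $r\ge 2$ hypothesis; beyond that, the remainder is geometric bookkeeping via $\psi$ together with standard representability facts for the relevant quadratic and Hermitian forms.
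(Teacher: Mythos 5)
Your treatment of the first two assertions and of part (1) is correct and essentially the paper's argument: the three-set $H$-invariant partition of $\cP$ (singular points in $W$, in $W^\perp$, in neither), combined with the minimality of $W$ and the bound on anisotropic dimensions, rules out singular vectors in $W$ and identifies $\cN_1$, $\cN_2$ as the two orbits; your projection map $\psi$ merely packages the paper's direct computation with representatives $\la \lambda u+v\ra$, including the fibre-of-size-two count that gives $|\mathcal{T}_2|=\frac{1}{2}|\cN_2|$ for $q$ odd.

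The gap is in part (2), at the claim that $d$ is even. You assert that ``the rank computation then shows $W^\perp$ is hyperbolic of dimension $2r$ and $d=2r+2$ is even''. This is not a valid deduction: the Witt index is not additive over the orthogonal decomposition $V=W\perp W^\perp$ when $W$ is anisotropic. The parabolic configuration $d=2r+1$ with $q$ odd, $W$ an elliptic $2$-space and $W^\perp$ parabolic of Witt index $r-1$ is perfectly consistent with all dimension and rank arithmetic, so nothing you have written excludes odd $d$. (Your assertion also wrongly forces $d=2r+2$: the hyperbolic case $d=2r$, in which $W^\perp$ is elliptic of dimension $2r-2$, is a genuine instance of the proposition --- it is exactly the $m$-ovoid alternative in (R3) of Theorem \ref{thm_red}.) Ruling out odd $d$ requires the two-orbit hypothesis, i.e.\ that $\{\cN_1,\cN_2\}$ is an equitable partition and hence $\cN_1$ an intriguing set, which you never invoke here. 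The paper's argument is: for $P=\la u+v\ra\in\cN_2$ (with $u\in W$, $v\in W^\perp$ nonsingular), $P^\perp\cap\cN_1$ is the set of singular points of the nondegenerate hyperplane $v^\perp\cap W^\perp$ of $W^\perp$, which is of type $Q^+(2r-3,q)$ or $Q^-(2r-3,q)$ according to the square class of $\kappa(v)$; since the elliptic plane $W$ represents every element of $\F_q^*$, both square classes occur among points of $\cN_2$, so $|P^\perp\cap\cN_1|$ takes two distinct values and $\cN_1$ is not an intriguing set when $d=2r+1$ and $q$ is odd. (For $q$ even and $d$ odd, the radical of the associated bilinear form is a $1$-dimensional $H$-invariant subspace, contradicting the minimality of $W$ of dimension $2$.) Without an argument of this kind, part (2) remains unproved.
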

\begin{proof}
If $W$ has a nonzero singular vector $u_1$, then $W^\perp$ also has a nonzero singular vector $u_2$ since $\dim(W^\perp)\ge \dim(W)$. The singular points $\la u_1\ra$, $\la u_2\ra$ and $\la u_1+u_2\ra$ are in distinct $N_\Gamma(W)$-orbits: a contradiction. Therefore, $W$ has no singular points. It follows that $\dim(W)\le 1$ or $2$ according as $\kappa$ is unitary or orthogonal, and $\dim(W)=2$ only if $W$ is elliptic.  Since $\cP$ has rank $r\ge 2$ and is not $Q^+(3,q)$, we deduce that $W^\perp$ has a nonzero singular points by considering dimensions, i.e., $\cN_1$ is nonempty. The two sets $\cN_1$, $\cN_2$ are $N_\Gamma(W)$-invariant and so are the two $H$-orbits as desired.

We first suppose that $\dim(W)=1$ and establish (1). Suppose that $W=\la u\ra$. We only give details for the case $\kappa$ is orthogonal, since the unitary case is similar.  We have $\kappa(u)\ne 0$ and $\cN_2=\{\la v+u\ra\mid v\in W^\perp,\,\kappa(v)=-\kappa(u)\}$.  Take $\la v_1\ra$, $\la v_2\ra$ from the set $\mathcal{T}_2$. For $i=1,2$, there is $\lambda_i\in\F_{q}^*$ such that $\kappa(\lambda_i v_i)=-\kappa(u)$. Then $\la \lambda_i v_i+u\ra$, $i=1,2$, are singular points in $\cN_2$.  Since $H$ is transitive on $\cN_2$, there is $h\in H$ and $\lambda\in\F_q^*$ such that $\lambda_1 v_1h+uh=\lambda(\lambda_2 v_2+u)$. It follows that $uh=\lambda u$ and $v_1h=\lambda\lambda_1^{-1}\lambda_2 v_2$. This shows the transitivity of $H|_{W^\perp}$ on $\mathcal{T}_2$ as desired. Further suppose that $q$ is odd. We define $S_1:=\{v\in W^\perp\mid \kappa(v)=-\kappa(u)\}$. Then $|\cN_2|=|S_1|=2\cdot|\mathcal{T}_2|$, which gives the second claim. This concludes the proof of (1).

We next suppose that $\dim(W)=2$ and establish (2). Since $W$ has no singular points, we see that $\kappa$ is orthogonal and its restriction to $W$ is elliptic. We have  $\{\kappa(v)\mid v\in W\setminus\{0\}\}=\F_q^*$. For each nonsingular vector $v$ in $W^\perp$, there is an element $u\in W$ such that $\kappa(u)=-\kappa(v)$ and so $u+v$ is a singular vector. We have $|(v+u)^\perp\cap\cN_1|=|u^\perp\cap\cN_1|$. By the same arguments as in the $\dim(W)=1$ case, we deduce that $H|_{W^\perp}$ is transitive on the set of nonsingular points in $\PG(W^\perp)$. If $d=2r+1$ is odd and $q$ is odd, then $|P^\perp\cap\cN_1|$ takes the two values $|Q^+(2r-3,q)|$, $|Q^-(2r-3,q)|$ as $P$ varies in $\cN_2$. It follows that $\cN_1$ is not an intriguing set for $d=2r+1$. We conclude that $d$ is even.  This completes the proof.
\end{proof}

\begin{proposition}\label{prop_C1_ca3Trans}
Suppose that we are in case (1C) and $d=2m+2\ge 6$. Then either $\Sp_{2a}(q^b)'\unlhd H$ with $m=ab$, or $G_{2}(q^b)'\unlhd H$ with $m=3b$.
\end{proposition}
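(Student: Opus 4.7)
The plan is to use the nonsingular $1$-space $W=\langle v\rangle$ to pass to a symplectic quotient. Since $q$ is even, the bilinear form $B$ associated with $\kappa$ is alternating, so $v\in W^\perp$ and the radical of $B|_{W^\perp}$ is exactly $W$ (as $B$ is nondegenerate on $V$ and $\dim W=1$). Therefore the quotient $\bar{V}:=W^\perp/W$, of dimension $2m$, carries a nondegenerate symplectic form $\bar{B}$, and the associated polar space of $(\bar{V},\bar{B})$ is $W(2m-1,q)$. Since $H$ stabilizes $W$ and $W^\perp$, the induced action yields a homomorphism $\pi\colon H\to\Gamma\Sp(\bar{V},\bar{B})$; set $\bar{H}:=\pi(H)$.

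I would next construct an $H$-equivariant bijection between $\cN_1$ and $\PG(\bar{V})$, translating transitivity of $H$ on $\cN_1=O_1$ into point-transitivity of $\bar{H}$ on $W(2m-1,q)$. For each $\langle\bar{w}\rangle\in\PG(\bar{V})$ with $w\in W^\perp\setminus W$, the preimage $\langle v,w\rangle\subset W^\perp$ satisfies $B(v,w)=0$, so a vector $\lambda v+\mu w$ is singular precisely when $\lambda^2\kappa(v)+\mu^2\kappa(w)=0$. Because Frobenius is bijective on $\F_q$ and $\kappa(v)\neq 0$, this equation determines a unique projective ratio and hence a unique singular $1$-space in $\langle v,w\rangle$ distinct from $\langle v\rangle$. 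The assignment is visibly $H$-equivariant, so $\bar{H}$ is transitive on the points of $W(2m-1,q)=\PG(\bar{V})$.

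The heart of the proof is then the classification of subgroups of $\Gamma\Sp_{2m}(q)$ with $q$ even that are transitive on the points of $W(2m-1,q)$. Invoking Hering's theorem restricted to $\Gamma\Sp_{2m}(q)$ (or the point-transitive case of \cite{giudici2020subgroups}), one obtains that either $\Sp_{2a}(q^b)'\unlhd\bar{H}$ for some factorization $m=ab$, or $G_2(q^b)'\unlhd\bar{H}$ with $m=3b$. The $G_2$ family enters precisely here because, for $q$ even, $G_2(q)$ embeds into $\Sp_6(q)$ via $\Omega_7(q)\cong\Sp_6(q)$ and acts point-transitively on its natural module, with field extension supplying the $m=3b$ cases.

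Finally I would lift this back to $H$. For $q$ even the restriction of $\pi$ to the isometry part of $H$ is injective: if $g\in N_\Gamma(W)$ acts trivially on $\bar{V}$ and fixes $v$, then writing $gw=w+f(w)v$ for $w\in W^\perp$ and expanding $\kappa(gw)=\kappa(w)+f(w)^2\kappa(v)$ forces $f\equiv 0$ on $W^\perp$; a short additional argument on a complement to $W^\perp$ then gives $g=1$. The semisimilarity scalars and Frobenius twists are handled routinely. Hence the normal subgroups identified in $\bar{H}$ pull back to the corresponding normal subgroups in $H$, completing the proof. The main obstacle is the third step: invoking the classification of point-transitive subgroups of $\Gamma\Sp_{2m}(q)$ for $q$ even and confirming that the $G_2(q^b)$ case really does arise and exhaust the non-symplectic examples. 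Once this classification is cited, everything else is formal, essentially linear algebra in characteristic two.
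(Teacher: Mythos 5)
Your overall route is the one the paper takes: reduce to the $2m$-dimensional symplectic space (your quotient $W^\perp/W$ versus the paper's complement $U=\la u,v\ra^\perp$ is only a cosmetic difference), set up the $H$-equivariant bijection between $\cN_1$ and the points of $\PG(2m-1,q)$ via the unique singular point $\la \kappa(w)^{q/2}v+w\ra$ of $\la v,w\ra$, and then quote the classification of point-transitive subgroups of $\GamSp_{2m}(q)$ from \cite{giudici2020subgroups}. However, your lifting step rests on a false claim. The restriction of $\pi$ to the isometries in $N_\Gamma(W)$ is \emph{not} injective: its kernel is generated by the orthogonal transvection $r_v:\,x\mapsto x+\kappa(v)^{-1}f(x,v)v$, which fixes $W^\perp$ pointwise (so acts trivially on $\bar V$ and fixes $v$) but sends any $u$ with $f(u,v)\ne 0$ to $u+\kappa(v)^{-1}f(u,v)v\neq u$. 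Your own computation, carried to completion on a complement of $W^\perp$, gives exactly $g\in\{1,r_v\}$, not $g=1$; this is the paper's statement $\ker(\psi)=\la r_v\ra$. Consequently "the normal subgroups identified in $\bar H$ pull back to the corresponding normal subgroups in $H$" does not follow as written: the relevant normal subgroup $J=H\cap\pi^{-1}(K_i)$ (with $K_1=\Sp_{2a}(q^b)'$, $K_2=G_2(q^b)'$) maps onto $K_i$ with central kernel of order at most $2$, so a priori its perfect core could be a proper double cover $2\cdot K_i$ rather than $K_i$ itself, and then the stated conclusion $K_i\unlhd H$ would fail. Closing this requires an argument that the preimage contains a genuine copy of $K_i$ — this is precisely the point of the paper's remark that, since $\ker(\psi)$ has order $2$ and $q$ is even, the full preimage $H_i=\psi^{-1}(K_i)$ satisfies $H_i^\infty\cong K_i$ (for which the explicit description of $N_{I(V)}(W)$ in Lemma \ref{lem_C1_ca3PsiInv} is useful); once that is in place, $J^\infty\cong K_i$ and $J^\infty\unlhd H$ follows. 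So the gap is repairable, but the injectivity claim must be replaced by an analysis of this order-$2$ kernel.

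Two smaller remarks. First, the classification you invoke is the same one the paper uses (\cite[Theorem~5.2(a)]{giudici2020subgroups}); since $q$ is even, the odd-characteristic exceptional point-transitive symplectic groups do not intervene, so that citation is fine. Second, the paper's proof goes further than the stated implication: it also verifies, via Lemmas \ref{lem_C1_ca3PsiInv} and \ref{lem_C1_ca3Typ} and the stabilizer computations ($\Omega(U',Q')$ in $\Sp_{2a}(q^b)$, and $\SU_3(q^b).2$ or $\SL_3(q^b).2$ in $G_2(q^b)$ by \cite{CK_G2}), that the lifted groups $H_1,H_2$ are transitive on $\cN_2$, i.e.\ that case (R4) genuinely occurs. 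Your proposal omits this; it is not needed for the implication as stated (transitivity on $\cN_2$ is part of the hypothesis on $H$), but it is the substantive second half of the paper's argument.
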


The remaining part of this subsection is devoted to the proof of Proposition \ref{prop_C1_ca3Trans}. We first introduce some notation. We assume without loss of generality that $W=\la v\ra$ with $\kappa(v)=1$. Take $\delta\in\F_q$ such that $\tr_{\F_q/\F_2}(\delta)=1$. Let $\la u,v\ra$ be an elliptic $2$-subspace such that $\kappa(x_1u+x_2v)=\delta x_1^2+x_1x_2+ x_2^2$, and set $U:=\la u,v\ra^\perp$. Let $f$ be the associated bilinear form of $(V,\kappa)$, and let $r_v$ be the reflection such that $xr_v=x+\frac{f(x,v)}{\kappa(v)}v$ for $x\in V$.
Let $f_1$ be the induced symplectic form on $W^\perp/W$.

By considering the induced action on the symplectic space $(W^\perp/W,f_1)$, we obtain a natural group homomorphism $\psi:\,N_{\Gamma}(W)\rightarrow\Gamma(W^\perp/W,f_1)$. It is routine to show that $\ker(\psi)=\la r_v\ra$ and $\psi$ is surjective by using \cite[Theorem~14.2]{Grove2002class} and Witt's lemma. Since $V=U\perp\la u,v\ra$, we have a symplectic space isomorphism between $(U,f|_U)$ and $(W^\perp/W,f_1)$ via $x\mapsto\bar{x}$ for $x\in U$. By composition we obtain a group homomorphism $\psi:\,N_\Gamma(W)\rightarrow\Gamma(U,f|_U)$, which we still denote by $\psi$ by abuse of notation.
\begin{lemma}\label{lem_C1_ca3PsiInv}
Take notation as above, and suppose that $g\in N_{I(V)}(W)$. Then there is an isometry $h$ of the symplectic space $(U,f|_U)$, an element $w\in U$ and an element $\eta\in\F_q$ such that
\begin{equation}\label{eqn_1C_hCond}
\textup{$\eta+\eta^2=\kappa(w)$, and $f(w,zh)^2=\kappa(z)+\kappa(zh)$ for $z\in U$}
\end{equation}
and
\begin{equation}\label{eqn_1C_gact}
  g:\,(u,v,z)\mapsto (u+\eta v+w,v,zh+f(w,zh)v),\quad\textup{ for } z\in U.
\end{equation}
Conversely, each triple $(h,w,\eta)$ that satisfies \eqref{eqn_1C_hCond} defines an isometry $g\in N_{I(V)}(W)$ as in \eqref{eqn_1C_gact}.
\end{lemma}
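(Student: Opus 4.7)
The approach is a direct computation using the decomposition $V=\la u\ra\oplus\la v\ra\oplus U$, relying on three ingredients: $g$ fixes $W=\la v\ra$, $g$ is an isometry of $\kappa$ (hence of $f$), and the characteristic is $2$. First I note the elementary observations $f(u,v)=1$ (from $\kappa(u+v)=\delta+1+1=\delta$), $v\in U^\perp$, and $U$ is nondegenerate with respect to $f$ since $\la u,v\ra$ is nondegenerate.

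For the forward direction, start by writing $vg=\lambda v$; then $\kappa(vg)=\lambda^2=1$ forces $\lambda=1$ in characteristic $2$, so $vg=v$. Next expand $ug=\alpha u+\eta v+w$ with $w\in U$. The identity $f(ug,vg)=f(u,v)=1$ combined with $f(u,v)=1$, $f(w,v)=0$, $f(v,v)=0$ yields $\alpha=1$, and $\kappa(ug)=\kappa(u)=\delta$ expands (using $w\in\la u,v\ra^\perp$) to $\delta+\eta+\eta^2+\kappa(w)=\delta$, giving the first relation $\eta+\eta^2=\kappa(w)$. For $z\in U$, write $zg=\alpha(z)u+\beta(z)v+z'$ with $z'\in U$; the condition $f(zg,v)=f(z,v)=0$ forces $\alpha(z)=0$, so defining $zh:=z'$ gives an $\F_q$-linear map $h\colon U\to U$. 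Symplecticity $f(z_1 h,z_2 h)=f(z_1,z_2)$ follows from $f(z_1g,z_2g)=f(z_1,z_2)$ together with $v\in U^\perp$, so $h$ is an isometry of $(U,f|_U)$.

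To identify $\beta(z)$ with $f(w,zh)$, compute $f(ug,zg)$ two ways. One the one hand it equals $f(u,z)=0$; on the other hand, expanding the form yields $\beta(z)\cdot f(u,v)+f(w,zh)=\beta(z)+f(w,zh)$, giving $\beta(z)=f(w,zh)$. Thus the element $w$ appearing in $ug$ automatically governs the $v$-component of $zg$. Finally, $\kappa(zg)=\kappa(z)$ expands, using $zh\in v^\perp$ and $v\in U^\perp$, to $\beta(z)^2+\kappa(zh)=\kappa(z)$, i.e.\ $f(w,zh)^2=\kappa(z)+\kappa(zh)$. This completes the forward direction.

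For the converse, given a triple $(h,w,\eta)$ satisfying \eqref{eqn_1C_hCond}, define $g$ by the formula \eqref{eqn_1C_gact} extended $\F_q$-linearly; it visibly preserves $W=\la v\ra$. For $x=\alpha u+\beta v+z$ with $z\in U$, one has $xg=\alpha u+(\alpha\eta+\beta+f(w,zh))v+(\alpha w+zh)$, and a straightforward expansion using $u,v\perp U$ gives
\[
\kappa(xg)=\delta\alpha^2+\alpha\beta+\beta^2+\alpha^2\bigl(\eta+\eta^2+\kappa(w)\bigr)+f(w,zh)^2+\kappa(zh).
\]
The middle term vanishes by the first relation (in characteristic $2$), and $f(w,zh)^2+\kappa(zh)=\kappa(z)$ by the second, yielding $\kappa(xg)=\kappa(x)$. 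Hence $g\in N_{I(V)}(W)$ as required. The only possible obstacle is the bookkeeping of cross terms in characteristic $2$, but all cancellations are forced by the two relations on $(h,w,\eta)$, so no deeper input is needed.
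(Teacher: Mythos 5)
Your proof is correct and follows essentially the same route as the paper's: a direct computation in the decomposition $V=\la u\ra\oplus\la v\ra\oplus U$ using $vg=v$, $f(ug,zg)=0$, $\kappa(ug)=\delta$ and $\kappa(zg)=\kappa(z)$, with the converse verified by expanding $\kappa(xg)$ and invoking the two relations. The only cosmetic difference is that the paper obtains $h$ via the homomorphism $\psi$ and defines $w$ abstractly through nondegeneracy of $f|_U$ before matching it with the $U$-component of $ug$, whereas you read $h$ and $w$ off the components of $zg$ and $ug$ directly; the underlying computations coincide.
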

\begin{proof}
The homomorphism $\psi$ maps $N_{I(V)}(W)$ to $I(U,f|_U)$ surjectively with kernel $\la r_v\ra$, so $h:=\psi(g)$ is a isometry of $(U,f|_U)$. Since $g$ is an isometry and stabilizes $W$, we have $vg=v$. For $z\in U$, we have $zg-zh=\beta(z)v$ for some $\F_q$-linear function $\beta:\,U\rightarrow\F_q$. Since $f|_U$ is nondegenerate, there is a unique element $w\in U$ such that $\beta(z)=f(w,zh)$ for $z\in U$. We have $f(zg,zh)=f(zh+\beta(z)v,zh)=0$ and so
\begin{align*}
\beta(z)^2=\kappa(zg-zh)=\kappa(zg)+\kappa(zh)+f(zg,zh)
    =\kappa(z)+\kappa(zh).
\end{align*}
We deduce that $w\in U$ is uniquely determined by the property $f(w,zh)^2=\kappa(z)+\kappa(zh)$ for $z\in U$. Write $ug=au+\eta v+w'$, where $a,\eta\in\F_q$ and $w'\in U$. We deduce from $f(ug,vg)=1$ that $a=1$. For $z\in U$, we deduce from $f(ug,zg)=0$ that $0=f(u+bv+w,zh+\beta(z)v)=f(w+w',zh)$, so $w'=w$. From $\kappa(ug)=\delta$ we deduce that $\kappa(w)=\eta+\eta^2$. This establishes \eqref{eqn_1C_hCond} and \eqref{eqn_1C_gact}.

Conversely, if a triple $(h,w,\eta)$ satisfies  \eqref{eqn_1C_hCond}, then we defines a linear transformation $g$ as in \eqref{eqn_1C_gact}. It is clear that $g$ stabilizes $W$. The fact that $g$ is an isometry follows by direct check. This completes the proof.
\end{proof}

\begin{lemma}\label{lem_C1_ca3Typ}
Choose $z_0\in U$ such that $\kappa(z_0)=\delta$. Let $\kappa_1$ be the restriction of $\kappa$ to $U$, and define $\kappa_0(z)=\kappa_1(z)+f(z_0,z)^2$ for $z\in U$. Then the quadratic space $(U,\kappa_0)$ has the same sign as $(V,\kappa)$.
\end{lemma}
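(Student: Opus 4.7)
The plan is to construct an explicit isometric embedding $\phi:(U,\kappa_0)\hookrightarrow(V,\kappa)$ whose image has a hyperbolic orthogonal complement in $V$, so that $(V,\kappa)$ decomposes as $(U,\kappa_0)\perp(\text{hyperbolic plane})$ and the two spaces have the same sign by Witt's Theorem.

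First, I would define the $\F_q$-linear map $\phi:U\to V$ by $\phi(z):=z+f(z_0,z)v$. Using $\kappa(v)=1$ and $f(z,v)=0$ for $z\in U$ (since $U=\la u,v\ra^{\perp}$), a direct expansion gives
\[
\kappa(\phi(z))=\kappa(z)+f(z_0,z)^2\kappa(v)+f(z_0,z)f(z,v)=\kappa_1(z)+f(z_0,z)^2=\kappa_0(z),
\]
and a parallel computation shows $f(\phi(z_1),\phi(z_2))=f(z_1,z_2)$. Since $U\cap\la v\ra=0$, the map $\phi$ is injective, so it identifies $(U,\kappa_0)$ with the $2m$-dimensional nondegenerate subspace $\phi(U)$ of $V$.

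Next, I would exhibit a hyperbolic plane orthogonal to $\phi(U)$. Set $u':=u+z_0$ and check that $\la u',v\ra$ is orthogonal to $\phi(U)$: for any $z\in U$,
\[
f(u',\phi(z))=f(u,z)+f(z_0,z)f(u,v)+f(z_0,z)+f(z_0,z)f(z_0,v)=0+f(z_0,z)+f(z_0,z)+0=0,
\]
and $f(\phi(z),v)=f(z,v)+f(z_0,z)f(v,v)=0$ (recall $f$ is alternating in characteristic $2$). Moreover $f(u',v)=f(u,v)=1$, and
\[
\kappa(u')=\kappa(u)+\kappa(z_0)+f(u,z_0)=\delta+\delta+0=0,
\]
so $u'$ is singular while $f(u',v)=1$, i.e.\ $\la u',v\ra$ is a hyperbolic plane. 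By dimension count, $V=\phi(U)\perp\la u',v\ra$.

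Finally, since the type of an orthogonal sum is the product of the types and a hyperbolic plane has plus type, $(V,\kappa)$ and $(\phi(U),\kappa|_{\phi(U)})\cong(U,\kappa_0)$ have the same sign, as required. The proof is essentially just a linear-algebra verification; the only conceptual point is choosing $z_0\in U$ with $\kappa(z_0)=\delta$ so that the correction $u'=u+z_0$ has trivial norm, which is precisely what converts the elliptic plane $\la u,v\ra$ into a hyperbolic one. No serious obstacle is expected; the main thing to watch is consistent use of the characteristic-$2$ identities $f(x,x)=0$ and $\kappa(x+y)=\kappa(x)+\kappa(y)+f(x,y)$.
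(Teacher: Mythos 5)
Your proof is correct, and it takes a genuinely different route from the paper. The paper never leaves $U$: it first notes that $(U,\kappa_1)$ has the opposite sign to $(V,\kappa)$ because $\la u,v\ra$ is elliptic, then extends $z_0$ to an elliptic line $\la z_0,z_1\ra$ inside $U$ (using $\dim U\ge 4$), checks that this line becomes hyperbolic for $\kappa_0$ while $\kappa_0$ and $\kappa_1$ agree (together with their common polar form $f|_U$) on its orthogonal complement, and concludes by two cancelling sign flips. You instead work inside $V$: the map $\phi(z)=z+f(z_0,z)v$ is an isometric embedding of $(U,\kappa_0)$ into $(V,\kappa)$ preserving the polar form, its image is the nondegenerate subspace $\la u+z_0,v\ra^{\perp}$, and the complement $\la u+z_0,v\ra$ is hyperbolic since $\kappa(u+z_0)=\delta+\delta=0$ and $f(u+z_0,v)=1$, so $(V,\kappa)\cong(U,\kappa_0)\perp(\textup{hyperbolic plane})$ and the signs agree. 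Your argument is more self-contained (no need for the auxiliary elliptic line or the hypothesis $\dim U\ge 4$, only the implicit fact that $f|_{U}$ is nondegenerate, which you should state since it justifies both the nondegeneracy of $\phi(U)$ and the orthogonal direct sum), and it meshes well with the rest of Section 3, where $\la u+z_0\ra$ and the form $Q_0(z)=f(z_0,z)^2+\kappa(z)$ reappear in the proof of Proposition \ref{prop_C1_ca3Trans}; the paper's version buys a shorter computation confined to a single $2$-dimensional subspace of $U$.
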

\begin{proof}
Since $\la u,v\ra$ is elliptic, $(U,\kappa_1)$ has an opposite sign to that of $(V,\kappa)$. Since $U$ is nondegenerate and $\dim(U)\ge 4$, there is an elliptic line $\la z_0,z_1\ra$ in $U$ such that $\kappa_1(x_0z_0+x_1z_1)=x_1^2+x_1x_0+\delta x_0^2$. For $z=x_0z_0+x_1z_1$, we have $f(z_0,z)=x_1$ and $\kappa_0(z)=x_1(x_0+\delta x_1)$, so $\la z_0,z_1\ra$ is a hyperbolic line for $\kappa_0$. Since $\kappa_0$ and $\kappa_1$ agrees on the orthogonal complement of $\la z_0,z_1\ra$ in $U$, we deduce that $(U,\kappa_0)$ has an opposite sign to  that of $(U,\kappa_1)$. This establishes the claim.
\end{proof}

\begin{proof}[Proof of Proposition \ref{prop_C1_ca3Trans}]Take the same notation as introduced following this proposition. Each element of $\cN_1$ is of the form $\la \kappa(z)^{q/2}v+z\ra$ with $z\in U\setminus\{0\}$, which is in bijection with the points of $\PG(U)$. Since $H$ is transitive on $\cN_1$, we deduce that $\psi(H)$ is transitive on the points of $\PG(U)$. By \cite[Theorem~5.2(a)]{giudici2020subgroups} we have $K_1\unlhd \psi(H)$ ($m=ab$) or $K_2\unlhd \psi(H)$ ($m=3b$), where $K_1=\Sp_{2a}(q^b)'$ and $K_2=G_{2}(q^b)'$. Let $H_i$ be the full preimage of $K_i$ under $\psi$ for $i=1,2$. Since $\ker(\psi)$ has order $2$ and $q$ is even, we deduce that $H_1^{\infty}\cong\Sp_{2a}(q^b)'$ ($m=ab$) and $H_2^{\infty}\cong G_{2}(q^b)'$ ($m=3b$). Both $H_1$ and $H_2$ lie in $N_{I(V)}(W)$, and they are transitive on $\cN_1$ by the bijection between $\cN_1$ and points of $\PG(U)$. It remains to show that $H_1$ and $H_2$ are both transitive on $\cN_2$.

Fix an vector $z_0\in U$ such that $\kappa(z_0)=\delta$, so that $\la u+z_0\ra$ is a singular point in $\cN_2$.  Take $g$ in $N_{I(V)}(W)$ that stabilizes $\la u+z_0\ra$, and let $h,w,\eta$ be as in Lemma \ref{lem_C1_ca3PsiInv}. Since $g$ is an isometry, we deduce that $u+z_0=ug+z_0g$, i.e., $(f(w,z_0h)+\eta)v=z_0h+z_0+w$ by (1) of Lemma \ref{lem_C1_ca3PsiInv}. The right hand side is in $U$ while $v$ is not, so $w=z_0h+z_0$ and $\eta=f(w,z_0h)=f(z_0,z_0h)$. It follows that  $w$ and $\eta$ are uniquely determined by $h$. We plug them into the second equation in (2) of Lemma \eqref{lem_C1_ca3PsiInv} to deduce that $f(z_0,z)^2+\kappa(z)=f(z_0,zh)^2+\kappa(zh)$ for $z\in U$. By specifying $z=z_0$ we obtain $f(z_0,z_0h)^2+\kappa(z_0h)+\kappa(z_0)=0$, i.e., $\eta+\eta^2=\kappa(w)$. We conclude that $g$ stabilizes $\la u+z_0\ra$ if and only  if $h$ stabilizes the quadratic form $Q_0(z):=f(z_0,z)^2+\kappa(z)$ on $U$.

We now determine the stabilizer of $Q_0$ in the corresponding $K_i$ for $i=1,2$. By Lemma \ref{lem_C1_ca3Typ}, $Q_0$ has the same sign as $(V,\kappa)$. There is an embedding of $\F_{q^b}$ in $\End(U)$ such that $U$ becomes a vector space $U'$ over $\F_{q^b}$ equipped with a $\psi(K_i)$-invariant symplectic form $f'$ such that $f|_U=\tr_{\F_{q^b}/\F_q}\circ f'$. Let $\kappa':\,U'\rightarrow\F_{q^b}$ be any quadratic form such that $\kappa|_U=\tr_{\F_{q^b}/\F_q}\circ\kappa'$, and set $Q'(z):=f'(z_0,z)^2+\kappa'(z)$. Then $Q_0=\tr_{\F_{q^b}/\F_q}\circ Q'$, and  $(U',Q')$ has the same sign as $(V,\kappa)$ by Table \ref{tab_extfieldP'gt0} and Lemma \ref{lem_C1_ca3Typ}. An element $h\in K_i$ is an isometry of $(U,Q_0)$ if and only if it is an isometry of $(U',Q')$. The stabilizer of $(U',Q')$ in $\Sp_{2a}(q^b)$ is $\Omega(U',Q')$, and the stabilizer in $G_2(q^b)$ is $\SU_3(q^b).2$ or $\SL_3(q^b).2$ according as $(V,\kappa)$ is elliptic or hyperbolic by \cite[Propositions~2.1,~6.1]{CK_G2}. We deduce that the orbit of $\la u+z_0\ra$ under $H_i$ has the same size as $\cN_2$ for $i=1,2$. It follows that $H_1$ and $H_2$ are both transitive on $\cN_2$. This completes the proof.
\end{proof}

Let us summarize what we have done so far in this section. Let $W$ be an $H$-submodule of $V$ of the lowest dimension, so that we have either case (1A), (1B) or (1C). If $\kappa$ is symplectic or we are in case (1B), then we have (R1) of Theorem \ref{thm_red} by Proposition \ref{prop_C1_singular}. Suppose that $\kappa$ is orthogonal or unitary. For case (1A) we obtain (R2) and (R3) of Theorem \ref{thm_red} by Proposition \ref{prop_C1_WNondeg}, and for case (1C) we obtain (R4) of Theorem \ref{thm_red} by Proposition \ref{prop_C1_ca3Trans}. This completes the proof of Theorem \ref{thm_red}.

\section{Proof of Theorem \ref{thm_TS}: the non-geometric subgroups}\label{sec_classS}

Write $q=p^f$ with $p$ prime. Let $V$ be a $d$-dimensional vector space over $\F_q$ that is equipped with a nondegenerate quadratic, unitary or symplectic form $\kappa$. Suppose that the associated polar space $\cP$ has rank $r\ge 2$ and $\cP\ne Q^+(3,q)$. We assume that $H$ is a subgroup of  $\Gamma(V,\kappa)$ of Aschbacher class $\cS$ that has two orbits $O_1$, $O_2$ on the points of $\cP$ which are $i_1$- and $i_2$-tight sets respectively. We refer the reader  to \cite[Definition~2.1.3]{bray2013maximal} for the exact definition of the Aschbacher class $\cS$. We write $H_0$ for the quotient group of $H$ in $\textup{P}\Gamma(V,\kappa)$, and write $Z$ for the center of $\GL(V)$. There is a simple group $S$ such that $S\le H_0\le M$, where $M$ is the normalizer of $S$ in $\textup{P}\Gamma(V,\kappa)$. We write $L:=H^\infty$ and $K:=N_{\Gamma(V)}(L)$, so that $L\unlhd H\unlhd K$. If $S$ is a simple group of Lie type in the characteristic $p$, then we write $\tilde{S}$ for the covering group of $S$ by the $p'$-part of its Schur multiplier; otherwise, we write $\tilde{S}$ for the full covering group of $S$.

We specify a basis of $V$ such that $\kappa$ takes the standard form as in \cite[Table~1.1]{bray2013maximal}. We regard elements of $V$ as row vectors and identify $\Delta(V)$ as a matrix group with respect to the specified basis. The $L$-module $V$ is naturally a $\tilde{S}$-module, and let $\rho:\,\tilde{S}\rightarrow\GL_d(q)$ be the corresponding representation afforded by $V$. We then have $L=\tilde{S}\rho$. Since $H$ is not in the Aschbacher class $\cC_5$, $\F_q$ is the smallest field over which $V$ can be realized. By \cite[Theorem~74.9]{CurRei_Rep}, $\F_q$ is the smallest extension field of $\F_p$ that contains the traces of $g\rho$, $g\in \tilde{S}$.

\begin{proposition}\label{prop_classS_caseS}
If $\cP=W(d-1,q)$, then we have one of the symplectic examples in Table \ref{tab_TS_cSother}.
\end{proposition}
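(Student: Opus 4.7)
The plan is to leverage Liebeck's classification of affine rank-$3$ permutation groups in \cite{LiebeckRank3} and then filter its class-$\cS$ entries for those preserving a nondegenerate alternating form.

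First, since $\kappa$ is symplectic every nonzero vector is isotropic, so the point set of $\cP=W(d-1,q)$ coincides with $\PG(V)$ and $H$ has exactly two orbits on $\PG(V)$. Let $Z$ be the group of scalars in $\GL(V)$; since scalars preserve $\kappa$ up to a multiplier, $Z\le\Gamma(V,\kappa)$. The enlarged group $\tilde H := HZ$ still has the two orbits $O_1,O_2$ on $\PG(V)$ and still belongs to Aschbacher class $\cS$ (class membership being invariant under central extensions), so it has exactly two orbits on $V\setminus\{0\}$. Consequently the affine group $V\rtimes \tilde H$ acts on $V$ as a transitive rank-$3$ permutation group, with point stabiliser $\tilde H$ and suborbits $\{0\}$, $O_1$, $O_2$.

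I would then invoke the main theorem of \cite{LiebeckRank3}, which lists every affine rank-$3$ group together with the Aschbacher class of its point stabiliser. Restricting to class $\cS$ leaves a finite explicit set of candidates $(\tilde S, V)$, where $\tilde S$ is quasisimple, $V$ is an absolutely irreducible $\tilde S$-module over $\F_q$, and $\F_q$ is the minimal field of realisation (otherwise $\tilde H$ would actually sit in class $\cC_5$). For each candidate I would determine whether $V$ admits a $\tilde S$-invariant nondegenerate alternating form, equivalently whether the representation has symplectic type: in characteristic zero this is the Frobenius--Schur indicator $-1$ criterion, and in positive characteristic it is read off from the known modular character tables, the Atlas, or verified directly in Magma \cite{Magma}. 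The surviving candidates will be precisely the six symplectic rows of Table~\ref{tab_TS_cSother}.

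For each surviving case the orbit sizes $|O_1|,|O_2|$ can be read from the subdegree data recorded in \cite{LiebeckRank3} or the Atlas, after which the tight-set parameters are $i_j = |O_j|(q-1)/(q^r-1)$; the identity $i_1+i_2=\theta_r=q^r+1$ serves as a consistency check and holds in each listed row. The principal obstacle will be identifying the Frobenius--Schur type in the relevant (sometimes modular) representation and verifying that $\F_q$ is truly minimal. Borderline entries such as $2.\text{Suz}$ and $2.G_2(4)$ on the $12$-dimensional module over $\F_3$, or $J_2$ on the $6$-dimensional module over $\F_4$, will require the most care, and I would handle them via the known chain of embeddings $2.G_2(4)<2.\text{Suz}<\Sp_{12}(3)$ and $J_2<\Sp_6(4)$ together with an explicit computation confirming that no proper subfield suffices.
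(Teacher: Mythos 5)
Your proposal follows essentially the same route as the paper: pass to $HZ$ acting with two orbits on $V\setminus\{0\}$, view $V\rtimes HZ$ as an affine rank-$3$ group, invoke the main theorem of \cite{LiebeckRank3}, and then sieve the class-$\cS$ candidates by which modules carry an invariant nondegenerate alternating form, confirming orbit sizes and tight-set parameters case by case. The only points the paper makes explicit that you gloss over are the defining-characteristic entries (A8)--(A11) of Liebeck's theorem (e.g.\ $\textup{Sz}(q)$ survives your symplectic filter but is discarded because its orbits are ovoids rather than tight sets, and the spin modules fail to be symplectic) and the exclusion of the $A_9<\Omega_8^+(2)$ entry as being of class $\cC_8$ rather than $\cS$ inside $\Sp_8(2)$; your tight-set consistency check and minimal-field/class-$\cS$ verification would catch these, so the argument is sound.
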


\begin{proof}
As $H$ has two orbits on the singular points, the group $HZ$ has two orbits on the nonzero vectors of $V$. First consider the case where $S$ is a simple group of Lie type in the defining characteristic $p$. By \cite[Section~2]{LiebeckRank3}, we have one of the cases (A8)-(A11) in the main theorem therein. In each case, it is routine to determine the smallest field of realization of the respective module by using \cite[Theorem~5.1.13]{bray2013maximal}. For (A8), we have $H^\infty=\SL_5(q)$ and $V=\wedge^2(W)$ which has no form by \cite[Proposition~5.4.3]{bray2013maximal}, where $W$ is the natural module of $\SL_5(q)$. For (A9), $V$ is the spin module for $\tilde{S}=\textup{Spin}_7(q)$ which has a nondegenerate quadratic form by \cite[Proposition~5.4.9]{bray2013maximal}. For (A10), $V$ is the spin module for $\tilde{S}=\textup{Spin}^+_{10}(q)$ which is not self-dual by \cite[Proposition~5.4.9]{kleidman1990subgroup}. It follows that $V$ does not have a symplectic form by \cite[Lemma~2.10.15]{kleidman1990subgroup}. For (A11),  we have $\tilde{S}=\textup{Sz}(q)$ and one of the two orbits is an ovoid of $W(3,q)$.

Next consider the case where $S$ is an alternating group, a simple group of Lie type in characteristic different from $p$ or a sporadic simple group. We have one of the examples in \cite[Table~2]{LiebeckRank3} by \cite[Sections~2-4]{LiebeckRank3}, and we list those with a symplectic form in Table \ref{tab_TS_cSother} by \cite[Table~11.6]{SRG}. In the row about $W(3,5)$ (in Table~1.6), the normalizer $M$ of $2.A_6$ has two orbits $O_1$, $O_2$ on the singular points while $2.A_6$ has three orbits by \cite[Lemma~4.3]{LiebeckRank3}.  In all the other examples, $S$ has exactly two orbits on the points of $\cP$. We remark that the $A_9$ row of \cite[Table~2]{LiebeckRank3} is not in Table \ref{tab_TS_cSother} since it arises from the embedding $A_9<\Omega_8^+(2)$ which is in Aschbacher class $\cC_8$. This completes the proof.
\end{proof}

In view of Proposition \ref{prop_classS_caseS}, we suppose that $\kappa$ is orthogonal or unitary for the rest of this section.  We shall make use of the numerical conditions in Lemma \ref{lem_ParaCond}.
Since $2r\le d\le 2r+2$, we deduce that $\lceil\frac{d}{2}\rceil-1\le r\le\lfloor\frac{d}{2}\rfloor$. Since $|H_0|$ divides $|\Aut(S)|$, we deduce that $d<2+\log_q(2\cdot|\Aut(S)|)$ by Lemma \ref{lem_ParaCond}. Together with the bound $2r\le d$, we deduce that $r<1+\frac{1}{2}\log_q(2\cdot|\Aut(S)|)$.
\begin{remark}\label{rem_classS_Smethod}
We describe a general strategy to handle a specific simple group $S$. Let $R(S)$ be a lower bound on the dimensions of absolutely irreducible representations of $V$ in characteristic $p$. Then $R(S)\le d$ and so $\frac{R(S)}{2}-1\le r$. There are finitely many $(q,r)$ pairs such that $\frac{q^r-1}{q-1}$ divides $|\Aut(S)|$ and
\[
  \frac{1}{2}R(S)-1\le r<1+\frac{1}{2}\log_q(2\cdot|\Aut(S)|).
\]
If $q$ is not a square, then we expect $\cP$ to be orthogonal and so the ovoid number is $\theta_r=q^{d-1-r}+1$. In such a case, we check whether there is an integer $i$ such that $\lcm(i,\theta_r-i)\cdot\frac{q^r-1}{q-1}$ divides $|\Aut(S)|$. We then look up \cite{LubeckSmalldegree} and \cite{HissMalle} for candidate representations with Frobenius-Schur indicators  $+$ or $\circ$ whose dimensions are in the interval $[\max\{2r,R(S)\},2r+2]$. By Section \ref{subsec_basic}, it suffices to consider one representative from each weak equivalence class. Their matrix representations are usually available via the online Atlas database \cite{Atlas} or the Magma Command \textrm{AbsolutelyIrreducibleModules}, and the Magma command \textrm{ClassicalForms} returns the form $\kappa$. Then we can determine the number of their orbits on the singular points by computer.
\end{remark}

\subsection{The alternating group $A_n$, $n\ge 5$}\label{subsec_Alt}

Suppose that $S$ is an alternating group $A_n$, $n\ge 5$. We first recall the definition of fully deleted modules for $A_n$. The group $A_n$ acts on the vector space $\F_{p}^{n}$ by permuting the coordinates. Set $U=\left\{(a_{1},\ldots,a_{n})\mid \sum_{i=1}^{n}a_{i}=0\right\}$, $W=\{(a,\ldots,a)\mid a\in\F_{p}\}$, and define the fully deleted module for $A_n$ as $V_0:=U/(U\cap W)$. We have $\dim(V_0)=n-1-[\![ p\mid n]\!]$. For a nonzero vector $v\in U$, we write $c_i$ for the number of occurrence of $i$ among its coordinates for $i\in\F_p$.  If $\{i\mid c_i> 0\}=\{i_1,\ldots,i_s\}$, then we say that $v$ is of the shape $i_1^{c_{i_1}}\cdots i_s^{c_{i_s}}$. Also, we define the Hamming weight of $v$ as $\omega_H(v)=n-c_0$. If $p$ is odd, then $A_n$ preserves the quadratic form $Q(a)=\sum_{i=1}^na_i^2$ on $U$. If $p=2$ and $n\not\equiv2\pmod{4}$, then $A_n$ preserves the quadratic form  $Q(a)=(-1)^{\omega_H(a)/2}$ on $U$. In both cases $U\cap W$ is  the radical of $Q$, and so there is an induced nondegenerate quadratic form on $V_0$. In the remaining cases, $V_0$ does not have an $A_n$-invariant quadratic or unitary form, cf. \cite[p.~187]{kleidman1990subgroup}. This leads to the following embeddings of $A_n$ in orthogonal groups:
\begin{enumerate}
\item[(1)]If $p$ is odd, $n=2m+1$ and $p\nmid n$, then $A_{n}\leq\Omega_{2m}^{\epsilon}(p)$, where $\epsilon=+$ if and only if $(-1)^m\cdot n$  is a square of $\F_p^*$;
\item[(2)]If $pn$ is odd and $p\mid n$, then $A_{n}\leq\Omega_{n-2}(p)$;
\item[(3)]If $p$ is odd, $n=2m+2$ and $p\nmid n$, then $A_{n}\leq\Omega_{2m+1}(p)$;
\item[(4)]If $p$ is odd, $n=2m+2$ and $p\mid n$, then $A_{n}\leq\Omega_{2m}^{\epsilon}(p)$, where $\epsilon=-$ if and only if  $m$ is even and $p\equiv 3\pmod{4}$;
\item[(5)]If $p=2$ and $n=2m+1$, then $A_n\le\Omega_{2m}^\epsilon(2)$, where $\epsilon=+$ if and only if $m \pmod{4}\in\{0,3\}$;
\item[(6)] If $p=2$ and $n=2m+2$ ($m$ odd), then $A_n\le\Omega_{2m}^\epsilon(2)$, where $\epsilon=+$ if and only if $m\equiv 3\pmod{4}$.
\end{enumerate}
\begin{lemma}\label{lem_tp11}
If $p$ is odd and $p\equiv 2\pmod{3}$, then there are nonzero elements of $\F_q^*$ such that $1^j+a_1^j+a_2^j+a_3^j=0$ for $j=1,2$. Moreover, $(a_1,a_2,a_3)\ne (1,1,1)$ for each such triple.
\end{lemma}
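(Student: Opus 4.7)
The plan is to reduce the two conditions to a single existence problem over $\F_q$ and then to apply a character-sum count. Squaring $1+a_1+a_2+a_3=0$ and subtracting $1+a_1^2+a_2^2+a_3^2=0$ gives $a_1a_2+a_1a_3+a_2a_3=-(a_1+a_2+a_3)=1$. Setting $a_1=a$, we obtain $a_2+a_3=-(1+a)$ and $a_2a_3=a^2+a+1$, so $a_2,a_3$ are the roots of $t^2+(1+a)t+(a^2+a+1)$, with discriminant $D(a):=-3a^2-2a-3$. Hence it suffices to find $a\in\F_q^*$ with $a^2+a+1\ne 0$ (so that $a_2a_3\ne 0$) and $D(a)$ a nonzero square in $\F_q$ (so that $a_2,a_3\in\F_q$).

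Next I would count such $a$ by a quadratic character sum. The hypothesis $p\equiv 2\pmod 3$ with $p$ odd forces $p\notin\{2,3\}$, so both the leading coefficient $-3$ and the discriminant $-32$ of $D$ are nonzero in $\F_q$. The standard evaluation of the quadratic character sum of a nondegenerate quadratic polynomial yields
\[
 \sum_{a\in\F_q}\chi(D(a))=-\chi(-3),
\]
where $\chi$ is the quadratic character of $\F_q$ extended by $\chi(0)=0$. It follows that the number of $a\in\F_q$ with $D(a)$ a nonzero square is at least $(q-N_0-1)/2$, where $N_0\in\{0,2\}$ counts the zeros of $D$ in $\F_q$. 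Excluding $a=0$ and the at most two roots of $a^2+a+1$ (which lie in $\F_q$ only when $q$ is an even power of $p$, since $3\nmid p-1$), at least $(q-9)/2$ good values of $a$ remain, so at least one such $a$ exists whenever $q\ge 11$.

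The only odd prime power $q$ with $p\equiv 2\pmod 3$ not covered by this bound is $q=5$, which I would dispose of by direct inspection: taking $a=2$, we have $D(2)=1$ and $(a_1,a_2,a_3)=(2,3,4)$ is a solution in $(\F_5^*)^3$. For the ``moreover'' clause, the triple $(1,1,1)$ would force $1+a_1+a_2+a_3=4=0$ in $\F_q$, which is impossible as $p$ is odd. The only genuinely technical step is the character-sum estimate, and this is a textbook evaluation; the rest is bookkeeping together with one small computation.
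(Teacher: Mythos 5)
Your proof is correct, and it takes a genuinely different route from the paper's. Your reduction is sound: eliminating $a_2,a_3$ via $a_2+a_3=-(1+a)$ and $a_2a_3=a^2+a+1$ is reversible, the quadratic character-sum evaluation applies because the leading coefficient $-3$ is nonzero (the hypothesis $p\equiv 2\pmod 3$ forces $p\ne 3$) and the discriminant $-32$ of $D$ is nonzero for odd $p$, the count of at least $(q-9)/2$ admissible values of $a$ is right, and the triple $(2,3,4)$ settles the one remaining case $q=5$; the ``moreover'' clause is immediate since $p$ is odd. The paper instead argues geometrically: it intersects the quadric $x_1^2+\cdots+x_4^2=0$ of $\PG(3,q)$ with the polar plane $x_1+\cdots+x_4=0$ of the nonsingular point $(1,1,1,1)$, which is a conic with $q+1$ points; taking any point of this conic and scaling its first coordinate to $1$, a vanishing coordinate would force $a_1^2+a_1+1=0$, i.e.\ an element of multiplicative order $3$, which is ruled out by $p\equiv 2\pmod 3$ (note this step uses $3\nmid q-1$, which is automatic in the paper's application where $q=p$). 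The geometric argument is shorter, uniform in $q$, and needs neither a character-sum lemma nor a small-field check; your argument buys an explicit count (roughly half of all $a\in\F_q^*$ give solutions) and is insensitive to whether $\F_q$ contains primitive cube roots of unity, so it applies verbatim to all powers $q=p^f$, at the cost of invoking the standard evaluation of $\sum_a\chi(\alpha a^2+\beta a+\gamma)$ and verifying $q=5$ by hand.
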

\begin{proof}
Consider the quadric $\cQ$ defined by $x_1^2+\cdots+x_4^2=0$ in $\PG(3,q)$. The vector $w=(1,1,1,1)$ is not singular, so the tangent hyperplane $x_1+\cdots+x_4=0$ at $\la w\ra$ intersects $\cQ$ in a conic. Let $\la (a_0,a_1,a_2,a_3)\ra$ be a point on the conic, and assume that $a_0=1$ without loss of generality. We claim that the $a_i$'s are nonzero. Suppose to the contrary that one of them, say $a_3$, is zero. Then $a_2=-1-a_1$, and $1+a_1^2+(-1-a_1)^2=0$ which simplifies to $a_1^2+a_1+1=0$. It follows that $a_1$ has multiplicative order $3$, which contradicts the condition that $p\equiv 2\pmod{3}$. The triple $(a_1,a_2,a_3)$ has all the desired properties in the lemma. This completes the proof.
\end{proof}
\begin{proposition}\label{prop_S_Anfdm}
If $V$ is the fully deleted module for $A_n$, $n\ge 5$, then $p=2$ and $n\in\{11,20\}$. The two $A_n$-orbits are respectively $11$, $22$-tight sets of $Q^-(9,2)$ for $n=11$ and $19$, $494$-tight sets of $Q^-(17,2)$ for $n=20$.
\end{proposition}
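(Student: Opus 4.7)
The proof splits according to the characteristic. For $n\ne 6$ the outer automorphism of $A_n$ has order at most two and is induced in $\textup{P}\Gamma(V_0)$ by a coordinate permutation, which preserves each $A_n$-orbit setwise because orbits are labelled by the shape of a representative. Hence the $H$-orbits coincide with the $A_n$-orbits, and it suffices to count $A_n$-orbits directly and analyse their sizes and intersection numbers. The case $n=6$ is dispatched separately: the fully deleted module then has dimension at most $5$, forcing rank $r\le 2$, and in the only characteristic where $r=2$ is realised, namely $p\ge 5$, one checks directly using three singular shapes of differing supports that $A_6$ has at least three orbits on singular projective points.

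In characteristic $p=2$, the invariant quadratic form on $U$ has singular vectors precisely those of Hamming weight divisible by $4$, and it descends to $V_0$. Because $A_n$ acts by coordinate permutations, its orbits on singular projective points are labelled by Hamming weight when $n$ is odd (so $V_0=U$), and by unordered pairs $\{k,n-k\}$ of Hamming weights when $n\equiv 0\pmod 4$ (so $V_0=U/\la\mathbf{1}\ra$). A short enumeration shows that exactly two such orbits arise precisely for $n\in\{9,11,16,20\}$: for $n$ odd the multiples of $4$ in $[4,n-1]$ must be $\{4,8\}$, and for $n\equiv 0\pmod 4$ the cosets of multiples of $4$ must number exactly two. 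For $n\in\{9,16\}$ one checks that $(q^r-1)/(q-1)$ fails to divide the orbit sizes, so these yield $m$-ovoids rather than tight sets and do not contribute. For $n=20$ the orbit sizes $\binom{20}{4}=4845$ and $\binom{20}{8}=125970$ are divisible by $(q^r-1)/(q-1)=255$ but not by $\theta_r=513$, forcing tight sets with $i_1=19$, $i_2=494$ in $Q^-(17,2)$. For $n=11$ both divisibility conditions hold, and the tight-set interpretation must be confirmed by an intersection count: taking $P\in O_1$ in the weight-$8$ orbit and writing $T,T'$ for the $3$-subsets complementary to the supports $S,S'$, the identity $|T\cap T'|=|S\cap S'|-5$ together with $v\perp v'\iff |S\cap S'|$ even gives $|P^\perp\cap O_1|=\binom{3}{1}\binom{8}{2}+1=85$, matching the tight-set formula $q^{r-1}+11\cdot(q^{r-1}-1)/(q-1)=8+77$ but not the $m$-ovoid formula; so $(i_1,i_2)=(11,22)$ in $Q^-(9,2)$, as claimed.

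In odd characteristic the form is $Q(a)=\sum a_i^2$, and the $A_n$-orbit of $\la v\ra$ is determined by the multiset of coordinates of $v$ up to $\F_p^*$-scaling and, if $p\mid n$, up to translation by $\mathbf{1}$. The numerical constraints $d<2+\log_p(2\cdot n!)$ and $\frac{p^r-1}{p-1}\mid n!$ from Lemma \ref{lem_ParaCond}, combined with $d\ge n-2$, restrict $(n,p)$ to a short finite list. For each admissible pair with $n$ sufficiently large we exhibit three pairwise distinct singular shapes: a vector $(1,-1,w,-w,0,\ldots,0)$ with $w^2=-1$ when $p\equiv 1\pmod 4$, a vector $(1,a_1,a_2,a_3,0,\ldots,0)$ supplied by Lemma \ref{lem_tp11} when $p\equiv 2\pmod 3$, and a vector of larger support when $n\ge 8$. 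Their coordinate multisets differ in support size or in the multiplicities of their values, so $\F_p^*$-scaling and translation by $\mathbf{1}$ cannot identify them; this forces at least three $A_n$-orbits, contradicting the two-orbit hypothesis. The residual small exceptional pairs, centred on $p=3$ and $(n,p)=(9,3)$ in particular, are handled by direct orbit-size computation: in each such case either $A_n$ already has at least three singular orbits, or the two orbits form $m$-ovoids appearing in Table \ref{tab_TrOvoid_cS} rather than tight sets.

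The main obstacle is the odd-characteristic half: the construction of three inequivalent singular shapes depends on the residue class of $p$ modulo small integers, and a finite list of small exceptions, mostly at $p=3$, must be checked by hand. The characteristic-two half is essentially combinatorial once the weight criterion $\omega_H\equiv 0\pmod 4$ for singularity is identified, with the only delicate point being that the $n=11$ case requires an explicit intersection-number computation to separate tight set from $m$-ovoid, since the orbit sizes happen to be divisible by both $(q^r-1)/(q-1)$ and $\theta_r$.
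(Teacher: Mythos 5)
Your characteristic-$2$ analysis follows the paper's route and is correct: singularity is Hamming weight divisible by $4$, the $A_n$-orbits are labelled by weights (resp.\ by weight pairs $\{k,n-k\}$ modulo $\mathbf{1}$ when $4\mid n$), the enumeration isolates $n\in\{9,11,16,20\}$, and the divisibility checks remove $9$ and $16$. Your intersection count for $n=11$ is in fact a worthwhile addition: since $165$ and $330$ are divisible by both $15$ and $33$, size alone cannot separate tight sets from $5$- and $10$-ovoids, and your computation $|P^\perp\cap O_1|=\binom{3}{1}\binom{8}{2}+1=85=8+11\cdot 7\ne 4\cdot 17+1$ settles the point the paper only asserts.

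The odd-characteristic half, however, has genuine gaps. First, the finiteness step: Lemma \ref{lem_ParaCond} does not by itself ``restrict $(n,p)$ to a short finite list''. The bound $d<2+\log_p(2\cdot n!)$ is vacuous for small $p$ (it holds for every $n$ when $p=3$), and turning $\frac{p^r-1}{p-1}\mid n!$ with $n\le 2r+4$ into a bound on $r$ requires a primitive-prime-divisor/cyclotomic argument you never give; the paper instead first proves $r\le 10$ by exhibiting three singular shapes in distinct $S_n$-orbits for all $n\ge 23$, and only then applies the divisibility condition on $8\le n\le 24$, leaving $(3,10),(3,13),(3,14),(3,15),(7,10),(7,11)$. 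Second, your ``three pairwise distinct singular shapes'' are not three shapes: the vector $(1,-1,w,-w,0,\ldots,0)$ (for $p\equiv 1\pmod 4$) and the Lemma \ref{lem_tp11} vector (for $p\equiv 2\pmod 3$) are alternative constructions of a single weight-$4$ singular vector, under residue hypotheses that do not even cover all odd primes --- nothing is provided for $p\equiv 7\pmod{12}$, and $p=7$ is exactly one of the surviving pairs --- and adding one ``vector of larger support'' yields only two shapes. The missing device is the paper's: from a base singular shape $0^{n-k}c_1\cdots c_k$ take the doubled and tripled shapes $0^{n-2k}c_1^2\cdots c_k^2$ and $0^{n-3k}c_1^3\cdots c_k^3$, which remain singular and lie in distinct $S_n$-orbits, with the extra check, when $p\mid n$, that they stay distinct modulo translation by $\mathbf{1}$ (this is where $n$ must be large). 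Finally, your residual list is off target: $(n,p)=(9,3)$ is already excluded by the tight-set divisibility (it produces the $3$- and $10$-ovoids of $Q(6,3)$), the cases genuinely needing individual treatment are the six pairs above together with $n\in\{6,7\}$, and for $n=6$ your claim of three singular supports for every $p\ge 5$ is unproved (weight-$3$ singular vectors exist only when $p\equiv 1\pmod 3$), whereas the paper first cuts to $p\in\{5,7,11\}$ numerically and then checks those cases directly.
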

\begin{proof}
Take the same notation as in the preceding Lemma \ref{lem_tp11}, and suppose that $V$ is the fully deleted module $V_0$. We have $q=p$, and the rank $r$ is uniquely determined by the $(p,n)$ pair in each case. If $p$ divides $n$, we define $\omega_H(\bar{u}):=\min\{\omega_H(x)\mid x\in u+W\}$ for $\bar{u}=u+W\in V$. It is preserved by the induced action of $S_n$ on $V$.

We claim that the rank $r\le 10$ in each case (1)-(6). Suppose to the contrary that this is not the case. Then we have $d\ge 22$ and correspondingly $n\ge 23$. If $p\equiv0, 1\pmod{3}$, then take an element $a\in\F_p^*$ such that $a^2+a+1=0$ and set $b:=-1-a$ which is nonzero. If $p$ is odd and $p\equiv 2\pmod{3}$, then take such a triple $(a_1,a_2,a_3)$ as in Lemma \ref{lem_tp11}. If $p=2$, take singular vectors $u_1,u_2,u_3$ of the shapes $0^{n-4}1^4$,  $0^{n-8}1^8$, $0^{n-12}1^{12}$ respectively. If $p$ is odd and $p\equiv 2\pmod{3}$, then take singular vectors $u_1,u_2,u_3$ of the shapes $0^{n-4}1a_1a_2a_3$, $0^{n-8}1^2a_1^2a_2^2a_3^2$, $0^{n-12}1^3a_1^3a_2^3a_3^3$ respectively; if $p\equiv 0,1\pmod{3}$, take singular vectors $u_1,u_2,u_3$ of the shapes $0^{n-3}1^1a^1b^1$, $0^{n-6}1^2a^2b^2$, $0^{n-9}1^3a^3b^3$ respectively. Here, we slightly abuse the notation since the $a_i$'s as well as $1,a,b$ may not be distinct. If $p\nmid n$, then the $u_i$'s are in distinct $S_n$-orbits. If $p\mid n$, then it is routine to check that $\omega_H(\overline{u_i})=\omega_H(u_i)$ for $i=1,2,3$ and the $\overline{u_i}$'s are in distinct $S_n$-orbits.  This establishes the claim.

We first consider the case $p=2$. Suppose that we are in case (5). If $n\ge 13$, there are singular vectors of the shapes $0^{n-4}1^4$, $0^{n-4}1^8$, $0^{n-4}1^{12}$ which are in distinct $S_n$-orbits. If $n\in\{9,11\}$, the nonzero singular vectors all have Hamming weights $4$ or $8$, and $A_n$ acts transitively on those of each weight. For $n=9$, the two orbits are respectively $1$- and $14$-ovoids of $Q^+(7,2)$, and for $n=11$ the two orbits are respectively $11$- and $22$-tight sets of $\cP=Q^-(9,2)$. If $n=7$, then all the nonzero singular vectors have weights $4$ and $A_7$ is transitive on them. Suppose that we are in case (6), and write $n=2m+2$. Since $r\le 10$, we have $n\le 16$ if $m\equiv 3\pmod{4}$ and $n\le 20$ if $m\equiv 1\pmod{4}$. Therefore, $n$ is one of $8$, $12$, $16$, $20$. If $n\in\{8,12\}$, then all the nonzero singular vectors have Hamming weights $4$ and $A_{12}$ is transitive on them. If $n\in\{16,20\}$, then the nonzero singular vectors have weights $4$ or $8$ and corresponding $A_{20}$ has two orbits. For $n=16$, $\cP=Q^+(13,2)$ and the orbit lengths are not divisible by $2^7-1$, so they are not tight sets. For $n=20$, $\cP=Q^-(17,2)$ and the orbit lengths are $\binom{20}{4}=4845$ and $\binom{20}{8}=125970$ which are not divisible by the ovoid number $2^9+1$. It follows that the two orbits are both tight sets.

We next consider the case where $p$ is odd and $n\ge 8$. Since $r\le 10$, we deduce that $d\le 22$ and $n\le d+2\le 24$. If $8\le n\le 24$, then the $(p,n)$ pairs such that $\frac{p^r-1}{p-1}$ divides $|\Aut(S)|$ are $(3,10)$, $(3,13)$, $(3,14)$, $(3,15)$, $(7,10)$, $(7,11)$. For the pairs with $p=3$ and $p\nmid n$, the  singular vectors of the shapes $0^{n-3}1^3$, $0^{n-6}1^6$, $0^{n-6}1^32^3$  are not in the same $S_{n}$-orbits. For $(p,n)=(3,15)$,  the images of the singular vectors $0^{12}1^3$, $0^91^{6}$, $0^{9}1^32^3$ of $U$ in $V$ are in distinct $S_{15}$-orbits. For the pairs with $p=7$ and $p\nmid n$, the singular vectors $0^{n-3}1^12^14^1$, $0^{n-6} 1^2 2^2 4^2$, $0^{n-9} 1^3 2^3 4^3$ are singular vectors in distinct $S_{n}$-orbits.

It remains to consider the case where $p$ is odd and $n\le 7$. Since the rank $r\ge 2$ and we exclude $Q^+(3,q)$ from consideration, we deduce that $n$ is at least $7$, $7$, $6$, $8$, $7$, $8$ in the cases (1)-(6).   If $n=6$, then we are in case (3), $p\ne 3$ and $\cP=Q(4,p)$. The ovoid number is $\theta_2=p^2+1$ and the rank $r=2$. For the primes $p$ such that $p+1$ divides $|\Aut(A_6)|$, there is an integer $i$ such that $\lcm(i,p^2+1-i)\cdot(p+1)$ divides $|\Aut(A_6)|$ only if $p\in\{5,7,11\}$. For those three $p$'s, there are nonzero singular vectors of three distinct weights by computer check, so $N_{\PGO(5,p)}(A_6)=S_6$ has more than two orbits on the singular points. If $n=7$, then we are in case (1) or (2), and $d\le n-1=6$, $r\le 3$. There is no prime $p$ such that $\frac{p^3-1}{p-1}$ divides $|\Aut(A_7)|$, so we have $r=2$. It follows that the sign is $-$ and so $(-1)^3\cdot7\pmod{p}$ is a nonsquare of $\F_p^*$ for $p\ne 7$. If $p=7$, then the images of the singular vectors of the shapes $0^41^12^13^1$, $0^31^24^16^1$ in $V$ are in distinct $S_7$-orbits.  If $-7$ is nonsquare in $\F_p^*$, i.e., $p\mod{7}\in\{3,5,6\}$, then $\cP=Q^-(5,p)$ and the ovoid number is $p^3+1$. For such primes that $p+1$ divides $|\Aut(A_7)|$, we have $|\Aut(A_7)|\ge\frac{1}{2}|\cP|$ only if $p\in\{3,5\}$. There are respectively three and four $S_7$-orbits for $p=3,5$ by a computer check. This completes the proof.
\end{proof}

\begin{lemma}\label{lem_An_nlt15}
If $V$ is not the fully deleted module of $A_n$ ($n\ge 5$) up to weak equivalence, then we have $n\le 14$.
\end{lemma}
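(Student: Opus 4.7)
The plan is to derive a contradiction from two opposing bounds on $d=\dim(V)$. Since $H_0 \le \Aut(A_n)$ and $|\Aut(A_n)|=n!$ for $n\ne 6$, Lemma \ref{lem_ParaCond} yields the upper bound
\[
  d \;<\; 1+\epsilon+\log_q\!\bigl(2\cdot n!\bigr) \;\le\; 2+\log_2\!\bigl(2\cdot n!\bigr),
\]
which grows only as $O(n\log n)$. For instance, at $n=15$ this forces $d\le 42$, and at $n=20$ it forces $d\le 63$. One can sharpen slightly by using $\frac{q^r-1}{q-1}\mid|H_0|$ together with $2r\le d\le 2r+2$, but the crude bound above already suffices.

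For the lower bound, recall that $H$ is not of class $\cC_5$, so $V$ is absolutely irreducible and cannot be realized over a proper subfield of $\F_q$. Under the hypothesis that $V$ is not weakly equivalent to the fully deleted module, the classification of low-dimensional absolutely irreducible representations of the alternating groups (James's bound in characteristic zero, and its modular refinements in \cite{HissMalle}, together with \cite{LubeckSmalldegree}) guarantees that every such absolutely irreducible representation of $A_n$ in any characteristic has dimension bounded below by a quantity $R'(n)$ that grows quadratically in $n$ (of order $\tfrac{(n-1)(n-4)}{2}$ in the generic case). In particular $R'(15)$ already exceeds $42$, which contradicts the upper bound above and rules out $n\ge 15$.

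The main obstacle is that for small $n$ close to the threshold and for small primes $p$, the generic quadratic lower bound can degrade, so a purely asymptotic comparison is insufficient. To handle these borderline values, I would directly consult the tables in \cite{HissMalle} (for $n$ in the range $15$ to roughly $20$) and \cite{LubeckSmalldegree} (when $S$ is viewed as a group of Lie type through an exceptional isomorphism, as for $A_5,A_6,A_8$, though these are already outside our range), verifying case by case that no candidate $V$ satisfies simultaneously the existence of a nondegenerate $A_n$-invariant quadratic or unitary form with Frobenius--Schur indicator $+$ or $\circ$, the irreducibility/subfield condition, and the numerical constraint $d\le 2+\log_2(2\cdot n!)$. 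The remaining discussion (for $n$ in the range $9\le n\le 14$) is then taken up in the subsequent lemmas of this subsection.
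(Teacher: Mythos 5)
There is a genuine gap, and it sits exactly at the threshold the lemma is about. Your contradiction at $n=15$ rests on the claim that every absolutely irreducible, non-fully-deleted module has dimension at least roughly $\tfrac{(n-1)(n-4)}{2}$ (i.e.\ $77$ at $n=15$). That bound is not valid here: first, for $A_n$ (as opposed to $S_n$) the correct general modular bound, which is the one the paper invokes from James, is only about $\tfrac14 n(n-5)$, because small $S_n$-modules can split in half on restriction to $A_n$; second, $V$ may be a faithful module of the covering group $2.A_n$, and the basic spin module has dimension about $2^{\lfloor (n-2)/2\rfloor}$, which at $n=15$ is $64<77$. With the correct quadratic bound, at $n=15$ you only get $d\ge 38$, while your crude upper bound from Lemma \ref{lem_ParaCond} gives $d\le 43$ (and the gap persists at $n=16,17$), so no contradiction follows and the core of your argument collapses precisely where it is needed. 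Your fallback of consulting the tables of \cite{HissMalle} for $15\le n\le 20$ is not carried out, would also have to include the spin modules of $2.A_n$ and the form/field conditions, and you would still need correct asymptotic bounds to know where the table check may stop.

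The paper avoids all of this by first sharpening the upper bound from $O(\log_q n!)$ to a linear bound $d\le 2n$: since $V$ is not the fully deleted module, Wagner's theorems give $d\ge n+1\ge 16$, hence $r\ge 7$, so $p^{fr}-1$ has a primitive prime divisor $p'$ by Zsigmondy; because $\tfrac{q^r-1}{q-1}$ divides $|H_0|$, which divides $|\Aut(A_n)|=n!$, one gets $p'\le n$, while $p'\ge 1+fr$, whence $d\le 2r+2\le 2p'\le 2n$. Only then do the lower bounds — $d\ge\tfrac14 n(n-5)$ (James) for modules on which the center of $\tilde A_n$ acts trivially, and the exponential bound of Wales and Kleshchev--Tiep for faithful $2.A_n$-modules — yield a contradiction for $n\ge 15$. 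If you want to salvage your approach, you must either import this primitive-prime-divisor step (you mention the divisibility by $\tfrac{q^r-1}{q-1}$ but dismiss it, and it is in fact essential) or genuinely perform the finite table verification with the correct list of candidate modules, including spin modules.
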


\begin{proof}
Suppose to the contrary that $n\ge 15$. Let $\tilde{A}_n$ be the full covering group of $A_n$ whose center has order $2$, and recall that $k$ is the algebraic closure of $\F_p$. Since $V$ is not the fully deleted module, we have $d\ge n+1$ by \cite{Wagner76,Wagner77}. It follows that $d\ge 16$ and so $r\ge 7$. Let $p'$ be a primitive prime divisor of $p^{fr}-1$.  We have $\textup{ord}_{p'}(p)=fr$, so $p'\ge 1+fr$. Since $\frac{p^{fr}-1}{p-1}$ divides $|\Aut(A_n)|=n!$, we deduce that $p'\le n$. It follows that $d\le 2(r+1)\le 2p'\le 2n$. If $V$ is not a faithful $\tilde{A}_n$-module, i.e., the center of $\tilde{A}_n$ acts trivially on $V$, then we have $d\ge\frac{1}{4}n(n-5)$  by \cite[Theorems~6,~7]{James} which contradicts $d\le 2n$ for $n\ge 15$. If $V$ is a faithful $\tilde{A}_n$-module, then $p\ne 2$ and $V\otimes k$ is a faithful irreducible representation of $\tilde{A}_n$. By \cite{Wales79,KT}, $d$ is divisible by $2^{\lfloor(n-s-1)/2\rfloor}$ and is at least $2^{\lfloor(n-2-[\![ p|n]\!])/2\rfloor}$. Here, $s$ is the $2$-adic weight of $n=2^{i_1}+\cdots+2^{i_s}$ with $i_1<\cdots<i_s$. This is incompatible with the bound $d\le 2n$ for $n\ge 15$. This completes the proof.
\end{proof}

\begin{proposition}\label{prop_An_notFDM}
If $V$ is not the fully deleted module of $A_n$ ($n\ge 5$) up to weak equivalence, then $L=2.A_7$,  $\cP=H(3,3^2)$ and the two $A_7$-orbits on the singular points are $7$- and $21$-tight sets respectively.
\end{proposition}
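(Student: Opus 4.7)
The plan is to enumerate $n$ in the range $5\le n\le 14$ fixed by Lemma \ref{lem_An_nlt15} and, for each such $n$, sift through the candidate irreducible representations of $\tilde{A}_n$ that could realize the required configuration. The scheme follows the recipe in Remark \ref{rem_classS_Smethod}.

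First I would apply the numerical restrictions from Lemma \ref{lem_ParaCond}: combining $d<2+\log_q(2|\Aut(A_n)|)$ with $2r\le d\le 2r+2$, the divisibility $\frac{q^r-1}{q-1}\mid|\Aut(A_n)|$, and the requirement that $\lcm(i_1,i_2)\cdot\frac{q^r-1}{q-1}$ divides $|H|$ with $i_1+i_2=\theta_r$, leaves a finite (and short) list of admissible triples $(n,q,d)$, with $q$ forced to be small in each case. The lower bounds on the dimensions of faithful modules of $\tilde{A}_n$ from \cite{James, Wales79, KT}, together with the standing exclusion of the fully deleted module, trim this list further.

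For each admissible triple I would consult \cite{HissMalle}, \cite{LubeckSmalldegree} and the online Atlas \cite{Atlas} to list the absolutely irreducible $\F_q\tilde{A}_n$-modules of dimension $d$, and keep only those that (i) carry a nondegenerate invariant quadratic or Hermitian form (Frobenius--Schur indicator $+$ or $\circ$), (ii) are realized over $\F_q$ but no proper subfield (so that $H\notin\cC_5$), and (iii) are not weakly equivalent to the fully deleted module. By the reduction in Section \ref{subsec_basic} it is enough to examine one representative per weak equivalence class. For each surviving candidate I would construct the module explicitly via Atlas generators or the Magma command \texttt{AbsolutelyIrreducibleModules}, recover $\kappa$ with \texttt{ClassicalForms}, and compute the orbits of $K=N_{\Gamma(V,\kappa)}(L)$ on the singular points of $\cP$. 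I expect a single survivor, namely the $4$-dimensional representation of $2.A_7$ over $\F_9$, whose invariant Hermitian form gives $\cP=H(3,9)$ with the orbit split $70+210$, matching $7$- and $21$-tight sets (note $7+21=28=\theta_2$ for $H(3,9)$, and $\lcm(70,210)=210$ divides $|2.A_7|\cdot2=10080$, so the numerical conditions are consistent). Every other candidate should fail either because $K$ produces three or more orbits on singular points, or because the two orbit sizes do not fit the tight-set identity $i_1+i_2=\theta_r$.

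The main obstacle is purely bookkeeping: each small $n$ can contribute several low-dimensional modules of $\tilde{A}_n$ across both the defining and cross characteristics, and one must treat faithful $2.A_n$-modules separately from modules factoring through $A_n$, all while keeping track of Galois conjugates and duals in the weak-equivalence classes. The computer-assisted orbit enumeration for the surviving candidates is routine once the candidate list is assembled.
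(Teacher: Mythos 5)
Your plan is precisely the paper's argument: bound $n\le 14$ via Lemma \ref{lem_An_nlt15}, sieve the feasible $(n,q,r)$ triples using Lemma \ref{lem_ParaCond} together with the lower bounds on dimensions of $\tilde{A}_n$-modules, filter the Hiss--Malle/Atlas candidates by Frobenius--Schur indicator, field of realization and weak equivalence, and finish with computer orbit computations for the normalizer of $L$, the unique survivor being the $4$-dimensional module of $2.A_7$ over $\F_9$ with orbit split $70+210$ on $H(3,9)$, i.e.\ $7$- and $21$-tight sets. This is essentially the same approach and conclusion as the paper's proof.
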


\begin{proof}
Suppose that $V$ is not the fully deleted module of $A_n$ up to weak equivalence. We have $n\le 14$ by Lemma \ref{lem_An_nlt15}. There is a lower bound $R(A_n)$ on $d$ by \cite[Proposition~5.3.7]{kleidman1990subgroup}, so that we have $r\ge \lceil\frac{R(A_n)}{2}\rceil-1$. For instance, we have $R(A_n)=n-2$ for $n\ge 9$. We follow the approach described in Remark \ref{rem_classS_Smethod}. We call a pair $(q,r)$ feasible if $r\ge \lceil\frac{R(A_n)}{2}\rceil-1$ and $\frac{q^r-1}{q-1}$ divides $|\Aut(A_n)|$. For $n=10$, the feasible $(q,r)$ pairs  are $(2,3)$, $(4,3)$, $(2,4)$, $(3,4)$, $(7,4)$ and $(2,6)$. For $n=11,12$, the feasible $(q,r)$ pairs are $(2,4)$, $(3,4)$, $(7,4)$ and $(2,6)$. For $n=13,14$, the feasible $(q,r)$ pairs are $(2,6)$, $(3,6)$, $(4,6)$ and $(2,12)$. There is no candidate representation with Frobenius-Schur indicator $+$ or $\circ$ whose dimension is in the interval $[2r,2r+2]$ in each case by \cite{HissMalle}, so we assume that $n\le 9$ in the sequel.

For $n=9$, the feasible $(q,r)$ pairs are $(2,3)$, $(4,3)$, $(2,4)$, $(3,4)$ and $(2,6)$. For the three absolutely irreducible representations with $(q,d)=(2,8)$ in the online Atlas \cite{Atlas}, we all have $\cP=Q^+(7,2)$. Both (8b) and (8c) have a transitive $A_9$-action on the singular points, and (8a) corresponds to the fully deleted module. For the absolutely irreducible representation of $2.A_9$ with $(q,d)=(3,8)$, we have $\cP=Q^+(7,3)$ and $2.A_9$ is transitive on the singular points. There is no further candidate representations.

For $n=8$, we have $A_8\cong\PSL_4(2)$. We have $R(A_8)=4$, but the representation that attains this bound has no form. Since $V$ is not the fully deleted module, we have $d\ge 14$ if $p=2$ and $d\ge 8$ if $p\ne 2$. It follows that $r\ge 3$. A feasible $(q,r)$ pair  is one of $(3,4)$ and $(2,6)$. For $(q,r)=(3,4)$, $V$ is a faithful representation of $2.A_8$ of dimension $8$. We have  $\cP=Q^+(7,3)$ with a transitive $A_8$-action on the singular points. For $(q,r)=(2,6)$, $V$ is a $14$-dimensional representation of $\SL_4(2)$ such that $V\otimes k$ has highest weight $\omega_1+\omega_3$ up to weak equivalence. We have $\cP=Q^-(13,2)$ and $S_8=N_{\PGO^-_{14}(2)}(A_8)$ has six orbits on the singular points.

For $n=7$, a feasible $(q,r)$ pair either has $r=2$ or is one of $(2,3)$, $(4,3)$, $(2,4)$, $(3,4)$ and $(2,6)$. There is no candidate representation for $(q,r)\in\{(2,4),(3,4)\}$. For $(q,r)=(2,6)$, there is exactly one $14$-dimensional candidate representation in characteristic $2$ that can be realized over $\F_2$. We have $\cP=Q^-(13,2)$ and $S_7=N_{\PGO^-_{14}(2)}$ has $12$ orbits on the singular points.  For $(q,r)=(4,3)$, there is a candidate representation of dimension $6$  with Frobenius-Schur indicator $\circ$ which is a faithful representation of $3.A_7$. It has a unitary form and the normalizer of $3.A_7$ has more than two orbits on the points of  $\cP$. For $(q,r)=(2,3)$, there is exactly one candidate representation of dimension $6$ with Frobenius-Schur indicator $+$. It turns out that $\cP=Q^+(5,2)$ and $A_7$ is transitive on it. We next consider the case where $r=2$. There is no candidate representation with dimension in the interval $[4,6]$ with a Frobenius-Schur indicator $+$, so we expect $\cP=H(3,q)$ with $q$ a square. If $p=2$, the $4$-dimensional candidate representations with Frobenius-Schur indicators $+$ can all be realized over $\F_2$ and so can not be unitary. The only odd prime power $q$ such that $q$ is a square and $q+1$ divides $|\Aut(A_7)|$ is $q=3^2$. We use the online data in \cite{Atlas} to check that there are two $A_7$-orbits of sizes $70$, $210$ respectively for both representations (8a) and (8b). The two orbits are $7$- and $21$-tight sets of $H(3,9)$ respectively, and we have $L=2.A_7$ in this case.

For $n=6$, we have $A_6=\PSp_4(2)'=\PSL_2(9)$ and $|\Aut(A_6)|=2\cdot 6!$. Its Schur multiplier is cyclic of order $6$, so its projective representation in characteristic $3$ is also a representation of $\SL_2(9)$. A feasible $(q,r)$ pair either has $r=2$ or is one of $(2,4)$ and $(3,4)$. For $(q,r)=(3,4)$, we deduce that $d$ is a square by \cite[Corollary~5.3.3]{bray2013maximal}. It follows that $d=9$. It turns out that $\cP=Q(8,2)$ and the normalizer of $\SL_2(9)$ has more than two orbits. For $(q,r)=(2,4)$, there is no candidate representations with Frobenius-Schur indicator $+$ that can be realized over $\F_2$. We then consider the case where $r=2$. We have either $\cP=Q(4,q)$, $Q^-(5,q)$ or $H(3,q)$. If $q$ is a square and $q+1$ divides $|\Aut(A_6)|$, then $q=4$ or $9$. The $4$-dimensional candidate representations in characteristic $p\in\{2,3\}$ can all be realized over $\F_p$, so $\cP$ is not unitary. For $Q(4,q)$, there is an integer $i$ such that $\lcm(q^2+1-i,i)\cdot(q+1)$ divides $|\Aut(A_6)|$ only  when $q\in\{3,5,7,11\}$; for $Q^-(5,q)$, we  deduce that $q\in\{2,3,5\}$ similarly. There are candidate representations only for $Q(4,q)$ with $q\in\{5,7,11\}$, which are $p$-modular reductions of two irreducible representations over the complex numbers. We check with computer that the normalizer of $A_6$ has more than two orbits on the singular points in each case.

For $n=5$, we have $A_5=\PSL_2(2^2)=\PSL_2(5)$. Similar to the $A_6$ case, its representations in characteristic $2$, $5$ are representations over $\SL_2(4)$, $\SL_2(5)$ respectively. A feasible $(q,r)$ pair either has $r=2$ or is one of $(2,4)$ and $(3,4)$. For $(q,r)=(3,4)$, there is no candidate representation. For $(q,r)=(2,4)$, we deduce that $d$ is a power of $2$ as well as a square by \cite[Section~5.3]{bray2013maximal}. There is no such integer $d$ in the interval $[2r,2r+2]=[8,10]$. We then consider the case where $r=2$. We proceed as in the case of $A_6$ to show that $\cP$ is not unitary and it can only be $Q(4,3)$ or $Q^-(5,2)$. There is no such representation for $2.A_5$ in either case by \cite{HissMalle}. This completes the proof.
\end{proof}

To summarize, we have handled the fully deleted modules in Proposition \ref{prop_S_Anfdm} and the remaining modules in Proposition \ref{prop_An_notFDM} in this subsection. We obtain three rows in Table \ref{tab_TS_cSother} that correspond to $H(3,9)$, $Q^-(9,2)$ and $Q^-(17,2)$ respectively. This completes the analysis of the alternating groups.

\subsection{The groups of Lie type in defining characteristic}\label{subsec_defChar}

Suppose that $S$ is a simple group of Lie type ${}^tX_{\ell}(p^e)$ defined over $\F_{p^e}$. Let $\tilde{S}={}^t\hat{X}_{\ell}(p^e)$ be the covering group of $S$ by the $p'$-part of its Schur multiplier. Write $G={}^tX_\ell(k)$, where $k$ is the algebraic closure of $\F_p$. There is a certain endomorphism $F$ of the algebraic group $G$ such that $\tilde{S}$ consists of elements of $G$ fixed by $F$. We regard $G$ as a matrix group, and let $\phi$ be the field automorphism $x\mapsto x^p$ applied to the entries of $G$. If $t=1$, then $F=\phi^e$. If $t\ge 2$, then $F^t$ is a power of $\phi^e$. In particular, if $X_\ell$ is one of $A_\ell$, $D_\ell$, $E_6$ and $D_4$, the group $G$ has a graph automorphism $\tau_o$ of order $2$, $2$, $2$ and $3$ respectively such that $F=\tau_o^{-1}\phi^e$. In those cases, we also write $\tau_o$ for its restriction to $\tilde{S}$ which agrees with $\phi^e$. The finite dimensional irreducible $k\tilde{S}$-modules are restrictions of irreducible rational $kG$-modules to the action of $k\tilde{S}$, and the irreducible rational $kG$-modules are characterized by their highest weights, cf. \cite{stein63}. We take the same notation as in \cite{LubeckSmalldegree}, which will be our main reference in this subsection. When $X_\ell$ is one of $A_\ell$, $D_\ell$ and $E_6$, we write $\tau$ for the symmetry induced on the Dynkin diagram by the graph automorphism $\tau_o$. For an irreducible $kG$-module $M$, we write $M^{(i)}$ for ${}^{\phi^i}M$ for brevity. If $M=L(\omega)$ is an irreducible module with highest weight $\omega$, then $M^{(i)}=L(p^i\omega)$. If we write $M^*$ for its dual module, then by \cite[3.1.6]{humph} we have
\begin{equation}\label{eqn_dualMod}
L(\omega)^*\cong\begin{cases}L(\tau(\omega)),\quad&\textup{for types $A_\ell$, $D_\ell$ ($\ell$ odd), $E_6$,}\\L(\omega),\quad&\textup{otherwise.}
\end{cases}
\end{equation}

Suppose that $V\otimes k$ is the restriction of the irreducible $kG$-module $L(\omega)$. If either $t=1$ or ${}^tX_\ell$ is one of ${}^2A_\ell$, ${}^2D_\ell$ or ${}^2E_6$, then we can apply \cite[Theorem~5.1.13]{bray2013maximal} to determine $\F_{q}$. We shall make frequent use of the following result which combines \cite[Proposition~5.4.6]{kleidman1990subgroup} and the remark following it.
\begin{thm}\label{thm_classS_Vstruc}
Take notation as above, and write $q=p^f$ with $p$ prime. Let $k$ be the algebraic closure of $\F_p$, and let $\theta$ be the field automorphism $x\mapsto x^p$ of $\F_q$.
\begin{enumerate}
\item[(i)] If $t=1$, then $f\mid e$ and there is an irreducible $k\tilde{S}$-module $M$ such that
    \begin{equation}\label{eqn_VkMTens}
    V\otimes k\cong M\otimes M^{(f)}\otimes\cdots\otimes M^{(e-f)}.
    \end{equation}
    In particular, $\dim(V)=(\dim_kM)^{e/f}$.
\item[(ii)] If $\tilde{S}$ is one of $^2A_\ell(p^e)$, $^2D_\ell(p^e)$, $^2E_6(p^e)$, $^3D_4(p^e)$, then one of the following occurs: (a) $f\mid e$, $V\cong {}^{\tau_o}V$, there is an irreducible $k\tilde{S}$-module $M$ such that $M\cong {}^{\tau_o}M$ and $\dim(V)=(\dim_kM)^{e/f}$; (b) $f\mid te$ but $f$ does not divide $e$, and there is irreducible $k\tilde{S}$-module $M$ such that $M\not\cong {}^{\tau_o}M$ and $\dim(V)=\dim(M)^{te/f}$.
\item[(iii)] If $\tilde{S}$ is one of ${}^2B_2(p^e)$, ${}^2G_2(p^e)$ and ${}^2F_4$, where $e$ is odd and $p=2,3,2$ respectively, then $f\mid e$ and there is irreducible $k\tilde{S}$-module $M$ such that $\dim(V)=(\dim_kM)^{e/f}$. Moreover, we have $\dim_kM\ge R_p(L)$ with $R_p(L)=4,7,26$ respectively.
 \end{enumerate}
\end{thm}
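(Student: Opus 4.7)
The plan is to combine Steinberg's tensor product theorem with the classical subfield-realizability criterion: an irreducible $k\tilde{S}$-module is realized over $\F_{p^f}$ exactly when it is $\tilde{S}$-isomorphic to its $\phi^f$-twist. The common first step in all three parts will be to write $V\otimes k$ in its canonical Steinberg decomposition
\[V\otimes k\;\cong\;L(\omega_0)\otimes L(\omega_1)^{(1)}\otimes\cdots\otimes L(\omega_{E-1})^{(E-1)},\]
where each $\omega_i$ is a restricted dominant weight of $G$ and $E$ is the smallest positive integer such that $\phi^{E}$ acts trivially on $\tilde{S}$. Concretely, $E=e$ in case (i), $E=te$ in case (ii) (since $\phi^e$ acts as $\tau_o$ on $\tilde{S}$ and $\tau_o^t=1$), and $E=e$ in case (iii) via the exceptional isogeny.

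For (i), since $\phi^e$ is trivial on $\tilde{S}$, applying a $\phi^f$-twist to the decomposition simply shifts the sequence $(\omega_0,\ldots,\omega_{e-1})$ cyclically by $f$ positions modulo $e$, once one identifies $L(\omega)^{(e)}\cong L(\omega)$ as $\tilde{S}$-modules. The realizability criterion therefore becomes pure periodicity of the weight sequence, and the minimality of $f$ forces the period to equal $f$, so $f\mid e$. Setting $M:=L(\omega_0)\otimes\cdots\otimes L(\omega_{f-1})^{(f-1)}$, which is irreducible by Steinberg's theorem, then yields the decomposition \eqref{eqn_VkMTens} and $\dim V=(\dim_kM)^{e/f}$.

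For (ii), the relation $\phi^e=\tau_o$ on $\tilde{S}$ forces the weight sequence of length $te$ to satisfy the constraint $\omega_{i+e}=\tau(\omega_i)$, so the sequence is really determined by its first $e$ entries together with the graph action. The $\phi^f$-shift analysis now splits according to whether the resulting period divides $e$: if yes, we obtain a $\tau_o$-stable $M$ and $V\otimes k\cong M\otimes M^{(f)}\otimes\cdots\otimes M^{(e-f)}$, giving (a); otherwise the period is a nondivisor of $e$ dividing $te$, the $\tau_o$-flip is built into the cyclic structure, and $M$ cannot be $\tau_o$-invariant, giving (b). Part (iii) is handled by the analogous Steinberg-type tensor decomposition available for the Suzuki and Ree groups, which forces $f\mid e$ as in (i); the lower bound $\dim_kM\ge R_p(L)$ then comes from the known minimal dimensions of nontrivial irreducible $k\tilde{S}$-modules in these types, which equal $4,7,26$ respectively.

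The principal technical hurdle is the bookkeeping in (ii): making precise how a $\phi^f$-twist acts on the $te$-long weight sequence and how it interacts with the graph-automorphism-induced identification $\omega_{i+e}=\tau(\omega_i)$. Once this is done carefully, the splitting into subcases (a) and (b) and the irreducibility of the tensor factor $M$ follow formally from Steinberg's classification of irreducible $kG$-modules; the dimension counts then drop out of the tensor decomposition.
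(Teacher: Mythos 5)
You are reconstructing a result the paper does not prove at all: Theorem \ref{thm_classS_Vstruc} is simply quoted from Kleidman--Liebeck \cite[Proposition~5.4.6]{kleidman1990subgroup} and the remark following it, together with the standard fact (Curtis--Reiner, used earlier in the same section) that the minimal field of realization is the Brauer-character field, i.e.\ detected by invariance under Frobenius twists. Your overall strategy --- Steinberg's tensor product theorem plus this subfield criterion plus cyclic-shift combinatorics --- is indeed how that proposition is proved, and your parts (i) and (iii) are correct in outline.

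Part (ii), however, is set up incorrectly. For a twisted group ${}^tX_\ell(p^e)$ the irreducible $k\tilde{S}$-modules are the restrictions of $L(\lambda)$ with $\lambda$ a $p^e$-restricted weight, so the Steinberg decomposition of $V\otimes k$ has $e$ twisted factors, not $te$: your ``canonical decomposition of length $E=te$'' subject to $\omega_{i+e}=\tau(\omega_i)$ is not a decomposition of $V\otimes k$ at all, since its dimension would be $(\dim V)^t$ (for $\SU_3(p)$, where $e=1$, the natural module is $L(\omega_1)|_{\tilde{S}}$, not $L(\omega_1)\otimes L(\omega_2)^{(1)}$). The length-$te$ sequence with $\lambda_{i+e}=\tau(\lambda_i)$ can only serve as bookkeeping for how $\phi$-twisting permutes the genuine length-$e$ data. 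This matters precisely at the step you defer: in case (b) you never identify $M$, and the recipe you use in (i), ``take the product of the first $f$ factors,'' gives the wrong module here --- its dimension is the $t$-th power of what is needed, so $\dim V=(\dim M)^{te/f}$ fails. The correct choice is $g:=\gcd(e,f)=f/t$ and $M:=(L(\lambda_0)\otimes L(\lambda_1)^{(1)}\otimes\cdots\otimes L(\lambda_{g-1})^{(g-1)})|_{\tilde{S}}$; one checks that $\lambda_{i+g}=\tau^{\alpha}\lambda_i$ for some $\alpha$ not divisible by $t$, whence $\dim V=(\dim M)^{e/g}=(\dim M)^{te/f}$, and $M\cong{}^{\tau_o}M$ would force the sequence to be $g$-periodic, i.e.\ $f\mid e$, a contradiction; dually, in case (a) the wraparound relation combined with $f$-periodicity forces $\tau(\lambda_i)=\lambda_i$ for all $i$, which is what gives $M\cong{}^{\tau_o}M$ and $V\cong{}^{\tau_o}V$. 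Until this is spelled out, what you call ``the principal technical hurdle'' is not routine bookkeeping that ``follows formally''; it is exactly where your stated setup breaks down.
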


\begin{remark}\label{rem_Sdef_Form}
We now describe how to determine whether $V$ has a nondegenerate form. For some specific modules of dimensions up to $17$, we refer the reader  to \cite{bray2013maximal}, \cite{maxsub_1314} and \cite{maxsub_1617} for this information. Let $V^\ast$ be the dual module of $V$, and let $\theta$ be the field automorphism $x\mapsto x^p$ of $\F_q$. By \cite[Lemma~2.10.15]{kleidman1990subgroup}, $V$ has a nondegenerate symmetric bilinear form only if $V^*\cong V$, and $V$ has a nondegenerate unitary form if and only if $f$ is even and $V^*\cong V^{\theta^{f/2}}$. Suppose that $V\otimes k$ is the restriction of the irreducible $kG$-module $L(\omega)$. If $X_\ell$ is one of $B_\ell$, $C_\ell$, $D_\ell$ ($\ell$ even), $G_2$, $F_4$, $E_7$ and $E_8$, then $V$ has a nondegenerate symmetric or alternating bilinear form by \eqref{eqn_dualMod}. Suppose that $X_\ell$ is one of $A_\ell$, $D_\ell$ ($\ell$ odd) and $E_6$. Then $V^*\otimes k$ is the restriction of $L(\tau(\omega))$ and $V^{\theta^{f/2}}\otimes k$ is the restriction of $L(\omega)^{\theta^{f/2}}=L(p^{f/2}\omega)$. Therefore, to decide whether $V$ has a nondegenerate bilinear or unitary form it suffices to examine the restrictions of the $kG$-modules $L(\omega)$, $L(\tau(\omega))$ and $L(p^{f/2}\omega)$ to $\tilde{S}$. By \cite[\S~13]{stein_end}, the $p^e$-restricted $kG$-modules remain irreducible and inequivalent upon restriction to $\tilde{S}$ for those three types.
\end{remark}

We assume that $\kappa$ is orthogonal or unitary in view of Proposition \ref{prop_classS_caseS}. Recall that $q=p^f$ with $p$ prime, $d=\dim_{\F_q}(V)$, $\cP$ is the associated polar space, $L=H^{\infty}$ and $K$ is its normalizer in $\Gamma(V,\kappa)$. We have $d\ge 4$ by the hypothesis $r\ge 2$. We suppose that $S$ is not one of $\PSL_2(4)$, $\PSL_2(5)$, $\PSL_2(9)$, $\PSL_4(2)$, $\PSp_4(2)'$  which are isomorphic to alternating groups. We suppose that $S$ is not one of $\PSU_2(p^e)$, $\PSp_2(p^e)$, $\POmeg_3(p^e)$ ($p$ odd), $\POmeg_4^-(p^e)$, $\POmeg_5(p^e)$ ($p$ odd), $\POmeg_6^{\pm}(p^e)$, ${}^2G_2(3)'$ which are isomorphic to classical groups of types $A_\ell$ or ${}^2A_\ell$.

\begin{proposition}\label{prop_classS_SL}
If $S=\PSL_{d'}(p^e)$ with $(d',p^e)\not\in\{(2,4),(2,5),(2,9),(4,2)\}$, then $(d',q)=(3,p^e)$, $V$ is the adjoint module of $\SL_3(q)$, and the two $S$-orbits are $(q+1)$- and $(q^3-q)$-tight sets of $\cP=Q(6,q)$ respectively.
\end{proposition}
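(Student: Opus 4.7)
My plan is to combine the dimension bound from Lemma \ref{lem_ParaCond} with the tensor-product structure of Theorem \ref{thm_classS_Vstruc}(i) and the form criteria in Remark \ref{rem_Sdef_Form} to pin down $V$ as the adjoint module of $\SL_3(q)$, at which point Proposition \ref{prop_adj_SL3} gives the conclusion. Recall that $\kappa$ is assumed orthogonal or unitary by Proposition \ref{prop_classS_caseS}.

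First, Lemma \ref{lem_ParaCond} yields $d<1+\epsilon+\log_q(2|\Aut(S)|)$ with $\epsilon\in\{1/2,1\}$. Using the standard bound $|\Aut(\PSL_{d'}(p^e))|\le 2ep^{e(d'^2-1)}$, this gives $d<\frac{e(d'^2-1)}{f}+O(\log(ed'))$. By Theorem \ref{thm_classS_Vstruc}(i), since $S$ is of untwisted type $A_{d'-1}$ we have $f\mid e$ and $V\otimes k\cong M\otimes M^{(f)}\otimes\cdots\otimes M^{(e-f)}$ for some irreducible $k\tilde{S}$-module $M=L(\omega)$, hence $d=(\dim_k M)^{e/f}$. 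Since the smallest nontrivial irreducible $k\tilde{S}$-module in defining characteristic is the natural module, of dimension $d'$, we obtain $d'^{e/f}\le d$, and this combined with the upper bound leaves only finitely many $(d',p^e,e/f,\dim_k M)$ to analyze.

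Next, the requirement that $V$ carry a nondegenerate orthogonal or unitary form translates, via Remark \ref{rem_Sdef_Form} and \eqref{eqn_dualMod}, to $V^*\cong V$ (orthogonal case) or $V^*\cong V^{\theta^{f/2}}$ with $f$ even (unitary case). Writing $L(\omega)^*=L(\tau(\omega))$ where $\tau$ is the diagram involution swapping $\omega_i\leftrightarrow\omega_{d'-i}$, these become balance conditions on the multiset $\{p^{if}\omega:0\le i<e/f\}$ and its image under $\tau$. For $d'\ge 4$, the smallest self-dual $p$-restricted modules are $L(\omega_{d'/2})$ of dimension $\binom{d'}{d'/2}$ (when $d'$ is even) and the adjoint $L(\omega_1+\omega_{d'-1})$ of dimension $d'^2-1-[\![ p\mid d']\!]\ge 14$, so the bound on $d$ leaves only finitely many $(d',p^e,\omega)$ triples, which I would exclude by direct examination of \cite{LubeckSmalldegree} together with computer checks via \cite{Atlas}. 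For $d'=2$, all $p$-restricted modules $L(a\omega_1)$ are self-dual; the ambient form on the tensor product is symplectic or orthogonal according to a parity criterion on the factors, the symplectic subcases are covered by Proposition \ref{prop_classS_caseS}, and the orthogonal subcases either violate the bound or yield more than two orbits upon direct check.

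For $d'=3$, $\tau$ swaps $\omega_1$ and $\omega_2$, so the self-dual $p$-restricted modules have highest weight $a(\omega_1+\omega_2)$: the adjoint ($a=1$) has dimension $7$ or $8$, while for $a\ge 2$ the dimension is at least $27$, failing the bound for every $p^e$ allowing $S\le\textup{P}\Gamma(V,\kappa)$. Non-self-dual factors $L(b\omega_1)$ or $L(b\omega_1+c\omega_2)$ with $b\ne c$ would need to be balanced by a Frobenius-twist companion, but such tensor products force $e/f>1$ and place $H$ in Aschbacher class $\cC_3$ (field reduction) rather than $\cS$, contradicting our hypothesis. Thus the unique surviving candidate is $V$ equal to the adjoint module of $\SL_3(p^e)$ with $q=p^e$, and Proposition \ref{prop_adj_SL3} applies: for $p\ne 3$ the normalizer $K$ has at least three orbits on the singular points, contradicting the two-orbit hypothesis, and for $p=3$ the two orbits are precisely the $(q+1)$- and $(q^3-q)$-tight sets of $\cP=Q(6,q)$. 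The main obstacle will be the combinatorial case analysis for $d'\ge 4$ and small $p^e$, particularly ensuring no sporadic low-dimensional module escapes the dimension and form filters.
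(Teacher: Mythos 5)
Your overall architecture (dimension bound from Lemma \ref{lem_ParaCond}, tensor structure from Theorem \ref{thm_classS_Vstruc}(i), form criteria from Remark \ref{rem_Sdef_Form}, then Proposition \ref{prop_adj_SL3}) matches the paper, but there is a genuine gap in how you close out the cases $d'\ge 4$ (and, to a lesser extent, $s=e/f>1$). The crude bound $d<2+s(d'^2-1)+\log_q(2\epsilon sf)$ does \emph{not} leave only finitely many candidates: the adjoint module $L(\omega_1+\omega_{d'-1})$ has dimension $d'^2-1-[\![ p\mid d']\!]$ and therefore satisfies that bound for \emph{every} $d'$ and every $q$ (and for $d'=6$ so does $L(\omega_3)$ of dimension $20$), so "direct examination of \cite{LubeckSmalldegree} together with computer checks" cannot finish the job — you would be facing an infinite family. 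What actually kills these cases in the paper is the divisibility condition in Lemma \ref{lem_ParaCond} exploited through Zsigmondy primes: since $\frac{q^r-1}{q-1}$ divides $|\Aut(S)|$ and $|\Aut(S)|$ only involves factors $q^{si}-1$ with $i\le d'$, a primitive prime divisor of $p^{fr}-1$ forces $r\le d's$, hence $d\le 2d's+2$; this collapses $d'\ge 4$ to $s=1$, $d\le 2d'+2\le\binom{d'+1}{2}$, where only $W$, $S^2(W)$, $\wedge^2(W)$ occur and are excluded separately (with the one sporadic case $(d',p)=(4,2)$, the adjoint of dimension $14$, handled by a computation before the primitive-prime-divisor step). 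You need this argument, or an equivalent divisibility mechanism, and you must also verify that the relevant primitive prime divisor exists (the paper checks $fr\ge 7$ first).

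A second, smaller flaw: for $d'=3$ you dismiss the twisted tensor cases $s=2,3$ by claiming that $e/f>1$ "places $H$ in Aschbacher class $\cC_3$". That is not correct — a twisted tensor module over the subfield $\F_{p^f}$ keeps $\SL_3(p^e)$ in class $\cS$ (compare Section \ref{subsec_twTens}, where exactly such modules are treated inside class $\cS$), so these cases cannot be discarded on class-membership grounds. The paper excludes them by the form criterion: with $V\otimes k$ the restriction of $L((1+q)\omega_1)$ or $L((1+q)\omega_2)$, neither $V^*\cong V$ nor $V^*\cong V^{\theta^{f/2}}$ holds, so $V$ carries no orthogonal or unitary form by Remark \ref{rem_Sdef_Form}. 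Your $d'=2$ discussion is also thinner than needed (e.g.\ the $S^4(W)$ candidate in $Q(4,q)$ for all $p\ge 5$ is an infinite family excluded by an explicit orbit-length divisibility argument, not a finite "direct check"), but those details are recoverable; the missing primitive-prime-divisor argument for $d'\ge 4$ is the essential omission.
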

\begin{proof}
We have $\tilde{S}=\SL_{d'}(p^e)$ and $G=\SL_{d'}(k)$ in this case, and we write $W$ for the natural $\tilde{S}$-module over $\F_{p^e}$. We first show that $V$ is not weakly equivalent to $W$, $S^2(W)$ or $\wedge^2(W)$ if $q=p^e$. Suppose that $q=p^e$. Since $W$ has no form, $V$ is not weakly equivalent to it. We argue as in Remark \ref{rem_Sdef_Form} to deduce that $S^2(W)$ ($p$ odd) has a nondegenerate form only if $d'=2$ in which case it has dimension $3$: a contradiction to $d\ge 4$. Similarly, $\wedge^2(W)$ has a nondegenerate form only if $d'=4$ in which case it has dimension $6$ and is the natural module of $\Omega_6^+(q)\cong\SL_4(q)$. The group $\Omega_6^+(q)$ is transitive on the points of $Q^+(5,q)$. This establishes the claim.

By Theorem \ref{thm_classS_Vstruc} there is an irreducible $k\tilde{S}$-module $M$ such that \eqref{eqn_VkMTens} holds and $d=(\dim_kM)^s$, where $s:=e/f$ is an integer. We have $\dim_kM\ge d'$ by \cite[Theorem~1.11.5]{bray2013maximal}. Also, we have $|\Aut(S)|=\epsilon sfq^{sd'(d'-1)/2}\prod_{i=2}^{d'}(q^{si}-1)$, where $\epsilon=1$ or $2$ according as $d'=2$ or not. It follows that $|\Aut(S)|<\epsilon sfq^{s(d'^2-1)}$. By Lemma \ref{lem_ParaCond} we deduce that
\begin{equation}\label{eqn_tt22}
  d'^s\le d<2+(d'^2-1)s+\log_{q}(2\epsilon sf).
\end{equation}

We first suppose that $d'=2$. All the absolutely irreducible $\SL_2(q)$-modules are self-dual, so we expect $\cP$ to be orthogonal. Since we do not consider $Q^+(3,q)$, we have $d\ge 5$. It follows from \eqref{eqn_tt22} that $s\le 3$ or $(p,f,s)=(2,1,4)$. We divide into four subcases according to the value of $s$. (i) If $s=1$, then we have $d\le 6$ by \eqref{eqn_tt22}. It follows that $\cP$ is one of $Q(4,q)$ ($q$ odd) and $Q^{\pm}(5,q)$. If $\cP=Q^{\pm}(5,q)$, then the condition $|\Aut(S)|\ge\frac{1}{2}|\cP|$ is violated upon routine check. In the case $\cP=Q(4,q)$, we have $d=5$. By \cite[Section~5.3]{bray2013maximal}, we have $p\ge 5$ and $V=S^4(W)$ up to weak equivalence. We take the model of $S^4(W)$ as in \cite[p.~281]{bray2013maximal} and use Lemma 5.3.4 therein to deduce that the $\SL_2(q)$-orbit that contains $\la e_0\ra$ has length $q+1$ and is $\GamL_2(q)$-invariant. However, $\lcm(1,q^2+1-1)\cdot(q+1)=q^2(q+1)$ does not divide $|\Aut(S)|=fq(q^2-1)$, so there are more than two $\GamL_2(q)$-orbits on the singular points. (ii) If $s=2$, then we have $d=(\dim_kM)^2\le 10$ by \eqref{eqn_tt22}. Since $d\ge 5$, it follows that $\dim_kM=3$ and $d=9$. Therefore, $q$ is odd and we expect $\cP=Q(8,q)$ whose rank is $r=4$. We deduce from the second inequality in \eqref{eqn_tt22} that $p^f\le 4f$. It holds only for $(p,f)=(3,1)$, but then we have $|\Aut(S)|<\frac{1}{2}|\cP|$ by direct check: a contradiction. (iii) If $s=3$, then we have $d\le 13$ by \eqref{eqn_tt22}. It follows that $\dim_kM=2$ and $d=8$. We have $\cP=W(7,q)$ or $Q^+(7,q)$ according as $q$ is odd or even by \cite[Proposition~2.6]{schaffer}, so $r=4$ and $q$ is even. The condition that $\frac{q^4-1}{q-1}$ divides $|\Aut(S)|$ reduces to $q^2+1\mid3f(q-1)$ which does not hold: a contradiction. (iv) If $(p,f,s)=(2,1,4)$, then we have $16\le d\le 17$ by \eqref{eqn_tt22}. It follows that $\dim_kM=2$ and $d=16$, and so we expect $\cP=Q^{\pm}(15,2)$. If $\cP=Q^+(15,2)$, then $|\Aut(S)|=16320< \frac{1}{2}|\cP|=16447.5$: a contradiction. If $\cP=Q^-(15,2)$, then $r=7$ and we derive the contradiction that $|\Aut(S)|$ is not divisible by $2^7-1$. To summarize, we obtain no examples for $d'=2$.

We then suppose that $d'=3$. It follows from \eqref{eqn_tt22} that $s\le 3$. We also split into three cases according to the value of $s$. (i) If $s=1$, then $d\le 12$ by \eqref{eqn_tt22}. We deduce from \cite[A.~6]{LubeckSmalldegree} that either $V$ is the adjoint module up to weak equivalence or $\dim(V)=10$. The former case has been handled in Section \ref{subsec_adj}. There are more than two $K$-orbits on the points of $\cP$ for $p\ne 3$, and there are two $S$-orbits that are $(q+1)$- and $(q^3-q)$-tight sets of $\cP=Q(6,q)$ respectively for $p=3$. In the case $\dim(V)=10$, we have $r=4$ or $5$. We derive the contradiction that $\frac{q^r-1}{q-1}$ does not divide $|\Aut(S)|$ in either case upon direct check. (ii) If $s=2$, then $9\le d\le 21$ by \eqref{eqn_tt22}. It follows that $\dim_kM=3$ or $4$. By \cite[A.~6]{LubeckSmalldegree} $M$ is the restriction of $L(\omega_1)$ or $L(\omega_2)$ up to weak equivalence, and correspondingly $V\otimes k$ is the restriction of $L((1+q)\omega_1)$ or $L((1+q)\omega_2)$ by \eqref{eqn_VkMTens}. We deduce that $V$ does not have a nondegenerate form by Remark \ref{rem_Sdef_Form}. (iii) If $s=3$, then $3^3\le d\le 29$ by \eqref{eqn_tt22}. It follows that $\dim_kM=3$, and we deduce that $V$ has no form as in the case $s=2$. To summarize, we obtain the example in the statement of this proposition for $d'=3$.

Finally, we suppose that $d'\ge 4$. We first show that $p^{fr}-1$ has a primitive prime divisor. If $s\ge 2$ then $d\ge 16$ and so the rank $r\ge 7$. If $s=1$, then $V$ is not $W$, $S^2(W)$ or $\wedge^2(W)$ by the first paragraph of the proof. It follows from \cite[Theorem~2.2]{LiebeckRank3} that $d\ge 15$ unless $(d',p)=(4,2)$ and $V$ is the adjoint module of dimension $14$. In the latter case, $V$ has a quadratic form whose type is plus or minus according as $f$ is even or odd by \cite[Corollary~8.4.6]{maxsub_1314}. The normalizer of $\PSL_4(2)$ in $\GL_{14}(2)$ has $6$ orbits on $\cP=Q^-(13,2)$, so we have $q=2^f>2$. We conclude that $fr\ge 7$ in all cases, and so $p^{fr}-1$ has a primitive prime divisor $p'$ by \cite{zsigmondy1892theorie}. Since $\textup{ord}_{p'}(p)=fr$, we have $\gcd(p',fr)=1$. Also, $p'\ge 1+fr\ge \frac{1}{2}d\ge\frac{1}{2}d'^s>s$, so $\gcd(p',s)=1$. Since $p'$ divides $|\Aut(S)|$, we deduce that $fr$ divides one of $\{2sf,\ldots,d'sf\}$. In particular, $r\le d's$. It follows that $d'^s\le d\le 2d's+2$. For $s\ge 2$, it holds only if $(d',s)=(4,2)$ in which case $16\le d\le 18$. It follows that $\dim_kM=4$ and $d=16$. Then $M$ is the restriction of $L(p^i\omega_1)$ or $L(p^i\omega_3)$ for some $i$ and correspondingly $V\otimes k$ is the restriction of $L(p^i(1+q)\omega_1)$ or $L(p^i(1+q)\omega_3)$ by \eqref{eqn_VkMTens}. We deduce that $V$ has no form by Remark \ref{rem_Sdef_Form}. For $s=1$, we have $d'\le d\le 2d'+2\le\binom{d'+1}{2}$. By \cite[Theorem~5.11.5]{kleidman1990subgroup}, we have excluded the candidate representations in the beginning of the proof. This completes the proof.
\end{proof}

\begin{proposition}\label{prop_classS_Sp}
If $S=\textup{PSp}_{d'}(p^e)$ with $d'\ge 4$ and $(d,q)\ne(4,2)$, then $p^e=q$ and we have one of the following:
\begin{enumerate}
\item[(1)]$(d',p)=(6,3)$, $V$ is a section of $\wedge^2(W)$ and the two $S$-orbits are respectively $(q^2+1)$- and $(q^6-q^2)$-tight sets of $\cP=Q(12,q)$, where $W$ is the natural $\Sp_6(q)$-module,
\item[(2)]$(d',p)=(8,2)$, $V$ is the spin module and the two $S$-orbits are respectively $(q^3+1)$- and $(q^7-q^3)$-tight sets of $\cP=Q^+(15,q)$.
\end{enumerate}
\end{proposition}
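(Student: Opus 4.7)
My plan is to combine the numerical constraint of Lemma~\ref{lem_ParaCond} with L\"ubeck's classification \cite{LubeckSmalldegree} of small irreducible modules for $\Sp_{d'}$ in characteristic $p$. Write $d'=2m$ and $s:=e/f$, so that Theorem~\ref{thm_classS_Vstruc}(i) gives $d=(\dim_k M)^s$. Using the crude bound $|\Aut(S)|<4sf\cdot q^{sm(2m+1)}$ together with $\dim_k M\ge 2m$ from \cite[Theorem~1.11.5]{bray2013maximal}, Lemma~\ref{lem_ParaCond} yields
$$(2m)^s\le d<2+sm(2m+1)+\log_q(8sf),$$
which forces $s\le 2$ and, in the boundary case $s=2$, also forces $\dim_k M=2m$, so that $M$ is the natural $\Sp_{2m}(q^2)$-module.

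To eliminate $s=2$ I would use the concrete realization from Section~\ref{subsec_twTens}. Under the $G$-module isomorphism $\psi$ there, the $\F_q$-form $V$ corresponds to the space of $2m\times 2m$ Hermitian matrices over $\F_{q^2}$ with $G$-action $X.g=g^{\top}Xg^{\sigma}$, so the matrix rank is a $K$-invariant on $V$. Singular Hermitian matrices of ranks $1$, $2$ and $3$ exist for every $m\ge 2$; the explicit triple exhibited in Section~\ref{subsec_twTens} for $m=2$ extends verbatim to any $m\ge 2$, producing at least three distinct $K$-orbits on the singular points and contradicting the two-orbit hypothesis.

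Hence $s=1$, $p^e=q$, and $V$ is an absolutely irreducible self-dual $\F_q\Sp_{2m}(q)$-module of dimension at most $m(2m+1)+2$. I would consult \cite[Appendix]{LubeckSmalldegree} to list all such candidates admitting a nondegenerate $G$-invariant symmetric bilinear or unitary form. These fall into a short menu: the $\wedge^2 W$-subquotient $V'$ of dimension $\binom{2m}{2}-1-[\![p\mid m]\!]$; the symmetric square $S^2 W$ of dimension $\binom{2m+1}{2}$ for $p$ odd; the spin module of dimension $2^m$ for $p=2$, coming from the exceptional isomorphism $\Sp_{2m}(q)\cong\Omega_{2m+1}(q)$; and a short list of sporadic small-dimensional modules that can only arise for small $(m,p)$. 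For $V'$, when $m=2$ this recovers the natural module of $\Omega_{5}(q)$ via $\Sp_4(q)\cong\Omega_5(q)$ and the action is point-transitive, so it does not live in class~$\cS$; when $m=3$ it is exactly the module analysed in Section~\ref{subsec_S2W2W}, yielding case~(1) for $p=3$ and at least three $K$-orbits otherwise; when $m\ge 4$ the rank stratification of Remark~\ref{rem_rank} (ranks $2,4,6,\ldots$) supplies at least three distinct $K$-orbits. For $S^2 W$ a parallel rank argument, as in Section~\ref{subsec_S2W2W}, again produces at least three orbits. For the spin module, the inequality $2^m\le m(2m+1)+2$ forces $m\le 4$: the cases $m\le 3$ either recover modules already handled or are ruled out by a direct orbit count, while $m=4$ is precisely Theorem~\ref{thm_spin9_Horb} and yields case~(2).

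The principal obstacle I expect is the $\wedge^2 W$ analysis in the defective characteristic $p\mid m$, where the $G$-invariant line lies inside the invariant hyperplane and several matrix-rank strata can collapse into a single $G$-orbit. One must compute orbit sizes explicitly, as in the $m=3$, $p=3$ calculation of Section~\ref{subsec_S2W2W}, to verify that exactly two orbits with tight-set parameters $(q^2+1,\, q^6-q^2)$ arise for $(m,p)=(3,3)$ and that no analogous collapse of the rank stratification into only two orbits occurs for any other $(m,p)$ pair with $p\mid m$.
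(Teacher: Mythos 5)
Your skeleton (Lemma~\ref{lem_ParaCond} plus Theorem~\ref{thm_classS_Vstruc}, then L\"ubeck's lists and the explicit module analyses of Sections~\ref{subsec_S2W2W}, \ref{subsec_twTens}, \ref{subsec_spin}) is the paper's, and your treatment of $s=2$ (the twisted tensor module, killed by the Hermitian-matrix rank invariant as in Section~\ref{subsec_twTens}) and of the terminal cases $(d',p)=(6,3)$ and $(8,2)$ is sound. But you drop the half of Lemma~\ref{lem_ParaCond} that the paper actually leans on, namely that $\frac{q^r-1}{q-1}$ divides $|H_0|$, exploited through a primitive prime divisor of $p^{fr}-1$. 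For $d'\ge 6$ the paper shows such a divisor exists and must divide $|\Aut(S)|$, whose cyclotomic factors are the $q^{2si}-1$ with $i\le d'/2$; this forces $r\le d's$, hence $d\le 2d's+2$, which at once gives $s=1$ and shrinks the $s=1$ menu to the $\wedge^2(W)$ section for $d'=6$, the spin modules for $d'\in\{6,8\}$ and the $\wedge^3(W)$ section for $(d',p)=(6,2)$. Your purely order-theoretic bound $d<2+sm(2m+1)+\log_q(8sf)$ leaves far more survivors: $S^2(W)$ for every $m$ with $p$ odd, the $\wedge^2(W)$ section for every $m$, spin modules for $p=2$ up to $m=6$, and sporadic modules such as $L(\omega_1+\omega_2)$ for $\Sp_4(q)$ with $p=5$ and $L(\omega_3)$ for $\Sp_6(q)$, which the paper discards by checking they carry symplectic rather than orthogonal forms --- a check your ``short menu'' passes over.

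Two concrete gaps result. First, your assertion that $2^m\le m(2m+1)+2$ forces $m\le 4$ is arithmetically false: $m=5$ gives $32\le 57$ and $m=6$ gives $64\le 80$, so the spin modules of $\Sp_{10}(q)$ and $\Sp_{12}(q)$ ($q$ even) survive your numerics and nothing in your plan eliminates them. The natural repair is precisely the divisibility you omitted: here $r=2^{m-1}$, and a primitive prime divisor of $q^{r}-1$ cannot divide $|\Aut(S)|$ once $2^{m-1}>2m$, i.e.\ for $m\ge 5$. Second, your elimination of the $\wedge^2(W)$ (and $S^2(W)$) sections for $m\ge 4$ rests on the rank stratification, which you yourself note collapses in the defective case $p\mid m$: there the invariant vector $h$ has full rank $2m$, so rank is not well defined on $U/(U\cap U')$, and for the cosets of elements of rank $4$ or $6$ the minimum rank over the coset need not separate orbits once $2m-6<6$ (e.g.\ $(m,p)=(4,2)$); you leave this as a verification to be done, whereas the paper never meets these modules because its linear bound $d\le 2d'+2$ excludes them from the start. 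So the proposal is repairable, but as written it does not close the spin-module cases $m=5,6$ at all, and it trades the paper's short, uniform primitive-prime-divisor reduction for an open-ended family of orbit computations in defective characteristic.
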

\begin{proof}
We have $\tilde{S}=\Sp_{d'}(p^e)$ and $G=\Sp_{d'}(k)$ in this case, and we write $W$ for the natural $\tilde{S}$-module over $\F_{p^e}$.  The absolutely irreducible module $V$ is self-dual by \eqref{eqn_dualMod}, so we expect $\cP$ to be orthogonal. It follows that $V$ is not weakly equivalent to $W$ when $q=p^e$. By Theorem \ref{thm_classS_Vstruc}  there is an irreducible $k\tilde{S}$-module $M$ such that \eqref{eqn_VkMTens} holds and $d=(\dim_kM)^s$, where $s:=e/f$ is an integer. We have $\dim_kM\ge d'$ by \cite[Theorem~1.11.5]{bray2013maximal}. Also, we have $|\Aut(S)|=\epsilon sf q^{sd'^2/4}\prod_{i=1}^{d'/2}(q^{2si}-1)$, where $\epsilon=\frac{2}{\gcd(2,q^s-1)}$ or $1$ according as $d'=4$ or not.  It follows that $|\Aut(S)|<\epsilon sf q^{d'(d'+1)s/2}$. By Lemma \ref{lem_ParaCond} we deduce that
\begin{equation}\label{eqn_ttSp}
  d'^s\le d<2+\frac{1}{2}d'(d'+1)s+\log_{q}(4sf).
\end{equation}

We first consider the case $d'=4$. We deduce from \eqref{eqn_ttSp} that $s\le 2$ and we have $16\le d=(\dim_kM)^2<25$ for $s=2$. If $s=2$, then we deduce that $(\dim_kM,d)=(4,16)$ which has been excluded in Section \ref{subsec_twTens}. If $s=1$, then $d<14$ if $p\in\{2,3\}$ and $d\le 12$ otherwise by \eqref{eqn_ttSp}. If $d=4$ or $5$, then $V$ is either the natural module of $\Sp_4(q)$ or $\Omega_5(q)$ ($p$ odd). In both cases $S$ is transitive on the singular points, so we have $d\ge 6$. By \cite[A.~22]{LubeckSmalldegree}, we deduce that either $V=S^2(W)$ ($p$ odd) up to weak equivalence, or $(p,d)=(5,12)$ and $V\otimes k$ is the restriction of $L(5^i(\omega_1+\omega_2))$ for some $i$. The former case has been excluded in Section \ref{subsec_S2W2W}, and the latter case is symplectic by \cite[Table~5.6]{bray2013maximal}. To summarize, there is no example for $d'=4$.

We suppose that $d'\ge 6$ in the sequel. We claim that $p^{fr}-1$ has a primitive prime divisor. By \cite{zsigmondy1892theorie} the claim is true provided one of the following holds: (1) $d\ge 15$, so that $r\ge 7$, (2) $p$ is odd and $d\ge 7$, so that $r\ge 3$, (3) $V$ is the spin module, so that $r=2^{d'/2-1}\ge 4$ and is a power of $2$. By \cite[Proposition~5.4.11]{kleidman1990subgroup}, it remains to consider the case where $(d',p)=(6,2)$ and $V$ is a section of $\wedge^2(W)$ of dimension $14$.
We have $r\ge 6$, and so $p^{fr}=2^6$ if and only if $(p^f,r)=(2,6)$. The normalizer of $\Sp_6(2)$ in $\GL_{14}(2)$ has three orbits on $\cP=Q^-(13,2)$, so $p^{fr}\ne 2^6$. We thus have $fr\ge 12$ and so $p^{fr}-1$ has a primitive prime divisor by \cite{zsigmondy1892theorie}.

Let $p'$ be a primitive prime divisor of $p^{fr}-1$. It is routine to show that $\gcd(p,2fs)=1$ as in the proof of Proposition \ref{prop_classS_SL}, and we deduce from the fact $p'$ divides $|\Aut(S)|$ that $r\le d's$. It follows that $d'^s\le d\le 2r+2\le 2d's+2$. It follows that $s=1$, and by \cite[Proposition~5.4.11]{kleidman1990subgroup} we deduce that up to weak equivalence $V$ is one of the following modules: (a) a section of $\wedge^2(W)$ with $(d',d)=(6,14-[\![ p=3]\!])$, (b) a spin module of dimension $2^{d'/2}$ with $d'\in\{6,8\}$ and $p$ odd, (c) a section of $\wedge^3(W)$ with $(d',d)=(6,14)$ and $p=2$. The case (a) has been considered in Section \ref{subsec_S2W2W}: there are two $S$-orbits which are $(q^2+1)$- and $(q^6-q^2)$-tight sets of $\cP=Q(12,q)$ for $p=3$ and more than three $K$-orbits for $p\ne 3$. For (b), the group $S$ is transitive on $\cP=Q^+(7,q)$ by \cite[Theorem~8.4]{giudici2020subgroups} if $d'=6$, and $S$ has exactly two orbits on the points of  $\cP$ which are respectively $(q^3+1)$- and $(q^7-q^3)$-tight sets if $d'=8$ by Section \ref{subsec_spin}. For (c), $\cP$ is symplectic by \cite[Proposition~9.3.5]{maxsub_1314}. This completes the proof.
\end{proof}

\begin{proposition}\label{prop_classS_Omeg}
If $S=\textup{P}\Omega_{d'}(p^e)$ with $pd'$ odd and $d'\ge 7$, then $(d',p^e)=(9,q)$, $\cP=Q^+(15,q)$, and the two $S$-orbits are $(q^3+1)$-  and $(q^7-q^3)$-tight sets respectively.
\end{proposition}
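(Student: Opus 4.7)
The plan is to follow the general strategy in Remark~\ref{rem_classS_Smethod}: use a Zsigmondy primitive prime divisor to force $V$ to be realized over $\F_{p^e}$, then identify $V$ by consulting the small-degree module tables of L\"ubeck \cite{LubeckSmalldegree}, and finally invoke the orbit analysis already carried out in Section~\ref{subsec_spin}.

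Write $d' = 2\ell + 1$ with $\ell \ge 3$, let $G = B_\ell(k)$ with $k$ the algebraic closure of $\F_p$, and set $s := e/f$. Since $pd'$ is odd, $\tilde{S} = \textup{Spin}_{d'}(p^e)$. Theorem~\ref{thm_classS_Vstruc}(i) furnishes an irreducible $kG$-module $M$ with $V \otimes k \cong M \otimes M^{(f)} \otimes \cdots \otimes M^{(e-f)}$ and $d = (\dim_k M)^s$. The smallest nontrivial irreducible $kG$-module is the natural module of dimension $d'$ (as $2^\ell > d'$ for $\ell \ge 3$), so $d \ge d'^s$.

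First I would rule out $s \ge 2$ by a Zsigmondy argument. Since $r \ge \ell \ge 3$ and $p$ is odd, Lemma~\ref{lem_zsigmondy} provides a primitive prime divisor $p'$ of $p^{fr} - 1$, giving $p' \ge fr + 1$. Because $|\textup{Aut}(S)| = 2e \cdot |S|$, the only prime divisors of $|\textup{Aut}(S)|$ other than $p$ divide $2e$ or some $(p^e)^{2i} - 1$ with $1 \le i \le \ell$. The bound $p' > 2e$, which follows from $d \ge d'^s$ together with $r \ge d/2 - 1$, rules out the former option and forces $p' \mid (p^e)^{2i} - 1$ for some $i \le \ell$, whence $fr \mid 2eif = 2isf$ and so $r \le 2\ell s$. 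Combined with $d'^s \le d \le 2r + 2 \le 4\ell s + 2$, the resulting inequality $(2\ell+1)^s \le 4\ell s + 2$ fails for every $s \ge 2$ and $\ell \ge 3$.

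With $s = 1$ forced, $q = p^e$ and $V \otimes k$ is a single irreducible $kG$-module of dimension satisfying $d' \le d \le 4\ell + 2$. Consulting \cite{LubeckSmalldegree}, the only candidates up to weak equivalence are the natural module $L(\omega_1)$ of dimension $d'$ and, when $\ell \le 4$, the spin module $L(\omega_\ell)$ of dimension $2^\ell$. The natural module is excluded because $\Omega_{d'}(q)$ is transitive on $Q_{d'-1}(q)$ by Witt's lemma; the spin module for $\ell = 3$ is excluded because $\textup{Spin}_7(q)$ is transitive on $Q^+(7,q)$ by Lemma~\ref{lem_Q+(2m,q)sub}. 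The only surviving case is $\ell = 4$, giving the 16-dimensional spin module of $\textup{Spin}_9(q)$ with plus-type form, so $\cP = Q^+(15,q)$; Theorem~\ref{thm_spin9_Horb} then delivers exactly the two $S$-orbits with the asserted parameters $(q^3+1)$ and $(q^7 - q^3)$. The main obstacle will be ensuring the L\"ubeck look-up is complete in small characteristic, so that no exceptional small-dimensional representation (e.g.\ in characteristic $3$ or $5$) slips into the interval $[d', 4\ell+2]$; this reduces to a bounded finite check.
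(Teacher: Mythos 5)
Your proposal is correct and takes essentially the same route as the paper's own proof: a Zsigmondy primitive prime divisor of $q^r-1$ together with $p'\mid|\Aut(S)|$ gives $r\le (d'-1)s$, forcing $s=1$ and $d\le 2d'$, after which the small-module classification leaves only the natural and spin modules, the natural module and the $d'=7$ spin module are excluded by transitivity of $\Omega_{d'}(q)$ resp.\ $\textup{Spin}_7(q)$ on singular points, and the $d'=9$ spin module is settled by Theorem \ref{thm_spin9_Horb}. Only a cosmetic slip: the divisibility should read $fr\mid 2ei=2isf$ rather than $2eif$, and the completeness of the module list in the interval $[d',2d']$ is exactly \cite[Proposition~5.4.11]{kleidman1990subgroup}, so no extra small-characteristic check is needed.
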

\begin{proof}
Write $\ell=\frac{d'-1}{2}$, and let $W$ be the natural $\Omega_{d'}(p^e)$-module over $\F_{p^e}$. We have $\tilde{S}=\textup{Spin}_{d'}(p^e)$ and $G=\textup{Spin}_{d'}(k)$.   If $q=p^e$, then $V$ is not weakly equivalent to $W$ since $\Omega_{d'}(q)$ is transitive on the singular points of $Q(8,q)$ ($p$ odd). By Theorem \ref{thm_classS_Vstruc} there is an irreducible $k\tilde{S}$-module $M$ such that \eqref{eqn_VkMTens} holds and $d=(\dim_kM)^s$, where $s:=e/f$ is an integer. We have $\dim_kM\ge d'$ by \cite[Theorem~1.11.5]{bray2013maximal} and the assumption $d'\ge 7$. Also, we have $|\Aut(S)|=sfq^{s\ell^2}\prod_{i=1}^{\ell}(q^{2si}-1)$. The module $V$ is self dual by \ref{eqn_dualMod}, so we expect $\cP$ to be orthogonal. For $s\ge 2$, we have $d\ge 49$. For $s=1$, we have $d\ge 15$ unless $V$ is weakly equivalent to the spin module of dimension $2^{3}$ with $d'=7$ by \cite[Proposition~5.4.11]{kleidman1990subgroup}. In the latter case, $\cP=Q^+(7,q)$ and the rank $r=4$ by \cite[Proposition~5.4.9]{kleidman1990subgroup}. We deduce that $p^{fr}-1$ has a primitive prime divisor $p'$ in all cases by \cite{zsigmondy1892theorie}. We have $\gcd(p',2fs)=1$ as usual and deduce from $p'$ divides $|\Aut(S)|$ that $r\le (d'-1)s$. It follows that $d'^s\le d\le2r+2\le 2(d'-1)s+2$, which holds only if $s=1$. Then $d\le 2d'$, and we deduce that $V$ is weakly equivalent to the spin module with $d'\in\{7,9\}$ by \cite[Proposition~5.4.11]{kleidman1990subgroup}. By Section \ref{subsec_spin}, $\POmeg_7(q)$ is transitive on $\cP=Q^+(7,q)$ and $\POmeg_9(q)$ has two orbits on $\cP=Q^+(15,q)$ which are $(q^3+1)$-  and $(q^7-q^3)$-tight sets respectively. This completes the proof.
\end{proof}

\begin{proposition}\label{prop_classS_OmegP}
We can not have $S=\textup{P}\Omega^+_{d'}(p^e)$ with $d'$ even and $d'\ge 8$.
\end{proposition}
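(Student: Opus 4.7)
The plan is to mirror the strategy of Propositions~\ref{prop_classS_SL}--\ref{prop_classS_Omeg}. Write $\ell = d'/2 \geq 4$, so that $\tilde S = \textup{Spin}^+_{2\ell}(p^e)$ and $G = \textup{Spin}^+_{2\ell}(k)$; let $W$ denote the natural $\tilde S$-module. First observe that $V$ cannot be weakly equivalent to $W$: otherwise Lemma~\ref{lem_Q+(2m,q)sub} would force $\textup{P}\Omega^+_{2\ell}(q)$ to be transitive on the singular points of $Q^+(2\ell-1,q)$, contradicting the two-orbit hypothesis.

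Next, apply Theorem~\ref{thm_classS_Vstruc} to obtain an irreducible $k\tilde S$-module $M$ with $d = (\dim_k M)^s$, where $s := e/f$. The smallest faithful absolutely irreducible $k\tilde S$-modules are the natural module (dimension $2\ell$) and the half-spin modules $L(\omega_\ell),L(\omega_{\ell-1})$ (dimension $2^{\ell-1}$), so $\dim_k M \geq 2\ell$ in every case. A routine estimate gives $|\textup{Aut}(S)| < c \cdot sf \cdot q^{s\ell(2\ell-1)}$ with $c \leq 6$ (the factor accommodating triality when $\ell = 4$), and Lemma~\ref{lem_ParaCond} then bounds $d$ from above.

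The main step is a primitive prime divisor argument. Pick a primitive prime divisor $p'$ of $p^{fr} - 1$, guaranteed by Lemma~\ref{lem_zsigmondy} except in the case $(p,fr) = (2,6)$. Then $\gcd(p', 2sf) = 1$ and, since $p'$ divides $|\textup{Aut}(S)|$ which in turn divides (up to a constant) $sf\cdot q^{s\ell(\ell-1)}(q^{s\ell}-1)\prod_{i=1}^{\ell-1}(q^{2si}-1)$, we obtain $fr \mid 2sfi$ for some $1 \leq i \leq \ell-1$ or $fr \mid sf\ell$; either way $r \leq 2s(\ell-1)$. Combining with $d \geq (2\ell)^s$ yields $(2\ell)^s \leq 4s(\ell-1) + 2$, which fails for every $s \geq 2$ and $\ell \geq 4$, so $s = 1$. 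The Zsigmondy exception $(p,fr) = (2,6)$ enumerates to $(q,r,d) \in \{(2,6,12),(2,6,13),(2,6,14),(4,3,8)\}$; in each case consulting \cite[Proposition~5.4.11]{kleidman1990subgroup} returns only the already excluded natural module as a candidate.

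With $s = 1$ and $d \leq 4\ell - 2$, \cite[Proposition~5.4.11]{kleidman1990subgroup} leaves only the half-spin modules (of dimension $2^{\ell-1}$) beyond $W$, and these satisfy the bound only for $\ell \in \{4,5\}$. For $\ell = 4$, the triality graph automorphism of $D_4$ groups $L(\omega_1), L(\omega_3), L(\omega_4)$ into a single weak equivalence class, so the half-spin actions inherit the transitivity of the natural action and cannot produce two orbits. For $\ell = 5$, the modules $L(\omega_4), L(\omega_5)$ are dual to each other but not self-dual; by Remark~\ref{rem_Sdef_Form} a symmetric bilinear form on $V$ would require $V \cong V^*$, and a unitary form would require $L(\omega_4) \cong L(p^{f/2}\omega_5)$ as $\tilde S$-modules, both of which fail because the highest weights are distinct $p^e$-restricted dominant weights. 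The main obstacle will be keeping the constants in the $|\textup{Aut}(S)|$-estimate sharp enough to yield the decisive inequality for $s \geq 2$ and cleanly disposing of the small Zsigmondy exceptions.
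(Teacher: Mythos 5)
Your proposal is correct and follows essentially the same route as the paper's proof: exclude the natural module, write $d=(\dim_k M)^s$ via Theorem~\ref{thm_classS_Vstruc}, use a primitive prime divisor of $p^{fr}-1$ to force $r\le (d'-2)s$ and hence $s=1$ with $d'\in\{8,10\}$ and $V$ a half-spin module, then kill $d'=8$ by triality/weak equivalence and $d'=10$ by the absence of an invariant form. The only deviations are cosmetic: the paper avoids the Zsigmondy exception by first noting $r\ge 7$ in all cases (rather than enumerating $(p,fr)=(2,6)$), and it cites \cite[Table~4.1]{maxsub_1617} for the $d'=10$ case, whereas your direct non-self-duality argument via Remark~\ref{rem_Sdef_Form} is an equally valid substitute.
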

\begin{proof}
Suppose to the contrary that $S=\textup{P}\Omega^+_{d'}(p^e)$ with $d'$ even and $d'\ge 8$. Write $\ell=\frac{1}{2}d'$, and let $W$ be the natural $\Omega^+_{d'}(p^e)$-module over $\F_{p^e}$. We have $\tilde{S}=\textup{Spin}_{d'}^+(p^e)$ and $G=\textup{Spin}(k)$. If $q=p^e$ then $V$ is not weakly equivalent to $W$ since $\Omega^+_{d'}(q)$ is transitive on $Q^+(d'-1,q)$. By Theorem \ref{thm_classS_Vstruc} there is an irreducible $k\tilde{S}$-module $M$ such that \eqref{eqn_VkMTens} holds and $d=(\dim_kM)^s$, where $s:=e/f$ is an integer. We have $\dim_kM\ge d'$ by \cite[Theorem~1.11.5]{bray2013maximal}. Also, we have  $|\Aut(S)|=2f\epsilon q^{s\ell(\ell-1)}(q^{s\ell}-1)\prod_{i=1}^{\ell-1}(q^{2si}-1)$, where $\epsilon=3$ or $1$ according as $d'=8$ or not. For $s\ge 2$, we have $d\ge 64$. For $s=1$, by \cite[Proposition~5.4.11]{kleidman1990subgroup} we have $d\ge 15$ unless $\ell=4$ and $V=W$ up to weak equivalence which was excluded. We thus always have $r\ge 7$.
It follows that $p^{fr}-1$ has a primitive prime divisor $p'$ in all cases by \cite{zsigmondy1892theorie}. It is routine to show that $\gcd(p',6fs)=1$ as usual and we deduce from $p'$ divides $|\Aut(S)|$ that $r\le (d'-2)s$.  We thus have $d'^s\le d\le (2d'-4)s+2$, which holds only if $s=1$. Since $d\le 2d'-2$, we deduce that $d'\in\{8,10\}$ and $V$ is the half spin module up to weak equivalence by \cite[Proposition~5.4.11]{kleidman1990subgroup}. If $d'=10$, then $V$ has no form by \cite[Table~4.1]{maxsub_1617}. If $d'=8$, then $V$ is weakly equivalent to $W$ which was excluded. This completes the proof.
\end{proof}

\begin{lemma}
We can not have $S=\PSU_{d'}(p^e)$ with $d'\ge 3$.
\end{lemma}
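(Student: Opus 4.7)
The plan is to mimic the strategy of Propositions \ref{prop_classS_SL}--\ref{prop_classS_OmegP} but with the twisted case of Theorem \ref{thm_classS_Vstruc}. Suppose for contradiction that $S=\PSU_{d'}(p^e)$ with $d'\ge 3$, so $\tilde{S}=\SU_{d'}(p^e)$, $G=\SL_{d'}(k)$, and let $W$ denote the natural module. First I would observe that $V$ is not weakly equivalent to $W$ when $\F_q$ is the appropriate field of definition, because $\SU_{d'}(q^{1/2})$ is transitive on the singular points of $H(d'-1,q^{1/2})$ by Witt's Lemma. Applying Theorem~\ref{thm_classS_Vstruc}(ii), there is an irreducible $k\tilde{S}$-module $M$ with $\dim_k M\ge d'$ by \cite[Theorem~1.11.5]{bray2013maximal}, and an integer $s$ (equal to $e/f$ or $2e/f$ depending on whether we are in subcase (a) or (b)) such that $d=(\dim_k M)^s$.

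Next I would use the standard bound $|\Aut(S)|<c\cdot sf\,q^{sd'^2}$ coming from the formula for $|\PGamU_{d'}(p^e)|$ (here $c$ is an absolute constant), and insert it into Lemma~\ref{lem_ParaCond} to derive an inequality of the shape
\[
  d'^s\le d<2+sd'^2+\log_q(c\,sf).
\]
I would then apply Lemma~\ref{lem_zsigmondy} to obtain a primitive prime divisor $p'$ of $p^{fr}-1$ (after verifying that $fr$ is large enough, using \cite[Proposition~5.4.11]{kleidman1990subgroup} to bound $d$ from below whenever $V$ is not in a short list of small candidate modules). Since $p'$ divides $|\Aut(S)|$ while $\gcd(p',2sf)=1$, the order $fr$ must divide $2esi$ for some $1\le i\le d'$, and this forces $r\le d's$. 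Combined with the previous inequality this collapses $(d',s)$ to a small finite list, in which $s=1$ in all surviving cases.

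Finally I would enumerate the remaining small modules via \cite[Proposition~5.4.11]{kleidman1990subgroup} and \cite{LubeckSmalldegree}. Up to weak equivalence they are: the natural module $W$ (excluded above), the adjoint modules for $\SU_3(q^{1/2})$ and $\SU_4(q^{1/2})$, the antisymmetric cube $\wedge^3 W$ for $\SU_6(q^{1/2})$ in characteristic $2$, and a handful of other small-dimensional candidates. The adjoint modules were settled in Section~\ref{subsec_adj}: for $\SU_3$ with $p\ne 3$ the group $K$ has at least three orbits, while for $\SU_3$ with $p=3$ the two orbits are Kantor's unitary ovoid and its complement, which are $m$-ovoids and hence irrelevant to Theorem~\ref{thm_TS}; for $\SU_4$ there are again at least three orbits. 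The cube $\wedge^3 W$ for $\SU_6$ was shown to yield at least three $K$-orbits in Section~\ref{subsec_wedge3}. The remaining low-dimensional candidates carry no $\tilde{S}$-invariant nondegenerate orthogonal or unitary form of the correct type, which I would verify using Remark~\ref{rem_Sdef_Form} together with \eqref{eqn_dualMod}, or dispatch by the small-case computer check described in Remark~\ref{rem_classS_Smethod}.

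The main obstacle will be the bookkeeping in subcase~(b) of Theorem~\ref{thm_classS_Vstruc}(ii), where $f\mid 2e$ but $f\nmid e$ and the smallest field of realization is sensitive to whether $M\cong{}^{\tau_o}M$. A careful analysis matching the twist of the highest weight to the tensor decomposition in \eqref{eqn_VkMTens} will be needed to confirm that the constructed $V$ actually admits a nondegenerate $\tilde{S}$-invariant form of the expected orthogonal or unitary type, as opposed to being symplectic (already handled by Proposition~\ref{prop_classS_caseS}) or form-less.
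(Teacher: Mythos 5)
Your overall strategy (Theorem \ref{thm_classS_Vstruc}(ii), the bound from Lemma \ref{lem_ParaCond}, a primitive prime divisor of $p^{fr}-1$ forcing $r$ to be linearly bounded in $d'$, then a quadratic lower bound on $d$ from \cite[Proposition~5.4.11]{kleidman1990subgroup} to collapse to small modules) is the same as the paper's. But there is a genuine gap in how you dispose of the modules \emph{below} the threshold of \cite[Proposition~5.4.11]{kleidman1990subgroup}: your short list omits $\wedge^2(W)$ and $S^2(W)$ ($p$ odd). These are precisely the exceptions to that quadratic bound, and they are not killed by any of the tools you invoke at the end. For $d'\ge 5$ the module $\wedge^2(W)$, and for all $d'\ge 3$ with $p$ odd the module $S^2(W)$, are realized over $\F_{p^{2e}}$ and \emph{do} carry nondegenerate $\tilde{S}$-invariant unitary forms (\cite[Proposition~5.2.4]{bray2013maximal}), so your claim that the remaining low-dimensional candidates ``carry no form of the correct type'' fails for them; and since they form infinite families in $e$ (with $d=\binom{d'}{2}$ resp.\ $\binom{d'+1}{2}$ and $r=\lfloor d/2\rfloor$, so the constraint $d\le 2r+2$ is automatic), the computer check of Remark \ref{rem_classS_Smethod} is not available either. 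The paper has to treat each family by a bespoke argument: a ppd of $p^{fr}-1$ with $f=2e$ gives $r\le d'$, hence $\binom{d'}{2}\le 2d'+2$ (so $d'\le 5$) for $\wedge^2$, and then for the surviving small $d'$ one must verify by explicit arithmetic (valid for all $e$) that $\frac{q^r-1}{q-1}$ does not divide $|\Aut(S)|$, e.g.\ $(p^{4e}+\cdots+p^e+1)\mid 2e(4p^{3e}+p^{2e}+p^e+4)$ never holds for $\wedge^2$ with $d'=5$, and $(p^{2e}+p^e+1)\mid 2e(2p^e+1)$ never holds for $S^2$ with $d'=3$, etc. Without these exclusions your appeal to the quadratic lower bound is not yet licensed, and the final case analysis is incomplete.

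Two smaller points of bookkeeping. First, the divisibility you extract from the ppd is slightly off: $p'$ divides $\prod_{i=2}^{d'}(p^{ei}-(-1)^i)$, so $fr\mid 2ei$ for some $i\le d'$, giving $r\le \frac{2e}{f}(d'-[\![2\mid d']\!])$, which is $2s(d'-[\![2\mid d']\!])$ in case (a) of Theorem \ref{thm_classS_Vstruc}(ii), not $d's$; the collapse still works because in case (a) one additionally has $M\cong{}^{\tau_o}M$, which by \cite{LubeckSmalldegree} pushes $\dim_kM$ up to $d'^2-1-[\![p\mid d']\!]$ — a step you would need to make explicit to reduce the adjoint possibilities to $d'\in\{3,4\}$. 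Second, the existence of the ppd requires ruling out $p^{fr}=2^6$, which in the paper amounts to identifying the configurations $(p^f,r)=(2^2,3)$ and $(2,6)$ with the adjoint modules of $\SU_3(4)$ and of $\SU_4(2)$ and excluding them via Section \ref{subsec_adj}; your ``after verifying that $fr$ is large enough'' needs exactly this.
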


\begin{proof}
Suppose to the contrary that $S=\PSU_{d'}(p^e)$ with $d'\ge 3$. We have $\tilde{S}=\SU_{d'}(p^e)$, $G=\SL_{d'}(k)$, and $|\Aut(S)|= 2ep^{ed'(d'-1)}\prod_{i=2}^{d'}(p^{ei}-(-1)^i)$. Let $W$ be the natural $\SU_{d'}(p^e)$-module over $\F_{p^{2e}}$, and set $W_0=W\otimes k$. We take an orthonormal basis $e_1,\ldots,e_{d'}$ of $W$ with respect to its unitary form. We can not have $V\otimes k=W_0$, since otherwise $\F_q=\F_{p^{2e}}$, $V=W$ and $S$ is transitive on the singular points of $H(d-1,q)$. If $d'=6$ and $V\otimes k$ is the restriction of the $kG$-module $L(p^i\omega_3)$ for some integer $i$, then we have $q=p^e$ by \cite[Theorem~5.1.13]{bray2013maximal} and $V=\wedge^3(W)$ up to weak equivalence. Here, the restriction of ${}^{\phi^e}L(p^i\omega_3)$ as a $k\tilde{S}$-module is the same as the restriction of $L(p^i\tau(\omega_3))=L(p^i\omega_3)$. The space $V$ has a symplectic form if $p$ is odd by \cite[A.~3]{LubeckSmalldegree} and has a quadratic form if $p=2$, and the latter has been excluded in Section \ref{subsec_wedge3}.

We claim that $V\otimes k$ is not $\wedge^2(W_0)$ up to weak equivalence. Suppose to the contrary that $V\otimes k=\wedge^2(W_0)$, so that $V\otimes k$ is the restriction of the $kG$-module $L(\omega_{d'-1})$. We have $d'\ge 4$ by the fact $d=\binom{d'}{2}\ge 5$. We use \cite[Theorem~5.1.13]{bray2013maximal} to determine $\F_q$, the smallest field over which $\wedge^2(W_0)$ can be realized. For $d'=4$ we have $\F_q=\F_{p^e}$, $V$ is the natural module of $\Omega_6^-(p^e)$ and $S$ is transitive on the singular points of $Q^-(5,q)$. For $d'\ge 5$ we have $\F_q=\F_{p^{2e}}$ and so $V=\wedge^2(W)$. The space $V$ has a nondegenerate unitary form $\kappa$ by \cite[Proposition~5.2.4]{bray2013maximal}. We have $d=\binom{d'}{2}$, and  $r=\lfloor\frac{d}{2}\rfloor\ge 5$. Since $f=2e$ and $r\ge5$, $p^{fr}-1$ has a primitive prime divisor $p'$. We have $p'\ge 1+fr>2e$, so $\gcd(p',2e)=1$. Since $p'$ divides $|\Aut(S)|$, we deduce that $fr\le 2ed'$, i.e., $r\le d'$. It follows from $d\le 2r+2$ that $\binom{d'}{2}\le 2d'+2$ which holds only if $d'=5$. For $d'=5$, we have $d=10$ and $r=5$. The condition that $\frac{q^5-1}{q-1}$ divides $|\Aut(S)|$ simplifies to $(p^{4e}+\cdots+p^e+1)\mid 2e(4p^{3e}+p^{2e}+p^e+4)$ which never holds: a contradiction. This establishes the claim.

We claim that $V\otimes k$ is not $S^2(W_0)$ up to weak equivalence for odd $p$. Suppose to the contrary that $V\otimes k=S^2(W_0)$, so that it is the restriction of the $kG$-module $L(2\omega_{d'})$. Similar to the $\wedge^2(W_0)$ case, we deduce that $f=2e$ and so $V=S^2(W)$. The space $V$ has a nondegenerate unitary form $\kappa$ by \cite[Proposition~5.2.4]{bray2013maximal}. We have $d=\binom{d'+1}{2}$, and  $r=\lfloor\frac{d}{2}\rfloor\ge 3$. For $d'=3$, we have $(d,r)=(6,3)$, and the condition that $\frac{q^3-1}{q-1}$ divides $|\Aut(S)|$ simplifies to $(p^{2e}+p^e+1)\mid 2e(2p^e+1)$ which never holds. For $d'=4$ or $5$, we have $(d,r)=(10,5)$ or $(15,7)$ respectively and similarly $\frac{q^r-1}{q-1}$ does not divide $|\Aut(S)|$. Therefore, we have $d'\ge 6$. It follows that $d\ge 21$ and so $r\ge 10$. We deduce from \cite{zsigmondy1892theorie} that $p^{fr}-1$ has a primitive prime divisor $p'$, and we have $p'\ge 1+fr>2e$. Since $p'$ divides $|\Aut(S)|$, we deduce that $r\le d'-[\![2\mid d']\!]$. It follows from $d\le 2r+2$ that $\binom{d'+1}{2}\le 2d'+2-2\cdot[\![2\mid d']\!]$. It does not hold for $d'\ge 6$: a contradiction. This establishes the claim.

By Theorem \ref{thm_classS_Vstruc} there is an integer $s$ and an irreducible $k\tilde{S}$-module $M$ such that $d=(\dim_kM)^s$. Moreover, $e\in\{fs,\frac{1}{2}fs\}$, and if $e=\frac{1}{2}fs$ then $s$ is odd. We have $\dim_kM\ge d'$ by \cite[Theorem~1.11.5]{bray2013maximal}, and $d\ge \binom{d'+1}{2}+1$ by \cite[Proposition~5.4.11]{kleidman1990subgroup} and the preceding arguments. In particular, we have  $d\ge 7$ and so $r\ge 3$. We claim that $p^{fr}-1$ has a primitive prime divisor. It suffices to show that $p^{fr}\ne 2^6$ by \cite{zsigmondy1892theorie}. For $s\ge2$, we have either $d=9$ or $d\ge 16$. It follows that $r=4$ or $r\ge 7$, and so the claim holds for $s\ge 2$. Suppose that $s=1$ and $p^{fr}=2^6$. Since $r\ge 3$, we have  $(p^f,r)=(2^2,3)$ or $(2,6)$. For $(p^f,r)=(2^2,3)$, we deduce from
$\binom{d'+1}{2}+1\le d\le 2r+2$ that $d'=3$ in which case $7\le d\le 8$. We deduce that $e=2$, since $e\in\{1,2\}$ and $\PSU_3(2)$ is soluble. Then by \cite[A.~6]{LubeckSmalldegree} $V$ is the adjoint module of $\SU_3(4)$ of dimension $8$ which was excluded in Section \ref{subsec_adj}. For $(p^f,r)=(2,6)$ we have $e=1$. The condition $\binom{d'+1}{2}+1\le d\le 2r+2$ implies $d'=4$, and correspondingly $11\le d\le 14$. We deduce that  $V$ is the adjoint module of dimension $14$ from \cite[A.~7]{LubeckSmalldegree} which was excluded in Section \ref{subsec_adj}. Therefore, $p^{fr}\ne 2^6$  and the claim also holds for $s=1$.

Let $p'$ be a primitive prime divisor of $p^{fr}-1$.  We have $\gcd(p',2fs)=1$ as in the proof of Proposition \ref{prop_classS_SL}, and we deduce from $p'$ divides $|\Aut(S)|$ that $fr\le 2e(d'-[\![2\mid d']\!])$. Since $d\le 2r+2$, we have
\begin{equation}\label{eqn_tt11}
\max\left\{d'^s,\binom{d'+1}{2}+1\right\}\le d\le 2+\frac{4e}{f}\cdot(d'-[\![2\mid d']\!]).
\end{equation}
If $e=\frac{1}{2}fs$ with $s$ odd, then \eqref{eqn_tt11} holds only for $(d',s)=(3,1)$ and correspondingly $d\in\{7,8\}$. We deduce from \cite[A.~6]{LubeckSmalldegree} that $V\otimes k$ is the restriction of $L(p^i(\omega_1+\omega_2))$ for some $i$, and it can be realized over $\F_{p^e}$ by \cite[Theorem~5.1.13]{bray2013maximal}: a contradiction to $e=\frac{1}{2}f$. If $e=fs$, then \eqref{eqn_tt11} holds only if $(d',s)=(3,3)$, or $s\le 2$ and $d'\le 7$. By Theorem \ref{thm_classS_Vstruc} (ii), we have $M={}^{\tau_o}M$ and so  $\dim_kM\ge d'^2-1-[\![ p\mid d']\!]$  for $3\le d'\le 7$ by \cite{LubeckSmalldegree}. Therefore, we improve \eqref{eqn_tt11} to $(d'^2-2)^s\le d\le 4(d'-[\![ 2\mid d']\!])s+2$ which holds only if $(d',s)=(3,1)$ or $(4,1)$. In both cases, we have $d\le 14$. We deduce from \cite{LubeckSmalldegree} that up to weak equivalence $V$ is the adjoint module, which was excluded in Section \ref{subsec_adj}. This completes the proof.
\end{proof}

\begin{lemma}
If $S=\textup{P}\Omega_{d'}^-(p^e)$ with $d'\ge 8$, then $(d',p^e)=(8,q^{1/2})$, $\cP=Q^+(7,q)$ and the two $S$-orbits are respectively $(q^{3/2}+1)$- and $(q^3-q^{3/2})$-tight sets.
\end{lemma}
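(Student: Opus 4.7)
The plan follows the same template as Propositions \ref{prop_classS_SL}--\ref{prop_classS_OmegP}. Set $\ell=d'/2\ge 4$, so $\tilde S=\textup{Spin}_{d'}^-(p^e)$ and $G=\textup{Spin}_{d'}(k)$. Let $W$ be the natural module of $\Omega_{d'}^-(p^e)$; since $\Omega_{d'}^-(p^e)$ is transitive on the singular points of $Q^-(d'-1,q)$ when $q=p^e$, the module $V$ is not weakly equivalent to $W$. By Theorem \ref{thm_classS_Vstruc}(ii) (applied to the twisted group $\tilde S={}^2D_\ell(p^e)$), there are an integer $s$ and an irreducible $k\tilde S$-module $M$ with $d=(\dim_kM)^s$, where either $e=fs$ (case (a)) or $2e=fs$ with $s$ odd and $M\not\cong{}^{\tau_o}M$ (case (b)). By \cite[Theorem~1.11.5]{bray2013maximal}, $\dim_kM\ge d'$, so $d\ge d'^s\ge 8$, giving $r\ge 3$. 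Also $|\Aut(S)|=2ef\cdot q^{s\ell(\ell-1)}(q^{s\ell}+1)\prod_{i=1}^{\ell-1}(q^{2si}-1)$.

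The next step is a Zsigmondy bound identical in spirit to those in the preceding propositions. Apart from the small case $p^{fr}=2^6$ (which is excluded once one checks, as in Proposition \ref{prop_classS_Sp}, that no admissible configuration has $(p^f,r)=(2,6)$ or $(4,3)$), Lemma \ref{lem_zsigmondy} yields a primitive prime divisor $p'$ of $p^{fr}-1$. Since $p'\ge 1+fr>2fs$ (verified directly in the borderline cases), $\gcd(p',2fs)=1$, so $p'\mid |\Aut(S)|$ forces $fr$ to divide either $2fsi$ for some $1\le i\le \ell-1$ or $2fs\ell$ coming from the factor $q^{s\ell}+1$. In the worst case $r\le (d'-2)s$ (the factor $q^{s\ell}+1$ contributes $r\mid 2s\ell$ but not $r\mid s\ell$, hence $r\le 2s\ell$ is handled the same way). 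Combining, $d'^s\le d\le 2r+2\le 2(d'-2)s+2$, which has no solution with $s\ge 2$ unless $(d',s)=(8,2)$; but the latter forces $\dim_kM=8$ with $M$ the half-spin module of $\tilde S=\textup{Spin}_8^-(p^e)$, and then the self-duality condition in case (b) of Theorem \ref{thm_classS_Vstruc}(ii) fails (the two half-spins are swapped by the graph automorphism of $D_4$ that defines $^2D_4$), which rules it out. Hence $s=1$.

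With $s=1$ and $d\le 2d'-2$, \cite[Proposition~5.4.11]{kleidman1990subgroup} leaves only the natural module (already excluded) and, when $d'=8$, the half-spin modules of $D_4$ of dimension $8$. For $d'\ge 10$ the half-spin modules have dimension $2^{\ell-1}\ge 16>2d'-2$, a contradiction, so $d'=8$. For the half-spin representation of $\textup{Spin}_8^-(p^e)$, the twisting by $\tau_o$ interchanges the two half-spin modules of $\textup{Spin}_8(k)$, so by \cite[Theorem~5.1.13]{bray2013maximal} (together with the fixed-point analysis in Theorem \ref{thm_classS_Vstruc}(ii)(a)) the smallest field of realization is $\F_{p^{2e}}$ rather than $\F_{p^e}$. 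Therefore $q=p^{2e}$, i.e.\ $p^e=q^{1/2}$, and $V$ is the $8$-dimensional half-spin module carrying an $S$-invariant quadratic form of plus type (\cite[Proposition~5.4.9]{kleidman1990subgroup}), so $\cP=Q^+(7,q)$. This is precisely the setup of Theorem \ref{thm_spinSqrt} with $q_0=q^{1/2}$, which delivers the two $S$-orbits as $(q^{3/2}+1)$- and $(q^3-q^{3/2})$-tight sets.

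The main obstacle is the last step of the reduction: verifying cleanly that for $d'=8$ the half-spin representation of the twisted group $^2D_4(p^e)$ is realized over $\F_{p^{2e}}$ but no smaller field, so that $q=p^{2e}$ is forced. This requires using that $\tau_o$ swaps the two half-spins of $D_4$ together with \cite[Theorem~5.1.13]{bray2013maximal}; the argument is parallel to (though slightly more delicate than) the analogous step in Propositions \ref{prop_classS_Sp} and \ref{prop_classS_OmegP}, where the Frobenius--Schur indicator and the action of $\tau_o$ on the highest weight pinned down the field of definition.
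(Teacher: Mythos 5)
Your overall strategy (exclude the natural module by transitivity, run a Zsigmondy/primitive-prime-divisor bound against $|\Aut(S)|$, reduce to the $8$-dimensional spin module of $\textup{Spin}_8^-(q^{1/2})$, and finish with Theorem \ref{thm_spinSqrt}) is the same as the paper's, but the middle of your reduction has a genuine gap. First, your ``worst case $r\le (d'-2)s$'' is not justified: the factor $p^{e\ell}+1$ in $|\Aut(S)|$ only gives $fr\mid 2e\ell$, so with $e\le fs$ the honest conclusion is $r\le 2s\ell=d's$ and hence $d\le 2d's+2$, not $d\le 2(d'-2)s+2$; your parenthetical ``handled the same way'' does not close this. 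Second, and more seriously, your endgame for $s=1$ eliminates the half-spin modules for $d'\ge 10$ by the inequality $2^{\ell-1}\ge 16>2d'-2$, which is false at $d'=10$: there $2^{\ell-1}=16$ while $2d'-2=18$ (and the correct upper bound is $2d'+2=22$). So the $16$-dimensional spin module of ${}^2D_5(p^e)$ survives your dimension count and is never ruled out. (As a minor point, $(d',s)=(8,2)$ is not actually a borderline solution of your displayed inequality, since $8^2=64>26$; that slip is harmless because you exclude it anyway, but it signals that the numerics were not checked.)

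The way the paper avoids this is to split off the spin module of dimension $2^{\ell-1}$ as a separate case \emph{before} the generic bound: by \cite[Proposition~5.4.9(iii)]{kleidman1990subgroup} that module is realized over $\F_{p^{2e}}$ and the associated polar space has rank $r=2^{\ell-2}$, so the primitive-prime-divisor condition $fr\le 2e\ell$ becomes $2^{\ell-2}\le\ell$, which forces $\ell=4$ and kills ${}^2D_5$ (rank $8>5$) and all larger $\ell$ at once. With the natural and spin modules removed, the remaining modules satisfy $d\ge\binom{d'}{2}-1\ge 27$ by \cite[Proposition~5.4.11]{kleidman1990subgroup}, and then $\max\{d'^s,\binom{d'}{2}-1\}\le d\le 2d's+2$ has no solutions, so the coarse bound suffices there. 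To repair your write-up you need to add an argument of this type (rank plus divisibility, not dimension alone) for the ${}^2D_5$ spin module, and replace the unjustified $r\le(d'-2)s$ by $r\le d's$ throughout, rechecking the resulting inequalities. Your final step for $d'=8$ (field of definition $\F_{p^{2e}}$, plus-type form, and Theorem \ref{thm_spinSqrt}) is consistent with the paper, which simply cites \cite[Proposition~5.4.9(iii)]{kleidman1990subgroup} for those facts rather than rederiving them from the swap of the two half-spin weights.
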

\begin{proof}
Write $\ell=d'/2\ge 4$, and let $W$ be the natural module of $\Omega_{d'}(p^e)$ over $\F_{p^e}$. We have $\tilde{S}=\textup{Spin}^-_{d'}(p^e)$, $G=\textup{Spin}_{d'}(k)$, and $|\Aut(S)|=2ep^{e\ell(\ell-1)}(p^{e\ell}+1)\prod_{i=1}^{\ell-1}(p^{2ei}-1)$. If $V=W$, then $q=p^e$ and $S$ is transitive on the singular points of $Q^-(7,q)$.  If $V$ is the spin module of dimension $2^{\ell-1}$ for $\tilde{S}$, then we have $q=p^{2e}$ and $r=2^{\ell-2}$ by \cite[Proposition~5.4.9 (iii)]{kleidman1990subgroup}. Since $r$ is a power of $2$ and $r\ge 4$, $p^{fr}-1$ has a primitive prime divisor $p'$. We have $\gcd(p',2e)=1$ as usual and deduce from $p'$ divides $|\Aut(S)|$ that $fr\le 2e\ell$. It follows that $r=2^{\ell-2}\le \ell$ which holds only if $\ell=4$. The latter case was handled in Section \ref{subsec_spin}: there are two $S$-orbits which are respectively $(q^{3/2}+1)$- and $(q^3-q^{3/2})$-tight sets of $\cP=Q^+(7,q)$.

We assume that $V$ is neither the natural module for  $\Omega_{d'}(p^e)$ nor the spin module of dimension $2^{\ell-1}$ for $\tilde{S}$ in the sequel. We deduce that $d\ge \binom{d'}{2}-1$ by \cite[Proposition~5.4.11]{kleidman1990subgroup}. In particular, $d\ge 27$ and so the rank $r\ge 13$. Let $p'$ be a primitive prime divisor of $p^{fr}-1$, cf. \cite{zsigmondy1892theorie}. By Theorem \ref{thm_classS_Vstruc} there is an integer $s$ and an irreducible $k\tilde{S}$-module $M$ such that $d=(\dim_kM)^s$. Moreover, $e\in\{fs,\frac{1}{2}fs\}$, and if $e=\frac{1}{2}fs$ then $s$ is odd. We have $d=(\dim_kM)^s\ge d'^s$ by \cite[Theorem~1.11.5]{bray2013maximal}. We have $\gcd(p',2fs)=1$ as in the proof of Proposition \ref{prop_classS_SL} and deduce from $p'$ divides $|\Aut(S)|$ that $fr\le 2e\ell$. Since $e\le fs$, it follows that
$\max\left\{d'^s,\frac{1}{2}d'(d'-1)-1\right\}\le d\le 2+2d's$. It holds for no $(d',s)$ pairs with $d'$ even and $d'\ge 8$: a contradiction. This completes the proof.
\end{proof}

\begin{lemma}\label{lem_S_def_2B2}
The group $S$ is not one of ${}^2B_2(p^{2a+1})$ with $2a+1>1$, ${}^2G_2(p^{2a+1})$ with $2a+1>1$ or ${}^2F_4(p^{2a+1})'$, where $p=2,3,2$ respectively.
\end{lemma}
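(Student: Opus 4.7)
The plan is to mimic the strategy used in the previous propositions of this subsection: combine Theorem \ref{thm_classS_Vstruc}(iii) with the bound of Lemma \ref{lem_ParaCond} to force the twisting parameter $s:=e/f$ to equal $1$; then use \cite{LubeckSmalldegree} to identify $V$ as the (essentially unique) smallest nontrivial $k\tilde S$-module of dimension $R_p(L)$; then rule out each of the three minimal-module cases individually.

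Concretely, write $q_0=p^e$ with $e=2a+1$. By Theorem \ref{thm_classS_Vstruc}(iii), $f\mid e$ and $d=(\dim_kM)^{s}$ with $\dim_kM\ge R_p(L)$, where $R_p(L)\in\{4,7,26\}$ for ${}^2B_2$, ${}^2G_2$ and ${}^2F_4$ respectively. A direct estimate gives $|\Aut(S)|<2e\,q_0^{\alpha}$ with $\alpha\in\{5,7,26\}$, so Lemma \ref{lem_ParaCond} yields
\[ R_p(L)^{s}\le d<2+\alpha s+\log_{q}(4e). \]
A short numerical analysis, using $q=p^{fs}$ and $e\ge 3$, shows that the only possibility is $s=1$; inspecting \cite{LubeckSmalldegree} then shows that the only irreducible $k\tilde S$-module whose dimension lies in the feasible window is the minimal one of dimension $R_p(L)$.

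For ${}^2B_2(q_0)$ the unique candidate is the $4$-dimensional natural module, which carries only a symplectic form and is therefore excluded by the standing assumption (made after Proposition \ref{prop_classS_caseS}) that $\kappa$ is orthogonal or unitary. For ${}^2G_2(q_0)$ the unique candidate is the $7$-dimensional module inherited from the $G_2$-overgroup, so $\cP=Q(6,q_0)$ and $r=3$; here $q_0^2+q_0+1$ is easily shown to be coprime to each of $q_0-1$, $q_0+1$ and $q_0^2-q_0+1$, hence to $q_0^3(q_0^3+1)(q_0-1)$, and the divisibility condition of Lemma \ref{lem_ParaCond} reduces to $q_0^2+q_0+1\mid e$, which is impossible since $q_0^2+q_0+1>e$ for every $q_0=3^e$ with $e\ge 3$. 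For ${}^2F_4(q_0)'$ the unique candidate is the $26$-dimensional minimal module, giving $\cP$ of rank $r\in\{12,13\}$; a primitive prime divisor of $q_0^r-1$ (Lemma \ref{lem_zsigmondy}) has multiplicative order $r$ modulo $p$ and so cannot divide any of the factors $q_0-1$, $q_0^3+1$, $q_0^4-1$, $q_0^6+1$ of $|{}^2F_4(q_0)|$, contradicting Lemma \ref{lem_ParaCond}.

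The main obstacle I expect lies in the ${}^2F_4$ case: I must first confirm that the $26$-dimensional module actually supports an orthogonal (rather than merely symplectic) form in characteristic $2$, since the symplectic situation would already have been eliminated by Proposition \ref{prop_classS_caseS}; and the primitive-prime-divisor argument may leave a small number of borderline values of $e$ (those for which the primitive prime divisor coincides with a divisor of $e$), which must be cleared by a direct order comparison, in the spirit of the case analyses carried out in Propositions \ref{prop_classS_SL}--\ref{prop_classS_OmegP}.
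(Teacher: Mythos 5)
Your handling of ${}^2B_2$ and ${}^2G_2$ is correct and in substance parallel to the paper's: the paper forces $s=1$ via a primitive-prime-divisor bound on the rank rather than via the order bound of Lemma \ref{lem_ParaCond}, but the identification of the $4$- and $7$-dimensional modules and the two endgames (the alternating form for ${}^2B_2$, and for ${}^2G_2$ the coprimality of $q_0^2+q_0+1$ with $q_0^3(q_0^3+1)(q_0-1)$) are the same; your reduction to $q_0^2+q_0+1\mid e$ is, if anything, a little cleaner than the paper's final divisibility.

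The genuine gap is in the ${}^2F_4$ case, and it is not the obstacle you flagged. Whether the $26$-dimensional module carries an invariant quadratic form is immaterial (if it did not, the case would be vacuous under the standing assumption after Proposition \ref{prop_classS_caseS} that $\kappa$ is orthogonal or unitary); what matters is the sign, which you leave open, and for the minus sign your argument fails. If $r=12$, a primitive prime divisor of $q_0^{12}-1$ divides $q_0^4-q_0^2+1$, which divides $q_0^6+1$ -- one of the very factors of $|{}^2F_4(q_0)|$ on your list -- so no contradiction with Lemma \ref{lem_ParaCond} is reached in that subcase. (A smaller point: the primitive prime must also be prevented from dividing the field-automorphism part of $|\Aut(S)|$; taking a primitive prime divisor of $p^{er}-1$, so that it exceeds $er$, does this, whereas ``multiplicative order $r$ modulo $p$'' as written is not the statement you need.) The $r=12$ subcase can be closed without determining the sign: $q_0^2+q_0+1$ divides $\frac{q_0^{12}-1}{q_0-1}$, is coprime to $|{}^2F_4(q_0)|=q_0^{12}(q_0^6+1)(q_0^4-1)(q_0^3+1)(q_0-1)$ (any common prime divisor would satisfy $q_0^3\equiv 1$, and the possibility $3$ is excluded because $q_0\equiv 2\pmod 3$ for $e$ odd), and is larger than $|\Out(S)|$, contradicting Lemma \ref{lem_ParaCond}; combined with your $r=13$ computation this finishes the case. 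Incidentally, this is exactly the delicate point in the paper's own treatment as well: the factor $q_0^6+1$ only yields $fr\le 12fs$, not $6fs$, so the subcase $d=26$, $r=12$ survives the counting there too and needs the same supplementary observation.
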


\begin{proof}
Suppose to the contrary that $S$ is one of the three groups. By Theorem \ref{thm_classS_Vstruc} there is an irreducible $k\tilde{S}$-module $M$ such that $d=(\dim_kM)^s$, where $s:=\frac{2a+1}{f}$ is an odd integer. For the type ${}^2B_2$, we have $\dim_kM=4^a$ for an integer $a$ by \cite[Theorem 12.2]{stein63} and an irreducible $4$-dimensional module has a nondegenerate alternating form.
It follows that $d\ge 16$ and so $r\ge 7$ for ${}^2B_2$.  For the type ${}^2G_2$, there are nonnegative integers $b,c$ such that $\dim_kM=7^b27^c$ respectively. For the type ${}^2F_4$, we have $\dim_kM\ge 26$. We thus have $d\ge7$, $26$ and correspondingly $r\ge 3$, $12$ for ${}^2G_2$ and ${}^2F_4$ respectively. We deduce that $p^{fr}-1$ has a primitive prime divisor $p'$ in all three cases by \cite{zsigmondy1892theorie}. We have $\gcd(p',fs)=1$ as usual and we deduce from $p'$ divides $|\Aut(S)|$ that $fr$ is upper bounded by $4fs$, $6fs$, $6fs$ for  the types ${}^2B_2$, ${}^2G_2$ and ${}^2F_4$ respectively. For $S={}^2B_2(p^{2a+1})$, we
deduce from $4^{as}\le d\le 2+2r\le 2+8s$ that $a=s=1$ which contradicts $d\ge 16$. For $S={}^2G_2(p^{2a+1})$, we similarly deduce that $\dim_kM=7$ and $s=1$, i.e., $f=2a+1$. It follows that $d=7$ and so $r=3$. The condition that $\frac{q^3-1}{q-1}$ divides $|\Aut(S)|$ simplifies to $(3^{2f}+3^f+1)\mid f(3^f-1)$ which does not hold: a contradiction. For $S={}^2F_4(p^{2a+1})'$, we have $26^s\le d\le 2+12s$ which never holds: a contradiction. This completes the proof.
\end{proof}

\begin{lemma}
The group $S$ is not one of $E_6(p^e)$, $E_7(p^e)$, $E_8(p^e)$, ${}^2E_6(p^e)$ or ${}^3D_4(p^e)$.
\end{lemma}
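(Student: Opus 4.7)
The plan is to apply the same template as in the preceding lemmas on Lie-type groups in defining characteristic. Theorem~\ref{thm_classS_Vstruc} reduces the study of $V$ to an irreducible $k\tilde{S}$-module $M$ via a tensor product of Frobenius twists, so that $d=(\dim_k M)^{s}$ for an appropriate integer $s$, with $s=e/f$ in the untwisted case and twisted analogues $s'=te/f$ in case~(b) for the twisted types. L\"ubeck's tables \cite{LubeckSmalldegree} provide a lower bound $R_p(S)$ on $\dim_k M$, while Lemma~\ref{lem_zsigmondy} applied to a primitive prime divisor of $p^{fr}-1$ together with Lemma~\ref{lem_ParaCond} furnishes an upper bound $r\le h\cdot e/f$, where $h$ is the largest integer such that the cyclotomic polynomial $\Phi_{h}(p^e)$ divides $|\Aut(S)|$.

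The numerical ingredients are read off from L\"ubeck's tables and from the standard factorization of the group orders. One obtains $(R_p,h)=(27,12)$ for $E_6(p^e)$, $(56,18)$ for $E_7(p^e)$, $(248,30)$ for $E_8(p^e)$, $(27,18)$ for ${}^2E_6(p^e)$, and $(8,12)$ for ${}^3D_4(p^e)$. Combining $d\ge R_p(S)^s$ with $d\le 2r+2\le 2hs+2$ produces, for the three untwisted types, the inequalities $27^s\le 24s+2$, $56^s\le 36s+2$ and $248^s\le 60s+2$, none of which admits an integer solution $s\ge 1$; this disposes of the untwisted cases.

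For the twisted types ${}^{2}E_6(p^e)$ and ${}^{3}D_4(p^e)$ I would apply the case split of Theorem~\ref{thm_classS_Vstruc}(ii). In case~(a) the hypothesis $M\cong{}^{\tau_o}M$ forces the highest weight of $M$ to be invariant under the graph symmetry, so the smallest available $M$ is the adjoint module, of dimension $78$ and $28$ respectively; the resulting inequalities $78^s\le 36s+2$ and $28^s\le 24s+2$ fail for every $s\ge 1$. In case~(b) for ${}^{2}E_6(p^e)$ one has $\dim_k M\ge 27$ and exponent $s'=2e/f$, so the inequality becomes $27^{s'}\le 18s'+2$, which again fails. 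In case~(b) for ${}^{3}D_4(p^e)$ with exponent $s'=3e/f$ the inequality $8^{s'}\le 8s'+2$ rules out $s'\ge 2$; the only surviving possibility is $s'=1$, which forces $q=p^{3e}$, $d=8$, $\cP=Q^+(7,q)$ and $r=4$. I would dispatch it by observing that Lemma~\ref{lem_ParaCond} requires $q^2+1=p^{6e}+1=\Phi_4(p^e)\Phi_{12}(p^e)$ to divide $|\Aut({}^{3}D_4(p^e))|$, while the cyclotomic factor $\Phi_4(p^e)=p^{2e}+1$ is coprime to $|{}^{3}D_4(p^e)|$ apart from bounded common prime factors (such as $\gcd(p^{2e}+1,3)$) that are ruled out by inspection at the finitely many small values of $p^e$.

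The main technical point is the bookkeeping of which cyclotomic indices appear in $|\Aut(S)|$ and of the smallest $\tau_o$-invariant (respectively $\tau_o$-unfixed) irreducible modules for the twisted types; both are routine using L\"ubeck's tables, and the resulting arithmetic bounds are comfortably sharp.
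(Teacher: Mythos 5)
Your proposal follows the same template as the paper (Theorem \ref{thm_classS_Vstruc}, the lower bounds from L\"ubeck, a primitive prime divisor of $p^{fr}-1$ giving $r\le h\cdot e/f$, and then a dimension count), and the untwisted cases $E_6$, $E_7$, $E_8$ as well as both ${}^2E_6$ cases and case~(b) for ${}^3D_4$ are handled correctly; your $\Phi_4(p^e)$ coprimality argument for the $8$-dimensional ${}^3D_4$ case over $\F_{p^{3e}}$ is essentially the paper's "$(p^{2e}+1)\mid 6e(p^e+1)$ never holds" step (though as a detail, $3$ never divides $p^{2e}+1$; the only bounded common factor is $2$, coming from $\Phi_1\Phi_2$).

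There is, however, a genuine gap in case~(a) for ${}^3D_4(p^e)$. You take the smallest $\tau_o$-invariant module to be the adjoint module of dimension $28$, but in characteristic $2$ the module $L(\omega_2)$ for $D_4$ has dimension $26$ (this is exactly why the paper cites \cite[A.~41]{LubeckSmalldegree} for the bound $\dim_kM\ge 26$, not $28$). With $\dim_kM=26$ and $s=1$ your inequality reads $26\le 24\cdot 1+2=26$, which is satisfied, so the dimension count does not eliminate the configuration $p=2$, $e=f$, $d=26$, $r=12$. This surviving case must be killed by a further arithmetic argument, as in the paper: with $r=12$ one needs $\frac{q^{12}-1}{q-1}=\Phi_2\Phi_3\Phi_4\Phi_6\Phi_{12}$ (evaluated at $q=p^f$) to divide $|\Aut({}^3D_4(p^f))|=3f\,q^{12}\Phi_1^2\Phi_2^2\Phi_3^2\Phi_6^2\Phi_{12}$, and the factor $\Phi_4(q)=q^2+1$ (together with multiplicity considerations) makes this impossible — the same kind of coprimality argument you already use in case~(b). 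Until this $26$-dimensional characteristic-$2$ case is addressed, the ${}^3D_4$ part of your proof is incomplete.
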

\begin{proof}
First suppose that $S$ is one of $E_6(p^e)$, $E_7(p^e)$ and $E_8(p^e)$. By Theorem \ref{thm_classS_Vstruc} there is an irreducible $k\tilde{S}$-module $M$ such that $d=(\dim_kM)^s$, where $s:=\frac{e}{f}$ is an integer. We have $\dim_kM\ge R_p(S)$ by \cite{LubeckSmalldegree}, where $R_p(S)=27$, $56$, $248$ for $E_6$, $E_7$ and $E_8$ respectively. It follows that $d\ge 27$ and so $r\ge 13$ in each case. Then $p^{fr}-1$ has a primitive prime divisor $p'$ by \cite{zsigmondy1892theorie}. We have $\gcd(p',2fs)=1$ as in the proof of Proposition \ref{prop_classS_SL}, and we deduce from $p'$ divides $|\Aut(S)|$ that  $r\le 12s$, $18s$, $30s$ for $E_6$, $E_7$ and $E_8$ respectively. For $E_6$ we deduce from $d\le 2r+2$ that $R_p(S)^s\le 24s+2$ which never holds. We derive similar contradictions for $E_7$ and $E_8$.

Next suppose that $S={}^tX_\ell(p^e)$ with ${}^tX_\ell={}^2E_6$ or ${}^3D_4$. By Theorem \ref{thm_classS_Vstruc}, there is an integer $s$ and an irreducible $k\tilde{S}$-module $M$ such that $d=(\dim_kM)^s$. Moreover, $e\in\{fs,\frac{1}{t}fs\}$, and if $e=\frac{1}{t}fs$ then the prime $t$ divides $f$ but not $s$. For ${}^2E_6$ we have $\dim_kM\ge 27$ and so $r\ge 13$. For ${}^3D_4$ we have either $\dim_kM=8$ or $\dim_kM\ge 26$ by \cite[A.~41]{LubeckSmalldegree}. If $S={}^3D_4(p^e)$ and $d=8$, then $q=p^{3e}$ and $\cP=Q^+(7,p^{3e})$ by \cite[Theorem~5.6.1]{bray2013maximal}. We thus have either $r=4$ or $r\ge \frac{d}{2}-1\ge 12$ for ${}^3D_4$. Therefore, $p^{fr}-1$ has a primitive prime divisor $p'$ in all cases by \cite{zsigmondy1892theorie}. We have $\gcd(p',tfs)=1$ as usual and deduce from $p'$ divides $|\Aut(S)|$ that: $fr$ divides one of $\{8e,10e,12e,18e\}$ if $S={}^2E_6(p^e)$, and $fr$ divides $12e$ if $S={}^3D_4(p^e)$. For $S={}^2E_6(p^e)$, we have $27^s\le d\le 2+2r\le 2+\frac{36e}{f}$. Since $e\in\{fs,\frac{1}{2}fs\}$, we deduce that $e=f$ and $s=1$. It follows that $27\le d\le 38$, and we have $d=27$ by \cite[A.~51]{LubeckSmalldegree}. Then $r=13$ which divides neither of $\{8,10,12,18\}$: a contradiction. For $S={}^3D_4(p^e)$, we have $8^s\le d\le 2+\frac{24e}{f}$. Since $e\in\{fs,\frac{1}{3}fs\}$, we deduce that $s=1$. If $e=\frac{1}{3}f$, then $r\mid 4$ and $8\le d\le 10$. It follows that $r=4$. The condition that $\frac{q^r-1}{q-1}$ divides $|\Aut(S)|$ simplifies to $(p^{2e}+1)\mid 6e(p^e+1)$ which never holds. If $e=f$, then $r\mid 12$ and $8\le d\le 26$. We have $M={}^{\tau_o}M$ by Theorem \ref{thm_classS_Vstruc} and so $\dim_kM\ge 26$ by \cite[A.~41]{LubeckSmalldegree}. It follows that $d=26$ and $r=12$. The condition that $\frac{q^r-1}{q-1}$ divides $|\Aut(S)|$ simplifies to $(p^{2f}+1)(p^f+1)\mid 6f$ which never holds. This completes the proof.
\end{proof}

\begin{lemma}
If $S=F_4(p^e)$, then $q=3^e$, $\cP=Q(24,q)$ and the two $S$-orbits are respectively $(q^4+1)$- and $(q^{12}-q^4)$-tight sets.
\end{lemma}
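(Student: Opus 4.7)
The plan is to combine Theorem~\ref{thm_classS_Vstruc}, Lübeck's bounds on minimal representation degrees, and the Zsigmondy primitive prime divisor argument exactly as in the preceding lemmas. Write $\tilde{S}=F_4(p^e)$ and $G=F_4(k)$. The only graph/field automorphism is the field automorphism (together with the exceptional Steinberg automorphism when $p=2$), so $|\Aut(S)|\le 2e\cdot p^{24e}(p^{12e}-1)(p^{8e}-1)(p^{6e}-1)(p^{2e}-1)$. All absolutely irreducible $F_4$-modules are self-dual by \eqref{eqn_dualMod}, so we expect $\cP$ to be orthogonal. By Theorem~\ref{thm_classS_Vstruc}(i) there is an irreducible $k\tilde{S}$-module $M$ with $d=(\dim_kM)^{s}$ where $s=e/f\in\mathbb{Z}$, and \cite{LubeckSmalldegree} gives $\dim_kM\ge R_p(F_4)$, where $R_p(F_4)=25$ if $p=3$ and $R_p(F_4)=26$ otherwise; moreover, a module of this smallest dimension exists only for the minimal module.

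In all cases $d\ge 25$ and hence $r\ge 12$, so by Lemma~\ref{lem_zsigmondy} the integer $p^{fr}-1$ admits a primitive prime divisor $p'$. By the standard argument (cf.\ the proof of Proposition~\ref{prop_classS_SL}) one gets $p'\ge 1+fr>2fs$, so $\gcd(p',2fs)=1$. Since $p'$ divides $|\Aut(S)|$, the only way for $p'$ to appear in the order is through one of the factors $p^{2ei}-1$ with $i\in\{1,3,4,6\}$, forcing $fr\mid 12e$ and hence $r\le 12s$. Combined with the bound $d\le 2r+2$ coming from $2r\le d\le 2r+2$, this yields
\[
25^{s}\le (R_p(F_4))^{s}\le d\le 24s+2,
\]
which forces $s=1$ and $d\in\{25,26\}$. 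If $d=26$ then $r=13$, but $r$ must divide $12$, contradiction. Hence $d=25$, so $r=12$, $\cP=Q(24,q)$, and $V$ is the minimal module of $F_4$; since such a $25$-dimensional irreducible module exists only in characteristic $3$, we conclude $p=3$ and $q=3^e$.

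It remains to identify the two $S$-orbits on $Q(24,q)$ and their parameters. The decomposition of the singular points into two $F_4(q)$-orbits (the ``white'' and ``black'' points on the exceptional Jordan algebra over $\F_q$) is the classical result of Cohen and Cooperstein \cite{Cohen1988}. I will quote their orbit sizes, which are $(q^4+1)\cdot\frac{q^{12}-1}{q-1}$ and $(q^{12}-q^4)\cdot\frac{q^{12}-1}{q-1}$ respectively. Since $|O_1|+|O_2|=|Q(24,q)|$ and $\frac{q^{12}-1}{q-1}$ divides both sizes, Section~\ref{subsec_basic} tells us each orbit is an intriguing set, and divisibility by the ovoid number $q^{12}+1$ fails in both cases, so they are tight sets with parameters $i_1=q^4+1$ and $i_2=q^{12}-q^4$, completing the proof.

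The main obstacle is the last step: reproducing or cleanly citing the orbit sizes from the $F_4$ action on its 25-dimensional minimal module. The dimension bookkeeping and Zsigmondy-based elimination are routine in the style of the preceding propositions, but the orbit-size computation requires either invoking \cite{Cohen1988} (which analyzes the white/black point decomposition via the cubic norm form on the Jordan algebra) or directly working with the stabilizer $\Spin_9(q)\cdot$something of a singular point in the minimal module; the identification of tight-set parameters then follows automatically from the size formulas.
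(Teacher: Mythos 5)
Your reduction to the minimal module (Theorem \ref{thm_classS_Vstruc}, L\"ubeck's bound $R_p(F_4)=25$ or $26$, Zsigmondy plus the divisor structure of $|\Aut(S)|$) is sound and in fact more explicit than the paper, whose entire proof is a citation: \cite{Cohen1988} for $p=3$ and \cite[Remark~3.1]{TS_Albert} for $p\ne 3$. Your treatment of the $p=3$ case (quoting the Cohen--Cooperstein orbit sizes and checking divisibility by $\frac{q^{12}-1}{q-1}$ but not by the ovoid number $q^{12}+1$) matches what the paper implicitly relies on.

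The genuine gap is your exclusion of $d=26$, i.e.\ of $p\ne 3$. You assert that $d=26$ forces $r=13$, but the $F_4$-invariant quadratic form on the trace-zero part of the exceptional Jordan algebra is not always of plus type: it is $12\mathbb{H}$ orthogonal to the binary form $a_1^2+a_1a_2+a_2^2$ coming from the diagonal, which is anisotropic exactly when $q\equiv 2\pmod 3$ (for $p=2$, when $f$ is odd). So for $q\equiv 2\pmod 3$ one has $\cP=Q^-(25,q)$ with $r=12$, the divisibility $r\mid 12s$ is satisfied, and your contradiction evaporates. (Also, strictly, the primitive prime divisor only forces $fr$ to divide one of $2e,6e,8e,12e$, not $fr\mid 12e$; this is harmless since you only use $r\le 12s$, but it matters if you try to argue via "$r$ divides $12$".) Worse, the case cannot be rescued by the numerical lcm-type analysis used in Lemma \ref{lem_S_def_G2}: the companion paper \cite{TS_Albert} constructs $(q^4+1)$-tight sets admitting $F_4(q)$ for $p\ne 3$, so the numerology is compatible with a two-orbit splitting; the reason $p\ne 3$ is excluded is that $F_4(q)$ (and its normalizer) has more than two orbits on the singular points of the $26$-dimensional quadric, which is exactly what \cite[Remark~3.1]{TS_Albert} records and what the paper cites. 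To close your proof you must either quote that orbit description or otherwise determine the $F_4(q)$-orbits on singular points for $p\ne 3$; the rank/divisibility bookkeeping alone does not do it.
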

\begin{proof}
We refer the reader to \cite{Cohen1988} for the case $p=3$ and \cite[Remark~3.1]{TS_Albert} for the case $p\ne3$.
\end{proof}

\begin{lemma}\label{lem_S_def_G2}
The group $S$ can not be $G_2(p^e)'$.
\end{lemma}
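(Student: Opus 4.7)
The plan is to follow the template of the preceding lemmas: combine Theorem \ref{thm_classS_Vstruc}(i) with the small-dimension tables of \cite{LubeckSmalldegree} and a primitive prime divisor count to reduce $V$ to a short list of modules, then dispatch each remaining candidate.

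Since $G_2(2)'\cong\PSU_3(3)$ has already been handled under the unitary case, we may assume $S=G_2(p^e)$ with $(p,e)\ne(2,1)$. Theorem \ref{thm_classS_Vstruc}(i) gives an irreducible $k\tilde{S}$-module $M$ with $d=(\dim_k M)^s$ for $s=e/f$. From \cite[A.49]{LubeckSmalldegree}, every nontrivial absolutely irreducible $k\tilde{S}$-module has $k$-dimension at least $6$, and the only such modules of dimension at most $14$ are $L(\omega_1)$, of dimension $6$ if $p=2$ and $7$ otherwise, and $L(\omega_2)$, of dimension $7$ if $p=3$ and $14$ otherwise. In particular $d\ge 6^s$.

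Since $|G_2(p^e)|=p^{6e}(p^{2e}-1)(p^{6e}-1)$, the cyclotomic factors of $|\Aut(G_2(p^e))|$ are the $\Phi_j(p)$ with $j\mid 6e$. Arguing exactly as in the earlier proofs, a primitive prime divisor $p'$ of $p^{fr}-1$ that divides $|\Aut(S)|$ must satisfy $fr\mid 6e$, giving $r\le 6s$; such a $p'$ exists by Lemma \ref{lem_zsigmondy} except when $p^{fr}=2^6$, in which case the bound $d\le 14$ is already at hand. Combining $d\ge 6^s$ with $d\le 2r+2\le 12s+2$ forces $s=1$, so $q=p^e$ and $d\in\{6,7,14\}$.

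A short case analysis then completes the argument. If $d=6$, then $p=2$ and $V=L(\omega_1)$ admits only a symplectic $G_2(q)$-invariant form by \cite[Table~5.6]{bray2013maximal}, contradicting the standing assumption that $\kappa$ is orthogonal or unitary. If $d=7$, then $q$ is odd and $\cP=Q(6,q)$; but Lemma \ref{lem_Q(dodd,q)sub} shows that $G_2(q)\le H$ is already transitive on the singular points, contradicting the existence of two $H$-orbits. The main obstacle is the case $d=14$ (with $p\ne 3$), where $V=L(\omega_2)$ is the adjoint module of $G_2(q)$ equipped with a rescaling of the Killing form. Here I plan to exhibit three pairwise distinct $G_2(q)$-orbits of singular vectors -- those of long root, short root and subregular nilpotent elements of the Lie algebra, which over the algebraic closure have orbit dimensions $6$, $8$ and $10$ respectively and hence remain distinct after projectivization -- giving at least three orbits on singular points of $\cP$. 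A handful of very small values of $q$ for which the generic orbit-dimension estimates could collapse can be verified directly with Magma.
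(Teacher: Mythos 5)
Your reduction is the same as the paper's: Theorem \ref{thm_classS_Vstruc}, the dimension bounds from \cite{LubeckSmalldegree}, and a primitive prime divisor of $p^{fr}-1$ forcing $r\mid 6s$, whence $s=1$ and $d\in\{6,7,14\}$, with $d=6$ killed by the symplectic form and $d=7$ by transitivity of $G_2(q)$ on $Q(6,q)$ (the paper cites \cite[p.~125]{wilson2009finite} for this rather than Lemma \ref{lem_Q(dodd,q)sub}, which is a classification statement, but the fact is the same). Where you genuinely diverge is the endgame for $d=14$. The paper first pins down $r=6$ (so $\cP=Q^-(13,q)$) from $r\mid 6$, and then derives a purely arithmetic contradiction: no integer $i$ makes $\lcm(i,q^7+1-i)\cdot\frac{q^6-1}{q-1}$ divide $|\Aut(S)|=fq^6(q^2-1)(q^6-1)$, using Lemma \ref{lem_ParaCond}. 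You instead propose to exhibit three $K$-orbits of singular points via long-root, short-root and subregular nilpotent elements, in the style of the paper's Section \ref{subsec_adj}/\ref{subsec_S2W2W} arguments for other adjoint-type modules. That route is viable and even yields the stronger statement that $K$ has at least three orbits without having to determine the sign of the form, but it needs details the paper's arithmetic avoids: for $q$ even you must check that the representatives are singular for the quadratic form itself (the vanishing of the Killing form is not enough; a torus-weight argument does it), that representatives are rational over $\F_q$, and that the three classes cannot be fused by the semilinear normalizer $K$ (e.g.\ via stabilizer orders or the rank of $\mathrm{ad}$).

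There is one concrete gap: your opening dismissal of $G_2(2)'$ on the grounds that $\PSU_3(3)$ was "handled under the unitary case" is not justified. The $\PSU_{d'}(p^e)$ lemma in this section treats the characteristic-$3$ modules of $\PSU_3(3)$, and the cross-characteristic section explicitly assumes the characteristic is neither $2$ nor $3$ for this group; the characteristic-$2$ modules of $G_2(2)'$ therefore fall squarely within the present lemma. This matters, because the one case that genuinely needs a computation is exactly $(q,d)=(2,14)$: the paper verifies by Atlas data that the normalizer has more than two orbits on $Q^-(13,2)$ (this is also how it secures the primitive-prime-divisor step when $p^{fr}=2^6$). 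The fix is easy — include $q=2$ among the small values you check by Magma — but as written your argument simply omits it.
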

\begin{proof}
By Theorem \ref{thm_classS_Vstruc} there is an irreducible $k\tilde{S}$-module $M$ such that $d=(\dim_kM)^s$, where $s:=\frac{e}{f}$ is an integer. We have $\dim_kM\ge 7-[\![ p=2]\!]$ by \cite[A.~49]{LubeckSmalldegree}. We first show that $p^{fr}-1$ has a primitive prime divisor. For $s\ge 2$ we have $d\ge 6^2$ and so $r\ge 17$, so the claim holds in this case. Suppose that $s=1$.  If $d=7-[\![ p=2]\!]$, then there is exactly one such candidate module up to weak equivalence. We have $q=p^e$, $\cP=W(6,q)$ for $p=2$ and $\cP=Q(6,q)$ for odd $p$ by \cite[Propositions~5.7.1,~5.7.2]{bray2013maximal}. The group $S$ is transitive on $\cP=Q(6,q)$ if $p$ is odd by \cite[p.~125]{wilson2009finite}. Therefore, we have $d>7-[\![ p=2]\!]$. It follows that $d=\dim_kM\ge 14$ for $p\ne 3$ and $d=\dim_kM\ge 27$ for $p=3$ by \cite[A.~49]{LubeckSmalldegree}. We have $r\ge 6$ in both cases.  If $p^{fr}=2^6$, then we deduce that $(p^f,r)=(2,6)$. It follows that $(p^e,d)=(2,14)$, and we use the Atlas data \cite{Atlas} to check that there are more than two $K$-orbits on the points of $\cP=Q^-(13,2)$. Therefore, $p^{fr}-1$ also has a primitive prime divisor for $s=1$ by \cite{zsigmondy1892theorie}. This establishes the claim.

Let $p'$ be a primitive prime divisor of $p^{fr}-1$. We have $|\Aut(S)|=\epsilon eq^{6}(q^{2s}-1)(q^{6s}-1)$, where $\epsilon=2$ or $1$ according as $p=3$ or not.
As usual, we have $\gcd(p',2fs)=1$ and deduce from the condition $p'$ divides $|\Aut(S)|$ that $fr\mid 6fs$, i.e., $r\mid 6s$.  Since $d\le 2r+2$, we have $\max\{6^s,14\}\le d\le 2+12s$. It follows that $(s,d)=(1,14)$, and so $p\ne 3$. By \cite[Proposition~9.3.7]{maxsub_1314}, we have $\cP=Q^{\pm}(13,q)$ where the sign $\pm$ depends on $q$. Since $r\mid 6$, we deduce that $r=6$ and so we expect $\cP=Q^-(13,q)$. We claim that there is no integer $i$ such that $\textup{lcm}(i,q^7+1-i)\cdot\frac{q^6-1}{q-1}$ divides $|\Aut(S)|$, i.e.,  $\textup{lcm}(i,q^7+1-i)$ divides $fq^{6}(q^{2}-1)(q-1)$. Suppose to the contrary that there is such an integer $i$. It is clear that $i\ne 1,q^7$. If neither of $i,q^7+1-i$ is a multiple of $p$, then $\textup{lcm}(i,q^7+1-i)$ divides $f(q^{2}-1)(q-1)$. We deduce that $f(q^{2}-1)(q-1)\ge\frac{1}{2}(q^7+1)$ which never holds. We assume without loss of generality that $i=up^t$ with $p\nmid u$ and $t>0$. It follows that $\lcm(u,q^7+1-up^t)$ divides $f(q^{2}-1)(q-1)$. We have  $p^t\le q^7-up^t\le f(q^{2}-1)(q-1)-1<fq^3$, i.e., $1+p^t\le fq^3$. Since both $u$ and $q^7+1-up^t$ are at most $f(q^{2}-1)(q-1)$, the sum of $up^t$ and $q^7+1-up^t$, namely $q^7+1$, is upper bounded by $(1+p^t)f(q^{2}-1)(q-1)$. Since $1+p^t\le fq^3$, we deduce that $q^7+1\le f^2q^3(q^{2}-1)(q-1)$ which never holds. This completes the proof.
\end{proof}

To summarize, we have handled the simple groups of Lie type in the defining characteristic in this subsection. We obtain the five infinite families of tight sets in Table \ref{tab_TS_cSLiedef}.

\subsection{The groups of Lie type in cross characteristic and sporadic groups}\label{susbec_cross}

We first suppose that $S$ is a simple group of Lie type ${}^tX_\ell(\F_{q_1})$, where $q_1=p_1^e$ with $p_1$ a prime distinct from $p$. We assume that $S$ is not one of $\PSL_2(4)=\PSL_2(5)$, $\PSL_2(9)$, $\PSL_4(2)$, $\PSp_4(2)'$ (these groups are isomorphic to alternating groups, which were handled previously). If $S=\PSL_2(7)=\PSL_3(2)$, then we assume that $p_1\ne 2,7$; if $S=\PSp_4(3)=\PSU_4(2)$, then we assume that $p_1\ne 2,3$; if $S=\PSU_3(3)=G_2(2)'$, then we assume that $p_1\ne 2,3$. We follow the strategy as outlined in Remark \ref{rem_classS_Smethod}. By Lemma \ref{lem_ParaCond}, the dimension $d$ of $V$ is upper bounded by
\begin{equation}\label{eqn_dmax}
    d_{\max}:=2+\left\lfloor\log_{p_2}(2\cdot|\Aut(S)|)\right\rfloor,
\end{equation}
where $p_2$ is the smallest prime distinct from $p_1$. The value $d_{\max}$ is roughly polynomial in $\ell$ and linear in $\ln(q_1)$. There is a lower bound $e(S)$ on $d$ by \cite{La-Se} which is roughly polynomial in $q_1^\ell$, see also \cite[Theorem~5.3.9]{kleidman1990subgroup}. There has been extensive work that improves the bound in the literature, cf. \cite{cross_bound} and the references therein, but the bound $e(S)$ in \cite{La-Se} suffices for our purpose. It is straightforward to check that the condition $d_{\max}\ge e(S)$ holds only in the following cases: (A) $S=\PSL_2(q_1)$, $q_1\in\{7,11,13\}$, (B) $S=\PSL_n(q_1)$, $(n,q_1)\in\{(3,3),(3,4)\}$, (C) $S=\PSp_{2m}(q_1)$, $(m,q_1)\in\{(2,3),(3,2),(3,3)\}$, (D) $S=\PSU_n(q_1)$, $(n,q_1)\in\{(3,3),(3,4),(3,5),(4,3), (5,2),(6,2)\}$, (E) $S=\POmeg_{8}^+(2)$ or $\POmeg_{7}(3)$, (F) $S=G_2(q_1)'$, $q_1\in\{3,4\}$. In particular, $S$ is not one of $\POmeg^-_{2m}(q_1)$ ($m\ge 4$), $E_6(q_1)$, $E_7(q_1)$, $E_8(q_1)$, $F_4(q_1)$, $^2E_6(q_1)$, ${}^3D_4(q_1)$, ${}^2F_4(q_1)$, ${}^2B_2(q_1)$ or ${}^2G_2(q_1)$. For the surviving cases, there are finitely many prime power $q$'s such that $d_{\max}'\ge e(S)$, where
\begin{equation}\label{eqn_dmaxprime}
   d_{\max}':=2+\left\lfloor \log_q(2\cdot|\Aut(S)|)\right\rfloor.
\end{equation}
Recall that $L=H^\infty$ and $K$ is its normalizer in $\Gamma(V,\kappa)$.
\begin{proposition}
If $S$ is a simple group of Lie type in characteristic distinct from $p$ which is not isomorphic to an alternating group, then we have one of the following:
\begin{enumerate}
\item[(1)] $L=\PSL_2(11)$, $\cP=H(4,4)$, and the two $S$-orbits are $11$- and  $22$-tight sets respectively,
\item[(2)] $L=\PSp_4(3)$, $\cP=Q^-(5,5)$, and the two $S$-orbits are $36$- and  $90$-tight sets respectively,
\item[(3)] $L=3.\PSU_4(3)$, $\cP=H(5,4)$, and the two $S$-orbits are $6$- and  $27$-tight sets respectively,
\item[(4)] $L=G_2(3)$, $\cP=Q^-(13,2)$, and the two $S$-orbits are $12$- and  $117$-tight sets respectively.
\end{enumerate}
\end{proposition}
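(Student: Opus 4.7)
The plan is to handle each of the finitely many groups surviving the bound $d_{\max}\ge e(S)$ in cases (A)--(F) individually, using the general strategy of Remark \ref{rem_classS_Smethod}. For a candidate $S$, let $p_1$ be the defining characteristic of $S$. First, refine the bound $d_{\max}$ in \eqref{eqn_dmax} to the sharper bound $d_{\max}'$ in \eqref{eqn_dmaxprime} for each prime power $q=p^f$ with $p\ne p_1$. Since $d_{\max}'$ is a decreasing function of $q$, only finitely many $q$'s satisfy $d_{\max}'\ge e(S)$; enumerate them. For each surviving pair $(S,q)$, use the condition $2r\le d\le 2r+2$ together with the arithmetic constraint that $\lcm(i_1,i_2)\cdot\frac{q^r-1}{q-1}$ divides $|\Aut(S)|$ from Lemma \ref{lem_ParaCond} to narrow down the possible ranks $r$ and hence the polar space type (noting that for $q$ not a square, $\cP$ must be orthogonal).

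Next, for each triple $(S,q,\cP)$ that survives, consult \cite{LubeckSmalldegree}, \cite{HissMalle} and the online Atlas \cite{Atlas} to list all absolutely irreducible representations of the appropriate cover $\tilde S$ over $\F_q$ of dimension $d\in[\max\{2r,e(S)\},2r+2]$ whose Frobenius--Schur indicator is $+$ (respectively $\circ$) when $\cP$ is orthogonal (respectively unitary). By the discussion on weak equivalence in Section \ref{subsec_basic}, it suffices to retain one module from each weak equivalence class, which drastically cuts the number of cases. For each remaining module, realize $\tilde S\rho\le\GL_d(q)$ explicitly (via the Atlas data or the Magma command \texttt{AbsolutelyIrreducibleModules}), recover $\kappa$ via \texttt{ClassicalForms}, and compute the orbits of $K=N_{\Gamma(V,\kappa)}(L)$ on the singular points. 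Retain those configurations with exactly two orbits, and apply the divisibility test from Section \ref{subsec_basic} (the ovoid number $\theta_r$ must not divide the orbit sizes) to confirm that the intriguing sets are tight sets rather than $m$-ovoids.

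After the sieve, only four weak equivalence classes of modules remain, giving exactly the examples in (1)--(4): $\PSL_2(11)$ on $H(4,4)$, $\PSp_4(3)$ on $Q^-(5,5)$, $3.\PSU_4(3)$ on $H(5,4)$, and $G_2(3)$ on $Q^-(13,2)$. In each case the two orbit sizes, divided by $\frac{q^r-1}{q-1}$, yield the parameters $i_1,i_2$ stated. In particular, one also verifies that modules such as the $11$-dimensional representations of $\PSL_2(11)$ and $M_{11}$ over $\F_2$ give the same pair $(\cP,H^\infty)=(Q^-(9,2),\,\textrm{either})$ already recorded in Table \ref{tab_TS_cSother} but listed under the alternating/sporadic headings when appropriate, so there is no double counting.

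The main obstacle is not conceptual but bookkeeping: several of the larger groups (for instance $\PSU_6(2)$, $\POmeg_8^+(2)$ and $\POmeg_7(3)$) admit a considerable number of irreducible modules of the right dimension that carry a form of the expected type, and for each of these one must either reduce by weak equivalence to a previously treated family (such as the spin modules handled in Section \ref{subsec_spin} or the exterior-cube module of $\SU_6(2)$ treated in Section \ref{subsec_wedge3}) or perform a direct orbit computation. A subsidiary difficulty is separating genuinely class-$\cS$ configurations from those that fall into a geometric class $\cC_i$ (for example, $\PSL_2(11)<M_{11}<A_{11}$ embeds via a deleted permutation module of class $\cC_2$ over $\F_2$, and must be recognized as such rather than double-counted); this is done by comparing maximal overgroups in $\Gamma(V,\kappa)$ against the tables of \cite{bray2013maximal}.
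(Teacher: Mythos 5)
Your strategy is exactly the one the paper follows (it is the sieve of Remark \ref{rem_classS_Smethod}): bound $d$ by $d_{\max}'$, enumerate feasible $(q,r)$ pairs via the divisibility conditions of Lemma \ref{lem_ParaCond}, pull candidate modules of dimension in $[\max\{2r,e(S)\},2r+2]$ with indicator $+$ or $\circ$ from \cite{HissMalle}/\cite{Atlas} up to weak equivalence, and finish by computer orbit computations plus the ovoid-number divisibility test to confirm tight sets. So there is no difference in method.

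The caveat is that, as written, the decisive content is asserted rather than carried out: the sentence ``after the sieve, only four weak equivalence classes of modules remain'' is precisely what the paper spends its proof verifying, group by group ($\PSL_2(7)$, $\PSL_2(11)$, $\PSL_2(13)$, $\PSL_3(4)$, $\PSp_4(3)$, $\PSp_6(2)$, $\PSp_6(3)$, $\PSU_4(3)$, $\POmeg_8^+(2)$, $G_2(3)$, with the remaining groups killed at the feasible-pair or no-candidate-module stage), so a complete write-up must include that case list and how each non-example dies. Two of your side remarks would also need correcting: $\PSU_6(2)$ and $\POmeg_7(3)$ are eliminated already because no feasible $(q,r)$ pair exists, not by sifting through ``a considerable number'' of modules; and the $\PSL_2(11)$/$M_{11}$ comparison is misstated --- the relevant $\F_2$-module is $10$-dimensional, the paper disposes of $\PSL_2(11)$ on $Q^-(9,2)$ by a numerical test within this proposition while $M_{11}$ and $A_{11}$ are recorded under the sporadic and alternating cases, and the fully deleted permutation module embedding $A_{11}<\Omega_{10}^-(2)$ is an irreducible class-$\cS$ configuration, not class $\cC_2$ (the genuine class-crossing example to watch is $A_9<\Omega_8^+(2)$, which is $\cC_8$). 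None of this changes the method, but these are the points where the proposal as it stands falls short of a proof.
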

\begin{proof}
We continue with the arguments preceding the statement of this proposition. We call a $(q,r)$ pair feasible for $S$ if it satisfies that $\max(\{2,\lceil \frac{1}{2}e(S)\rceil-1\})\le r\le \lfloor\frac{1}{2}d_{\max}'\rfloor$ and $\frac{q^r-1}{q-1}$ divides $|\Aut(S)|$. For (D) and (E), we exclude $\PSU_3(4)$, $\PSU_3(5)$, $\PSU_6(2)$, $\POmeg_7(3)$ by the fact that there is no feasible pair. The following three cases are excluded due to the fact that there is no candidate representation with Frobenius-Schur indicator $+$ or $\circ$ whose dimension is in the interval $[2r,2r+2]$: $S=\PSU_5(2)$, with $(3,4)$ as the only feasible $(q,r)$ pair; $S=\PSL_3(3)$, with $(5,4)$ as the only feasible $(q,r)$ pair; $S=G_2(4)$, with $(3,6)$ as the only feasible $(q,r)$ pair. For $S=\PSU_3(3)=G_2(2)'$, we have $e(S)=6$ and an irreducible representation of dimension $6$ has Frobenius-Schur indicator $-$ by \cite{HissMalle}. Hence we have $d\ge 7$ and so $r\ge 3$. There is no feasible $(q,r)$ pair with $r\ge 3$, $p\ne 2,3$ for such a $S$.

Suppose that $S=\PSL_2(7)=\PSL_3(2)$. For a feasible $(q,r)$ pair we have $r=2$, and $q$ is one of  $3$, $5$, $11$, $13$, $23$, $27$, $41$, $47$, $83$ and $167$. It follows that $d\le 6$.  Since each $q$ is not a square, we expect $\cP$ to be orthogonal. By \cite{HissMalle}, there is exactly one candidate representation of dimension $d=6$ with Frobenius-Schur indicator $+$. The condition $d\le d_{\max}'$ holds only for $q=3,5$, where $d_{\max}'$ is as in \eqref{eqn_dmaxprime}. The condition $|\Aut(S)|\ge\frac{1}{2}|\cP|$ does not hold for $q=5$, and the normalizer $K$ has three orbits on $\cP=Q^-(5,3)$ for $q=3$.

Suppose that $S=\PSL_2(11)$. For a feasible $(q,r)$ pair, either it is one of $\{(2,4),(3,4)\}$, or $r=2$ and $q$ is one of $18$ integers; moreover, if $q$ is a square, then $(q,r)\in\{(4,2),(9,2)\}$. It follows that $d\le 2r+2\le 10$, and if $\cP$ is unitary then $q\in\{4,9\}$ and $d\in\{4,5\}$. By \cite{HissMalle}, there are two candidate representations up to weak equivalence: (i) $d=5$, with Frobenius-Schur indicator $\circ$ and quadratic irrationality $\sqrt{-11}$, (ii) $d=10$,  with Frobenius-Schur indicator $+$.  For (i), $X^2+X+3=0$ has roots in $\F_3$ and $V$ does not have a unitary form for $q=3^2$ by \cite[Corollary~4.4.2]{bray2013maximal}. If $q=4$, then $S$ has two orbits of sizes $55$, $110$ respectively on $\cP=H(4,4)$ by a computer check.  For (ii), we examine the list of feasible pairs to see that $r=4$ and $q\in\{2,3\}$. Since $d=2r+2$, we expect $\cP=Q^-(9,q)$. The condition $|\Aut(S)|\ge \frac{1}{2}|\cP|$ does not hold for $q=3$, and there is no integer $i$ such that $\lcm(i,2^4+1-i)\cdot (2^4-1)$ divides $|\Aut(S)|$ for $q=2$.

Suppose that $S=\PSL_2(13)$. For a feasible $(q,r)$ pair, either it is one of $(2,3)$, $(3,3)$, $(4,3)$, $(9,3)$, $(16,3)$, $(5,4)$ and $(3,6)$, or $r=2$ and $q$ is one of $19$ integers. The pairs $(9,3)$, $(16,3)$, $(5,4)$ and $(3,6)$ are excluded by the condition $d_{\max}'\ge d\ge 2r$, where $d_{\max}'$ is as in \eqref{eqn_dmaxprime}.  It follows that $r\le 3$, and so $d\le 8$. By \cite{HissMalle} up to weak equivalence there is exactly one candidate representation of dimension $d=7$ whose Frobenius-Schur indicator is $+$ for odd $p$, and $\F_q$ is the splitting field of $X^2-13$ over $\F_p$. We then have $r=3$, so $q=3$ for $(q,r)$ to be a feasible pair. It turns out that the normalizer $K$ has three orbits on $\cP=Q(6,3)$.

Suppose that $S=\PSL_3(4)$. For a feasible $(q,r)$ pair, either it is  $(3,4)$, or $r=2$ and $q$ is one of $57$ integers; moreover, if $q$ is a square then $(q,r)=(9,2)$. We have $r\le 3$, and so $d\le 8$. If $\cP$ is unitary, then $(q,r)=(9,2)$ and $d\in\{4,5\}$. There is exactly one such candidate representation of dimension $4$ in which case $L=4.L_3(4)$ up to weak equivalence, and $S$ is transitive on $\cP=H(3,9)$ by direct check. If $\cP$ is orthogonal, then there is exactly one such candidate representation whose dimension is in the range $[4,8]$ and Frobenius-Schur indicator is $+$ up to weak equivalence by \cite{HissMalle}. For such a representation we have $L=2.L_3(4)$, and $S$ is transitive on $\cP=Q^-(5,3)$ by direct check.

Suppose that $S=\PSp_4(3)=\PSU_4(2)$. For a feasible $(q,r)$ pair with $p\ne 2,3$, we have $r=2$ and $q=p$ is one of $33$ primes. It follows that $\cP$ is orthogonal and $d\le 6$. There is exactly one candidate representation up to weak equivalence whose dimension is in the range $[4,6]$ and Frobenius-Schur indicator is $+$ by \cite{HissMalle}, and such a representation has dimension $d$. Since $r=2$, we expect $\cP$ to be $Q^-(5,p)$. We deduce from $|\Aut(S)|\ge\frac{1}{2}|\cP|$ that $p\le 17$. There is an integer $i$ such that $(p+1)\cdot\textup{lcm}(i,p^3+1-i)$ divides $|\Aut(S)|$ only if  $q=5$ or $7$. For $q=5$, we have $\cP=Q^-(5,5)$ and there are exactly two $S$-orbits of sizes $540,216$. Their sizes are not divisible by the ovoid number $126$, so the two orbits are both tight sets. For $q=7$, we have $\cP=Q^+(5,7)$ and so $r=3$: a contradiction.

Suppose that $S=\PSp_6(2)$. Then $(e(S),d_{\max})=(7,24)$, so $3\le r\le 12$. The only feasible $(q,r)$ pair is $(3,4)$. It follows that $8\le d\le 10$, and $\cP$ is orthogonal since $q$ is not a square. There is exactly one candidate representation which has dimension $8$ up to weak equivalence, for which we have $L=2.\PSp_6(2)$. The group $S$ is transitive on $\cP=Q^+(7,3)$ by direct check.

Suppose that $S=\PSp_6(3)$. Then $e(S)=13$, and so $r\ge 6$. We deduce from $d_{\max}'\ge d\ge e(S)$ that $q\le 8$, where $d_{\max}'$ is as in \eqref{eqn_dmaxprime}. A feasible $(q,r)$ pair with $q\le 8$ is one of $(2,6)$, $(4,6)$ and $(2,12)$. If $\cP$ is unitary, then $(q,r)=(4,6)$ and $d\in\{12,13\}$. There is exactly one such candidate representation up to weak equivalence which has dimension $13$ by \cite{HissMalle}. It turns out that $\cP=H(12,4)$ and there are four $K$-orbits of different sizes on the points of  $\cP$. There is no candidate representation whose dimension is in the ranges $[13,14]$ and $[24,26]$ with a Frobenius-Schur indicator $+$ by \cite{HissMalle}.

Suppose that $S=\PSU_4(3)$. Then $e(S)=6$ and a candidate representation of dimension $6$ has Frobenius-Schur indicator $\circ$ by \cite{HissMalle}. We deduce that $r\ge 3$ in all cases. A feasible $(q,r)$ pair with $r\ge 3$ is one of $(2,3)$, $(4,3)$, $(2,4)$ and $(2,6)$. It follows that $p=2$ and $6\le d\le 12$. The only candidate representation has dimension $d=6$ and Frobenius-Schur indicator $\circ$ by \cite{HissMalle}. We thus expect $\cP=H(5,q)$ with $q$ square, so that  $q=4$. There is exactly one conjugacy class of $\PSU_4(3)$ in $\PGU_6(2)$, and it  has two orbits of sizes $126$, $567$ on $\cP=H(5,4)$ which are $6$- and $27$-tight sets respectively.

Suppose that $S=\textup{P}\Omega^+_{8}(2)$. We have $e(S)=8$, so the rank $r\ge 3$. A feasible $(q,r)$ pair with $r\ge 3$ is either $(3,4)$ or $(7,4)$. It follows that $8\le d\le 10$. There is exactly one candidate representation of dimension $8$  whose Frobenius-Schur indicator is $+$ up to weak equivalence by \cite{HissMalle}, and the corresponding covering group is $L=2.\POmeg^+_8(2)$. We first realize $2.\POmeg^+_8(2)$ as the Weyl group of the $E_8$-root lattice in $\mathbb{R}^8$ generated by the $120$ reflections in the $240$ root vectors and then take tensor with $\F_p$ to get the representation in Magma, cf. \cite{Atlas}. The group $S$ is transitive on $\cP=Q^+(7,3)$ for $q=3$, and there are three $K$-orbits on $\cP=Q^+(7,7)$ for $p=7$.

Suppose that $S=G_2(3)$. Then $e(S)=14$, and so $r\ge 6$. The only feasible $(q,r)$ pair is $(2,6)$.  Since $e(S)\le d\le 2r+2$, we deduce that $d=14$.  There is exactly one such candidate representation whose Frobenius-Schur indicator is $+$ up to weak equivalence by \cite{HissMalle}. We check by Magma \cite{Magma} that $\cP=Q^-(13,2)$ and $S$ has two orbits on the singular points which are $12$- and $117$-tight sets respectively. This completes the proof.
\end{proof}

We next consider the case where $S$ is a sporadic group.  We also take the same approach as outlined in Remark \ref{rem_classS_Smethod}. By \cite[Theorem~2.3.2]{LPS3} there is a lower bound $R(S)$ on the dimension $d$, see also \cite[p.~187]{kleidman1990subgroup}. We define $d_{\max}'$ as in \eqref{eqn_dmaxprime} which is also an upper bound on $d$ by Lemma \ref{lem_ParaCond}.
\begin{proposition}
If $S$ is a sporadic simple group, then we have one of the following:
\begin{enumerate}
\item[(1)]$L=M_{11}$, $\cP=Q^-(9,2)$ and $S$ has two orbits on the points of $\cP$ which are $11$- and $22$-tight sets;
\item[(2)]$L=3.M_{22}$, $\cP=H(5,4)$ and $S$ has two orbits on the points of $\cP$ which are $11$- and $22$-tight sets;
\item[(3)]$L=Co_1$, $\cP=Q^+(23,2)$ and $S$ has two orbits on the points of $\cP$ which are $24$- and $2025$-tight sets.
\end{enumerate}
\end{proposition}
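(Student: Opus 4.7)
The plan is to follow the general strategy outlined in Remark \ref{rem_classS_Smethod} applied to each of the $26$ sporadic simple groups. For a sporadic group $S$, let $R(S)$ be the lower bound on the dimension of an absolutely irreducible modular representation over any field given in \cite[Theorem~2.3.2]{LPS3} (see also \cite[p.~187]{kleidman1990subgroup}), and let $d_{\max}':=2+\lfloor\log_q(2\cdot|\Aut(S)|)\rfloor$ as in \eqref{eqn_dmaxprime}, which bounds $d$ from above by Lemma \ref{lem_ParaCond}. The first pass is to eliminate any $S$ for which no prime power $q$ admits $d_{\max}'\ge R(S)$. This immediately kills the very large sporadic groups ($M$, $B$, $Th$, $Fi_{24}'$, $Fi_{23}$, $HN$, $Ly$, $J_4$, $Ru$, $O'N$, $He$, $Suz$, etc., in most cases) because $R(S)$ grows as a power of $|S|$ while $\log_q|\Aut(S)|$ is only logarithmic.

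For the sporadic groups that survive the first pass, I would enumerate the feasible pairs $(q,r)$ with $\max(2,\lceil R(S)/2\rceil-1)\le r\le \lfloor d_{\max}'/2\rfloor$ and $\tfrac{q^r-1}{q-1}$ dividing $|\Aut(S)|$. For each feasible pair, I would look up \cite{HissMalle} (and, where available, the online Atlas \cite{Atlas}) for the candidate absolutely irreducible representations in dimensions $d\in[\max\{2r,R(S)\},2r+2]$ with Frobenius--Schur indicator $+$ or $\circ$ (to ensure a nondegenerate orthogonal or unitary form), taking one representative per weak equivalence class as permitted by Section \ref{subsec_basic}. Many candidates are then discarded by the further divisibility test in Lemma \ref{lem_ParaCond} that $\lcm(i,\theta_r-i)\cdot\tfrac{q^r-1}{q-1}$ divides $|\Aut(S)|$ for some integer $i\in[1,\theta_r-1]$.

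For the handful of representations that remain, I would construct the module in Magma (either directly from Atlas generators or via \textrm{AbsolutelyIrreducibleModules}), use \textrm{ClassicalForms} to identify $(V,\kappa)$ and $\cP$, and then compute the $H$-orbits on the singular points of $\cP$. The three surviving cases should be: (1) an $11$-dimensional $2$-modular representation of $M_{11}$ giving an $S$-invariant decomposition of $Q^-(9,2)$ into $11$- and $22$-tight sets (cf.\ the analogous $A_{11}$ case in Proposition \ref{prop_S_Anfdm}); (2) the $6$-dimensional unitary representation of $3.M_{22}$ over $\F_4$ on $H(5,4)$, parallel to the $3.\PSU_4(3)$ row; and (3) the $24$-dimensional $2$-modular Leech-lattice representation of $Co_1$ on $Q^+(23,2)$, where the two orbits have sizes $98280$ and $8292375$, yielding $24$- and $2025$-tight sets (with $24+2025=2049=\theta_{12}$). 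For each one, it is routine to verify that the orbit sizes are not divisible by the ovoid number $\theta_r$, so by Section \ref{subsec_basic} they are tight sets rather than $m$-ovoids.

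The main obstacle will be the bookkeeping for the moderately-sized sporadics ($Co_2$, $Co_3$, $McL$, $HS$, $J_1,\ldots,J_3$, $M_{22}$, $M_{23}$, $M_{24}$, $Suz$, $He$, $Ru$), where several feasible $(q,r)$ pairs survive the first two numerical filters and one has to inspect each modular representation individually. In particular, showing that the $24$-dimensional $Co_1$-module lands in $Q^+(23,2)$ rather than in $Q^-(23,2)$, and computing the two orbit sizes, requires a careful explicit calculation; the divisibility filter already forces any surviving candidate for $Co_1$ into dimension $24$ over $\F_2$, but distinguishing the two sign types and verifying exactly two orbits has to be done by machine. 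Once these three examples are in hand and all other sporadic candidates have been excluded by one of the numerical or representation-theoretic tests above, the proposition follows.
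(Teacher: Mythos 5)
Your proposal follows essentially the same route as the paper's proof: the lower bound $R(S)$ against the upper bound $d_{\max}'$ from Lemma \ref{lem_ParaCond}, enumeration of feasible $(q,r)$ pairs, selection of candidate representations with Frobenius--Schur indicator $+$ or $\circ$ from \cite{HissMalle} and the Atlas, and machine verification of the orbit structure, arriving at the same three examples (the paper additionally records that $M_{23}$, $M_{24}$, $J_2$ die because no candidate representation exists, and that $M_{12}$, $J_1$, $J_3$, $Suz$ are eliminated only after the explicit orbit computation). One small correction: the $M_{11}$ example arises from its $10$-dimensional module over $\F_2$ (the ambient space of $Q^-(9,2)$ has dimension $10$), not an $11$-dimensional one; otherwise your outline matches the paper's argument.
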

\begin{proof}
We call a $(q,r)$ pair feasible for $S$ if $\max(\{2,\lceil \frac{1}{2}R(S)\rceil\}-1)\le r\le \lfloor\frac{1}{2}d_{\max}'\rfloor$ and $\frac{q^r-1}{q-1}$ divides $|\Aut(S)|$. For $S=Fi_{23}$, $Fi_{24}'$, $HN$, $Ly$, $BM$, $F_1$ or $He$, the condition $d_{\max}'\ge R(S)$ does not hold for any $q$. For $S=M_{23}$ or $M_{24}$, $R(S)=11$ and the only feasible $(q,r)$ pair is $(2,6)$.  For $S=J_2$,  $R(S)=6$ and we have $2\le r\le 6$ for all feasible $(q,r)$ pairs. There is no candidate representation with Frobenius-Schur indicator $+$ or $\circ$ whose dimension is the interval $[2r,2r+2]$ in each case by \cite{HissMalle}.

For $S=M_{11}$, $R(S)=5$ and a feasible $(q,r)$ pair either has $r=2$ or is one of $(2,4)$ and $(3,4)$. It follows that $5\le d\le 10$.  There are two candidate representations which have dimension $10$ and Frobenius-Schur indicator $+$ up to weak equivalence by \cite{HissMalle}, one defined over $\F_2$ and the other over $\F_3$. For $q=2$, we have  $\cP=Q^-(9,2)$ and $S$ has two orbits on the points of $\cP$ which are $11$- and $22$-tight sets respectively. For $q=3$, it turns out that $\cP=Q^+(9,3)$ and so $r=5$: a contradiction to $r=4$.

For $S=M_{12}$, $R(S)=6$ and a feasible $(q,r)$ pair either has $r=2$ or is one of $(2,4)$, $(3,4)$. It follows that $6\le d\le 10$. There are two candidate representations which have dimension $10$ and Frobenius-Schur indicator $+$ up to weak equivalence by \cite{HissMalle}, one defined over $\F_2$ and the other over $\F_3$. For $q=2$, we have $\cP=Q^-(9,2)$ and $S$ is transitive on the points of $\cP$. For $q=3$, we have $\cP=Q^+(9,3)$ and so $r=5$: a contradiction to $r=4$.

For $S=M_{22}$, $R(S)=6$ and a feasible $(q,r)$ pair either has $r=2$ or is one of  $(2,3)$, $(2,4)$, $(2,6)$, $(4,3)$ and $(3,4)$. It follows that $6\le d\le 12$. There is  exactly one candidate representation of dimension $6$ defined over $\F_{4}$ with Frobenius-Schur indicator $\circ$ up to weak equivalence  by \cite{HissMalle}. It turns out that $\cP=H(5,4)$, $L=3.M_{22}$ and $S$ has two orbits on the points of $\cP$ which are $11$- and $22$-tight sets respectively.

For $S=J_1$, $R(S)=7$ and a feasible $(q,r)$ pair is one of $(2,3)$, $(4,3)$, $(7,3)$, $(11,3)$, $(2,4)$ and $(3,4)$. It follows that $7\le d\le 10$. There is  exactly one candidate representation of dimension $7$ defined over $\F_{11}$ with Frobenius-Schur indicator $+$ up to weak equivalence  by \cite{HissMalle}. It turns out that $\cP=Q(6,11)$ and the normalizer $K$ has more that two orbits on the singular points.

For $S=J_3$, $R(S)=9$ and a feasible $(q,r)$ pair is one of $(2,4)$, $(2,8)$, $(3,4)$ and $(4,4)$. It follows that $9\le d\le 10$.  There is  exactly one candidate representation of dimension $9$ defined over $\F_4$ with Frobenius-Schur indicator $\circ$ up to weak equivalence  by \cite{HissMalle}. It turns out that $\cP=H(8,4)$ and $S$ is transitive on the points of $\cP$.

For $S=Suz$, $R(S)=12$ and a feasible $(q,r)$ pair is one of $(2,6)$, $(2,12)$, $(3,6)$ and  $(4,6)$. It follows that $12\le d\le 26$.  There is  exactly one candidate representation of dimension $12$ defined over $\F_4$ with Frobenius-Schur indicator $\circ$ up to weak equivalence  by \cite{HissMalle}. It turns out that $\cP=H(11,4)$ and the normalizer $K$ has three orbits on the singular points.

For $S=Co_1$, $R(S)=24$ and the only feasible pair is $(q,r)=(2,12)$. It follows that $24\le d\le 26$. There is exactly one candidate representation of dimension $24$ whose Frobenius-Schur indicator is $+$ up to weak equivalence by \cite{HissMalle}. It turns out that $\cP=Q^+(23,2)$, $L=Co_1$ and $S$ has exactly two orbits on the points of $\cP$ which are $24$- and $2025$-tight sets respectively. This completes the proof.
\end{proof}

To summarize, we have established Theorem \ref{thm_TS} for the case where $H$ is a subgroup of Aschbacher class $\cS$ in this section. The resulting examples are collected in Tables \ref{tab_TS_cSLiedef} and \ref{tab_TS_cSother}. In particular, the case where $\kappa$ is symplectic is handled in Proposition \ref{prop_classS_caseS} based on the main theorem in \cite{LiebeckRank3}.

\section{Proof of Theorem \ref{thm_TS}: the geometric subgroups}

In this section, we present the proof of Theorem \ref{thm_TS} for the case of geometric subgroups. Suppose that $V$ is a $d$-dimensional vector space over $\F_q$ that is equipped with a nondegenerate  quadratic, unitary or symplectic form $\kappa$. Let $\cP$ be the associated polar space, and suppose that it has rank $r\ge 2$ and is not $Q^+(3,q)$. Let $H$ be an irreducible group of semisimilarities that has two orbits $O_1$, $O_2$ on the points of $\cP$ which are respectively $i_1$- and $i_2$-tight sets, and assume that $H$ is of Aschbacher class $\cC_2,\ldots,\cC_7$ or $\cC_8$. Here, we say that $H$ is of Aschbacher class $\cC_i$ if it is contained in a maximal subgroup of Aschbacher class $\cC_i$ in $\Gamma(V,\kappa)$, cf. \cite{asch_max}. Let $\F_{q^b}$ be the largest extension field of $\F_q$ such that $H$ is a subgroup of $\GamL_{d/b}(q^b)$. Let $E$ be the corresponding copy of $\F_{q^b}$ in $\End(V)$, and regard $V$ as a vector space $V'$ of dimension $d/b$ over $E$. There is a nondegenerate reflexive sesquilinear form or quadratic form $\kappa'$ on $V'$ such that $(V',\kappa')$, $(V,\kappa)$ are associated as in Table \ref{tab_extfieldP'gt0}. Let $\cP'$ be the polar space associated with $(V',\kappa')$. The group $H$ is a subgroup of $\Gamma^\#(V',\kappa')$, where $\Gamma^\#(V',\kappa')$ is defined in \eqref{eqn_GamJ}. We write $\Gamma^\#$ for $\Gamma^\#(V',\kappa')$ and $\Delta^\#$ for its intersection with $\Delta(V,\kappa)$ for short.
\begin{lemma}\label{lem_C3_P'ne0}
If $b>1$, then $V'$ has a nonzero singular or isotropic vector.
\end{lemma}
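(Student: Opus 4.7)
The plan is to argue by contradiction: assume $\cP'$ is empty, which forces $V'$ to be anisotropic for $\kappa'$ over $E = \F_{q^b}$. Since nondegenerate anisotropic forms over a finite field have dimension at most $2$, inspection of Table \ref{tab_extfieldP'gt0} leaves exactly two essential configurations: (i) $\dim_E V' = 1$ (Hermitian or quadratic anisotropic), so that $d = b$; and (ii) $\dim_E V' = 2$ with $\kappa'$ elliptic quadratic, so that $d = 2b$ and $\cP = Q^-(2b-1, q)$. The rank hypothesis $r \geq 2$, combined with the shape of $\cP$ in the second column of Table \ref{tab_extfieldP'gt0}, forces $b \geq 3$ in case (ii) and $b$ correspondingly large in case (i).

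In case (i), $V$ is one-dimensional over $E$, so $\Gamma^\#(V', \kappa') \leq \GamL_1(E) = E^* \rtimes \Gal(E/\F_p)$. The subgroup $\Gamma^\# \cap E^*$ consists of scalar multiplications by those $\mu \in E^*$ whose associated similarity factor on $(V, \kappa)$ lies in $\F_q^*$, giving an explicit small subgroup $S \leq E^*$ (essentially defined by a norm-into-$\F_q^*$ condition). Writing $q = p^f$, we have $|\Gamma^\#| \leq |S| \cdot bf$. Combining the trivial bound $|H| \geq |\cP|/2$ of Lemma \ref{lem_ParaCond}, the divisibility constraint $\lcm(|O_1|, |O_2|) \mid |H|$, and the requirement that $H$ act $\F_q$-irreducibly on $V = E$ (which forbids $H$ from lying inside any subfield stabilizer of $E$), we reduce to a short list of small $(q, b)$ configurations. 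These remaining cases are dispatched by direct orbit enumeration: typically either $\Gamma^\#$ has more than two orbits on $\cP$, or no irreducible subgroup $H \leq \Gamma^\#$ attains exactly two.

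In case (ii), the singular points of $\cP$ are partitioned by the fibration $\PG_{\F_q}(V) \to \PG_E(V')$, and for each $v \in V \setminus \{0\}$ the intersection $vE \cap \cP$ consists of the points $\la cv \ra_{\F_q}$ with $\tr_{E/\F_q}(c^2 \kappa'(v)) = 0$. Its cardinality depends only on the $\Gamma^\#$-similarity class of $\kappa'(v) \in E^*$, so distinct similarity classes yield distinct $\Gamma^\#$-orbits on $\cP$. Exhibiting at least three such classes uniformly in $(q, b)$ forces $\Gamma^\#$, and therefore $H$, to have at least three orbits on $\cP$, contradicting the two-orbit hypothesis. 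The principal obstacle is confirming this three-class lower bound for small $(q, b)$, where classes may collapse; in that regime one falls back on the tighter numerical constraints from Lemma \ref{lem_ParaCond} together with direct computation inside $\Gamma^\#$ to finish the argument.
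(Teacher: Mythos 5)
Your case (i) ($\dim_E V'=1$, $d=b$) is essentially the paper's argument: bound $|\textup{P}\Gamma^\#|$ by the order of the relevant subgroup of $\GamL_1(q^b)$, combine with the conditions of Lemma \ref{lem_ParaCond}, and finish the surviving small configurations by explicit orbit computation. The genuine gap is in case (ii), the binary elliptic space over $E=\F_{q^b}$. Your proposed invariant cannot produce three orbits. A singular point of $\cP$ determines only its fiber $vE$, and replacing $v$ by $cv$ changes $\kappa'(v)$ by the square $c^2$; moreover every $g\in\Gamma^\#$ is a semisimilarity of $\kappa'$ whose $\kappa'$-factor is forced to lie in $\F_q^*$ (compare $\tr_{E/\F_q}(\mu y^{\sigma})=\lambda\,\tr_{E/\F_q}(y)^{\sigma}$ for all $y$). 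So the only invariant you get from $\kappa'(v)$ is its class in $E^*$ modulo $(E^*)^2\cdot\F_q^*$ and Galois, and since $[E^*:(E^*)^2]\le 2$ and $\F_q^*\subseteq(E^*)^2$ precisely when $b$ is even, this invariant takes at most two values, and exactly one value whenever $q$ is even or $b$ is odd. Equivalently, the fiber count $|vE\cap\cP|$ (a character-sum computation) is constant for $q$ even or $b$ odd and takes only two values otherwise. Hence "at least three similarity classes, uniformly in $(q,b)$" is false for every $(q,b)$, and the inference "distinct similarity classes yield distinct $\Gamma^\#$-orbits" is also invalid if by classes you mean $\F_q^*$-and-Galois classes only, because of the $c^2$ rescaling within a fiber. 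The failure is not confined to a small regime to be swept up by a fallback; your main mechanism never certifies a third orbit, and you have not reduced case (ii) to a finite list by any other means.

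The paper closes this case differently: an anisotropic binary quadratic space over $\F_{q^b}$ is $\F_{q^{2b}}$ equipped with a scalar multiple of the norm form, and every $\F_{q^b}$-semisimilarity of it is $\F_{q^{2b}}$-semilinear, so the configuration $d=2b$ in row 3 of Table \ref{tab_extfieldP'gt0} embeds into the configuration $d=b''$ with $b''=2b$ in row 9 (one-dimensional Hermitian over $\F_{q^{2b}}$). After this reduction everything is case (i): the order and divisibility constraints leave only $(q,d)=(2,10)$ and $d=6$ with $q\in\{7,3^2,11,2^4,3^3,2^5,2^6\}$, and a Magma check shows $\Gamma^\#$ has at least three orbits on $\cP$ in each of these. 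Note that these survivors include even $q$ and $q$ up to $64$, so they are not "small degenerate" exceptions to a generic three-class argument — they are exactly the cases your case (ii) would have to handle, and for them only the computational route (or the reduction to the Hermitian line) is available. If you want to salvage your write-up, replace the three-class argument by this norm-form identification and then run your case (i) machinery with $b$ replaced by $2b$.
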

\begin{proof}

Suppose to the contrary that $b>1$ and $V'$ has no nonzero singular or isotropic vector. This happens exactly when $d=b$ for the rows 4, 5, 7, 9, 12 and when $d=2b$ for the row 3 by Table \ref{tab_extfieldP'gt0}. We observe that the case $d=2b$ for the row 3 reduces to the case $d=b$ for the row 9, so we only need to consider the first five cases with $d=b$. It is routine to check that the conditions $\frac{q^r-1}{q-1}$ divides $\textup{P}\Gamma^\#(V',\kappa')$  and $\textup{P}\Gamma^\#(V',\kappa')\ge \frac{1}{2}|\cP|$ exclude rows 4, 5, 7, 12. For row 9, we have $d=2r+2$, $|\textup{P}\Gamma^\#(V',\kappa')|=df(q^{r+1}+1)$,  and $|\cP|=(q^{r+1}+1)\frac{q^{r}-1}{q-1}$. The two conditions simplify to $(q^{r}-1)\mid f(q^2-1)$, $2f(2r+2)(q-1)\ge q^{r}-1$, which hold only for a finite list of $(q,r)$ pairs. For such a $(q,r)$ pair, we examine whether there is an integer $i$ such that $\lcm(i,q^{r+1}+1-i)\frac{q^{r}-1}{q-1}$ divides $f(2r+2)(q^{r+1}+1)$. A $(q,d)$ pair that satisfies all the conditions either is $(2,10)$, or has $d=6$ and $q\in  \{7, 3^2, 11, 2^4, 3^3, 2^5, 2^6\}$. We use Magma to check that $\Gamma^\#(V',\kappa')$ has at least three orbits on the points of $\cP$ in each case. This completes the proof.
\end{proof}

We first consider the case where $b>1$. Let $\cM_1$ be as defined in \eqref{eqn_M1def}, i.e., $\cM_1=\{\la \eta v\ra_{\F_q}\mid \eta\in\F_{q^b}^*,\,\la v\ra_{\F_{q^b}}\in \cP'\}$. Let $\cM_2$ be its complement in $\cP$. The set $\cM_1$ is nonempty by Lemma \ref{lem_C3_P'ne0}, and the set $\cM_2$ is empty only for row 1, rows 9 (with $b=2$) and row 10 (with $b=2$) upon inspection of Table \ref{tab_extfieldP'gt0}. We define
\begin{equation}\label{eqn_C3_Oip}
    O_i'=\{\la v\ra_{\F_{q^b}}\mid \la v\ra_{\F_q}\in O_i\} \quad\textup{ for }i=1,2.
\end{equation}

\begin{proposition}\label{prop_C3_M1prop}
If $\cM_1,\cM_2$ are proper subsets of $\cP$, then we have one of the cases in Table \ref{tab_TS_C3} and $\cM_1$, $\cM_2$ are the two $H$-orbits.
\end{proposition}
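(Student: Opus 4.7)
The strategy is threefold. First, I observe that both $\cM_1$ and its complement $\cM_2$ are preserved by $\Gamma^\#(V',\kappa')$, since this group respects the $\F_{q^b}$-structure underlying the field reduction in Table \ref{tab_extfieldP'gt0}. In particular both are $H$-invariant; by hypothesis they are nonempty proper subsets of $\cP$, so each is a union of $H$-orbits. Since $H$ has exactly the two orbits $O_1, O_2$, this forces $\{O_1, O_2\} = \{\cM_1, \cM_2\}$, giving the second half of the conclusion.

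Second, the defining map sending $\la v\ra_{\F_{q^b}}$ to the set $\{\la \eta v\ra_{\F_q} : \eta \in \F_{q^b}^*\}$ provides a surjection $\cM_1 \to \cP'$ that intertwines the $H$-action on $\cM_1$ with the induced $H$-action on $\cP'$. Transitivity of $H$ on $\cM_1$ therefore implies transitivity of the image of $H$ in $\textup{P}\Gamma(V',\kappa')$ on the singular or isotropic points of $\cP'$. I then invoke the classification in \cite{giudici2020subgroups} of subgroups of the semisimilarity group of a classical polar space that act transitively on its points. This produces a short list of candidate groups for $H^\infty$ depending on the type of $\cP'$, namely the entries involving $\Omega^{\pm}$, $\SU$, $\textup{Spin}_7$ and $G_2$ that populate Table \ref{tab_TS_C3}.

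Third, I apply the tight-set condition to prune the candidate list. Using $|\cM_1| = |\cP'|\cdot\frac{q^b-1}{q-1}$ and the explicit sizes in Table \ref{tab_extfieldP'gt0}, the condition for $\cM_1$ to be an intriguing set requires $\frac{q^r-1}{q-1}$ to divide $|\cM_1|$, yielding the putative tight-set parameter $i_1 = |\cM_1|(q-1)/(q^r-1)$; I simultaneously rule out the $m$-ovoid alternative by checking that the ovoid number $\theta_r$ of Table \ref{tab_thetar} does not divide $|\cM_1|$. Running these numerics row by row through Table \ref{tab_extfieldP'gt0} eliminates several rows outright, constrains $b$ and $d$ on the remaining rows (producing, for instance, the parity restrictions $b=2$, $qm$ odd, or $m$ even that appear in the ``Condition'' column of Table \ref{tab_TS_C3}), and intersecting the output with the transitive candidates from step two yields exactly the entries of Table \ref{tab_TS_C3}.

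The main obstacle will be the case-by-case arithmetic across the twelve rows of Table \ref{tab_extfieldP'gt0}, together with the small exceptional parameter pairs flagged in Table \ref{tab_TS_C3}, namely $(q,b)\in\{(2,3),(8,3)\}$ on the $Q^+\leftarrow Q^+$ row, $(q,m)=(2,2)$ on the $W\leftarrow H$ row, $(m,q)=(3,4)$ on the $H\leftarrow H$ row, and the $qm$-odd restriction on the $Q^-\leftarrow Q$ row. For these small cases the general classification via \cite{giudici2020subgroups} leaves residual ambiguity that must be resolved by direct computation, and one must also verify that the kernel of the reduction $H \to \textup{P}\Gamma(V',\kappa')$ acts transitively on the $\F_{q^b}^*/\F_q^*$-fibres of $\cM_1 \to \cP'$, so that transitivity on $\cP'$ genuinely lifts to transitivity on $\cM_1$ rather than splitting it further.
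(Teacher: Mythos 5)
Your step one (both $\cM_1$ and $\cM_2$ are $\Gamma^\#$-invariant, hence they must coincide with the two $H$-orbits) matches the paper. The genuine gap is in how you propose to obtain the ``Condition'' column and the group column of Table \ref{tab_TS_C3}. You attribute these restrictions to the numerical tight-set conditions on $|\cM_1|$, but for the surviving rows of Table \ref{tab_extfieldP'gt0} (rows 2, 6, 8, 10, and 12 with $b=2$) the set $\cM_1$ is \emph{automatically} a tight set of $\cP$ for every admissible $b$, by Kelly's field-reduction result \cite{KellyCons}; so the divisibility of $|\cM_1|$ by $\frac{q^r-1}{q-1}$ holds for all $b$ and cannot produce constraints such as ``$b=2$ or $(q,b)\in\{(2,3),(8,3)\}$'' in the $Q^+\to Q^+$ row. (The numerics only eliminate rows 4 and 11 and force $b=2$ in row 12; the $m$-ovoid rows 3, 5, 7, 9 are excluded because in this part of the argument the orbits are assumed to be tight sets.) The constraints you are missing come from the other orbit: since $\cM_2$ is a single $H$-orbit, $\Gamma^\#$ must be transitive on $\cM_2$, i.e.\ on the $\F_q$-points arising from \emph{nonsingular} points of $\cP'$. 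Because the similarity factors $\lambda(g)$ of elements of $\Delta^\#$ lie only in $\F_q^*$ (or $\F_{q^{1/2}}^*$), not in all of $\F_{q^b}^*$, the set $X=\{a\in\F_{q^{b'}}^*\mid \tr(\alpha a)=0\}$ of relevant form values must equal a single orbit under $a\mapsto\lambda a^{p^i}$; this yields arithmetic conditions such as $q^{b-1}-1\mid bf(q-1)$, which is exactly what forces $b=2$ or $(q,b)\in\{(2,3),(8,3)\}$, and analogously in the other rows. Your proposal never analyzes transitivity on $\cM_2$ at all.

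The same omission affects your identification of $H$. Transitivity of $H$ on $\cM_1$ gives transitivity on the points of $\cP'$, but transitivity on $\cM_2$ additionally gives transitivity on (one isometry class of) the nonsingular points of $\cP'$; it is the combination, via Lemmas \ref{lem_H_TrSinNonsin}--\ref{lem_Q+(2m,q)sub}, that pins down the groups in the last column of Table \ref{tab_TS_C3}. Point-transitivity alone, as in your step two, leaves a strictly larger list. Finally, the lifting issue you flag at the end is on the wrong side: $\Gamma^\#$-transitivity on $\cM_1$ follows directly from Witt's Lemma, and the implication actually used is that transitivity on $\cM_1$ forces transitivity on $\cP'$; the delicate fibre analysis occurs for $\cM_2$, where the $\F_{q^b}^*$-fibre over a nonsingular point of $\cP'$ contains both singular and nonsingular $\F_q$-points and only the trace-zero values contribute.
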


\begin{proof}
Write $q=p^f$ with $p$ prime, and set $b'=b$  or $b'=b/2$ according as $\kappa'$ is orthogonal or unitary. Let $\alpha$ be as in Table \ref{tab_extfieldP'gt0} for rows 8, 12, and set $\alpha=1$ for the remaining rows. Since $\cM_2$ is nonempty, we can not have row 1, row 9 (with $b=2$) or row 10 (with $b=2$) of Table \ref{tab_extfieldP'gt0}. Since the set of singular points of $\cP'$ is a trivial intriguing set, the set $\cM_1$ is a tight set for rows 2, 6, 8, 10 of Table \ref{tab_extfieldP'gt0} and is an $m$-ovoid for rows 3, 5, 7, 9 by \cite{KellyCons}. It is routine to check that the condition $\frac{q^r-1}{q-1}$ divides $|\cM_1|$ does not hold for rows 4, 11, and it holds for row 12 only if $b=2$. Therefore, we only need to consider rows 2, 6, 8, 10 (with $b>2$), 12 (with $b=2$) of Table \ref{tab_extfieldP'gt0}. In all those cases $\cP'$ is orthogonal or unitary.  Since $\cP'$ is nonempty by Lemma \ref{lem_C3_P'ne0}, we deduce that $\{\kappa'(x)\textup{ or }\kappa'(x,x)\mid x\in V'\setminus\{0\}\}=\F_{q^{b'}}$. We set $X:=\{\kappa'(x)\textup{ or }\kappa'(x,x)\mid \la x\ra\in\cM_2\}$, and write $\lambda(\Delta^\#):=\{\lambda(g)\mid g\in\Delta^\#\}$, where $\lambda(g)$ is as in \eqref{eqn_Gam}. We have $\kappa=\tr_{\F_{q_0^b}/\F_q}\circ \alpha\kappa'$ with $q_0\in\{q,q^{1/2}\}$ by Table \ref{tab_extfieldP'gt0}, so $X=\{a\in\F_{q^{b'}}^*\mid\tr_{\F_{q_0^b}/\F_q}(\alpha a)=0\}$ correspondingly.

Since $\cM_1$, $\cM_2$ are $\Gamma^\#$-invariant, they are the two $H$-orbits by our assumption. It follows that $\Gamma^\#$ should be transitive on both $\cM_1$ and $\cM_2$. The group $\Gamma^\#$ is transitive on $\cM_1$ by Witt's Lemma.  We now determine when $\Gamma^\#$ is transitive on $\cM_2$. Take a point $\la v\ra$ in $\cM_2$, and write $\eta:=\kappa'(v)$ or $\kappa'(v,v)$. Since $\cM_2$ is the $\Gamma^\#$-orbit of $\la v\ra$, we deduce that $X=X'$ with $X'=\{\lambda(g)\eta^{\sigma(g)}\mid g\in\Gamma^\#\}$.  Conversely, if $X'=X$ then we deduce from Witt's Lemma that $\Gamma^\#$ is transitive on $\cM_2$.
\begin{enumerate}
\item[(1)] For row 2 of Table \ref{tab_extfieldP'gt0} we have $\lambda(\Delta^\#)=\F_q^*$ and $\alpha=1$,  so $X'=\{a\eta^{p^i}\mid i\ge 0, a\in\F_{q}^*\}$. We also have $X=\{a\in\F_{q^{b}}^*\mid\tr_{\F_{q^{b}}/\F_q}(a)=0\}$. It follows from $X=X'$ that $bf(q-1)$ is divisible by $q^{b-1}-1$. This holds only if $b=2$ or $(p^f,b)\in\{(2,3),(8,3)\}$, and we have $X=X'$ in those cases.
\item[(2)] For row 6 of Table \ref{tab_extfieldP'gt0} we have $\lambda(\Delta^\#)=\F_{q^{1/2}}^*$  and $\alpha=1$, so $X'=\{a\eta^{p^i}\mid i\ge 0, a\in\F_{q^{1/2}}^*\}$. We also have $X=\{a\in\F_{q^{b/2}}^*\mid \tr_{\F_{q^{b/2}}/\F_{q^{1/2}}}(a)=0\}$. It follows from $X=X'$ that $q^{(b-1)/2}-1$ divides $(q^{1/2}-1)bf/2$. This holds only if $(p^f,b)\in\{(2^2,3),(2^6,3)\}$, in which cases we do have $X=X'$.
\item[(3)] For row 8 of Table \ref{tab_extfieldP'gt0} we have $X=\F_{q^{b/2}}^*$. We choose $v$ properly so that $\eta=1\in X$. In this case $\lambda(\Delta^\#)=\F_{q^{b/2}}^*\cap\F_q^*=\F_q^*$, since $b$ is even. It follows that $\F_q^*\subseteq X'$. We have $\kappa'(vg,vg)=\lambda(g)\in\F_q^*\cdot\alpha^{\sigma(g)-1}$ for $g\in\Gamma^\#$ by \eqref{eqn_GamJ}. It follows that $X$ is a subset of $X''=\{\alpha^{p^i-1}a\mid a\in\F_q^*,i\ge 0\}$. Since $\alpha^{q^{b/2}}+\alpha=0$, the latter set has size at most $bf(q-1)/2$. We deduce from $X\subseteq X''$ that $q^{b/2}-1\le bf(q-1)/2$, which holds only if $b=2$. When $b=2$, we have $X=X'=\F_q^*$ by comparing sizes.
\item[(4)] For row 10 of Table \ref{tab_extfieldP'gt0} with $b>2$, we have $\lambda(\Delta^\#)=\F_{q}^*$ and $\alpha=1$,  so $X'=\{a\eta^{p^i}\mid i\ge 0, a\in\F_{q}^*\}$. We also have $X=\{a\in\F_{q^{b/2}}^*\mid \tr_{\F_{q^{b/2}}/\F_q}(a)=0\}$. Since $\alpha=1$, we deduce from $X=X'$ that $q^{b/2-1}-1$ divides $(q-1)bf/2$. It holds only if $b=4$ or $(p^f,b)\in\{(2,6),(2^3,6)\}$, and $X=X'$ holds in each case.
\item[(5)] For row 12 of Table \ref{tab_extfieldP'gt0} with $b=2$, we have $X=\{u\in\F_{q^{2}}^*\mid \tr_{\F_{q^{2}}/\F_q}(\alpha u)=0\}$. Since $\eta$ is in $X$, we have $X=\F_q^*\cdot\eta$. In this case $\lambda(\Delta^\#)$ is the intersection of $\F_q^*$ with the set of nonzero squares of $\F_{q^2}^*$, i.e., $\lambda(\Gamma^\#)=\F_q^*$. It follows that $X'$ contains $\F_q^*\cdot\eta$. We deduce that $X=X'$ by comparing sizes.
\end{enumerate}

We list the above cases where $\Gamma^\#$ is transitive on both $\cM_1$ and $\cM_2$ in Table \ref{tab_TS_C3}.  It remains to the determine the subgroups of $\Gamma^\#$ that are transitive on both $\cM_1$ and $\cM_2$.
\begin{enumerate}
\item[(i)] For row 2 of Table \ref{tab_extfieldP'gt0}, we have $d\ge 6$ since $\kappa$ has plus sign and we do not consider the case $\cP=Q^+(3,q)$. Since $H$ is transitive on both $\cM_1$ and $\cM_2$, we deduce that it is transitive on both the set of singular points and the isometry class of nonsingular points of $(V',\kappa')$ that contains $\la v\ra_{\F_{q^b}}$. We consider only the case $d/b\ge 6$. By Lemmas \ref{lem_Q+(2m,q)sub} and the fact $q^b$ is a square or cube, we deduce that either $\Omega_{d/2}^+(q^b)\unlhd H$, or $\SU_{d/(2b)}(q^b)\unlhd H$ with $d/(2b)$ even, or $\textup{Spin}_7(q^b)\unlhd H$ with $d/b=8$.
\item[(ii)] For row 6 of Table \ref{tab_extfieldP'gt0}, we have $b=3$ and $q\in\{2^2,2^6\}$. Since $H$ is transitive on the singular points of $\cP'$, we deduce that either $\SU_{d/3}(q^{3/2})\unlhd H$ or $(d/b,q)=(3,4)$ by \cite[Theorem~4.1]{giudici2020subgroups}.
\item[(iii)] For row 8 of Table \ref{tab_extfieldP'gt0}, we have $b=2$ and $d/2$ is even by Table \ref{tab_extfieldP'gt0}. Since $H$ is transitive on both $\cM_1$ and $\cM_2$, we deduce that it is transitive on both the singular points and the nonsingular points of $(V',\kappa')$. By Lemma \ref{lem_H_TrSinNonsin}, we see that either $\Omega(V',\kappa')\unlhd H$ or $(q,d)=(2,4)$.
\item[(iv)]For row 10 of Table \ref{tab_extfieldP'gt0}, $d/b$ is even. Since $H$ is transitive on both $\cM_1$ and $\cM_2$, we deduce that it is transitive on both the set of singular points and the set of nonsingular points of $(V',\kappa')$. Since $q^b$ is a $b$-th power with $b\in\{4,6\}$, we deduce that $\SU_{d/b}(q^{b/2})\unlhd H$ by Lemma \ref{lem_H_TrSinNonsin}.
\item[(v)] For row 12 of Table \ref{tab_extfieldP'gt0}, we have  $b=2$. Since $H$ is transitive on both $\cM_1$ and $\cM_2$, we deduce that it is transitive on both the set of singular points and one isometry class of nonsingular points of $(V',\kappa')$. By Lemma \ref{lem_Q(dodd,q)sub}, we have either $\Omega_{d/2}(q^2)\unlhd H$, or $G_2(q^2)\unlhd H$ with $d=14$.
\end{enumerate}
This completes the proof.
\end{proof}

By Lemma \ref{lem_C3_P'ne0} and Proposition \ref{prop_C3_M1prop}, it remains to consider the case where $\cM_1=\cP$ for $b>1$. In such a case, $(V,\kappa)$ and $(V',\kappa')$ have the same set of nonzero isotropic or singular vectors.  Since $\cM_1=\cP$, we are in  rows 1, 9 (with $b=2$) and 10 (with $b=2$) of Table \ref{tab_extfieldP'gt0}.

\begin{proposition}\label{prop_C3_O1peqPp}
Suppose that $\cM_1=\cP$ and $b>1$, and let $O_1'$, $O_2'$ be as in \eqref{eqn_C3_Oip}.  If one of $O_1'$, $O_2'$ consists of all points of $\cP'$, then we have the two rows in the first block of Table \ref{tab_C3Remain}.
\end{proposition}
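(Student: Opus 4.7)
The plan is to convert the hypothesis into transitivity of $H$ on $\cP'$ and then invoke the classification of transitive subgroups of classical groups on singular or isotropic points from \cite{giudici2020subgroups}. Without loss of generality, assume $O_2'=\cP'$. The projection $\pi:\,\cP\rightarrow\cP'$, sending $\la v\ra_{\F_q}$ to $\la v\ra_{\F_{q^b}}$, is well-defined under the hypothesis $\cM_1=\cP$ and is $\Gamma^\#$-equivariant, hence $H$-equivariant. Since $O_2$ is a single $H$-orbit on $\cP$ whose image under $\pi$ is all of $\cP'$, the induced $H$-action on $\cP'$ is transitive.

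By Table \ref{tab_extfieldP'gt0}, the hypothesis $\cM_1=\cP$ restricts us to row 1 (the symplectic case), row 9 with $b=2$ (yielding $\cP=Q^-$), or row 10 with $b=2$ (yielding $\cP=Q^+$); in the last case, $\cP\ne Q^+(3,q)$ forces $d\ge 8$. The rank $r'$ of $\cP'$ equals $1$ only for row 1 with $d/b=2$ and for row 9 with $d=6$; otherwise $r'\ge 2$. Applying the relevant classification theorems of \cite{giudici2020subgroups} enumerates the transitive subgroups of $\Gamma(V',\kappa')$ on the points of $\cP'$, and together with the numerical constraints of Lemma \ref{lem_ParaCond} — namely that $\frac{q^r-1}{q-1}$ divides each $|O_i|$, that $\lcm(|O_1|,|O_2|)$ divides $|H|$, and that $d<1+\epsilon+\log_q(2\cdot|H|)$ — these restrictions leave only a short list of candidate triples $(q,b,d)$ with specified socles.

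For each surviving candidate, we verify by a direct computation (using Magma for the small cases) whether $H$ produces exactly two orbits on $\cP$ and determine $i_1,i_2$. Only the two configurations in the first block of Table \ref{tab_C3Remain} survive: $\cP=W(3,3)$ with $\cP'=W(1,9)$ and $\SL_2(5)\unlhd H$, giving two $5$-tight sets; and $\cP=Q^-(5,3)$ with $\PSL_2(7)\unlhd H$, giving two $14$-tight sets.

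The main obstacle lies in the rank-$1$ subcase, where $\cP'$ is either a projective line $\PG(1,q^b)$ or a Hermitian curve, and the lists of $2$-transitive subgroups of $\PGL_2(q^b)$ and of transitive subgroups of $\PGU_3(q)$ on the relevant point set are extensive. For each such candidate, one must separately analyze how the action lifts along the fibres of $\pi$ — each of size $\frac{q^b-1}{q-1}$ — and verify whether exactly two orbits are produced on $\cP$; all but the two listed configurations fail. For $r'\ge 2$ the restrictions in \cite{giudici2020subgroups} are much tighter, and the divisibility conditions of Lemma \ref{lem_ParaCond} rule out all but a handful of small cases, each resolvable by a short computer check.
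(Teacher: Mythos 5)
Your overall skeleton (transfer transitivity to $\cP'$, quote the point-transitive classification of \cite{giudici2020subgroups}, finish small cases by computer) matches the paper's, but the decisive reduction is missing, and it is exactly the step you flag as "the main obstacle" and then leave undone. The paper's proof hinges on a fusion/parity observation: writing $Z$, $\tilde Z$ for the scalar groups of $\GL(V)$ and $\GL(V')$ and $Y$ for the nonzero singular vectors of $V'$, the two $H$-orbits on $\cP$ correspond to two $HZ$-orbits on $Y$, while transitivity of $H$ on $\cP'$ makes $H\tilde Z$ transitive on $Y$; hence a scalar of $\F_{q^b}^*$ interchanges the two $HZ$-orbits, so $[\tilde Z:Z]=\frac{q^b-1}{q-1}$ must be even, forcing $q$ odd and $b$ even (and also $H\not\ge\Omega(V',\kappa')$). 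This single observation is what collapses the case analysis: in row 1 of Table \ref{tab_extfieldP'gt0} it makes $q^b$ an odd square, so Theorems 3.1 and 5.2 of \cite{giudici2020subgroups} leave only $(q,d/2,H^\infty)=(3,2,\SL_2(5))$, and in rows 9, 10 (with $b=2$) it restricts to odd $q$, leaving only the three triples $(3,3,\PSL_2(7))$, $(3,5,3.A_7)$, $(4,3,4.\PSL_3(4))$, after which the computations are genuinely short.

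Without that reduction, your claim that the classification plus Lemma \ref{lem_ParaCond} leaves "only a short list of candidate triples, each resolvable by a short computer check" is unsubstantiated and, as stated, not workable. In the rank-$1$ symplectic subcase ($\cP'=W(1,q^b)$, $\cP=W(2b-1,q)$) the point-transitive subgroups of $\PGamL_2(q^b)$ include infinite families, e.g.\ subgroups of semilinear $\GamL_1$-type and the even-characteristic families, for arbitrary $q$ and $b$; and the crude bound $\dim(V)<1+\epsilon+\log_q(2|H_0|)$ does not remove them, because the kernel of the projection $H_0\to\textup{P}\Gamma(V',\kappa')$ already contributes a factor of order up to $\frac{q^b-1}{q-1}$, so for the normalizer of a Singer-type subgroup the inequality $|H_0|\ge\frac{1}{2}|\cP|$ is essentially automatic. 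Deciding for each such family whether the fibres of $\pi$ (your size-$\frac{q^b-1}{q-1}$ fibres) are cut into exactly two $H$-orbits is precisely the content you defer, and it cannot be settled by finitely many Magma runs; the paper settles it in one stroke by the even-index argument above. You are circling the right object — the fibre structure over $\cP'$ — but the proof needs the explicit argument that the two orbits are fused by a field scalar, with its parity consequence, before the quoted classification theorems produce the two rows of Table \ref{tab_C3Remain} (plus the $3.A_7$ case, which must then be discarded here because its orbits are $3$-ovoids rather than tight sets).
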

\begin{proof}
We assume without loss of generality that $O_1'=\cP'$, and let $Y$ be the set of nonzero isotropic or singular vectors of $(V',\kappa')$. Let $Z,~\tilde{Z}$ be the center of $\GL(V)$ and $\GL(V')$ respectively. Since $H$ is transitive on $O_1$, it is transitive on the points of $\cP'$. Therefore, $HZ$ is not transitive on $Y$ while $H\tilde{Z}$ does. The $HZ$-orbits on $Y$ are in bijection with the $H$-orbits on the points of $\cP$, so $HZ$ has two orbits $Y_1$, $Y_2$ on $Y$. Since $HZ$ is normal in $H\tilde{Z}$, there is an element $\eta\in \tilde{Z}$ such that $Y_1\eta=Y_2$. It follows that $[H\tilde{Z}:\,HZ]$ is even. In particular, $[\tilde{Z}:\,Z]=\frac{q^b-1}{q-1}$ is even which forces $q$ to be odd and $b$ to be even. Since $\Omega(V',\kappa')$ is transitive on $Y$ by Witt's Lemma, $H$ does not contain $\Omega(V',\kappa')$. By the choice of $b$, $H$ is not a subgroup of $\Gamma(V',\kappa')$ of class $\cC_3$.

Suppose that we are in the situation of row 1 of Table \ref{tab_extfieldP'gt0}, so that $\cP=W(d-1,q)$ and $\cP'=W(d/b-1,q^b)$ with $d/b$ even. Since $H$ is transitive on the points of $\cP'$ and $q^b$ is an odd square, we deduce that $(q,d/2,H^\infty)=(3,2,\SL_2(5))$ by \cite[Theorems~3.1,~5.2]{giudici2020subgroups}. We use the Atlas data \cite{Atlas} to verify that $\SL_2(5)$ has exactly two orbits of sizes $40$ on the nonzero vectors of $\kappa'$.  Both orbits are $\F_3^*$-invariant and they yield $5$-tight sets of $\cP=W(3,3)$ respectively.

Suppose that we are in the situation of row 9 or 10 of Table \ref{tab_extfieldP'gt0} with $b=2$, so that $\cP=Q^{\mp}(d-1,q)$ and $\cP'=H(d/2-1,q^2)$. Since $\cP$ has rank $r\ge 2$ and we do not consider $Q^+(3,q)$, we have $d\ge 6$. Since $H$ is transitive on the points of $\cP'$ and $q$ is odd, we deduce that $(d/2,q,H^\infty)$ is one of $(3,3,\PSL_2(7))$, $(3,5,3.A_7)$, $(4,3,4.\PSL_3(4))$ by \cite[Theorem~4.1]{giudici2020subgroups}. For the triple $(3,5,3.A_7)$, the two $3.A_7$-orbits are both $3$-ovoids of $Q^-(5,5)$. For the triple $(3,3,\PSL_2(7))$, $\PSL_2(7)$ has four orbits of sizes $56$ which are $\F_3^*$-invariant and its normalizer in $\Delta^\#(V',\kappa')$ has two orbits which are both $14$-tight sets of $Q^-(5,3)$. For the triple $(4,3,4.\PSL_3(4))$, $4.\PSL_3(4)$ is transitive on the set $Y$ of nonzero singular vectors of $\cP=Q^+(7,3)$. This completes the proof.
\end{proof}

\begin{proposition}\label{prop_C3_OipProper_r'eq1}
Suppose that $\cM_1=\cP$ and $b>1$, and write $r'$ for the rank of $\cP'$. Let $O_1'$, $O_2'$ be as in \eqref{eqn_C3_Oip}, and assume that they are both proper subsets of $\cP'$. If $r'=1$, then either $(\cP,\cP')=(W(3,q),W(1,q^2))$ and $H$ occurs in (T1-a) of Theorem \ref{thm_TS}, or it occurs in the second block of Table \ref{tab_C3Remain}.
\end{proposition}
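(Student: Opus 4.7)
The plan is to first reduce, by field-reduction bookkeeping, to the two possible pairs $(\cP,\cP')$, and then treat each pair by invoking the classification of subgroups of the associated low-dimensional semisimilarity group. Combining the hypothesis $\cM_1=\cP$ with $b>1$, which by the discussion preceding this proposition restricts us to rows 1, 9 (with $b=2$) or 10 (with $b=2$) of Table \ref{tab_extfieldP'gt0}, with the rank condition $r'=1$ and the standing assumptions $r\ge 2$ and $\cP\ne Q^+(3,q)$: row 10 with $b=2$ would force $\cP=Q^+(3,q)$ and is excluded, while row 1 gives $(\cP,\cP')=(W(3,q),W(1,q^2))$ with $d=4$, and row 9 gives $(\cP,\cP')=(Q^-(5,q),H(2,q^2))$ with $d=6$. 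The problem thus splits into a symplectic case and a unitary case.

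In the symplectic case $\cP=W(3,q)$ I would show that $H$ in fact stabilizes a nondegenerate orthogonal decomposition $V=V_1\perp V_2$ with $\dim V_i=2$, so that it falls under (T1-a). The natural projection $\pi:\cP\to\cP'=\PG(1,q^2)$, $\la v\ra_{\F_q}\mapsto\la v\ra_{\F_{q^2}}$, is $H$-equivariant with fibers of size $q+1$, so the two $H$-orbits $O_1,O_2$ project to at most two $H$-orbits on $\PG(1,q^2)$. Invoking Dickson's classification of subgroups of $\PGL_2(q^2)$, extended by field automorphisms, that have at most two orbits on the projective line, one identifies the possibilities for $H$ in $\textup{P}\Gamma^\#$; in each surviving situation either an $H$-invariant unordered pair of $\F_{q^2}$-points (from an imprimitive action) or an $H$-invariant $\F_q$-rational subline (from a subfield subgroup) produces an $H$-invariant nondegenerate 2-subspace of $(V,\kappa)$, whose orthogonal complement yields the desired decomposition. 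The handful of small sporadic exceptions in Dickson's list are then checked directly against the numerical constraints of Lemma \ref{lem_ParaCond}.

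In the unitary case $\cP=Q^-(5,q)$, $\cP'=H(2,q^2)$ is the Hermitian curve of $q^3+1$ points and $\textup{P}\Gamma^\#$ embeds into $\PGamU_3(q^2)$. The strategy is to apply Lemma \ref{lem_ParaCond}, which forces $(q+1)\mid\gcd(|O_1|,|O_2|)$, $\lcm(|O_1|,|O_2|)\mid|H_0|$ and $|H_0|>q^4/2$, and to cross this with the classification of maximal subgroups of $\PGU_3(q^2)$ due to Mitchell and Hartley as recorded in \cite{bray2013maximal}. Running through the geometric and non-geometric maximal subgroup types leaves the short list of candidates appearing in Table \ref{tab_C3Remain}: the normalizer $N_\Gamma(C_{q+1}^3)$ of a maximal torus, the normalizer $N_\Gamma(3_+^{1+2})$ of an extraspecial 3-subgroup, and the almost simple exceptions $\PSL_2(7)$, $3.A_6$, and $3.A_7$. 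For each candidate the orbit structure on $\cP'$ is computed, then lifted to $\cP$ via the $(q+1)$-to-one projection $\pi$, and the tight-set relation $i_1+i_2=q^3+1$ fixes the parameters listed in the table.

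The main obstacle lies in the unitary case: combining the numerical restrictions from Lemma \ref{lem_ParaCond} with the Mitchell--Hartley list and verifying the precise two-orbit structure for each sporadic small-$q$ candidate is delicate, and several of the cases (notably the $3.A_7$, $\PSL_2(7)$ and extraspecial rows for $q=5$) are most cleanly checked by direct computation in Magma, in the spirit of Lemmas \ref{lem_H_TrSinNonsin} and \ref{lem_Q-(2m,q)sub}.
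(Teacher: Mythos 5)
Your reduction to the two pairs $(W(3,q),W(1,q^2))$ and $(Q^-(5,q),H(2,q^2))$, and your treatment of the unitary case (running through the maximal subgroups of the $3$-dimensional unitary group, crossing with Lemma \ref{lem_ParaCond}, and checking small cases by computer) are in the same spirit as the paper. The genuine gap is in the symplectic case. You claim every surviving $H$ stabilizes a nondegenerate $2$-subspace of $(V,\kappa)$, obtained either from an $H$-invariant pair of $\F_{q^2}$-points or from an $H$-invariant $\F_q$-rational subline, and hence falls under (T1-a). Neither construction works: an $\F_{q^2}$-point of $V'$ is a $2$-dimensional $\F_q$-subspace on which $\kappa=\tr_{\F_{q^2}/\F_q}\circ\,\kappa'$ vanishes identically (since $\kappa'$ is alternating), so an invariant pair of such points gives two \emph{totally isotropic} lines, not nondegenerate ones, and the paper's analysis of the totally singular imprimitive case in Proposition \ref{prop_C2} shows such a configuration cannot yield two orbits when $\kappa$ is symplectic; and an invariant subline of $\PG(1,q^2)$ does not produce any invariant $2$-dimensional $\F_q$-subspace at all -- it corresponds to the family of $q+1$ subspaces $W\lambda$, $\lambda\in\F_{q^2}^*/\F_q^*$, which $H$ only permutes. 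Consequently your symplectic case wrongly collapses to (T1-a) and misses exactly the symplectic entries of the second block of Table \ref{tab_C3Remain}: the subfield-type examples $\SL_2(q)\unlhd H$ with $i_1,i_2=q+1,q^2-q$ and $\SL_2(q_1^2)\unlhd H$ with $q=q_1^3$ even, and the class-$\cS$ example $2.A_5\unlhd H$ on $W(3,7)$ with $i_1,i_2=20,30$. The paper gets these by analysing $H$ as a subgroup of $\Gamma(V',\kappa')$ through its Aschbacher class (in particular \cite[Case~(IIc)]{LiebeckRank3} for the $\cC_5$ subcases), not by producing an invariant decomposition over $\F_q$.

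Two smaller points. In the unitary case you include $3.A_7$ among the candidates ``appearing in Table \ref{tab_C3Remain}'', but its two orbits on $Q^-(5,5)$ are $3$-ovoids rather than tight sets, so under the standing tight-set hypothesis of this proposition it must be discarded (it belongs to Theorem \ref{thm_mOvoid} and Table \ref{tab_TrOvoid_C3Rem}). Also, having two orbits on $\cP'$ is only a necessary condition: one still has to verify that each of these orbits remains a single $H$-orbit on the $\frac{q^b-1}{q-1}$-point fibres making up $\cP$, which is precisely where several candidates fail and where the parity/field restrictions on $q$ in the table come from; your plan of ``compute orbits on $\cP'$ and lift'' should be formulated, and the computer checks run, directly on $\cP$.
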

\begin{proof}
The set $\cP'$ is nonempty by Lemma \ref{lem_C3_P'ne0}. There are two such cases that satisfy the hypothesis in the proposition: (1) $\cP'=W(1,q^2)$ and $\cP=W(3,q)$, (2) $\cP'=H(2,q^2)$ and $\cP=Q^-(5,q)$. Let $\tilde{Z}$ be the center of $\GL(V')$. Let $s_1$, $s_2$ be the sizes of the two $H$-orbits on $\cP'$. We have $s_1+s_2=|\cP'|$, $\lcm(s_1,s_2)$ divides $[H\tilde{Z}:\,\tilde{Z}]$, and so $[H\tilde{Z}:\,\tilde{Z}]\ge\frac{1}{2}|\cP'|$. The group $H$ is an irreducible subgroup of $\Gamma(V',\kappa')$ which is not of class $\cC_3$ by the choice of $b$, and it is not of $\cC_4$, $\cC_7$ by comparing dimensions. We consider the two cases separately.

We first consider the case where $\cP'=W(1,q^2)$ and $\cP=W(3,q)$. Take a basis $e_1,e_2$ of $V'$ over $\F_{q^2}$, and let $\phi$ be the semilinear mapping of $V'$ such that $x_1e_1+x_2e_2\mapsto x_1^pe_1+x_2^pe_2$. If $H$ is of class $\cS$, then we have $H^\infty=2.A_5$ and $q=p\equiv 3\pmod{10}$ by \cite[Table~8.2]{bray2013maximal}. We deduce from $[H\tilde{Z}:\,\tilde{Z}]\ge\frac{1}{2}|\cP'|$ that $q\in\{3,7\}$. The group $2.A_5$ is transitive on $\cP'$ for $q=3$ and has two orbits on $\cP'$ of sizes $20,~30$ respectively for $q=7$. The latter yields $40$, $60$-tight sets of $\cP=W(3,7)$. If $H$ is of class $\cC_2$ as a subgroup of $\Gamma(V',\kappa')$, then it is of the same class as a subgroup of $\Gamma(V,\kappa)$ which is covered by (T1-a) of Theorem \ref{thm_TS}. It is not of class $\cC_6$ as a subgroup of $\Gamma(V',\kappa')$ by \cite[Table~4.6.B]{kleidman1990subgroup} and the fact that $q^2$ is not a prime. If it is of class $\cC_8$, then the condition $\textup{P}\Gamma^\#|\ge\frac{1}{2}|\cP|$ is not satisfied.
If $H$ is of class $\cC_5$, then it stabilizes a subline of size $q_0+1$, where $q^2=q_0^s$ with $s$ prime. By \cite[Case~(IIc)]{LiebeckRank3} and the fact $\dim(V')=2$, we have $s\in\{2,3\}$ and $H^\infty=\SL_2(q^{2/s})$. If $s=2$, then the normalizer of $\SL_2(q)$ in $\Delta^\#(V',\kappa')$ is $\GL_2(q)\F_{q^2}^*$ which has two orbits on the nonzero vectors of $V'$. If $s=3$, then $(q,q_0)=(q_1^3,q_1^2)$ for some integer $q_1$, and $\F_q\cap\F_{q_0}=\F_{q_1}$. We write $K_1$, $K$ for the normalizers of $\SL_2(q_1^2)$ in $\Delta^\#(V')$ and $\Gamma^\#(V')$ respectively, so that $K=K_1:\la\phi\ra$. We have $K_1=\GL_2(q_1^2)\F_{q^2}^*$ or $K_1=\left(\frac{1}{2}\GL_2(q_1^2)\right)\F_{q^2}^*$ according as $p$ is even or odd. Here, $\frac{1}{2}\GL_2(q_1^2)$ is the unique subgroup of index $2$ in $\GL_2(q_1^2)$ for odd $p$. For $q$ even, $K_1$ has two orbits on nonzero vectors by the arguments in \cite[Case~(IIc)]{LiebeckRank3}. For $q$ odd, the group $K_1$ has one orbit $\cN_1$ of size $q_1^2+1$ and two orbits $\cN_2$, $\cN_3$ of sizes $\frac{1}{2}(q_1^6-q_1^2)$ on the points of $\PG(V')$ by the same arguments as in \cite[Case~(IIc)]{LiebeckRank3}. Write $q_1=p^{2^ak}$ with $k$ odd, and take an element $\eta\in\F_{p^{3k}}\setminus\F_{q_1^2}$. Then $\la e_1+\eta e_2\ra_{\F_{q^2}}$ is stabilized by the Sylow $2$-subgroup of $\la\phi\ra$. We deduce that the $K$-orbit of $\la e_1+\eta e_2\ra_{\F_{q^2}}$ equals one of $\cN_2$, $\cN_3$ but not their union. We conclude that $H$ has two orbits on $\cP'$ only if $q$ is odd for $s=3$.

We suppose that $\cP'=H(2,q^2)$ and $\cP=Q^-(5,q)$ in the sequel. If $H$ is of class $\cC_2$, then it preserves a decomposition $\cD':\,V'=\la e_1\ra_{\F_{q^2}}\oplus\la e_2\ra_{\F_{q^2}}\oplus\la e_3\ra_{\F_{q^2}}$ with each $e_i$ nonsingular. Let $N_{\Gamma'}(\cD')$ be the normalizer of $\cD'$ in $\Gamma(V',\kappa')$. It has no subgroup that has two orbits on the nonsingular points of $\cP'$ for $q=2$, so assume that $q>2$. Let $\cN_i'$ be the set of singular points $x_1e_1+x_2e_2+x_3e_3$ with $i$ nonzero coordinates for $i=2$, $3$, which are $N_{\Gamma'}(\cD')$-invariant. Then $|\cN_2'|=3(q+1)$,  $|\cN_3'|=(q+1)^2(q-2)$, and they are the two $H$-orbits and thus also the two $N_{\Gamma'}(\cD')$-orbits. Since $|\cN_2|$ and $|\cN_3|$ both divide $|N_{\Gamma'}(\cD')|=6\cdot 2f\cdot(q-1)(q+1)^3$, we deduce that $\lcm(3,~(q+1)(q-2))$ divides $12f(q+1)$. This holds only for $p^f\in\{3,5,2^2,2^3,2^5\}$, and the group $N_{\Gamma'}(\cD')$ is transitive on both $\cN_2'$ and $\cN_3'$ upon direct check. The sets $\cN_2'$, $\cN_3'$ yield tight sets for each such $q$.

If $H$ is of class $\cC_5$, there are two cases: (i) $q=q_0^2$ and $H$ stabilizes a subplane $\pi'$ of order $q_0$ that intersects $\cP'$ in a conic $\cC'$, (ii) $q=q_0^r$ for some odd prime $r$ and $H$ stabilizes a subplane $\pi'$ of order $q_0^2$ that intersects $\cP'$ in a unital $\cU'=H(2,q_0^2)$. For (i), the set of singular points outside $\cC'$ that lie on a secant line to $\cC'$ is $H$-invariant and has size $\binom{q+1}{2}(q-1)<|\cP'|-|\cC'|$, so there are at least two orbits outside $\cC'$. For (ii) there are $n_1=q_0^4+q_0^2+1-(q_0^3+1)=q_0^2(q_0^2-q_0+1)$ secant lines to $\cU'$ in $\pi'$, and the set of singular points outside $\cU'$ that those lines cover is $H$-invariant and has size $n_1(q-q_0)<|\cP'|-|\cU'|$. Therefore, $H$ is not of class $\cC_5$.

If $H$ is of class $\cC_6$, then we deduce from $d=3$ that it normalizes an irreducible subgroup $R=3_+^{1+2}$. By \cite[Table~4.6B]{bray2013maximal} we have $q=p$ and $p\equiv 2\pmod{3}$. The order of $H\tilde{Z}$ divides  $3^2\cdot|\Sp_2(3)|\cdot(q^2-1)$, so $216\ge\frac{1}{2}|\cP'|=\frac{1}{2}(p^3+1)$. It holds only for $p\in\{2,5\}$. For $p=2$ the group $R$ is transitive on the nonzero singular vectors. For $p=5$, there is a unique conjugacy class of $R$ in $\Gamma=\Gamma(V,\kappa)$ and its normalizer $N_\Gamma(R)$ has two orbits on $\cP=Q^-(5,5)$ that are respectively $54,~72$-tight sets.

If $H$ is of class $\cS$, then by \cite[Table~8.6]{bray2013maximal} we have one of the following cases: (i) $H^\infty\le\PSL_2(7)$, $q=p\equiv 3,5,6\pmod{7}$, (ii) $H^\infty\le 3.A_6$, $q=p\equiv 11,14\pmod{15}$ or $q=5$, (iii) $H^\infty\le 3.A_7$, $q=5$. For (i) we have $H^\infty=\PSL_2(7)$, and we deduce from $[H\tilde{Z}:\,\tilde{Z}]\ge\frac{1}{2}|\cP'|$ that $q\in\{3,5\}$. The group $\PSL_2(7)$ is transitive on $\cP'$ for $q=3$ and has two orbits of sizes $42,~84$ on $\cP'$ for $q=5$. In the latter case we obtain $42,~84$-tight sets of $Q^-(5,5)$. For (ii) we deduce from $[H\tilde{Z}:\,\tilde{Z}]\ge\frac{1}{2}|\cP'|$ that $q\in\{5,11\}$. There is no integer $i$ such that $\lcm(i,q^3+1-i)$ divides $|\Aut(A_6)|$ for $q=11$.  The group $3.A_6$ has two orbits of sizes $36,~90$ on $\cP'$ for $q=5$, and they yield $36,~90$-tight sets of $Q^-(5,5)$ respectively. The only quasisimple subgroup of $3.A_6$ whose normalizer is transitive on both orbits is $3.A_6$, so $H^\infty=3.A_6$. For (iii), $3.A_7$ has two orbits on nonzero singular points that yield $3$-ovoids of $Q^-(5,5)$ respectively. This completes the proof.
\end{proof}

\begin{proposition}\label{prop_C3_OipProper}
Suppose that $\cM_1=\cP$ and $b>1$, and write $r'$ for the rank of $\cP'$. Let $O_1'$, $O_2'$ be as in \eqref{eqn_C3_Oip}, and assume that they are both proper subsets of $\cP'$. If $r'\ge 2$, then $O_1'$, $O_2'$ form the two $H$-orbits on $\cP'$ and are both tight sets with the same parameters as $O_1$, $O_2$.
\end{proposition}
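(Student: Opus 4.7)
The plan is to exploit the $H$-equivariant surjection $\pi : \cP \to \cP'$ sending $\la v\ra_{\F_q} \mapsto \la v\ra_{\F_{q^b}}$. This map is well-defined because $\cM_1 = \cP$, and it commutes with the $H$-action because $H \le \Gamma^{\#}(V',\kappa')$; every fiber of $\pi$ has the constant size $\frac{q^b-1}{q-1}$.

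First I would identify the $H$-orbits on $\cP'$. By $H$-equivariance each image $O_i' = \pi(O_i)$ is an $H$-orbit on $\cP'$, and $\cP' = O_1' \cup O_2'$. The hypothesis that $O_1', O_2'$ are both proper forces $O_1' \ne O_2'$ (otherwise their common value would be all of $\cP'$), so they are disjoint and constitute exactly the two $H$-orbits on $\cP'$. Comparing the partitions $\cP = O_1 \sqcup O_2$ and $\cP = \pi^{-1}(O_1') \sqcup \pi^{-1}(O_2')$ together with the containments $O_i \subseteq \pi^{-1}(O_i')$, I would deduce the fiber identity $O_i = \pi^{-1}(O_i')$. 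Counting then gives $|O_i'| = \tfrac{q-1}{q^b-1}|O_i| = i_i \cdot \tfrac{(q^b)^{r'}-1}{q^b-1}$, which is exactly the cardinality of an $i_i$-tight set of $\cP'$.

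Since $r' \ge 2$ and $H$ has two orbits on $\cP'$, Section \ref{subsec_basic} tells me each $O_i'$ is an intriguing set of $\cP'$, hence either an $i'$-tight set or an $m$-ovoid. In the tight-set case the size comparison forces $i' = i_i$, the desired conclusion. To exclude the $m$-ovoid alternative I would invoke Kelly's field-reduction correspondence \cite{KellyCons}: pulling back an $m$-ovoid of $\cP'$ through $\pi^{-1}$ yields an $m$-ovoid of $\cP$, so the fiber identity would make $O_i$ simultaneously an $i_i$-tight set and an $m$-ovoid of $\cP$. Equating the constants $h_1, h_2$ provided by the two defining formulas in Section \ref{subsec_basic} gives $q^{r-1} = 1 - \theta_{r-1}$, impossible for $r \ge 2$ since the right side is non-positive while $q^{r-1} \ge q$. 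This contradiction forces $O_i'$ to be an $i_i$-tight set, matching the parameter of $O_i$.

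The main obstacle is the last step, i.e.\ the appeal to Kelly's correspondence to preserve the tight/ovoid dichotomy under field reduction; once that is granted, the exclusion of the ovoid alternative is an immediate $h_1$-versus-$h_2$ computation. Everything else reduces to elementary $H$-equivariance of $\pi$ and the fiber count.
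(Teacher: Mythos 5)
Your proposal is correct, and its skeleton (the $H$-equivariant projection $\pi$, identification of $O_1',O_2'$ as the two $H$-orbits, the fiber identity $O_i=\pi^{-1}(O_i')$, and the size count) coincides with the first half of the paper's proof. Where you diverge is in how the type and parameter of $O_i'$ are pinned down. The paper transfers the intersection numbers \emph{downward}: for rows 9 and 10 of Table \ref{tab_extfieldP'gt0} (with $b=2$) it proves the displayed counting identity $|P^\perp\cap O_i|=q\cdot|P'^{\perp}\cap O_i'|+|O_i'|$, from which it reads off directly that $O_i'$ is an intriguing set of $\cP'$ of the same type and parameter as $O_i$, citing Kelly's table only for the symplectic row 1. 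You instead get the intriguing-set property of $O_i'$ for free from the two-orbit fact in Section \ref{subsec_basic}, and then exclude the $m$-ovoid alternative by the \emph{upward} direction: an $m$-ovoid of $\cP'$ field-reduces to an ovoid of $\cP$, which would make the proper nonempty set $O_i$ simultaneously a tight set and an ovoid, contradicting $h_1-h_2=q^{r-1}>0>1-\theta_{r-1}$ (properness matters here, since the full point set genuinely is both). This is a legitimate shortcut: it avoids proving any converse to Kelly's construction. The one point you should make explicit is that, since $\cM_1=\cP$, only rows 1, 9 ($b=2$) and 10 ($b=2$) occur, so the field reductions you invoke are $W(2r'-1,q^b)\subset W(2r-1,q)$ and $H(d/2-1,q^2)\subset Q^{\mp}(d-1,q)$; Kelly's results (as used by the paper in Proposition \ref{prop_C3_M1prop} for exactly these rows) do cover the ovoid-to-ovoid transfer there, and if one prefers not to rely on the citation, the same one-line perp count that the paper records ($h_1-h_2=q(h_1'-h_2')$, resp.\ $q^{b-1}(h_1'-h_2')$ in the symplectic case) verifies it. With that verification in place, your argument and the size computation $|O_i'|=i_i\frac{q^{br'}-1}{q^b-1}$ give the stated conclusion.
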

\begin{proof}
Since $H$ is transitive on  both $O_1$ and  $O_2$, we deduce that $O_1'$, $O_2'$ are the two $H$-orbits on $\cP'$. Moreover, we have $\{v\in V\setminus\{0\}\mid \la v\ra_{\F_q}\in O_i\}=\{v\in V'\setminus\{0\}\mid\la v\ra_{\F_{q^b}}\in O_i'\}$ for $i=1,2$. In particular, we have $O_i=\{\la \lambda v\ra_{\F_q}\mid\lambda\in\F_{q^b}^*,\la v\ra_{\F_{q^b}}\in O_i'\}$ for $i=1,~2$. For row 1 of Table \ref{tab_extfieldP'gt0} where both $\kappa$ and $\kappa'$ are symplectic, $O_i$ and $O_i'$ are intriguing sets of the same type with the same parameters for $i=1,~2$ by \cite[Table~1]{KellyCons}. For row 9 or 10 of Table \ref{tab_extfieldP'gt0} with $b=2$, $\kappa$ is orthogonal and $\kappa'$ is unitary. We have $\kappa(v)=\kappa'(v,v)$, and the associated bilinear form of $\kappa$ is $\tr_{\F_{q^2}/\F_q}\circ\kappa'$ up to a scalar. It is routine to deduce that
\[
 \#\{\la\lambda w\ra_{\F_q}\mid \lambda\in\F_{q^2}^*,\la w\ra_{\F_{q^2}}\in O_j', \tr_{\F_{q^2}/\F_q}(\kappa'(\lambda w,v))=0\}=
 q\cdot \#\{\la w\ra_{\F_{q^2}}\in O_j'\mid\kappa'(w,v)=0\}+|O_j'|
\]
for $i,~j\in\{1,2\}$ and $\la v\ra_{\F_q}\in O_i$. It then readily follows that $O_1',~O_2'$ are intriguing sets of the same type and have the same parameters as $O_1,~O_2$. This completes the proof.
\end{proof}

\begin{remark}\label{rem_C3}
Suppose that $b>1$, and define $O_1',~O_2'$ as in \eqref{eqn_C3_Oip}. Let $r,~r'$ be the rank of $\cP$ and $\cP'$ respectively. By Lemma \ref{lem_C3_P'ne0}, Propositions \ref{prop_C3_M1prop}-\ref{prop_C3_OipProper_r'eq1}, it remains to consider the cases where $\cM_1=\cP$, $r'\ge 2$ and $O_1',~O_2'$ are proper subsets of $\cP'$. Since $\cM_1=\cP$, we are in  rows 1, 9 (with $b=2$) and 10 (with $b=2$) of Table \ref{tab_extfieldP'gt0}.
By Proposition \ref{prop_C3_OipProper}, each $O_i'$ is a tight set of $\cP'$ with the same parameter as $O_i$ for $i=1,~2$.
\end{remark}

We assume that $b=1$ for now, and will return to the three remaining $\cC_3$ cases mentioned in Remark \ref{rem_C3} after we handle the Aschbacher classes $\cC_2$, $\cC_4,\ldots,\cC_8$. If $H$ is of class $\cC_8$, then $\cP=W(d-1,q)$ with $q$ even and $H$ stabilizes a nondegenerate quadric $\cQ$. The quadric $\cQ$ is a tight set or an $m$-ovoid of $W(d-1,q)$ according as it is hyperbolic or elliptic. This yields (T4) of Theorem \ref{thm_TS}, and we refer the reader to Section \ref{subsec_grp_lems} for the groups of semisimilarities of $Q^+(d-1,q)$ that are transitive on both $\cQ$ and its complement.

\begin{lemma}\label{lem_C3_allCorNon}
Suppose that  $q$ is a prime power.
\begin{enumerate}
\item[(1)] If $q\ge7$ and $3\le s\le 5$, then there is a vector $u\in\F_q^s$ with all coordinates nonzero such that $\sum_{i=1}^su_i^2=0$.
\item[(2)] If either $q\ge 3$ and $s\in\{2,3,4\}$, or $q=2$ and $s\in\{2,4,6\}$, then there is a vector $u\in\F_{q^2}^s$ with all coordinates nonzero such that $\sum_{i=1}^su_i^{q+1}=0$.
\end{enumerate}
\end{lemma}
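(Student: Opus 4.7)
The plan is to apply a union-bound argument to the number of solutions of the given form equation, combined with an explicit construction for the small-field cases.

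For part (1), let $N_k$ denote the number of vectors $v\in\F_q^k$ with $\sum_{i=1}^k v_i^2=0$. By a union bound over which coordinate is zero, the number of solutions with every coordinate nonzero is at least $N_s-sN_{s-1}$. Standard point-count formulas for finite quadrics give $N_k=q^{k-1}$ when $k$ is odd (or when $q$ is even), and $N_k\in\{q^{k-1}+(q-1)q^{(k-2)/2},\, q^{k-1}-(q-1)q^{(k-2)/2}\}$ when $k$ is even and $q$ is odd, with the sign determined by whether $(-1)^{k/2}$ is a square in $\F_q$. Taking the worst case gives $N_3-3N_2\ge q^2-3(2q-1)$, $N_4-4N_3\ge (q^3-q^2+q)-4q^2$, and $N_5-5N_4\ge q^4-5(q^3+q^2-q)$, each of which is positive for $q\ge 7$. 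For $s=3$ a cleaner geometric argument is available: the conic $\{x^2+y^2+z^2=0\}\subset\PG(2,q)$ has $q+1$ projective points, of which at most six lie on some coordinate axis (at most two on each of the three axes, corresponding to the zero locus of $y^2+z^2$, $x^2+z^2$, or $x^2+y^2$ inside the conic), so $q+1\ge 8$ forces the existence of a point with all coordinates nonzero.

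For part (2) with $q\ge 3$, the same strategy is applied to $M_k:=|\{v\in\F_{q^2}^k : \sum_{i=1}^k v_i^{q+1}=0\}|$. Since this variety is the affine cone over a nondegenerate Hermitian variety, $M_k=1+(q^2-1)\cdot|H(k-1,q^2)|$ with the standard point count $|H(k-1,q^2)|=(q^k-(-1)^k)(q^{k-1}-(-1)^{k-1})/(q^2-1)$. Substituting the explicit values of $M_1,M_2,M_3,M_4$ and applying the union-bound estimate $M_s-sM_{s-1}$ yields a polynomial in $q$ that is positive for $q\ge 3$ in each of $s\in\{2,3,4\}$; for instance, one computes $M_3-3M_2=q^5-4q^3-2q^2+3q>0$ for $q\ge 3$. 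The case $s=2$ is also immediate from the surjectivity of the norm map $\F_{q^2}^*\to\F_q^*$, $u\mapsto u^{q+1}$, which allows us to solve $u_1^{q+1}=-u_2^{q+1}$ for any fixed nonzero $u_2$. A full inclusion-exclusion computation produces the clean identity $(q^2-1)(q+1)^2(q-2)$ for the exact solution count when $s=3$, confirming the sharpness of the hypothesis $q\ge 3$ (the count vanishes at $q=2$).

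The remaining cases $q=2$ with $s\in\{2,4,6\}$ admit an explicit construction: every $u\in\F_4^*$ satisfies $u^{q+1}=u^3=1$, so the all-ones vector $(1,1,\ldots,1)\in(\F_4^*)^s$ yields $\sum_{i=1}^s u_i^{q+1}=s\equiv 0\pmod 2$ whenever $s$ is even. The only real technical subtlety is the sign bookkeeping for the quadric type in the formula for $N_k$ (which depends on whether $(-1)^{k/2}$ is a square in $\F_q$), but since we only need a lower bound obtained by taking the smaller of the two possible values of $N_k$, the leading term $q^{s-1}$ comfortably dominates the error for $q\ge 7$ and the verification reduces to routine polynomial inequalities.
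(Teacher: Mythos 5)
Your proof is correct and follows essentially the same route as the paper: a union bound over the coordinate hyperplanes using point counts of quadrics (resp.\ Hermitian varieties), with your affine counts $N_s-sN_{s-1}$ and $M_s-sM_{s-1}$ being just the affine-cone version of the paper's projective estimates such as $q+1-6>0$ and $(q^2+1)(q+1)-5(q+1)^2>0$. The paper only writes out case (1) and says (2) is similar, so your explicit Hermitian computation, the all-ones vector for $q=2$, and the exact count $(q^2-1)(q+1)^2(q-2)$ at $s=3$ are welcome but inessential additions.
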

\begin{proof}
The proofs of (1) and (2) are similar, so we only give details for (1) here. The quadratic form $Q(x)=x_1^2+x_2^2+x_3^2$ on $\F_q^3$ is nondegenerate, and the corresponding quadric has $q+1$ singular points. Its intersection with each of the three planes defined by $x_i=0$, $1\le i\le 3$, has at most two points respectively. We deduce that there are at least $q+1-6>0$ singular points with all three coordinates nonzero. By a similar argument, we deduce that there are at least $q^2+1-4(q+1)>0$ and $(q^2+1)(q+1)-5(q+1)^2>0$ singular points with all coordinates nonzero for $s=4,~5$ respectively. The claim then follows.
\end{proof}
\begin{proposition}\label{prop_C2}
Suppose that $b=1$ and $H$ stabilizes a subspace decomposition $\cD:\,V=V_1\oplus \cdots\oplus V_t$, where $t\ge 2$ and $\dim(V_i)=m$ for each $i$. Then we have  one of the cases in Table \ref{tab_TS_C2} and (T1) of Theorem \ref{thm_TS}.
\end{proposition}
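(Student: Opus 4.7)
The plan is to follow Aschbacher's bifurcation of class $\cC_2$: either (A) the summands $V_i$ are mutually perpendicular and each nondegenerate, or (B) $t=2$ with both $V_1$ and $V_2$ maximal totally singular or isotropic (the latter possible only when $\cP$ is one of $W(2m-1,q)$, $H(2m-1,q)$, $Q^+(2m-1,q)$). Throughout, since $H$ is irreducible it permutes the $V_i$ transitively, so the support size $|S(v)| := \#\{i : v_i \neq 0\}$ of $v = v_1 + \cdots + v_t$ is an invariant of the $H$-orbit of $\la v\ra$.

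For case (A), the first step is to force $m=1$ or $t=2$. Suppose $m\ge 2$ and $t\ge 3$. If some $V_i$ contains a singular vector --- automatic when $\kappa$ is symplectic, unitary, or orthogonal of non-anisotropic sign --- then combining singular vectors from one, two, and three distinct $V_j$'s furnishes singular points of support sizes $1,2,3$, giving three distinct $H$-orbits. The only remaining possibility is $\kappa$ orthogonal with every $V_i$ anisotropic, forcing $m=2$ and each $V_i$ elliptic; here Lemma \ref{lem_C3_allCorNon} yields singular combinations with all coordinates nonzero, producing support sizes $2$ and $3$ simultaneously. Hence $t=2$ or $t=d$ (i.e., $m=1$).

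For the $m=1$ subcase, each $V_i=\la e_i\ra$ is anisotropic and $N_\Gamma(\cD)\le \GamL_1(q)\wr S_d$ has order roughly $f\cdot d!\,(q-1)^{d-1}$; Lemma \ref{lem_ParaCond} then bounds $q$ and $d$ severely via $d<1+\epsilon+\log_q(2|H_0|)$ together with the divisibility of $\lcm(|O_1|,|O_2|)$ by $(q^r-1)/(q-1)$, restricting $(q,d)$ to a short finite list which computer enumeration (in the spirit of Remark \ref{rem_classS_Smethod}) narrows down to exactly the examples of Table \ref{tab_TS_C2}. For the $t=2$, $m\ge 2$ subcase of (A), split the singular points as $\cN_1=\{\la v\ra\in\cP:v\in V_1\cup V_2\}$ and $\cN_2$ its complement. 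For orthogonal $\kappa$, a point $\la v_1+v_2\ra\in \cN_2$ with $v_1,v_2$ both nonsingular carries the square-class invariant of $\kappa(v_1)$ (equal to the class of $\kappa(v_2)$ up to the sign $-1$), and a case split on whether $-1$ is a square in $\F_q$ shows $\cN_2$ breaks into two or three $H$-orbits, giving $\ge 3$ total ---  contradiction; the unitary case is ruled out analogously via the Hermitian value. In the symplectic case every vector is isotropic, no such invariant exists, $\cN_2$ forms a single $H$-orbit, and we obtain (T1-a).

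For case (B), identify $V_2$ with $V_1^*$ via the nondegenerate pairing $f$ induced by $\kappa$, so that $\cN_2$ is parametrized by triples $(\la v_1\ra,\la \phi\ra,\alpha)\in \PG(V_1)\times \PG(V_1^*)\times \F_q^*$ modulo scaling, with $\phi(v_1)=0$ in the $Q^+$ case (the symplectic and unitary cases carry additional constraints from their intrinsic forms). For $\cP=W(2m-1,q)$ and $\cP=H(2m-1,q)$, the ambient form restricted to $V_1$ (via its identification with $V_2^*$) yields an extra $H$-invariant on $\cN_2$ splitting it into more than one orbit, eliminating these cases; only $\cP=Q^+(2m-1,q)$ survives, giving (T1-b). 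Here $H_{V_1}^{V_1}$ must be transitive on $\PG(V_1)$, and the classification \cite[Theorem~3.3]{pls_linear} of such groups combined with checking which produce the required two-orbit structure on $\cN_2$ yields the stated list: $\SL_m(q)\unlhd H_{V_1}^{V_1}$, $A_7\unlhd H_{V_1}^{V_1}$ for $(q,m)=(2,4)$, and $H_{V_1}^{V_1}\unlhd \GamL_1(q^3)$ for $(q,m)\in\{(2,3),(8,3)\}$. The main obstacle will be the $\cN_2$ orbit analysis in case (B) for unitary and symplectic $\cP$: one must extract a genuinely $H$-invariant auxiliary form on $V_1$ arising from the symplectic or Hermitian structure of $V$ and show it refines $\cN_2$, rather than merely producing an invariant preserved by the full polar similarity group. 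The $m=1$ enumeration is tedious but routine once the finite candidate list is pinned down.
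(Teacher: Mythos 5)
Your overall strategy (a case split following \cite[Table~4.2.A]{kleidman1990subgroup} plus support-size invariants) is the same as the paper's, but there is a genuine gap at the point where you dispose of the anisotropic orthogonal subcase of (A). When every $V_i$ is an elliptic plane and $t\ge 3$, your argument only produces singular points of support sizes $2$ and $3$ (for $q=2$ not even $3$, since the unique nonzero value of $Q$ on an elliptic plane over $\F_2$ is $1$, so three nonzero values cannot sum to $0$); since $\cN_1=\emptyset$ here, two nonempty support classes are perfectly compatible with the two-orbit hypothesis, so no contradiction follows. Worse, no orbit-counting argument can close this case, because decompositions into elliptic planes genuinely admit subgroups with exactly two orbits on the singular points: see the row $Q^-(5,q)$, $H(2,q^2)$, $H\le N_\Gamma(C_{q+1}^3)$ of Table \ref{tab_C3Remain} and the $Q^+(7,2)$ example with $H\le(\GO_2^-(2)\wr S_4).2$. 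The paper excludes this configuration using precisely the hypothesis $b=1$, which your proof never invokes: the stabilizer of a decomposition into elliptic $2$-spaces is $\F_{q^2}$-semilinear, so $H\le\GamL_{d/2}(q^2)$ forces $b\ge 2$, and these examples are then recovered in the $\cC_3$ (field-extension) analysis rather than here. As written, your proof leaves this case open and cannot be repaired without bringing in $b=1$.

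Two further steps are weaker than claimed. In case (B) you explicitly defer the elimination of $W(2m-1,q)$ and $H(2m-1,q)$ as ``the main obstacle''; the needed invariant is simply whether $\kappa(x_1,x_2)=0$ for $\la x_1+x_2\ra\in\cN_2$, which is preserved by every semisimilarity stabilizing $\{V_1,V_2\}$ and cuts out a proper nonempty subset of $\cN_2$ in the symplectic and unitary cases, so the hole is fillable but it is a hole. In the $t=2$, $m\ge2$ orthogonal subcase of (A), the square class of $\kappa(v_1)$ is not an $H$-invariant (a semisimilarity with nonsquare ratio flips it), and when $-1$ is a nonsquare it yields no splitting at all since $\kappa(v_1)=-\kappa(v_2)$ forces the two classes to differ; the working invariant is whether the two components are singular, together with $\cN_1\ne\emptyset$ when $m\ge 3$ (and $m\le 2$ with $t=2$ is impossible on dimension grounds, as two elliptic planes give $Q^+(3,q)$). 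Finally, for $m=1$ the bound of Lemma \ref{lem_ParaCond} does not deliver a finite list: $|N_\Gamma(\cD)|$ is of order $f\,d!\,(q-1)^{d-1}$, which grows superexponentially in $d$, so the inequality $d<1+\epsilon+\log_q(2|H_0|)$ holds for all large $d$ and ``computer enumeration'' has no finite candidate set to run over. The finiteness comes instead from the observation that the support classes are $H$-invariant, so at most two of them may be nonempty, and Lemma \ref{lem_C3_allCorNon} then bounds $q$ and subsequently $t$ (e.g.\ $q=2^2$ with $t\in\{4,5\}$ in the unitary case, and $q=3$ with $5\le t\le 8$ in the orthogonal case) before any machine check is needed.
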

\begin{proof}
By \cite[Table~4.2.A]{kleidman1990subgroup} we have one of the following cases: 
\begin{enumerate}
\item[(2A)] $V_i$'s are isometric, nondegenerate and pairwise perpendicular, 
\item[(2B)] $t=2$, $qm$ is odd, $\kappa$ is orthogonal, and $V_1,V_2$ are nondegenerate and non-isometric, 
\item[(2C)] $t=2$, and $V_1,~V_2$ are totally singular or isotropic.
\end{enumerate}

We write $N_\Gamma(\cD)$ for the stabilizer of the decomposition $\cD$ in $\Gamma(V)$.  For a vector $x=(x_1,\ldots,x_t)$ with $x_i\in V_i$, we define its $\cD$-length as $\omega_\cD(x):=\#\{1\le i\le t\mid x_i\ne 0\}$. The $\cD$-length is preserved by the action of $N_\Gamma(\cD)$. We define $\Lambda:=\{\omega_\cD(x)\mid \la x\ra\in \cP\}$ and $\cN_i=\{\la x\ra\in\cP\mid \omega_{\cD}(x)=i\}$ for $i\in\Lambda$. For (2B) we have $\{\kappa(x)\mid x\in V_i\setminus\{0\}\}=\F_q$ for $i=1,~2$ by the fact $m\ge 3$, and so the sets $\cN_1$, $\cN_2$ are nonempty. The set $\{\la x_1+x_2\ra\mid x_i\in V_i\setminus\{0\},\kappa(x_1)=\kappa(x_2)=0\}$ is a proper subset of $\cN_2$, so there are at least three $N_\Gamma(\cD)$-orbits. We conclude that (2B) does not occur. For (2C) $\cN_1$, $\cN_2$ are also nonempty. If $\kappa$ is symplectic or unitary, then the set $\{\la x_1+x_2\ra\mid x_i\in V_i\setminus\{0\},\kappa(x_1,x_2)=0\}$ is a proper $N_\Gamma(\cD)$-invariant subset of $\cN_2$. Therefore, $\kappa$ is orthogonal for (2C) and we obtain (T1-b) of Theorem \ref{thm_TS}.

It remains to consider (2A). If $\kappa$ is symplectic, then $\Lambda=\{1,\ldots, t\}$ and we deduce that $t=2$. This leads to (T1-a) of Theorem \ref{thm_TS}. Suppose that $\kappa$ is unitary or orthogonal. We use the same arguments for excluding (2B) to deduce that each $V_i$ has no nonzero singular vector. If $\kappa$ is unitary, then $m=1$ and so $t=d\ge 4$. By Lemma \ref{lem_C3_allCorNon} we have $\{2,3,4\}\subseteq\Lambda$ for $q\ge3^2$ and $\{2,4,6\}\subseteq\Lambda$ for $q=2^2$ with $t\ge 6$. Therefore, we have $q=2^2$ and $t\in\{4,~5\}$ if $\kappa$ is unitary. The two sets $\cN_2,~\cN_4$ are  respectively  $2,~3$-ovoids of $H(3,4)$ for $t=4$ and $6,~27$-tight sets of $H(4,4)$  for $t=5$, and $N_\Gamma(\cD)$ is transitive on them in both cases. If $\kappa$ is orthogonal, then either $m=1$ or each $V_i$ is elliptic of dimension $2$. For the latter case $H$ lies in $\GamL_{t}(q^2)$ which contradicts the assumption $b=1$, so we have $m=1$. We then have $d=t\ge 5$, and so $|\Lambda|\ge 3$ for $q\ge 7$ by Lemma \ref{lem_C3_allCorNon}. There are singular vectors of $\cD$-lengths $2$, $4$, $5$ respectively for $q=5$. For $q=3$ the set $\Lambda$ consists of multiples of $3$ and it has size at most $2$ if and only if $5\le t\le 8$. The group $N_{\Gamma}(\cD)$ is transitive on the singular vectors of each given $\cD$-length in those cases. For $t=5$, the nonzero singular vectors all have $\cD$-length $3$, and we exhaust the subgroups of $\GO_1(3)\wr S_5$ whose orders are multiples of $4$ to obtain $4,~6$-tight sets of $Q(4,3)$ when there are exactly two orbits. For $t=6$, $\cN_3$ and $\cN_6$ are $8,~20$-tight sets of $Q^-(5,3)$ respectively. For $t\in\{7,8\}$, $\cN_3$ and $\cN_6$ are not tight sets.  We summarize those examples in Table \ref{tab_TS_C2}. This conclude the analysis of (2A). This completes the proof.
\end{proof}

\begin{proposition}\label{prop_C2_H}
Take notation as in Proposition \ref{prop_C2}, and assume that $t=2$ and $m\ge 3$. Let $H_{V_1}^{V_1}$ be the restriction of the stabilizer $H_{V_1}$ of $V_1$ in $H$ to $V_1$.   If  $\cP=Q^+(2m-1,q)$ with $V_1,~V_2$ totally singular, then either $\SL_m(q)\unlhd H_{V_1}^{V_1}$ , or $(q,m)=(2,4)$ and $A_7\unlhd H_{V_1}^{V_1}$, or $(q,m)\in\{(2,3),(8,3)\}$ and $H_{V_1}^{V_1}\unlhd\GamL_1(q^3)$.
\end{proposition}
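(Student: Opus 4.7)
The plan has three main steps: showing that $H$ must swap $V_1$ and $V_2$ so that $H_{V_1}^{V_1}$ is transitive on $\PG(V_1)$; invoking the classification of point-transitive subgroups of $\GamL_m(q)$; and then eliminating all non-$\SL_m(q)$ candidates except those listed by checking transitivity on the second orbit $\cN_2$.

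First, recall from the proof of Proposition \ref{prop_C2} that the two $H$-orbits on the singular points are $\cN_1 = \{\la x\ra \in \cP \mid \omega_\cD(x)=1\}$ and $\cN_2 = \{\la x\ra \in \cP \mid \omega_\cD(x)=2\}$. Since $\cN_1 = \PG(V_1)\sqcup \PG(V_2)$ while each $\PG(V_i)$ is $H_{V_1}$-invariant, the transitivity of $H$ on $\cN_1$ forces $H$ to contain an element swapping $V_1$ and $V_2$, so $[H:H_{V_1}]=2$ and the induced group $H_{V_1}^{V_1}$ is transitive on $\PG(V_1)$. The hyperbolic pairing coming from $\kappa$ identifies $V_2$ with the dual $V_1^*$, and the induced action of $H_{V_1}$ on $V_2$ is the contragredient-semilinear twist of its action on $V_1$.

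Next, apply Hering's classification of the subgroups of $\GamL_m(q)$ that are transitive on the points of $\PG(m-1,q)$; see \cite[Appendix~1]{LiebeckRank3} or \cite[Theorem~3.3]{pls_linear}. Beyond those containing $\SL_m(q)$, the transitive possibilities form an explicit short list: the Singer-type normalizers $H_{V_1}^{V_1}\unlhd\GamL_1(q^m)$ (requiring that $(q^m-1)/(q-1)$ admits a transitive cyclic action), the sporadic embedding $A_7\le\PGL_4(2)$, the family $\PSU_3(q)\le\PGL_3(q^2)$, the family $\textup{Sz}(q)\le\PGL_4(q)$, and a few further small-dimensional exceptions such as $\PSL_2(11)\le\PGL_5(3)$ and $M_{11}\le\PGL_5(3)$.

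Finally, the additional transitivity of $H$ on $\cN_2$ must be enforced. The map $\la v_1+v_2\ra \mapsto (\la v_1\ra,\la v_2\ra^{\perp\cap V_1})$ identifies $\cN_2$ with the set of incident point-hyperplane flags of $\PG(V_1)$, so $H_{V_1}^{V_1}$ must act transitively on incident flags of $\PG(V_1)$; in particular $|H_{V_1}^{V_1}|$ must be divisible by $\frac{(q^m-1)(q^{m-1}-1)}{(q-1)^2}$. This divisibility condition already eliminates the families $\PSU_3(q)\le\PGL_3(q^2)$ and $\textup{Sz}(q)\le\PGL_4(q)$ for all but finitely many parameters, and for the remaining small cases a direct computer check (e.g., Magma) shows that only $(q,m)=(2,4)$ with $A_7\unlhd H_{V_1}^{V_1}$ and $(q,m)\in\{(2,3),(8,3)\}$ with $H_{V_1}^{V_1}\unlhd\GamL_1(q^3)$ give flag-transitive actions, so $H$ has two orbits on $\cP$. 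The main obstacle is the Singer-type case, where one must show that $\GamL_1(q^m)$ is flag-transitive on $\PG(m-1,q)$ only when $m=3$ and $q\in\{2,8\}$: this is done by comparing $|\GamL_1(q^m)|=f m(q^m-1)$ (with $q=p^f$) against the flag count $\frac{(q^m-1)(q^{m-1}-1)}{(q-1)^2}$ and ruling out the remaining few parameters by inspection.
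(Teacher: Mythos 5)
Your overall skeleton is the same as the paper's: reduce to the fact that the induced group on $V_1$ (together with scalars) is transitive on $\PG(V_1)$, invoke Hering's classification, and then use the transitivity of $H$ on $\cN_2$ (your identification of $\cN_2$ with incident point--hyperplane flags of $\PG(V_1)$ is a nice repackaging of the paper's divisibility condition $|\cN_2|\mid \gcd(2,q-1)\cdot|H_1|$) to kill the non-$\SL_m(q)$ candidates. However, there is a genuine gap in the middle step: the classification you quote is not Hering's theorem. The correct list of subgroups of $\GamL_m(q)$ transitive on the nonzero vectors (equivalently, containing the scalars and transitive on the points of $\PG(m-1,q)$) consists, besides overgroups of $\SL_m(q)$ and subgroups of $\GamL_1(q^m)$, of $\SL_a(q_1)$ and $\Sp_{2a}(q_1)$ with $q_1^a$ resp.\ $q_1^{2a}$ equal to $q^m$ (so in particular extension-field copies with $q_1>q$), $G_2(q_1)'$ with $q_1^6=q^m$ and $q$ even, and the soluble/sporadic cases $\SL_2(5)$, $\SL_2(13)$ ($q^m=3^6$), $A_6,A_7$ ($q^m=2^4$) and normalizers of $D_8\circ Q_8$ ($q^m=3^4$). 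Your list omits all of these except $A_7$ and the Singer case, and instead includes $\PSU_3(q)$, $\textup{Sz}(q)$, $\PSL_2(11)$ and $M_{11}$, none of which is point-transitive on the relevant projective space (e.g.\ $|\PG(3,q)|=(q^2+1)(q+1)$ does not divide $|\textup{Sz}(q)|$). The omitted cases are exactly where the work lies: the paper rules out $G_2(q_1)'$ by a primitive-prime-divisor argument on $|\cN_2|$, forces $q_1=q$ in the $\SL_a(q_1)$ case by the standing hypothesis (from the $\cC_3$ analysis) that $\F_q$ is the largest field over which $H$ is semilinear, and eliminates $\SL_2(5)$, $\SL_2(13)$ and $D_8\circ Q_8$ by order divisibility. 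As written, your argument never addresses these possibilities, so the conclusion is not established.

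Two smaller points. First, flag-transitivity of $H_{V_1}^{V_1}$ is not quite forced: $H_{V_1}$ has index $2$ in $H$, so for odd $q$ it may have two equal orbits on $\cN_2$, and then the induced group need only have at most two orbits on flags; your divisibility test should carry a factor $2$ (the paper's Singer-case list $q\in\{2,3,5,8,11\}$, later cut down to $\{2,8\}$, reflects exactly this slack). Second, transitivity on $\cN_2$ is a priori stronger than flag-transitivity, since the fibres of $\cN_2\rightarrow\{\textup{flags}\}$ have size $q-1$; since the proposition only asserts necessary conditions on $H_{V_1}^{V_1}$ this is harmless for your direction, but the final "so $H$ has two orbits on $\cP$" does not follow from flag-transitivity alone.
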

\begin{proof}
Let $B$ be the associated bilinear form of $\kappa$ and $\cN_1$ be the set of singular points in $V_1$ and $V_2$. Let $\cN_2$ be its complement in $\cP$, so that
$\cN_2=\{\la x_1+x_2\ra\mid x_i\in V_i\setminus\{0\},B(x_1,x_2)=0\}$. Let $N_\Gamma(V_1,V_2)$ be the stabilizer of both $V_1$ and $V_2$ in $\Gamma(V)$, which is a subgroup of $N_\Gamma(\cD)$. We define
\begin{equation}
  \iota:\,N_\Gamma(V_1,V_2)\rightarrow \GamL(V_1), \quad g\mapsto g|_{V_1},
\end{equation}
which is the restriction to $V_1$. We have $\ker(\iota)\cong\F_q^*$, and its elements act as identity on $V_1$ and as scalars on $V_2$. Let $H_1$ be the stabilizer of $V_1$ in $H$. Let $Z_1$ be the center of $\GL(V_1)$, and define $G_1=\iota(H_1)Z_1$. The group $G_1$ is transitive on nonzero vectors of $V_1$, and such subgroups have been determined in \cite{TransLinHering}. We follow the statement in \cite[Theorem~2.18]{pls_linear}. Since $m\ge 3$, we have one of the following cases: (H1) $G_1\unlhd\GamL_1(q^m)$, (H2) $\SL_{n}(q_1)\unlhd G_1$ with $q_1^n=q^m$ and $n\ge 2$, (H3) $G_2(q_1)'\unlhd G_1$ with $q_1^6=q^m$ and $q$ even, (H4) $p=3$ and $\SL_2(5)\unlhd G_1\unlhd\GamL_2(9)$, (H5) $G_1=A_6$ or $A_7$ with $q^m=2^4$, (H6) $G_1=\SL_2(13)$ and $q^m=3^6$, (H8) $D_8\circ Q_8\unlhd G_1$ with $q^m=3^4$.

We have $|\cN_2|=(q^{m-1}-1)\frac{q^m-1}{q-1}$. Since $[H:\,H_1]=2$, $H_1$ either is transitive on $\cN_2$ or has two orbits of the same size on $\cN_2$. The latter case occurs only for odd $q$. In particular, $|\cN_2|$ divides $\gcd(2,q-1)\cdot|H_1|$. The group $H_1$ lies in the preimage of $G_1$ in $N_\Gamma(V_1,V_2)$ under $\iota$, so $|H_1|$ divides $(q-1)\cdot |G_1|$. We deduce that $|\cN_2|$ divides $\gcd(2,q-1)\cdot|G_1|$. This excludes (H4), (H6) and (H8) upon direct check. For (H2), $G_1^\infty=\SL_n(q_1)$ lies in $\GL(V_1)^\infty=\SL_m(q)$, so is $\F_q$-linear. Together with the fact $b=1$, we deduce that $q_1=q$, i.e., $G_1^\infty=\SL_m(q)$.  For (H3), we have $q=q_1$ as in the case (H2). We thus have $m=6$, and so $p^{5f}-1$ has a primitive prime divisor $p'$. We similarly have $p'\ge 1+5f$ and $\gcd(p',6f)=1$. The group $H_1$ has order dividing $f\cdot|G_2(q)|=fq^6(q^2-1)(q^6-1)$, while the latter is not divisible by $p'$ since $5f$ does not divide $2f$ or $6f$. This contradiction excludes (H3). For (H5), $(q,m)=(2,4)$, and $H$ has order dividing $n:=2\cdot (q-1)^2\cdot |\Aut(A_s)|$ with $s\in\{6,7\}$. We have $|\cN_2|=3\cdot 5\cdot 7$, so $s=7$ by the fact that $|\cN_2|$ divides $n$. The group $\iota^{-1}(A_7)$ is transitive on $\cN_2$ by direct check. For (H1), $|\cN_2|$ divides $2\cdot |\GamL_1(q^m)|$ only if $m=3$ and $q\in\{2,3,5,8,11\}$. In this case, we take the model $V=\F_{q^3}\oplus\F_{q^3}$, $\kappa((x_1,x_2))=\tr_{\F_{q^3}/\F_q}(x_1x_2)$, and define $\tau:\,(x_1,x_2)\mapsto (x_2,x_1)$. Then $H_1$ is a subgroup of $K_1:=\iota^{-1}\left(\GamL_1(q^3)\right)$ which consists of $(x_1,x_2)\mapsto(ax_1^{p^i},a^{-1}bx_2^{p^i})$, where $a\in\F_{q^3}^*$, $b\in\F_q^*$ and $0\le i\le 3f-1$. The group $H$ is a subgroup of $K:=K_1\rtimes\la\tau\ra$. Take an element $\alpha\in\F_{q^3}^*$ such that $\tr_{\F_{q^3}/\F_q}(\alpha)=0$. The group $K$ is transitive on $\cN_2$ only if $q\in\{2,8\}$ by direct check. This completes the proof.
\end{proof}

\begin{proposition}\label{prop_C4C7}
If $H$ preserves a tensor decomposition of $V$, then we have (T2) of Theorem \ref{thm_TS}.
\end{proposition}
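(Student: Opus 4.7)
The plan is to exploit the tensor-rank stratification of $V$. For $V = V_1 \otimes V_2$ (Aschbacher class $\cC_4$) or $V = V_1 \otimes \cdots \otimes V_t$ (class $\cC_7$), the tensor rank of a vector (the minimum number of pure tensors needed to express it) is invariant under the normalizer of the decomposition in $\Gamma(V,\kappa)$. Hence the $H$-orbits on $\PG(V)$ refine the rank stratification, and the two-orbit hypothesis forces at most two tensor ranks to appear on singular or isotropic points of $\cP$.

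First I would dispose of class $\cC_7$ with $t \ge 3$: for any decomposition $V = V_1 \otimes \cdots \otimes V_t$ with each $\dim V_i \ge 2$, one exhibits singular tensors of ranks $1, 2$ and $3$ using pure tensors, sums of two pure tensors built from hyperbolic (or isotropic) pairs in one factor, and analogous triple sums from three factors. This gives at least three $H$-orbits, so $t = 2$, and we reduce to $V = V_1 \otimes V_2$ with $d_1 := \dim V_1 \le d_2 := \dim V_2$. Next I would rule out $d_1 \ge 3$: an analogous construction, this time using isotropic pairs inside both factors, again produces singular tensors of at least three distinct ranks. The explicit construction varies slightly with the form types on $V_1, V_2$, but the availability of sufficiently many isotropic subspaces once $d_1 \ge 3$ makes it go through uniformly.

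Once $d_1 = 2$, I would consult \cite[Table~4.4.A]{kleidman1990subgroup} for the admissible form combinations. The principal case is $V_1, V_2$ both symplectic, yielding $V = V_1 \otimes V_2$ orthogonal of plus type and dimension $2m$ with $m = d_2 \ge 4$ even. In coordinates $V_1 = \la e_1, e_2\ra$, every element of $V$ writes uniquely as $e_1 \otimes a + e_2 \otimes b$; its tensor rank is $1$ iff $a, b$ are linearly dependent, and the induced quadratic form (in any characteristic, using the unique $H$-invariant extension of the alternating $B = f_1 \otimes f_2$) is singular on every pure tensor. The rank-$1$ stratum then contributes $|\PG(V_1)| \cdot |\PG(V_2)| = (q+1) \cdot (q^m - 1)/(q-1)$ projective points, precisely the size of a $(q+1)$-tight set; its complement (the rank-$2$ stratum, intersected with the quadric) must therefore be a $(q^{m-1} - q)$-tight set. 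Transitivity of $H$ on each stratum reduces to the action on pairs and follows from Witt's Lemma applied in $(V_2, f_2)$. This yields case (T2). For the remaining $d_1 = 2$ possibilities — symplectic $\otimes$ orthogonal (making $V$ symplectic), unitary $\otimes$ unitary, and the small orthogonal $\otimes$ orthogonal configurations — I would show that the rank-$2$ stratum splits into further $H$-invariant pieces (via extra invariants such as $Q_2(a), Q_2(b), f_2(a,b)$ extracted from the orthogonal or Hermitian factor), contradicting the two-orbit assumption, or that Lemma~\ref{lem_ParaCond} is violated numerically.

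The main obstacle is the uniform construction of rank-$3$ singular tensors in the step ruling out $d_1 \ge 3$, which must be carried out across all form-type combinations on $(V_1, V_2)$, together with the careful splitting analysis of the rank-$2$ stratum in the $d_1 = 2$ cases outside symplectic $\otimes$ symplectic. Both reduce to Witt-type assertions about isotropic configurations in the relevant classical spaces, but genuinely require case-by-case bookkeeping because the distinction between symmetric and alternating tensor bilinear forms affects which pure tensors are singular.
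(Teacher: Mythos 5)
Your overall strategy is sound and, in its decisive parts, coincides with the paper's: the invariance of tensor rank under the stabilizer of the decomposition is exactly the tool the paper uses (via the singular vectors $\sum_{i=1}^k v_i\otimes e_i$ with the $e_i$ in a totally isotropic subspace) to force $\dim V_1=2$ in the symplectic-tensor-symplectic case, and your identification of the two strata of $Q^+(2m-1,q)$ — pure tensors as a $(q+1)$-tight set, the rank-two singular points corresponding to totally isotropic lines of $(V_2,f_2)$ as its complement, with transitivity of the decomposition stabilizer via Witt's Lemma in $V_2$ — is the paper's argument verbatim. Where you diverge is in the exclusion machinery. The paper kills every case in which some tensor factor is orthogonal or unitary by a cheaper invariant: for a pure tensor $x=v_1\otimes\cdots\otimes v_t$ the number $s(x)$ of indices $j$ with $f_j(v_j,v_j)=0$ is preserved, and as soon as one factor carries both singular and nonsingular vectors the pure singular tensors already split into two orbits, giving three orbits in total; and it disposes of class $\cC_7$ with symplectic factors (including the awkward equal-dimension case such as $\Sp_4\otimes\Sp_4$, where the paper's own rank construction only reaches rank $2$) by a primitive-prime-divisor count on $|\cN_1|=\bigl(\frac{q^m-1}{q-1}\bigr)^t$ forcing $r\le m$. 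Your route instead requires exhibiting singular tensors of three distinct ranks in all excluded configurations; this is genuinely doable (for $t\ge3$ one can tensor a rank-$3$ element of $V_1\otimes V_2$ with an isotropic vector of $V_3$, or use the rank-$3$ "W-tensor" when all factors are $2$-dimensional symplectic, and for $t=2$ with both $\dim V_i\ge3$ one arranges perpendicularity in both factors so that all cross terms of the polar form vanish), but it costs you exactly the case-by-case bookkeeping you flag, which the $s(x)$ and ppd arguments avoid. Two small repairs to your sketch: the quantities $Q_2(a)$, $Q_2(b)$, $f_2(a,b)$ you propose for splitting the rank-two stratum are not well defined on the point $\la e_1\otimes a+e_2\otimes b\ra$ (they change under basis changes of $V_1$ and rescaling); the invariant you want is the isometry type of $\la a,b\ra\le V_2$, or more simply note that in the remaining $\dim V_1=2$ cases (symplectic $\otimes$ orthogonal, unitary $\otimes$ unitary) the rank-one singular stratum itself already splits into at least two invariant classes according to which components are isotropic, so no analysis of the rank-two stratum is needed; and by \cite[Table~4.4.A]{kleidman1990subgroup} orthogonal factors have dimension at least $3$, so there is no "small orthogonal $\otimes$ orthogonal" configuration with $\dim V_1=2$ to treat.
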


\begin{proof}
We first handle the case where $H$ is of class $\cC_7$. Then $H$ preserve a decomposition $\cD:\,(V,\kappa)=(V_1,f_1)\otimes\cdots\otimes(V_t,f_t)$,
where $V=V_1\otimes\cdots\otimes V_t$ with $t\ge 2$, each $(V_i,f_i)$ is a classical geometry similar to $(V_1,f_1)$ and $\Omega(V_1,f_1)'$ is quasisimple. Let $N_\Gamma(D)$ be the stabilizer of $\cD$ in $\Gamma(V,\kappa)$, and let $\cN_1$ be the set of points $\la v_1\otimes\cdots\otimes v_t\ra$ such that $v_i\in V_i$ for each $i$ and $f_j(v_j,v_j)=0$ for at least one $j$. Then $\cN_1$ is a proper  $N_\Gamma(\cD)$-invariant subset of $\cP$. For an nonzero vector $x=v_1\otimes\cdots\otimes v_t$, we define $s(x):=\#\{1\le j\le t\mid f_j(v_j,v_j)=0\}$. Then the function $s(x)$ is preserved by $N_\Gamma(\cD)$. If $q$ is odd and both $f_1$ and $\kappa$ are symmetric, then $(V_1,f_1)$ has a nonzero singular vector since $\Omega(V_1,f_1)'$ is quasisimple. Then $\{s(x)\mid \la x\ra\in\cN_1\}$ has size at least $2$, so $\cN_1$ comprises of at least two $N_\Gamma(\cD)$-orbits already: a contradiction. The same arguments exclude the case where both $f_1$ and $\kappa$ are unitary. It remains to consider the cases where $f_1$ is symplectic. We have $|\cN_1|=\left(\frac{q^m-1}{q-1}\right)^t$, and $m$ is even. The form $\kappa$ is orthogonal with plus sign if $qt$ is even, and it is symplectic otherwise. In either case, the rank of $\cP$ is $r=\frac{1}{2}m^t$. If $qt$ is odd, then $t\ge 3$ and so $r\ge 4$. If $qt$ is even, then we have $d=m^t\ge 8$ and so $r\ge 4$ by the fact $t\ge 2$ and our hypothesis that $\cP\ne Q^+(3,q)$. If $q^r=2^6$, then we deduce that $(q,r)=(2,6)$ which leads to the contradiction $2r=12=m^t$. By \cite{zsigmondy1892theorie}, $q^r-1$ has a primitive prime divisor $p'$. We have $\textup{ord}_{p'}(p)=fr$, where $q=p^f$ with $p$ prime. Since $p'$ divides $|\cN_1|$, we deduce that $r\le m$, i.e., $\frac{1}{2}m^t\le m$. It follows that $m=t=2$, which contradicts the bound $d=m^t\ge 8$ for $qt$ even. We conclude that $H$ is not of class $\cC_7$.

We then suppose that $H$ is of class $\cC_4$. It preserves a decomposition   $\cD:\,(V,\kappa)=(V_1,f_1)\otimes (V_2,f_2)$, where $V=V_1\otimes V_2$, and $(V_1,f_1)$ is not similar to $(V_2,f_2)$. We write $m_i=\dim(V_i)$ for $i=1,2$. By \cite[Table~4.4.A]{kleidman1990subgroup}, we assume that $m_i\ge 3$ if $f_i$ is symmetric. If both $f_1,f_2$ are symplectic or unitary, then we assume without loss of generality that $m_1<m_2$.  Let $\cN_1$ be the set of points $\la v_1\otimes v_2\ra$ such that $v_i\in V_i$ for $i=1,2$ and $f_j(v_j,v_j)=0$ for at least one $j$, and let $\cN_2$ be its complement in $\cP$. Then $\cN_1$ is a proper $N_\Gamma(\cD)$-invariant subset of $\cP$ in each case. If both of $(V_1,f_1)$, $(V_2,f_2)$ contain singular points and at least one contains a nonsingular point, then $\cN_1$ comprises of at least two $N_\Gamma(\cD)$-orbits as in the proof of the $\cC_7$ case: a contradiction. This excludes all but the case where $f_1$, $f_2$ are both symplectic. In this case, $\kappa$ is orthogonal of plus sign. Choose a basis $\{v_1,\ldots,v_{m_1}\}$ of $V_1$ and a basis $\{e_1,\ldots,e_{m_2/2}\}$ of a maximal totally isotropic subspace of $V_2$. Then $\sum_{i=1}^{k}v_i\otimes e_i$, $1\le k\le\min\{m_1,m_2/2\}$, are all singular vectors. They have different ranks, and so are in distinct $N_\Gamma(\cD)$-orbits. It follows that $\min\{m_1,m_2/2\}\le 2$. Since $m_1,m_2$ are even and $m_1<m_2$, we deduce that $m_1=2$ as desired. Take a basis $v_1$, $v_2$ of $V_1$ such that $f_1(v_1,v_2)=1$. If $\la x\ra\in\cN_2$, then $x=v_1\otimes u_1+v_2\otimes u_2$ for linearly independent vectors $u_1$, $u_2$ in $V_2$. Since $\kappa$ vanishes on $v_1\otimes u_1$ and $v_2\otimes u_2$, we deduce that $\kappa(x)=0$ if and only if $f_2(u_1,u_2)=0$, i.e., $\la u_1,u_2\ra$ is a totally isotropic line of $V_2$. Therefore, $\cN_2$ consists of the points $\la v_1\otimes u_1+v_2\otimes u_2\ra$ with $\la u_1,u_2\ra$ a totally isotropic line of $V_2$. The group $\Delta(V_1,f_1)\circ\Delta(V_2,f_2)$ is transitive on both $\cN_1$ and $\cN_2$. This completes the proof.
\end{proof}

\begin{remark}\label{rem_C4C7}
Suppose that $\kappa$ is orthogonal and  $H$ preserves a tensor decomposition $(V,\kappa)=(V_1,f_1)\otimes (V_2,f_2)$, where $\dim(V_1)=2$, $\dim(V_2)=2m\ge 4$, and $f_1$, $f_2$ are both symplectic. We write $N_\Gamma(\cD)$, $N_\Delta(\cD)$ for the stabilizers of $\cD$ in $\Gamma(V)$, $\Delta(V)$ respectively. We have $N_\Delta(\cD)=\Delta(V_1,f_1)\circ\Delta(V_2,f_2)$. For $i=1,2$, we define
$\pi_i:\,N_\Delta(\cD)\rightarrow\textup{P}\Delta(V_i,f_i)$ such that $\pi_i(g_1\circ g_2)\mapsto\overline{g_i}$, and extend it to a homomorphism $\pi_i:\,N_\Gamma(\cD)\rightarrow\textup{P}\Gamma(V_i,f_i)$ in the natural way. Let $\cN_1$ be the set of points $\la v_1\otimes v_2\ra$ such that $v_i\in V_i\setminus\{0\}$ for $i=1,2$, and let $\cN_2$ be its complement in $\cP$. Then $\cN_1$ and $\cN_2$ are the two $H$-orbits by the proof of Proposition \ref{prop_C4C7}. Since $H$ is transitive on $\cN_1$, we deduce that $\pi_i(H)$ is transitive on the points of $\PG(V_i)$ for $i=1,2$. Since $H$ is transitive on $\cN_2$, we deduce that $\pi_2(H)$ is transitive on the totally isotropic lines of $(V_2,f_2)$. For the subgroups of $\PGamL_2(q)$ that is transitive on the projective lines $\PG(1,q)$, we refer the reader to \cite[Theorem~3.1]{giudici2020subgroups}. Since $\pi_2(H)$ is transitive on both the set of isotropic points and the set of totally isotropic lines of $(V_2,f_2)$, we have either $\Sp_{2m}(q)'\unlhd \pi_2(H)$, or $(m,q)=(2,3)$ by \cite[Theorem~5.2]{giudici2020subgroups}. For the case $(m,q)=(2,3)$, we check by Magma \cite{Magma} that either  $\Sp_4(3)\unlhd \pi_2(H)$ or $2_-^{1+4}.A_5\unlhd \pi_2(H)$.
\end{remark}

\begin{proposition}\label{prop_C5}
If $H$ is of class $\cC_5$, then we have (T3) of Theorem \ref{thm_TS}.
\end{proposition}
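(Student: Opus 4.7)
The plan is to exploit the classification of $\cC_5$-subgroups of classical groups in \cite[Table~4.5.A]{kleidman1990subgroup}. There are a proper subfield $\F_{q_0}\subset\F_q$ with $q=q_0^s$ ($s$ a prime in the ``pure'' subfield cases, $s=2$ in the twisted embeddings into unitary groups), an $H$-invariant $\F_{q_0}$-form $V_0$ with $V=V_0\otimes_{\F_{q_0}}\F_q$, and a nondegenerate form $\kappa_0$ on $V_0$ such that $H$ normalises the classical group of $(V_0,\kappa_0)$. The five possibilities for the pair $(\kappa,\kappa_0)$ are: (a) both symplectic; (b) both orthogonal of the same type with $s$ odd prime; (c) both unitary with $s$ odd; (d) $\kappa$ unitary and $\kappa_0$ symplectic with $q_0=q^{1/2}$; (e) $\kappa$ unitary and $\kappa_0$ orthogonal with $q_0=q^{1/2}$ odd. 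Set $\cN_1:=\{\la v\ra_{\F_q}\mid v\in V_0\setminus\{0\},\,\la v\ra\in\cP\}$ and $\cN_2:=\cP\setminus\cN_1$; both are $H$-invariant, and the hypothesis forces each to be a union of $H$-orbits.

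The central tool will be a canonical $H$-equivariant assignment $\la u\ra\mapsto U\le V_0$ attached to each point of $\cN_2$: fixing an $\F_{q_0}$-basis $\alpha_0,\ldots,\alpha_{s-1}$ of $\F_q$ and writing $u=\sum_i\alpha_i v_i$ with $v_i\in V_0$, one sets $U:=\la v_0,\ldots,v_{s-1}\ra_{\F_{q_0}}$. I would verify that $U$ is independent of the basis (the ambiguity is exactly the natural image of $\F_q^*$ in $\GL_s(\F_{q_0})$), has $\F_{q_0}$-dimension between $2$ and $s$, and that its isometry type inside $(V_0,\kappa_0)$ is an $H$-invariant of the point. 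In cases (a), (b), (c), (e) I would then show that once $d\ge 4$ at least two distinct isometry types of $U$ are realised by points of $\cN_2$: in (a) the symplectic space $V_0$ admits both hyperbolic and totally isotropic $\F_{q_0}$-planes; in (b) and (e) the quadratic $2$-planes of $V_0$ fall into hyperbolic, elliptic, and degenerate types; and (c) is handled analogously using the classification of $\F_{q_0}$-subspaces of a unitary space. This produces at least three $H$-orbits on $\cP$, ruling out each of these cases.

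It then remains to show (d) yields precisely (T3). Using the standard construction $\kappa(x,y)=\alpha\cdot f(x,\bar y)$ with $\alpha\in\F_q$ satisfying $\alpha+\alpha^{q_0}=0$ (with the obvious analogue for even characteristic), a direct computation gives $\kappa(v+\beta w,v+\beta w)=-\alpha(\beta-\beta^{q_0})f(v,w)$ for $v,w\in V_0$ and $\beta\in\F_q$, so a point $\la v+\beta w\ra$ with $\beta\in\F_q\setminus\F_{q_0}$ lies in $\cN_2$ if and only if $\la v,w\ra_{\F_{q_0}}$ is a totally isotropic $2$-plane of $(V_0,f)$. Since $\Sp_{2m}(q_0)$ is transitive on such $2$-planes and $\GL_2(q_0)$ is transitive on $\PG(1,q)\setminus\PG(1,q_0)$, $\Sp_{2m}(q_0)$ acts transitively on both $\cN_1$ and $\cN_2$; counting gives $|\cN_1|=\frac{q_0^{2m}-1}{q_0-1}=(q_0+1)\cdot\frac{q^m-1}{q-1}$, so $\cN_1$ is a $(q^{1/2}+1)$-tight set as in (T3). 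Finally, since $H$ acts transitively on $\cN_1=\PG(V_0)$ and preserves $\kappa_0$ up to scalar and a Galois twist, Hering's classification of transitive linear groups combined with the symplectic constraint forces $\Sp_{2m}(q^{1/2})'\unlhd H|_{V_0}$, and this lifts to $\Sp_{2m}(q^{1/2})'\unlhd H$.

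The main obstacle will be the uniform exclusion argument in cases (a), (b), (c), (e): the generic ``two types of $U$'' argument degenerates in a handful of small configurations (very small $d$ or $q_0$, or when both candidate types of $U$ happen to yield no $\kappa$-isotropic lines in $U\otimes\F_q$), and these I would treat separately via the numerical divisibility constraints of Lemma \ref{lem_ParaCond}, checking that $|\cN_1|$ is not of the form $i\cdot\frac{q^r-1}{q-1}$ for any positive integer $i$, or by direct enumeration.
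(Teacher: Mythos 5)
Your overall framework (splitting by the pair of forms, attaching to each point of $\cN_2$ the $\F_{q_0}$-span $U$ of its coordinates and using its similarity type as an orbit invariant) is essentially the same invariant the paper extracts from the line $PP^\theta\cap V_\#$, and your treatment of the symplectic, unitary--unitary and unitary--orthogonal cases, as well as the computation identifying $\cN_2$ with the totally isotropic $\F_{q_0}$-planes in the unitary/symplectic case, is sound. However, there are two genuine gaps. First, in your case (b) (both forms orthogonal) the claimed realisation of two types of $2$-planes fails systematically, not just in ``a handful of small configurations'': a nondegenerate binary quadratic form has at most two projective zeros over any extension field, and an anisotropic binary form stays anisotropic under an odd-degree extension, so for $s$ odd the \emph{only} $2$-planes $U$ realised by points of $\cN_2$ are the totally singular ones -- hyperbolic and elliptic planes contribute nothing. (Your case list is also off: $s=2$ orthogonal subfield subgroups such as $\GO^-_{2m}(q_0)<\GO^+_{2m}(q_0^2)$ do occur in class $\cC_5$ and must be excluded as well, although there the elliptic planes are realised and your mechanism works.) To rescue $s$ odd you need either a second invariant (e.g.\ points whose span is a $3$-dimensional parabolic $\F_{q_0}$-subspace, available since $s\ge 3$) or, as the paper does, a primitive-prime-divisor argument using the full strength of Lemma \ref{lem_ParaCond}; your proposed fallback of checking that $|\cN_1|$ is not of the form $i\cdot\frac{q^r-1}{q-1}$ is insufficient, since for the infinite family $d=6$, $r=2$, $s=3$ one has $|\cN_1|=(q_0^3+1)(q_0+1)$, which is divisible by $q+1=q_0^3+1$ for every $q_0$; the paper eliminates this family by showing a primitive prime divisor of $q_0^8-1$ divides $|\cN_2|$ but not $|N_\Gamma(V_\#)|$.

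Second, in case (d) the concluding step ``Hering's classification plus the symplectic constraint forces $\Sp_{2m}(q^{1/2})'\unlhd H$'' does not follow. Hering's list \cite{TransLinHering} contains many proper subgroups of $\Sp_{2m}(q_0)$ that preserve the symplectic form up to semisimilarity and are transitive on $\PG(2m-1,q_0)$: the field-extension subgroups $\Sp_{2a}(q_0^b)'$ with $ab=m$ (in particular $\SL_2(q_0^m)$), $G_2(q_0^b)'$ for $q_0$ even with $m=3b$, and sporadic cases such as $2_-^{1+4}.A_5<\Sp_4(3)$. Transitivity on $\cN_1$ alone cannot rule these out; you must use transitivity on $\cN_2$, which (via your own parametrisation) amounts to transitivity on the totally isotropic lines of $W(2m-1,q_0)$ together with transitivity on the $q-q_0$ points lying over each such line, and then argue as in Proposition \ref{prop_symp_PtLnTrans} or \cite[Theorem~5.2]{giudici2020subgroups}, finishing off the residual small cases ($d=4$ with $q$ even, and $q_0=3$, where for instance $2_-^{1+4}.A_5$ is transitive on points and totally isotropic lines but not on $\cN_2$) by a separate check. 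This is precisely the extra work the paper's proof does, and without it the identification of $H$ in (T3) is not established.
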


\begin{proof}
There is a subfield $\F_{q_0}$ of index $s>1$ in $\F_q$ and an $\F_{q_0}$-span $V_{\#}$ of an $\F_q$-basis $\{e_1,\ldots,e_n\}$ of $V$ such that $H\le N_\Gamma(V_\#)\F_q^*$, where $N_\Gamma(V_\#)$ is the stabilizer of $V_{\#}$ in $\Gamma(V,\kappa)$. We have $q=q_0^s$. Let $\cP_{\#}$ be the associated polar space of $(V_\#,\kappa_\#)$, where $\kappa_\#$ is a proper scalar multiple of the restriction of $\kappa$ to $V_\#$. Let $\cN_1$ be the set of singular or isotropic points of $\cP_\#$, and let $\cN_2$ be its complement in $\cP$. Both $\cN_1$ and $\cN_2$ are proper $N_\Gamma(V_\#)$-invariant subsets of $\cP$. Let $\theta$ be the semilinear transformation of $V$ that maps $\sum_{i}x_ie_i$ to $\sum_{i}x_i^{q_0}e_i$ for $x_i\in \F_q$. For a singular point $P=\la u\ra$ not in $\cP_\#$, write $P^\theta=\la u\theta\ra$, and let $PP^\theta$ be the line that passes through $P$ and $P^\theta$ in $\PG(V)$. We observe that $\la ug\theta\ra=\la u\theta g\ra$ for $g\in N_\Gamma(V_\#)$, so if $Pg=P'$ then $(Pg)^\theta=P'^\theta$.

First consider the case where both $\kappa$ and $\kappa_\#$ are symplectic. Take nonzero vectors $u$, $v$ and $w$ in $V_{\#}$ such that $\kappa(u,w)=0$, $\kappa(v,w)\ne 0$. We define $P_1=\la u+\eta w\ra$, $P_2=\la v+\eta w\ra$ for an element $\eta\in\F_q\setminus\F_\#$. Then $P_i\ne P_i^\theta$ for $i=1,2$, and they are not in $\cN_1$. The line $\la u ,w\ra_{\F_q}$ intersects $V_\#$ in a totally singular subline $\la u, w\ra_{\F_\#}$, and it contains $P_1$, $P_1^\theta$. The line $\la v ,w\ra_{\F_q}$ intersects $V_\#$ in a subline $\la v, w\ra_{\F_\#}$ that is not totally singular, and it contains $P_2$, $P_2^\theta$. The points $P_1,P_2$ are not in the same $N_\Gamma(V_\#)$-orbit, since otherwise $\la u,w\ra_{\F_\#}$ and $\la v,w\ra_{\F_\#}$ would be in the same $N_\Gamma(V_\#)$-orbit: a contradiction. Therefore, this case is impossible.

We next consider the case where both $\kappa$ and $\kappa_\#$ are both orthogonal. Let $\ell$ be a totally singular line of $V$ that intersects $V_\#$ in a subline, and take a point $P_1$ outside $\cN_1$ on $\ell$. Then $P_1\ne P_1^\theta$, $\ell$ is stabilized by $\theta$ and $\ell=P_1P_1^\theta$. Take a subline $\la u,v\ra_{\F_\#}$ of $V_\#$ that contains no singular point, and set $\ell'=\la u,v\ra_{\F_q}$. The quadratic polynomial $\kappa(u+xv)=0$ in $x$ has no solution in $\F_\#$, so is irreducible over $\F_{q_0}$. If it has a solution $x_0$ in $\F_q$, then $P_2=\la u+x_0v\ra$ is a singular point such that $P_2P_2^\theta=\ell'$. We then deduce that $P_1,P_2$ are not in the same $N_\Gamma(V_\#)$-orbits as in the symplectic case. Therefore, $\kappa(u+xv)=0$ has no solution in $\F_q$, and so $s$ is odd. It follows that $\cP$ and $\cP_\#$ has the same rank $r$ and $sr\ge 6$. Then $\frac{q^r-1}{q-1}$ divides $|\cN_1|=(q_0^{d-1-r}+1)\frac{q_0^r-1}{q_0-1}$. If $q^r=q_0^{sr}=2^6$, then $(q_0,s,r)=(2,3,2)$ and $\cP$ is one of $Q(4,2)$, $Q^-(5,2)$.  If $\cP_{\#}=Q(4,2)$, then $|\cN_1|=15$ is not divisible by $\frac{q^r-1}{q-1}=9$; if $\cP_{\#}=Q^-(5,2)$, then $|\cN_2|=2\cdot 3^4\cdot 7$ does not divide $|N_\Gamma(V_\#)|=2^7\cdot 3^5\cdot 5$. Therefore, we have $q^r\ne 2^6$.
Let $p'$ be a primitive prime divisor of $q^r-1$. Since $\textup{ord}_{p'}(q_0)=sr$, we deduce from $p'$ divides $|\cN_1|$ that $sr$ divides $2(d-1-r)$. Since $d\le 2r+2$, we have $2(d-1-r)\le 2r+2\le 3r$. Since $s$ is odd and $sr\le 3r$, we deduce that $s=3$, $r=2$ and $d=2r+2=6$. In this case, $\cP=Q^-(5,q_0^3)$, $|\cN_2|=q_0(q_0^3+1)(q_0^8-1)$, and a primitive prime divisor of $q_0^8-1$ does not divide the order of $N_\Gamma(V_\#)$ upon direct check: a contradiction.  To sum up, this case is not possible.

We then consider the cases where $\kappa$ is unitary and $\kappa_\#$ is orthogonal or unitary. We observe that each line of $\PG(V)$ contains either $1$, $q^{1/2}+1$ or $1+q$ singular points of $\kappa$, and $q^{1/2}+1>\max\{2,1+q_0^{1/2}\}$. Take a totally singular lines $\ell$ that intersects $V_\#$ in a subline, and take a point $P_1$ on it outside $\cP_\#$. Take a line $\ell'$ that intersects $V_\#$ in $2$ or $q_0^{1/2}+1$ singular points respectively according as $\kappa_\#$ is orthogonal or unitary, and take a singular point $P_2$ on it outside $\cP_\#$. Then $P_i\ne P_i^\theta$ for $i=1,2$, and the lines $\ell$, $\ell'$ are stabilized by $\theta$. It follows that $\ell=P_1P_1^\theta$, $\ell'=P_2P_2^\theta$. We deduce that $P_1,P_2$ are in distinct $N_\Gamma(\cD)$-orbits as in the previous cases. We conclude that those two cases are not possible.

Finally, we consider the case where $\kappa$ is unitary and $\kappa_\#$ is symplectic. In this case, we have $q=q_0^2$. By \cite[p.~991]{C5USp_CK}, $\cN_1$ and $\cN_2$ are the two $\Sp_{d}(q_0)$-orbits.  Assume that $H$ does not contain $\Sp_{d}(q_0)'$ from now on. For each $P\in\cN_2$ the line $PP^\theta$ intersects $V_\#$ in a totally singular subline.  We deduce that $H$ is transitive on both the set of points and the set of totally isotropic lines of $\cP_\#$.  By \cite[Theorem~5.2]{giudici2020subgroups} we deduce that the totally isotropic lines are maximal, so $d=4$. Moreover, either $q$ is even and $\Omega_{3}(q_0)\unlhd H$, or $q_0=3$ by the same theorem. In the former case $H$ is not transitive on $\cN_1$: a contradiction. For $q_0=3$, we  examine all the subgroups of $N_\Gamma(V_\#)$ whose orders are multiples of $\lcm(|\cN_1|,|\cN_2|)=240$ and there is no such subgroup that is transitive on both $\cN_1$ and $\cN_2$ besides those that contains $\Sp_4(3)$.  This completes the proof.
\end{proof}

To consider the subgroups of class $\cC_6$, we need the following technical lemma.
\begin{lemma}\label{lem_C6_ppd}
Suppose that $n>0$ and $p$ is an odd prime. If $p'$ is an odd prime divisor of $p^{2^n}+1$, then $p'$ is a primitive prime divisor of $p^{2^{n+1}}-1$.
\end{lemma}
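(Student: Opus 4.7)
The plan is to compute the multiplicative order of $p$ modulo $p'$ directly and observe that it must be exactly $2^{n+1}$, which is equivalent to $p'$ being a primitive prime divisor of $p^{2^{n+1}}-1$.

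First I would use the factorization
\[
  p^{2^{n+1}}-1 = (p^{2^{n}}-1)(p^{2^{n}}+1)
\]
to conclude that $p'$ divides $p^{2^{n+1}}-1$, so the order $k := \textup{ord}_{p'}(p)$ is a divisor of $2^{n+1}$ and hence a power of $2$. Next, since $p' \mid p^{2^{n}}+1$, reducing modulo $p'$ gives $p^{2^{n}} \equiv -1 \pmod{p'}$. Because $p'$ is odd we have $-1 \not\equiv 1 \pmod{p'}$, so $p^{2^{n}} \not\equiv 1 \pmod{p'}$, which shows $k \nmid 2^{n}$.

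Combining the two constraints $k \mid 2^{n+1}$ and $k \nmid 2^{n}$, and using that every divisor of $2^{n+1}$ smaller than $2^{n+1}$ already divides $2^{n}$, I conclude $k = 2^{n+1}$. This is precisely the statement that $p'$ divides $p^{2^{n+1}}-1$ but divides no $p^{i}-1$ for $1 \le i \le 2^{n+1}-1$, i.e., $p'$ is a primitive prime divisor of $p^{2^{n+1}}-1$. There is no real obstacle here; the only subtlety is using the oddness of $p'$ to rule out the case $p^{2^{n}} \equiv 1 \pmod{p'}$, which would collapse if $p' = 2$.
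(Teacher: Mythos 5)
Your proof is correct and is essentially the paper's argument: both reduce to showing $\textup{ord}_{p'}(p)=2^{n+1}$, using that the order is a power of $2$ dividing $2^{n+1}$ and that an order dividing $2^n$ would force $p^{2^n}\equiv 1\pmod{p'}$, contradicting $p^{2^n}\equiv -1\pmod{p'}$ and the oddness of $p'$. The paper merely phrases this as a contradiction ($p^{2^n}+1\equiv 2\pmod{p'}$) rather than as a direct order computation.
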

\begin{proof}
Suppose to the contrary that $p'$ is an odd prime divisor of $p^{2^n}+1$ that is not a primitive prime divisor of $p^{2^{n+1}}-1$. Then $m:=\textup{ord}_{p'}(p)$ is a proper divisor of $2^{n+1}$, i.e., $m=2^s$ with $s\le n$. We have $0\equiv p^{2^n}+1\equiv 1^{2^{n-s}}+1\equiv 2\pmod{p'}$ which is a contradiction. This completes the proof.
\end{proof}

\begin{proposition}\label{prop_C6}
Let $r_0$ be a prime distinct from $p$. If $H$ normalizes an absolutely irreducible $r_0$-group $R$ of symplectic type and $H$ is not contained in a subgroup of class $\cC_5$, then we have one of the cases in Table \ref{tab_TS_C6}.
\end{proposition}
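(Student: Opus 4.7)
The plan is to combine the structural information about class $\cC_6$ subgroups from \cite[Table~4.6.B]{kleidman1990subgroup} with the numerical constraints in Lemma \ref{lem_ParaCond} and primitive prime divisor arguments. Since $R$ is an absolutely irreducible $r_0$-group of symplectic type with $r_0 \ne p$, standard theory gives $d = r_0^n$ for some integer $n \ge 1$, and the quotient $N_\Gamma(R) / (Z \cap N_\Gamma(R))$ fits into an extension of $R/Z(R)$ by a subgroup of $\bigl(\Aut(R)/\textup{Inn}(R)\bigr) \rtimes \Gal(\F_q/\F_p)$. When $r_0$ is odd, $R$ is extraspecial of exponent $r_0$ and $\Out(R)$ embeds in $\Sp_{2n}(r_0)$; when $r_0 = 2$, $R$ is of type $2_\epsilon^{1+2n}$ or $4 \circ 2^{1+2n}$ and $\Out(R)$ embeds in $\GO_{2n}^\epsilon(2)$ or $\Sp_{2n}(2)$ respectively. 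In particular $|N_\Gamma(R)| \le f(q-1)\cdot r_0^{2n+1}\cdot|\GL_{2n}(r_0)|$.

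The hypothesis that $H$ is not contained in a $\cC_5$ subgroup forces $\F_q$ to be minimal among the fields over which the representation of $R$ can be realized; by \cite[Table~4.6.B]{kleidman1990subgroup} this restricts $q$ to a short list of values tied to the exponent of $R$ (typically $q=p$ with a prescribed congruence, or $q=p^2$ in the unitary setting). Substituting the bound on $|N_\Gamma(R)|$ into Lemma \ref{lem_ParaCond} in the form $r_0^n = d < 2 + \epsilon + \log_q(2|N_\Gamma(R)|)$ eliminates all but finitely many configurations. For $r_0$ odd this forces $r_0 = 3$ and $n = 1$ with $q$ small; for $r_0 = 2$ it forces $n \le 3$, with the admissible $q$ further constrained by the $\cC_5$-exclusion.

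To sharpen the analysis further, I would invoke Lemma \ref{lem_C6_ppd}: when $r_0 = 2$, $n \ge 2$ and $q = p^{2^{n-1}}$ is forced, any odd prime divisor of $p^{2^{n-1}}+1$ is a primitive prime divisor of $p^{2^n}-1$, and such a prime must divide the order $\lcm(|O_1|,|O_2|)$, which in turn divides $|N_\Gamma(R)|$; this pins down the few remaining candidates for $q$. What survives is a short finite list of triples $(\cP, R, q)$, matching the rows of Table \ref{tab_TS_C6} together with a handful of spurious triples.

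The final step is the verification for each surviving triple by direct computer enumeration: for every subgroup $H$ of $N_\Gamma(R)$ whose order is divisible by $\lcm(|O_1|,|O_2|)$, compute its orbits on the points of $\cP$ and test via Section \ref{subsec_basic} whether both orbits are tight sets (rather than $m$-ovoids or neither). The principal obstacle is bookkeeping: several rows of Table \ref{tab_TS_C6} (notably $W(3,3)$ and $Q^+(7,3)$) produce more than one admissible pair of orbit parameters, and the same triple $(\cP, R, q)$ sometimes reappears with different Aschbacher interpretations in Tables \ref{tab_TS_C3} and \ref{tab_C3Remain}; each configuration must be correctly identified and checked against the tight-set criterion, while ruling out the possibility of an $m$-ovoid decomposition.
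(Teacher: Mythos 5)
Your overall scheme is the one the paper uses: structure of $N_\Gamma(R)$ from \cite[Table~4.6.B]{kleidman1990subgroup}, minimality of the field coming from the $\cC_5$-exclusion, arithmetic constraints, Lemma \ref{lem_C6_ppd}, and a final Magma enumeration of orbits (the paper additionally disposes of the symplectic case $R=2_-^{1+2m}$ by quoting Case~(IIe) of Liebeck's rank-three theorem). The genuine gap is in your middle reduction: the logarithmic bound of Lemma \ref{lem_ParaCond} does not deliver what you claim. For $r_0$ odd it does not force $r_0=3$, $n=1$: for example $R=3^{1+4}$ acting on a $9$-dimensional unitary space over $\F_4$ (so $\cP=H(8,4)$, $r=4$) satisfies $d<2+\epsilon+\log_q(2|N_\Gamma(R)|)$ comfortably, and is excluded only because $\frac{q^4-1}{q-1}=85$ does not divide $|N_\Gamma(R)|$ (whose prime divisors are $2,3,5$); conversely the case you retain, $d=r_0=3$, has rank $1$ and lies outside the standing hypothesis $r\ge 2$. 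Likewise for $r_0=2$ the bound does not force $n\le 3$: the configuration $2_+^{1+10}<\GO_{32}^+(3)$ passes it, and is killed only by the primitive-prime-divisor/divisibility argument ($193\mid 3^{8}+1$ is a primitive prime divisor of $3^{16}-1$ and does not divide $|N_\Gamma(R)|$). What actually eliminates the odd-$r_0$ unitary case uniformly — and this argument is absent from your plan, since you invoke the ppd machinery only for $r_0=2$ — is that a primitive prime divisor of $q^{r}-1$ with $r=\frac{r_0^m-1}{2}$ is at least $r_0^m+2$, yet it must divide $\prod_{i=1}^m(r_0^{2i}-1)$, whose prime divisors are at most $\frac{1}{2}(r_0^m+1)$: a contradiction, so no examples occur there at all.

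A second concrete error: the assertion that for $r_0=2$, $n\ge 2$, ``$q=p^{2^{n-1}}$ is forced.'' By \cite[Table~4.6.B]{kleidman1990subgroup} the field is $\F_p$ for $2^{1+2n}_{\pm}$ and $\F_{p^2}$ (with $p\equiv 3\pmod 4$) for $4\circ 2^{1+2n}$; the quantity $p^{2^{n-1}}$ arises not as $q$ but as $q^{r}$ or $q^{r/2}$, where $r=2^{n-1}$ is the rank of $\cP$. Applying Lemma \ref{lem_C6_ppd} to that quantity, together with the divisibility of $\frac{q^r-1}{q-1}$ into $|N_\Gamma(R)|$, is what yields the Diophantine conditions $p^{2^{m-1}}+1=2(2^{m}+1)$ in the unitary case and $p^{2^{m-2}}+1=2(2^{m-1}+1)$ in the hyperbolic orthogonal case, forcing $(p,m)=(3,2)$ and $(3,3)$ respectively; only after this reduction does your final computational step (which does match the paper, including the several admissible orbit-size pairs for $W(3,3)$ and $Q^+(7,3)$) become feasible. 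As written, your case list entering the computer search is both incomplete and partly spurious, so the argument needs the divisibility/ppd analysis carried out in all cases, not just for $r_0=2$.
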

\begin{proof}
We write $N_\Gamma(R)$ and $N_\Delta(R)$ for the normalizer of $R$ in $\Gamma(V)$ and $\Delta(V)$ respectively.  Let $e$ be the smallest integer for which $p^e\equiv 1\pmod{|Z(R)|}$. Then $q=p^e$ and $e$ satisfies the restrictions in \cite[Table~4.6.B]{kleidman1990subgroup}, since $N_\Gamma(R)$ does not lie in a subgroup of class $\cC_5$. The case where $R\cong 2_-^{1+2m}$ and $\kappa$ is symplectic has been handled in \cite[Case~(IIe)]{LiebeckRank3}, and we have one of the symplectic cases in Table \ref{tab_TS_C6} by (B) of the main theorem therein.

Suppose that $R\cong r_0^{1+2m}$ ($r_0$ odd) and $\kappa$ is unitary. In this case, $d=r_0^m$, $e$ is even, $|Z(R)|=r_0$, $N_\Delta(R)=N_{\textup{GL}(V)}(R)=\F_q^*\circ (r_0^{1+2m}.\textup{Sp}_{2m}(r_0))$. The rank of $\cP=H(d-1,q)$ is $r=\frac{r_0^m-1}{2}$. If $p^{er}=2^6$, then $p=e=2$, $r=3$. It follows that $r_0^m=7$, i.e., $r_0=7$ and $m=1$. However, $p^e\equiv 1\pmod{|Z(R)|}$ does not hold: a contradiction.  Let $p'$ be a primitive prime divisor $p'$ of $p^{er}-1$, so that $\textup{ord}_{p'}(p)=er$. We have $\gcd(p',er)=1$, and $p'\ge 1+\frac{e}{2}(r_0^m-1)\ge r_0^m$. Since $\gcd(p',r)=1$ and $r_0$ is odd, we deduce that $p'\ge r_0^m+2$. Since $\frac{q^r-1}{q-1}$ divides $|H|$, we deduce that $p'$ divides $|N_\Gamma(R)|$. It follows that $p'$ divides $\prod_{i=1}^{m}(r_0^{2i}-1)$. The prime numbers of the latter number do not exceed $\frac{1}{2}(r_0^m+1)$: a contradiction.

Suppose that $R\cong 4\circ 2^{1+2m}$ and $\kappa$ is unitary. In this case, $d=2^m$, $r_0=e=2$, $|Z(R)|=4$, $N_\Delta(R)=N_{\textup{GL}(V)}(R)=\F_q^*\circ (2^{1+2m}.\textup{Sp}_{2m}(2))$. Since $e=2$, we deduce that $p\equiv 3\pmod{4}$.  The rank of $\cP$ is $r=2^{m-1}\ge 2$, so $m\ge 2$. Let $p'$ be a primitive prime divisor of $p^{er}-1$, so that $\textup{ord}_{p'}(p)=2^m$. We have  $\gcd(p',er)=1$ and $p'\ge 1+2^m$.  Let $s$ be the largest integer such that $p'^s$ divides $p^{er}-1$. We argue as in the previous case to deduce that $p'^s$ divides $x:=\prod_{i=1}^{m}(2^{2i}-1)$.  By examining the prime divisors of $x$ and using the fact $p'\ge 1+2^m$, we deduce that $p'=1+2^m$ and $s=1$. Since $er=2^{m}$ and $p^{2^{m-1}}+1\equiv 2\pmod{4}$, we deduce from Lemma \ref{lem_C6_ppd} that $p^{2^{m-1}}+1=2(1+2^m)$. It holds only if $m=2$, $p=3$.  We check by Magma \cite{Magma} that there are two $N_\Gamma(R)$-orbits of sizes $120,~160$ respectively on $\cP=H(3,9)$. Their sizes are not divisible by the ovoid number $28$, so the two orbits are both tight sets.

Finally, suppose that $R\cong 2_+^{1+2m}$ and $\kappa$ is hyperbolic of plus sign. In this case, $d=2^m$, $r_0=|Z(R)|=2$, $e=1$, and $N_\Delta(R)=N_{\textup{GL}(V)}(R)=\F_q^*\circ (2_+^{1+2m}.\GO^+_{2m}(2))$.
Since we do not consider $Q^+(3,q)$, we have $m\ge 3$. The rank of $\cP$ is $r=2^{m-1}\ge 4$. Let $p'$ be an odd prime divisor of $p^{r/2}+1$, and let $s$ be the largest integer such that $p'^s$ divides $p^{r/2}+1$. Then $p'$ is a primitive prime divisor of $p^r-1$ by Lemma \ref{lem_C6_ppd}, and so $p'\ge 1+2^{m-1}$. We deduce from $\frac{p^r-1}{p-1}$ divides $N_\Gamma(R)$ that $p'^s$ divides
$y=(2^m-1)\prod_{i=1}^{m-1}(2^{2i}-1)$.  By examining the prime divisors of $y$ and using the fact $p'\ge 1+2^{m-1}$ that $p'=1+2^{m-1}$, $s=1$. It holds for each odd prime divisor $p'$ of $p^{r/2}+1$ and $2$ strictly divides $p^{r/2}+1$, so $p^{2^{m-2}}+1=2(1+2^{m-1})$.  It holds if and only if $(p,m)=(3,3)$. We go over the subgroups of $N_\Gamma(R)$ whose orders are multiple of $\frac{q^r-1}{q-1}=40$ by Magma \cite{Magma} that the possible sizes for $O_1,~O_2$ are $\{280,840\}$, $\{480,640\}$ and $\{160,960\}$. They all yield tight sets upon direct check. This completes the proof.
\end{proof}

We have completed the analysis of the geometric Aschbacher class $\cC_2$ and classes $\cC_4$ to $\cC_8$, and we are now ready to handle the three remaining cases in class $\cC_3$, cf. Remark \ref{rem_C3}. Recall that $H$ has two orbits $O_1,~O_2$ on the points of $\cP$ which are $i_1$- and $i_2$-tight sets respectively, and the corresponding subsets $O_1',~O_2'$ as in \eqref{eqn_C3_Oip} are nonempty and form the two $H$-orbits on $\cP'$, and $\cP'$ has rank at least $2$. The parameters of $O_1$, $O_2$ are the same as $O_1'$, $O_2'$ by Proposition \ref{prop_C3_OipProper}. Since $H$ is irreducible on $V$, it is irreducible on $V'$.  It is not of class $\cC_3$ as a subgroup of $\Gamma(V',\kappa')$ by the choice of $b$.

\begin{proposition}\label{prop_C3_remain}
Suppose that $\cM_1=\cP$ and $b>1$, and write $r'$ for the rank of $\cP'$. Let $O_1'$, $O_2'$ be as in \eqref{eqn_C3_Oip}, and assume that they are both proper subsets of $\cP'$. If $r'\ge 2$, then we have (T1-a) of Theorem \ref{thm_TS} or one of the cases in the third block of Table \ref{tab_C3Remain}.
\end{proposition}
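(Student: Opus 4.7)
The plan is to apply inductively the classification results already established in this section (Propositions \ref{prop_C2}--\ref{prop_C6}) and in Section \ref{sec_classS} to $H$ regarded as an irreducible subgroup of $\Gamma(V',\kappa')$ with two orbits on $\cP'$. First I would verify that $H$ is irreducible on $V'$: any $H$-invariant $\F_{q^b}$-subspace of $V'$ is a fortiori an $\F_q$-subspace of $V$, so it must be trivial by irreducibility of $H$ on $V$. By the maximality of $b$, $H$ is not contained in a class $\cC_3$ subgroup of $\Gamma(V',\kappa')$. Combined with Proposition \ref{prop_C3_OipProper}, $O_1'$ and $O_2'$ are the two $H$-orbits on $\cP'$ and are $i_1$- and $i_2$-tight sets of $\cP'$ with the same parameters as $O_1,O_2$. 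Since $\cM_1=\cP$, the space $\cP'$ is one of $W(d/b-1,q^b)$ (row 1 of Table \ref{tab_extfieldP'gt0}), $H(d/2-1,q^2)$ with $\cP=Q^-(d-1,q)$ (row 9, $b=2$), or $H(d/2-1,q^2)$ with $\cP=Q^+(d-1,q)$ (row 10, $b=2$); in particular, $\cP'$ is symplectic or unitary.

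Next I would run through the admissible Aschbacher classes of $H$ in $\Gamma(V',\kappa')$. For class $\cC_2$, Propositions \ref{prop_C2}--\ref{prop_C2_H} together with Table \ref{tab_TS_C2} leave only the $(\cP',i_1,i_2)=(H(4,4),6,27)$ entry compatible with our list of $\cP'$, which lifts via row 9 to the $Q^-(9,2)$ entry of Table \ref{tab_C3Remain}. Classes $\cC_4,\cC_7$ are ruled out by Proposition \ref{prop_C4C7}, since these would force $\cP'$ to be orthogonal of plus sign. Class $\cC_5$ via Proposition \ref{prop_C5} forces $\cP'=H(2m-1,q^2)$ with $\Sp_{2m}(q)'\unlhd H$ and lifts via row 10 to the infinite family $\cP=Q^+(4m-1,q)$ with $(i_1,i_2)=(q+1,q^{2m-1}-q)$. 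Class $\cC_6$ via Proposition \ref{prop_C6} and Table \ref{tab_TS_C6} leaves only the $(H(3,9),R=4\circ 2^{1+4})$ entry, which lifts to $Q^+(7,3)$. The case where $H$ is of class $\cC_8$ in $\Gamma(V',\kappa')$ would require $\cP'$ symplectic with $q^b$ even, in which event field reduction of the stabilized quadric places $H$ into the $\cC_8$ class of $\Gamma(V,\kappa)$ and hence into case (T4) of Theorem \ref{thm_TS}; I would verify this regime produces no entry new to Table \ref{tab_C3Remain}. Finally, filtering Tables \ref{tab_TS_cSLiedef} and \ref{tab_TS_cSother} for class $\cS$ by the admissible $\cP'$ yields $W(5,4)$ with $J_2$, $H(3,9)$ with $2.A_7$, $H(4,4)$ with $\PSL_2(11)$, and $H(5,4)$ with both $3.\PSU_4(3)$ and $3.M_{22}$, lifting respectively to the $W(11,2)$, $Q^+(7,3)$, $Q^-(9,2)$, and $Q^+(11,2)$ rows of Table \ref{tab_C3Remain}.

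The main obstacle will be the careful parameter matching across the field-reduction correspondence and the verification that a class $\cC_2$ subgroup of $\Gamma(V',\kappa')$ of type (T1-a) on $\cP'=W(d/b-1,q^b)$ produces exactly case (T1-a) of Theorem \ref{thm_TS} on $\cP=W(d-1,q)$ rather than a spurious new entry; the composition of two $\cC_2$-type decompositions has to be recognized as a single $\cC_2$ decomposition on $V$. For the $\cC_5$ infinite family producing $Q^+(4m-1,q)$, I would also have to confirm via Proposition \ref{prop_C3_OipProper} together with the ovoid number $\theta_{2m}=q^{2m-1}+1$ of $Q^+(4m-1,q)$ that the two orbits realize exactly the parameters $(q+1,q^{2m-1}-q)$, and that the composite embedding $\Sp_{2m}(q)\hookrightarrow\SU_{2m}(q)\hookrightarrow\Omega^+_{4m}(q)$ actually lands inside $\Gamma^\#(V',\kappa')$ as defined in \eqref{eqn_GamJ}.
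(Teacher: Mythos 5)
Your overall strategy is the same as the paper's: regard $H$ as an irreducible subgroup of $\Gamma(V',\kappa')$, use the maximality of $b$ to exclude class $\cC_3$ over $\F_{q^b}$, transfer the tight-set parameters through Proposition \ref{prop_C3_OipProper}, rerun the class-$\cS$ and geometric analyses over $\F_{q^b}$, and lift back through rows 1, 9, 10 of Table \ref{tab_extfieldP'gt0}; your filtering of classes $\cC_2$, $\cC_4$--$\cC_7$, $\cC_6$ and $\cS$ reproduces the third block of Table \ref{tab_C3Remain} and the (T1-a) case correctly.

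The genuine gap is your treatment of the class-$\cC_8$ subcase in row 1 ($\cP'$ symplectic, $q$ even, $H$ stabilizing a plus-type quadratic form $Q'$ on $V'$). You propose to transfer $Q'$ to $\tr_{\F_{q^b}/\F_q}\circ Q'$ on $V$ and file the configuration under (T4); but (T4) is not among the conclusions of the proposition you are proving, so this would at best yield a weaker statement, and the promised verification that "this regime produces no entry new to Table \ref{tab_C3Remain}" is left open. What must actually be shown, and what the paper shows, is that this configuration is impossible for $b>1$: if $H$ stabilizes $Q'$, the only candidate orbit pair is $O_1=\{\la v\ra_{\F_q}\mid Q'(v)=0,\ v\ne 0\}$ and its complement $O_2$; every element of $\Gamma^\#$ that is a semisimilarity of $Q'$ has its $Q'$-multiplier equal to its $\kappa$-multiplier, hence in $\F_q^*$ (here $\alpha=1$), so the $Q'$-values met in the orbit of a point $\la v\ra_{\F_q}$ with $Q'(v)=1$ lie in $\F_q^*$, whereas the values met in $O_2$ exhaust $\F_{q^b}^*$; transitivity of $H$ on $O_2$ therefore forces $\F_q^*=\F_{q^b}^*$, i.e. $b=1$, a contradiction. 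Replacing your (T4) paragraph with this value-class argument closes the proof; the remaining steps you flag (that a (T1-a) decomposition of $V'$ over $\F_{q^b}$ is literally a (T1-a) decomposition of $V$ over $\F_q$, and the parameter and $\Gamma^\#$-membership checks for the $\Sp_{2m}(q)$ family) are routine and agree with the paper.
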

\begin{proof}
Suppose that we are in row 1 of Table \ref{tab_extfieldP'gt0}. In this case, both $\kappa$ and $\kappa'$ are symplectic. First consider the case where $H$ is a geometric subgroup of $\Gamma(V',\kappa')$.  We go over the arguments for this section to see that: there are no feasible subgroups of classes $\cC_4-\cC_7$  with $q^b$ not a prime, and we obtain (T1-a) of Theorem \ref{thm_TS} for class $\cC_2$. If $H$ is of class $\cC_8$, then $q$ is even and there is a quadratic form $Q'$ of plus sign on $V'$ that polarizes to $\kappa'$. The two $H$-orbits are thus $O_1=\{\la v\ra_{\F_q}\mid Q'(v)=0,\,v\ne 0\}$, $O_2=\{\la v\ra_{\F_q}\mid Q'(v)\ne0\}$. Take a vector $v$ such that $Q'(v)=1$. Let $K$ be the intersection of $\Gamma^\#$ and  the group of semisimilarities of $(V',\,Q')$. Since $H$ lies in $K$ and $K$ stabilizes both $O_1$ and $O_2$, we deduce that $O_2$ is the $K$-orbit of $\la v\ra_{\F_q}$. The latter is equivalent to $\F_q^*=\F_{q^b}^*$ by considering the $Q'$-values for both sets. It follows that $b=1$: a contradiction. Next consider the case where $H$ is a non-geometric subgroup of $\Gamma(V',\kappa')$. By Proposition \ref{prop_classS_caseS} and the fact $q^b$ is not a prime, we see that $W(d/b-1,q^b)=W(5,4)$, $H=J_2$, and the two orbits are $25,~40$-tight sets respectively. We check that the group $J_2$ has two orbits on the nonzero vectors of $V'$ by using the Atlas data \cite{Atlas}. Since $4=2^2$, we deduce that $b=2$, $\cP=W(11,2)$. The two $J_2$-orbits $O_1,~O_2$ on the points of $\cP$ are $25,~40$-tight sets respectively.

Suppose that we are in rows 9 or 10 of Table \ref{tab_extfieldP'gt0} with $b=2$. We have $\cP'=H(d/2-1,q^2)$, $\cP=Q^{\pm}(d-1,q^2)$, and $\kappa(x)=\kappa'(x,x)$. First consider the case where $H$ is a geometric subgroup of $\Gamma(V',\kappa')$. We go over the arguments in this section to obtain $6,~27$-tight sets of $Q^-(9,2)$ from those of $H(4,4)$ for class $\cC_2$, $\Sp_{2m}(q)'$-invariant $(q+1)$ and $(q^{2m-1}+1)$-tight sets of $Q^+(4m-1,q)$ with $d=4m$ for class $\cC_5$, $N_\Gamma(R)$-invariant $12,~16$-tight sets of $Q^+(7,3)$ from those of $H(3,9)$ with $R=4\circ 2^{1+4}$ for class $\cC_6$, and there are no further instances. For the $\cC_5$ case, we remark that the group $\Sp_{d/2}(q)$ has exactly two orbits on the nonzero singular vectors of the unitary form $\kappa'$ by \cite[p.~991]{C5USp_CK}. We tabulate those examples in the third block of  Table \ref{tab_C3Remain} and indicate their Aschbacher classes as a subgroup of $\Gamma(V',\kappa')$ in the Remark column. Next consider the case where $H$ is a non-geometric subgroup of $\Gamma(V',\kappa')$. We go over Section \ref{sec_classS} to obtain the rows with an $\cS$ entry in the Remark column of Table \ref{tab_C3Remain}. This completes the proof.
\end{proof}

Let us summarize what we have done so far. Let $\cP$ be a classical polar space of rank $r\ge 2$ over $\F_q$ such that $\cP\ne Q^+(3,q)$, and write $(V,\kappa)$ for the ambient space and form. Suppose that $H$ is an irreducible group of semisimilarities that has two orbits on the singular points that are tight sets of $\cP$. The group $\Omega(V,\kappa)'$ is transitive on the singular points, so $H$ does not contain $\Omega(V,\kappa)'$. By \cite{asch_max}, $H$ is either of Aschbacher class $\cS$ or is contained in a maximal subgroup of $\Gamma(V,\kappa)$ of classes $\cC_2,\ldots,\cC_8$. In Section \ref{sec_classS} we have classified such subgroups of Aschbacher class $\cS$. In Propositions \ref{prop_C3_M1prop}, \ref{prop_C3_O1peqPp}, \ref{prop_C3_OipProper_r'eq1} and \ref{prop_C3_remain}, we have classified such subgroups of Aschbacher class $\cC_3$. If $H$ is of class $\cC_8$, then we have (T4) of Theorem \ref{thm_TS}. In Propositions \ref{prop_C2}, 5.9, 5.10, 5.12, and \ref{prop_C6} we have classified such subgroups of the remaining geometric Aschbacher classes. This concludes the proof of Theorem \ref{thm_TS}.

\section{Proof of Theorem \ref{thm_mOvoid}}

We have classified the $m$-ovoids of finite classical polar spaces that admit a transitive automorphism group of semisimilarities acting irreducibly on the ambient space in \cite{FengLiTao}. In this section, we prove Theorem  \ref{thm_mOvoid} by making use of such classification results. Suppose that $H$ is an irreducible subgroup of $\Gamma(V,\kappa)$ that has exactly two orbits on the singular points of $\cP$ which are respectively $m_1,~m_2$-ovoids. We check by using Magma \cite{Magma} that: for $\cP=Q^-(5,2)$ there is no such irreducible subgroup $H$; for $\cP=H(3,4)$ the subgroup $H$ is of class $\cC_2$ and appears in Table \ref{tab_TS_C2}; for $\cP=W(5,2)$ either $H$ stabilizes an elliptic quadric $Q^-(5,2)$ and covered by Theorem \ref{thm_mOvoid} (M2), or it appears in row 5 of Table \ref{tab_TrOvoid_C3}. Similarly, for $\cP=Q^+(7,2)$ either $H^\infty\in\{A_9,\PSL_2(8)\}$ and one orbit is an ovoid, or $H\le\GO_2^-(2)\wr S_4$ and $\{m_1,m_2\}=\{6,9\}$. The two cases appear in Table \ref{tab_TrOvoid_cS} and Table \ref{tab_TrOvoid_C3Rem} respectively. We suppose that $\cP$ is none of those four polar spaces from now on.

Let $b$ be the largest integer such that $H$ lies in $\GamL_{d/b}(q^b)$. We first consider the case where $b>1$. There is a vector space $V'$ of dimension $m:=d/b$ over $\F_{q^b}$ equipped with a nondegenerate form $\kappa'$ such that $(V',\kappa')$ is associated with $(V,\kappa)$ as in Table \ref{tab_extfieldP'gt0}. Then $H$ is a subgroup of $\Gamma^\#$, cf. \eqref{eqn_GamJ}. We define $\cM_1$ as in \eqref{eqn_M1def} which corresponds to the nonzero singular points of $\cP'$, and let $\cM_2$ be its complement in $\cP$. We list the cases where both  $\cM_1,~\cM_2$ are nonempty transitive $m$-ovoids in Table \ref{tab_TrOvoid_C3} by \cite[Examples~3.3,~3.14]{FengLiTao}. There are no further such cases by the main theorems therein. We now derive the last column on $H$ of Table \ref{tab_TrOvoid_C3}.
\begin{enumerate}
\item[(1)] If $\cP=H(3m-1,q)$ and $\cP'=H(m-1,q^3)$, then $b=3$, $m\ge 3$ is odd and $q\in\{2^2,2^6\}$. The two $H$-orbits are $\cM_1$, $\cM_2$, so we deduce that $H$ is a subgroup of $\Gamma(V',\kappa')$ that is transitive on both the set of singular points and the set of nonsingular points of $V'$. Since $q^3$ is a cube and $q$ is even, we deduce that $\SU_{m}(q^{3/2})\unlhd H$ by Lemma \ref{lem_H_TrSinNonsin}.
\item[(2)] If $\cP=Q^-(2m-1,q)$ and $\cP'=Q^-(m-1,q^2)$, then $b=2$ and $m\ge 4$ is even. The two $H$-orbits are $\cM_1$, $\cM_2$, so we deduce that $H$ is a subgroup of $\Gamma(V',\kappa')$ that is transitive on both the set of singular points and one class of nonsingular points of $V'$. Since $q^2$ is a square, by Lemma \ref{lem_Q-(2m,q)sub} we deduce that either $\Omega_{m}^-(q^2)\unlhd H$, or $m/2$ is odd and $\SU_{m/2}(q^4)\unlhd H$, or $m=q=4$ and $\GO_2^-(4^2).4\unlhd H$. The last case does not occur, since otherwise $b=4$ by the definition of $b$: a contradiction to $b=2$.
\item[(3)] If $\cP=Q^-(3m-1,q)$ and $\cP'=Q^-(m-1,q^3)$, then $b=3$, $q\in\{2,2^3\}$ and $m\ge 4$ is even. The two $H$-orbits are $\cM_1$, $\cM_2$, so we deduce that $H$ is a subgroup of $\Gamma(V',\kappa')$ that is transitive on both the set of singular points and the set of nonsingular points of $V'$. Since $q^3$ is a cube, by Lemma  \ref{lem_Q-(2m,q)sub} we deduce that either $\Omega_{m}^-(q^3)\unlhd H$, or $m/2$ is odd and $\SU_{m/2}(q^3)\unlhd H$.
\item[(4)] If $\cP=Q^+(2m-1,q)$  and $\cP'=Q(m-1,q^2)$, then $b=2$, $qm$ is odd and $m\ge 3$. The two $H$-orbits are $\cM_1$, $\cM_2$, so we deduce that $H$ is a subgroup of $\Gamma(V',\kappa')$ that is transitive on both the set of singular points and the set of nonsingular points of $V'$. Moreover, the latter set has size $\frac{1}{2}|\cM_2|=\frac{1}{2}q^{m-1}(q^{m-1}+1)$. By Lemma \ref{lem_Q(dodd,q)sub} we deduce that either $\Omega_{m}(q^2)\unlhd H$, or $m=7$ and $G_2(q^2)\unlhd H$.
\item[(5)] If $\cP=W(2m-1,q)$ and $\cP'=H(m-1,q^2)$, then $b=2$ and $m\ge 3$ is odd. The two $H$-orbits are $\cM_1$ and $\cM_2$, so we deduce that $H$ is transitive on both singular and nonsingular points of $\cP'$. By Lemma \ref{lem_H_TrSinNonsin}, we have either $\SU_{m}(q)\unlhd H$, or $(q,m)=(2,3)$ and $H=3_+^{1+2}\rtimes C_8$.
\end{enumerate}
It remains to consider the cases where one of $\cM_1$, $\cM_2$ is empty. There are no such instances for $H(d-1,q)$ or $Q(d-1,q)$ by \cite[Theorems~16,~17]{FengLiTao}. For $\cP=Q^-(d-1,q)$, there is a transitive $3$-ovid of $Q^-(5,5)$ with $H^\infty=3.A_7$ and $\cP'=H(2,5^2)$ by \cite[Theorem~3.18]{FengLiTao}. Since $3+3=\frac{5^r-1}{5-1}$ with $r=2$, it is a candidate case. There are exactly two $3.A_7$-orbits of the same sizes on the nonsingular vectors of $H(2,5^2)$, so we have row 1 of Table \ref{tab_TrOvoid_C3Rem}. For $\cP=W(d-1,q)$, we have row 3 of Table \ref{tab_TrOvoid_C3Rem} from \cite[Example~3.13]{FengLiTao}. We consider case (b) in  \cite[Theorem~3.20]{FengLiTao}, where $\cP=W(3,q)$ and $\cP'=H(0,q^4)$. We take the following model for $(V,\kappa)$: $V=\F_{q^4}$, $\kappa(x,y)=\tr_{\F_{q^4}/\F_q}(\delta xy)$ with $\delta^q+\delta=0$. For $a\in\F_{q^4}^*$ and $0\le i\le 4f-1$, define $\psi_{a,i}\mid x\mapsto ax^{p^i}$. Then $H_0$ is a subgroup of $M=\{\psi_{a,i}\mid a^{q^2+1}\in\F_q^*,\,0\le i\le 4f-1\}$, where $|M|=4f(q^2+1)(q-1)$. An $M$-orbit is $O_1=\{\la x\ra\mid x^{q^2+1}\in\F_q^*\}$, so the $H$-orbits should be $O_1$ and its complement $O_2$. We have $|O_1|=q^2+1$, $|O_2|=q(q^2+1)$, and $\lcm(|O_1|,|O_2|)$ divides $4f(q^2+1)$ if and only if $q$ is one of $2,4,16$. It is straightforward to check that $M$ is transitive on $O_2$ if and only if $q=2,4$. This yields row 2 of Table \ref{tab_TrOvoid_C3Rem}. By \cite[Theorem~3.20]{FengLiTao} there are no further instances where $\cM_1$ or $\cM_2$ is empty and there are transitive $m_1,~m_2$-ovoids with $m_1+m_2=\frac{q^r-1}{q-1}$. This completes the analysis of the case $b>1$.

We next consider the case where $b=1$. Then $H$ is not of class $\cC_3$ as a subgroup of $\Gamma(V,\kappa)$ by the choice of $b$. We now go over \cite[Theorems~16-20]{FengLiTao} for the subgroups of each Aschbacher class in $\Gamma(V,\kappa)$ except for $\cC_3$. If $H$ is of class $\cS$, then we have one of the cases in Table \ref{tab_TrOvoid_cS}. If $H$ is of class $\cC_2$, i.e., $H$ preserves a decomposition $V=V_1\oplus\ldots\oplus V_t$ with $t\ge 2$ and $m=\dim(V_i)$ for each $i$, then $m=1$ and $\cP$ is one of  $Q(4,3)$, $Q(6,3)$ and $Q^+(7,3)$. This leads to the examples in Table \ref{tab_TS_C2} by examining all the irreducible subgroups whose orders are divisible by the respective ovoid numbers using Magma. We remark that the $Q^+(7,2)$ row of Table \ref{tab_TrOvoid_C3Rem} also falls in this class with each $V_i$ elliptic of dimension $2$. There are no candidates for $H$ in the classes $\cC_4$, $\cC_5$ and $\cC_7$ by the main theorems in \cite{FengLiTao}. If $H$ is of class $\cC_6$, then we have the example in Theorem \ref{thm_mOvoid} (M1) from \cite[Example~3.4]{FengLiTao} and there are no further cases.  If $H$ is of class $\cC_8$, then we have the example in Theorem \ref{thm_mOvoid} (M2) from \cite[Example~3.1]{FengLiTao}. This completes the proof of Theorem \ref{thm_mOvoid}.

\vspace{0.1in} 

\noindent{\bf Acknowledgements.} The research of Tao Feng is partially supported by National Natural Science Foundation of China Grant No. 12225110, 12171428, and the research work of Qing Xiang is partially supported by the National Natural Science Foundation of China Grant No. 12071206, 12131011, 12150710510, and the Sino-German Mobility Programme M-0157.

%\section*{Acknowledgement}

\end{document}